\definecolor{vert}{rgb}{0.09,0.7,0.17}
\definecolor{violet}{rgb}{0.69,0.13,0.69}
\definecolor{blendedblue}{rgb}{0.2,0.2,0.7}
\newcommand{\vp}{\vspace{.5cm}}
\newtheorem{theorem}{Theorem}%[section]
\newtheorem{proposition}{Proposition}
\newtheorem{corollary}{Corollary}
\newtheorem{lemma}{Lemma}
\newtheorem{remark}[theorem]{Remark}
\newtheorem{heu}{Heuristic}
\newcommand{\R}{\mathbb{R}} % set of real numbers
\newcommand{\1}{1
%\mathds{1} % avec \usepackage{dsfont}
} %indicatrice d'intervalles
\renewcommand{\P}{P}
\newcommand{\E}{E} % esperance
\DeclareMathOperator{\var}{Var} %variance
\renewcommand{\l}{\ell}
\newcommand{\FDR}{\mbox{FDR}}
\newcommand{\FNR}{\mbox{FNR}}
\newcommand{\FNP}{\mbox{FNP}}
\newcommand{\BFDR}{\mbox{BFDR}}
\newcommand{\Cst}{C}
\newcommand{\MCI}{\mbox{\tt MCI}}
\newcommand{\SC}{\mbox{\tt SC}}
\newcommand{\mtc}{\mathcal}
\newcommand{\mbf}{\mathbf}
\newcommand{\ol}[1]{\overline{#1}}
\newcommand{\ind}[1]{{\mbf{1}\{#1\}}}
\newcommand{\al}{\alpha}
\newcommand{\eps}{\varepsilon}
\newcommand{\cH}{{\mtc{H}}}
\newcommand{\cS}{{\mtc{S}}}
\newcommand{\FDP}{\mbox{FDP}}
\newcommand{\be}{\beta}
\newcommand{\ga}{\gamma}
\newcommand{\Ga}{\Gamma}
\newcommand{\La}{\Lambda}
\newcommand{\te}{\theta}
\newcommand{\ta}{\tau}
\newcommand{\veps}{\varepsilon}
\newcommand{\vphi}{\varphi}
\newcommand{\RR}{\mathbb{R}}
\newcommand{\leqa}{\lesssim}
\newcommand{\geqa}{\gtrsim}
\newcommand{\bfi}{\ol\Phi}
\newcommand{\given}{\,|\,}
\newcommand{\EM}{\ensuremath}
\newcommand{\cA}{\EM{\mathcal{A}}}
\newcommand{\cC}{\EM{\mathcal{C}}}
\newcommand{\cD}{\EM{\mathcal{D}}}
\newcommand{\cG}{\EM{\mathcal{G}}}
\newcommand{\cI}{\EM{\mathcal{I}}}
\newcommand{\cL}{\EM{\mathcal{L}}}
\newcommand{\cM}{\EM{\mathcal{M}}}
\newcommand{\EBayesqseuillage}{{\tt EBayesq$.0\,$}}
\newcommand{\vphiqvalseuillage}{\vphi^{\mbox{\tiny $q$-val$.0$}}}
\newcommand{\EBayesqplus}{{\tt EBayesq.hybrid$\,$}}
\begin{document}

\begin{frontmatter} 

\title{On spike and slab empirical Bayes multiple testing}
\runtitle{Spike and slab empirical Bayes multiple testing}
\thankstext{T1}{Work partly supported by grants of the ANR, projects ANR-16-CE40-0019 (SansSouci) and ANR-17-CE40-0001 (BASICS)}
%\date{\thedate}

\begin{aug}
\author{\fnms{Isma\"el} \snm{Castillo}\ead[label=e1]{ismael.castillo@upmc.fr}}
\and
\author{\fnms{\'Etienne} \snm{Roquain}\ead[label=e2]{etienne.roquain@upmc.fr}}
\address{
  Sorbonne Universit\'e, Laboratoire de Probabilit\'es, Statistique et Mod\'elisation, LPSM,\\ 4, Place Jussieu, 75252 Paris cedex 05, France\\
  \printead{e1,e2}
 }
 
 \affiliation{Sorbonne  Universit\'e}
\end{aug}

\begin{abstract}
This paper explores a connection between empirical Bayes posterior distributions and false discovery rate (FDR) control. In the Gaussian sequence model,  this work shows that empirical Bayes-calibrated spike and slab posterior distributions  allow a correct FDR  control under sparsity. Doing so, it offers a frequentist theoretical validation of  empirical Bayes methods 
in the context of multiple testing.
Our theoretical results are  illustrated with numerical experiments.
\end{abstract}
\begin{keyword}[class=AMS]
\kwd[Primary ]{}{62C12, 62G10}
\end{keyword}
\begin{keyword}
 \kwd{Frequentist properties of Bayesian procedures}  \kwd{False discovery rate} \kwd{sparsity} \kwd{multiple testing} \end{keyword}
\end{frontmatter}

\section{Introduction}
\label{sec:intro}

\subsection{Context} 

In modern high dimensional statistical models, several aims are typically pursued, often at the same time:  
{\em testing} of hypotheses on the parameters 
of interest, {\em estimation} and {\em uncertainty} quantification, among others. Due to their flexibility, in particular in the choice of the prior, Bayesian posterior distributions are routinely used to provide solutions to a variety of such inference problems. However, although practitioners may often directly read off quantities such as the posterior mean or credible sets once they have simulated posterior draws, the question of mathematical justification of the use of such quantities, in particular from a frequentist perspective, has recently attracted a lot of attention. While the seminal papers \cite{ggv}, \cite{sw01} set the stage for the study of posterior estimation rates in general models, the case of estimation in high dimensional models has been considered only recently from the point of view of estimation, see \cite{js04}, \cite{cv12}, \cite{vkv14} among others, while results on frequentist coverage of credible sets are just starting to emerge, see e.g. \cite{belnur15}, \cite{vsvv17d}. Some of the previous approaches rely on automatic data-driven calibration of the prior parameters, following the so-called {\em empirical Bayes} approach, notably \cite{js04}, estimating the proportion of significant parameters, and \cite{jiangzhang09}, where the full distribution function of the unknowns is estimated. 

 Our interest here is on the issue of {\em multiple testing} of hypotheses. 
 Typically, the problem is to identify the active variables among a large number of candidates.  This task appears in a wide variety of applied fields as genomics, neuro-imaging, astrophysics, among others. 
Such data typically involve more than thousands of variables with only a small part of them being significant (sparsity). 

In this context, a typical aim is to control the false discovery rate (FDR), see \eqref{deffdr} below, that is, to find a selection rule that ensures that the averaged proportion of errors among the selected variables is smaller than some prescribed level $\alpha$. 
This multiple testing type I error rate, introduced in \cite{BH1995}, became quickly popular with the development of high-throughput technologies because it is ``scalable" with respect to the dimension: the more rejections are possible, the more false positives are allowed.
A common way to achieve this goal is to compute the $p$-values (probability under the null that the test statistic is larger than the observed value) and to run the  Benjamini-Hochberg (BH) procedure \cite{BH1995}, which is often considered as a benchmark procedure. 
In the last decades, an extensive literature aimed at studying the BH method, by showing that it (or versions of it) controls the FDR in various frameworks, see \cite{BY2001,BKY2006,Sar2007,FDR2007}, among others.

In a fundamental work \cite{fdr06}, Abramovich, Benjamini, Donoho and Johnstone proved that a certain hard thresholding rule deduced from the BH procedure 
-- keeping only observations with significant $p$-values -- satisfies remarkable risk properties: it is minimax adaptive simultaneously for a range of losses and sparsity classes over a broad range of sparsity parameters. 
In addition, similar results hold true for the misclassification risks, see \cite{BCFG2011,NR2012}. 
These results in particular suggest a link between FDR controlling procedures and adaptation to sparsity.
Here, we shall follow a questioning that can be seen as `dual' to the former one: starting from a commonly used Bayesian procedure that is known to optimally  adapt to the sparsity in terms of risk over a broad range of sparsity classes (and even, under appropriate self-similarity type conditions, to produce adaptive confidence sets), we ask whether a uniform FDR control can be guaranteed.

\subsection{Setting} 

In this paper, we consider the Gaussian sequence model. One observes, for $1\le i\le n$, 
\begin{equation}\label{model}
X_i = \theta_{0,i} + \varepsilon_i, 
\end{equation}
for an unknown $n$-dimensional vector $\theta_{0}= (\theta_{0,i})_{1\leq i \leq n} \in \R^n$ and $\varepsilon_i$ i.i.d. $\mathcal{N}(0,1)$. This model can be seen as a stylized version of an high-dimensional model. 
The problem is to test
$$
\mbox{$H_{0,i}: ``\theta_{0,i}=0"$ against $H_{1,i}: ``\theta_{0,i}\neq 0"$},  
$$
 simultaneously over $i\in\{1,\dots,n\}$.
We also introduce the assumption that the vector $\theta_{0}$ is $s_n$-sparse, that is, is supposed to belong to  the set
\begin{equation}\label{equ:l0sn}
\ell_0[s_n] =  \{\theta\in \R^n\::\: \# \{1\leq i\leq n \::\: \theta_i\neq 0\} \le s_n\},
\end{equation}
for some sequence $s_n\in\{0,1,\dots,n\}$, typically much smaller than $n$, measuring the sparsity of the vector. 

\subsection{Bayesian multiple testing methodology}\label{sec:BMTM}

From the point of view of posterior distributions, one natural approach for testing is simply based on comparing posterior probabilities of the hypotheses under consideration. Yet, to do so, a choice of prior needs to be made, and for this reason it is important to carefully design a prior that is flexible enough to adapt to the unknown underlying structure (and, here, sparsity) of the model. This is one of the reasons behind the use of {\em empirical Bayes} approaches, that aim at calibrating the prior in a fully automatic, data-driven, way. Empirical Bayes methods for multiple testing have been in particular advocated by Efron (see e.g. \cite{Efron2008} and references therein) in a series of works over the last 10-15 years, reporting excellent behaviour of such procedures -- we describe two of them in more detail in the next paragraphs -- in practice. Fully Bayes methods, that bring added flexibility by putting prior on sensible hyperparameters, are another alternative. In the sequel {\it Bayesian multiple testing procedures} will be referred to as BMT for brevity.

Several popular BMT procedures rely on two quantities that can be seen as 
possible Bayesian counterparts of standard $p$-values: 
\begin{itemize}
\item the $\ell$-value: the probability that the null is true conditionally on the fact that the test statistics is {\it equal} to the observed value, see e.g. \cite{ETST2001};
\item the $q$-value: the probability that the null is true conditionally on the fact that the test statistics is {\it larger} than the observed value, introduced in \cite{Storey2003}.
\end{itemize}
(Note that the $\ell$-value is usually called ``local FDR". Here, we used another terminology to avoid any confusion between the procedure and the FDR.) Obviously, 
 these quantities are well defined only if 
 the trueness/falseness of a null hypothesis is random,  which is obtained by introducing an appropriate prior distribution.
 
Once the prior is calibrated (in a data-driven way or not), the $q$-values (resp. $\ell$-values) can be computed and combined to produce BMT procedures. For instance, existing strategies reject null hypotheses with:
\begin{itemize}
\item  a $\ell$-value smaller than a fixed cutoff $t=0.2$ \cite{Efron2007};
\item a $q$-value smaller than the  nominal level  $\alpha$ \cite{Efron2008};
\item averaged $\ell$-values smaller than the nominal level $\alpha$ \cite{muelleretal04,SC2007,SC2009}.   
\end{itemize} 
For alternatives see, e.g., \cite{AA2006,sarkaretal08}. 
In particular, one popular fact is that the use of Bayesian quantities ``automatically corrects for the multiplicity of the tests", see, e.g., \cite{Storey2003}; while using $p$-values requires to use a cutoff $t$ that decreases with the dimension $n$, using $\ell$-values/$q$-values can be used with a cutoff $t$ close to the nominal level $\alpha$, without any further correction.  
This is well known to be valid from a decision theoretic perspective for the Bayes FDR, that is, for the FDR integrated w.r.t. the prior distribution, as we recall in Proposition~\ref{prop:BFDR} below. 
When the hyper-parameters are estimated from the data within the BMT, the Bayes FDR is still controlled to some extent, as proved in \cite{SC2007,SC2009}. However,  controlling the Bayes FDR does not give theoretical guarantees for the usual frequentist FDR, that is, for the FDR at the true value of the parameter, as the pointwise FDR may deviate from an integrated version thereof. 

\subsection{Frequentist control of BMT}

In this paper,
our main aim is to study whether BMT procedures have valid frequentist multiple testing properties.

A first hint has already been given in \cite{Storey2003,Efron2008}: it turns out that the BH procedure can loosely be seen as a ``plug-in version" of the procedure rejecting the $q$-values smaller than $\alpha$ (namely, the theoretical c.d.f. of the $p$-values is estimated by its empirical counterpart). Since the BH procedure controls the (frequentist) FDR, this 
might suggest a possible connection between BMT and successful frequentist multiple testing procedures.

In regard to the rapidly increasing literature on frequentist validity of Bayesian procedures from the {\em estimation} perspective, the multiple testing question for BMT procedures has been less studied so far from the theoretical, frequentist, point of view. This is despite a number of very encouraging simulation performance results, see e.g. \cite{muelleretal04, caoetal09, guindanietal09, martintokdar12}.  
A recent exception is the interesting preprint \cite{salomond17} that shows a frequentist FDR control for a BMT based on a continuous shrinkage prior; yet, this  control holds under a certain signal-strength assumption only. One main question we ask in the present work is whether a fully {\em uniform} control (over sparse vectors) of the frequentist FDR is possible for some posterior-based BMT procedures.  
Also, while the constants in the risk bounds  are not made explicit  in \cite{salomond17}, 
we would like to clarify whether the final FDR control is made at, or close to, the required level $\alpha$. The FDR control results below will also be complemented by appropriate type II-error controls.

\subsection{Spike and slab prior distributions and sparse priors}

Let $w\in(0,1)$ be a fixed hyper-parameter.
Let us define the  prior distribution $\Pi=\Pi_{w,\gamma}$ 
on $\R^n$ as 
\begin{equation}\label{prior}
\Pi_{w,\gamma} = ((1-w)\delta_0+w \cG)^{\otimes n},
\end{equation}
where $\cG$ is a distribution with a symmetric density $\gamma$ on $\R$.  Such a prior is a tensor product of a mixture of a Dirac mass at $0$ (spike), that reflects the sparsity assumption, and of an absolutely continuous distribution (slab), that models nonzero coefficients. This is arguably one of the most natural priors on sparse vectors and has been  considered in many key contributions on Bayesian sparse estimation and model selection, see, e.g., \cite{MitchellBeauchamp}, \cite{GeorgeFoster}. 
 
Of course, an important question is that of  the choice of $w$ and $\ga$. A popular choice of $w$ is data-driven and based on a marginal maximum likelihood empirical Bayes method (to be described in more details below). The idea is to make the procedure  learn the intrinsic sparsity while also incorporating some automatic multiplicity correction, as discussed e.g. in \cite{scottberger10, bgt08}.  Following such an approach in a fundamental paper, Johnstone and Silverman \cite{js04} show that, provided $\ga$ has tails at least as heavy as Laplace, the posterior median of the empirical Bayes posterior is rate adaptive for a wide range of sparsity parameters and classes, is fast to compute and enjoys excellent behaviour in simulations (the corresponding R--package {\tt EBayesThresh} \cite{js05} is widely used). Namely, if $\|\cdot\|$ denotes the euclidian norm and  $\hat\te=\hat\te(X)$ is the coordinate-wise median of the empirical Bayes posterior distribution, there exists $c_1>0$ such that
 \begin{align}
\sup_{\te_0\in\ell_0[s_n]} 
E_{\te_0} \|\hat \te-\te_0\|^2 & \leq c_1s_n\log(n/s_n).
 \label{riskpoint}
\end{align}
Thus, asymptotically (in the regime $s_n,n\to\infty$, $s_n/n\to 0$), it matches up to a constant the minimax risk for this problem (\cite{djhs92}). In the recent work \cite{cm17}, the convergence of the empirical Bayes full posterior distribution (not only aspects such as median or mean) is considered, and similar results can be obtained, under stronger conditions on the tails of $\ga$ (for instance $\ga$ Cauchy works). More precisely, for $\Pi(\cdot\given X)=\hat\Pi(\cdot\given X)$ the empirical Bayes posterior, one can find a constant $C_1>0$ such that
 \begin{align}
\sup_{\te_0\in\ell_0[s_n]} 
E_{\te_0} \int \|\te-\te_0\|^2 d\Pi(\te\given X) & \leq C_1s_n\log(n/s_n).
 \label{riskcontrol}
\end{align}

 Further, under some conditions, one can show that certain credible sets from the posterior distributions are also adaptive confidence sets in the frequentist sense \cite{cs18}. Alternatively, one can also follow a hierarchical approach and put a prior on $w$. The paper \cite{cv12} obtains adaptive rates for such a fully Bayes procedure over a variety of sparsity classes, and presents a polynomial time algorithm to compute certain aspects of the posterior. 

Empirical Bayes approaches have also been successfully applied to a variety of different sparse priors such as empirically recentered Gaussian slabs as in \cite{belnur15, belghos16}, or the horseshoe \cite{vsvv17r, vsvv17d}, both studied in terms of estimation and the possibility to construct adaptive confidence sets. In \cite{jiangzhang09}, an empirical Bayes approach based on the `empirical' cdf of the $\theta$s is shown to allow for optimal adaptive estimation over various sparsity classes. For an overview on the rapidly growing literature on  sparse priors, we refer to the discussion paper \cite{vsvv17d}. 

Yet, most of the previous results are concerned with estimation or confidence sets, although a few of them report empirical false discoveries, e.g. \cite{vsvv17d}, Figure 7, without theoretical analysis though.

\subsection{Aim and results of the paper} 
Here we wish to find --  if this is at all possible --  a posterior-based procedure using a prior $\Pi$ (possibly an empirical Bayes one i.e. $\Pi=\hat\Pi$), that can perform {\em simultaneous inference} in that a) it behaves optimally up to constants in terms of the quadratic risk in the sense of \eqref{riskpoint} (or \eqref{riskcontrol}), 
b) its frequentist FDR at {\em any} sparse vector is bounded from above by (a constant times) a given nominal level. 
More precisely,  given a nominal level $t\in(0,1)$  and  $\vphi_t$ a multiple testing procedure deduced from $\Pi$
 ($\ell$-values or $q$-values procedure, as listed in Section~\ref{sec:BMTM}) we want to {\em validate} its use in terms of a uniform control of its false discovery rate $\text{FDR}(\te_0,\vphi_t)$, see \eqref{deffdr} below, over the whole parameter space. That is, we ask whether we can find $C_2>0$ independent of $t$ such that, for $n$ large enough, 
\begin{align}
\sup_{\te_0\in\ell_0[s_n]} \text{FDR}(\te_0,\vphi_t) & \le C_2\:t. \label{fdrcontrol}
\end{align}
Our main results are as follows: for a sparsity $s_n=O(n^\upsilon)$ with $\upsilon\in(0,1)$,
\begin{itemize}
\item Theorem~\ref{th1} shows that  \eqref{fdrcontrol} holds with $C_2$ arbitrary small for the BMT procedure rejecting the nulls whenever the corresponding $\ell$-value is smaller than $t$. 
\item Theorem~\ref{th2} shows that  \eqref{fdrcontrol} holds for some $C_2>0$ for the BMT procedure rejecting the nulls whenever the corresponding $q$-value is smaller than $t$ (with a slight modification if only few signals are detected).
\end{itemize}
These results hold for spike and slab priors,  
 for $\ga$ being Laplace or Cauchy,  or even for slightly more general heavy-tailed distributions.  The hyperparameter $\hat w$ is chosen according to a certain empirical Bayes approach to be specified below (with minor modifications with respect to the choice of \cite{js04}). 
In addition, 
 it is important to evaluate the amplitude of $C_2>0$ in  \eqref{fdrcontrol}. 
Our numerical experiments support the fact that, roughly, $C_2=1$. Furthermore, Theorem~\ref{theorem:psharpFDRcontrol} shows that  for some  
subset $\cL_0[s_n]\subset \ell_0[s_n]$ (containing strong signals), we have
for the $q$-value BMT, for any (sequence) $\te_0\in \cL_0[s_n]$,
\begin{equation}
\label{fdrcontrolexact}
\lim_n \ \text{FDR}(\te_0,\vphi_t)  = t\:,
\end{equation}
so the FDR control is exactly achieved asymptotically in that case. 

Finally, we provide a control of the type II error of the considered procedures by showing in Theorem \ref{theorem:power} that if FNR$(\te_0,\vphi)$ denotes the average number of non-discoveries of a procedure $\vphi$, for $\te_0\in\cL_0[s_n]$ as above,
\begin{equation}
\label{fnrcontrol}
 \lim_{n}  \text{FNR}(\te_0,\vphi_t) =0,
\end{equation} 
where $\vphi_t$ can either be the $\ell$-values or $q$-values procedure at level $t$.  

It follows from these results (combined with previous results of \cite{js04,cm17}) that the posterior distribution associated to a spike and slab prior, with $\ga$ Cauchy and a suitably empirical Bayes--calibrated $w$, is appropriate to perform several  tasks: \eqref{fdrcontrol}-\eqref{fdrcontrolexact}-\eqref{fnrcontrol} (multiple testing), \eqref{riskcontrol}--\eqref{riskpoint} (posterior concentration in $L^2$-distance). The posterior can also be used to build honest adaptive confidence sets (\cite{cs18}). The present work, focusing on the multiple testing aspect, then  completes the inference picture for spike and slab empirical Bayes posteriors, confirming their excellent behaviour in simulations.

\subsection{Organisation of the paper}

In Section \ref{sec:bmt}, we introduce Bayesian multiple testing procedures associated to spike and slab posterior distributions
 as well as the considered empirical Bayes choice of $w$. 
In Section  \ref{sec:EBw}, our main results are stated, while Section \ref{sec:simu} contains numerical experiments,  Section~\ref{sec:other}  presents some related BMT procedures and Section \ref{sec:disc} gives a short discussion.  
Preliminaries for the proofs are given in Section \ref{sec:prelim}, while the proof of %the main results 
Theorems~\ref{th1} and \ref{th2} 
can be found in Section \ref{sec:proof:th1}. The supplementary file \cite{CR2017supp} gathers a number of lemmas used in the proofs, as well as the proofs of Propositions \ref{prop:BFDR}--\ref{prop:typeIerror} and Theorems \ref{theorem:psharpFDRcontrol}, \ref{theorem:power} and \ref{thm-mci}. The sections and equations of this supplement are referred to with an additional symbol ``S-'' in the numbering.

\subsection{Notation}
 
In this paper, we use the following notation:

\begin{itemize}
\item for $F$ a cdf, we set $\ol{F}=1-F$
\item $\phi(x)=(2\pi)^{-1/2} e^{-x^2/2}$ and $\Phi(x)=\int_{-\infty}^x \phi(u)du$
\item $u_n  \asymp v_n$ means that there exists constants $C,C'>0$ such that  $|v_n| c \leq |u_n|\leq C |v_n|$ for $n$ large enough; 
\item $u_n  \leqa v_n$ means that there exists constants $C>0$ such that   $|u_n|\leq C |v_n|$ for $n$ large enough;
\item $f(y)  \asymp g(y)$, for $y\in A$ means that there exists constants $C,C'>0$  such that for all $y\in A$,   $c |g(y)| \leq |f(y)| \leq C |g(y)|$;
\item $f(y)  \asymp g(y)$, as $y\to\infty$ means that there exists constants $C,C'>0$  such that  $c |g(y)| \leq |f(y)| \leq C |g(y)|$ for $y$ large enough; 
\item $u_n\sim v_n$ means $u_n-v_n=o(u_n)$.
\end{itemize} 

Also, for $\ta\in\RR^n$, the symbol $\E_\ta$ (resp. $\P_\ta$) denotes the expectation (resp. probability) under $\te_0=\ta$ in the model \eqref{model}. 
The support of $\theta_0\in\RR^n$ is denoted by 
$S_{\te_0}=\{i\::\: \te_{0,i}\neq 0\}$ or sometimes $S_0$ for simplicity. The cardinality of the support $S_{\te_0}$ is denoted by
$\sigma_0=|S_0|$.

\subsection{Relevance and novelty of the approach}

We now briefly emphasize connections with existing works, and discuss several merits of the proposed  approach.
First, studying theoretical properties of BMT procedures is motivated by the fact that they are routinely used in practice  
since  Efron's seminal papers \cite{ETST2001,Efron2008}; in the context of genomic applications, we refer for instance to a recent series of works by  Stephens and co-authors \cite{Steph2016,GS2018} and references therein. Second,  %Also, 
we note that just a few other procedures to date  theoretically allow both  estimation at minimax rate and uniform FDR control: besides the BH procedure \cite{BH1995,fdr06}, the SLOPE procedure \cite{bogdanslope15}, \cite{sucandes16} also enjoys these two properties in a regression context. In addition, the Bayesian maximum a posteriori (MAP) rule \cite{abramovichetal07} has a minimax estimation rate and shares connections with the BH rule \cite{AA2006} for some specific choice of the prior. Third, let us mention that Sun, Cai and coauthors have also investigated a generic $\l$-value-based approach (see Section~\ref{rem:SC} for more details) that allows to control the FDR in structured settings where the BH procedure can be suboptimal \cite{SC2009,CS2009,CSW2019}. Nevertheless, the proposed FDR control is not uniform from the frequentist perspective, and is restricted to a specific asymptotical setting. 
Interestingly, using the present spike and slab prior in these contexts seems promising to get uniform FDR control while improving upon the BH procedure. During the submission process of the manuscript, a first encouraging attempt has  being made by the second author in the discussion part of the paper \cite{CSW2019} (see page $218$ therein).

To summarize, the present work aims at providing guarantees for a widely used class of 
 $\ell$-value/$q$-value-based BMT procedures,
deploying a spike and slab prior with suitably heavy tails and empirical Bayes choice of the weight. Further, by doing so, and combining with results from recent parallel investigations \cite{cm17,cs18}, our work demonstrates that  the corresponding posterior distribution produces simultaneously optimal estimation rates, confidence sets and uniform FDR control (as well as FNR control over appropriately large signals), thereby achieving a complete inference picture along the three canonical inferential goals of ``estimation, testing (here, multiple) and confidence sets".  We are not aware of any another method that produces simultaneously these (frequentist) inferences in the present setting.

\section{Preliminaries}
 \label{sec:bmt}

\subsection{Procedure and FDR}

A multiple testing procedure is a measurable function of the form $\vphi(X)=(\vphi_i(X))_{1\leq i\leq n}\in\{0,1\}^n$, where each $\vphi_i(X)=0$ (resp. $\vphi_i(X)=1$) codes for accepting $H_{0,i}$ (resp. rejecting $H_{0,i}$). 
For any such procedure $\vphi$, we let
\begin{equation} \label{deffdr}
\FDR(\theta_{0},\vphi) = \E_{\theta_0}\left[\frac{\sum_{i=1}^n \ind{\theta_{0,i}=0} \vphi_i(X)}{1\vee \sum_{i=1}^n  \vphi_i(X)}\right]  .
\end{equation}
A procedure $\vphi$ is said to control the FDR at level $\alpha$ if $\FDR(\theta_{0},\vphi)\leq \alpha$ for any $\theta_{0}$ in $\R^n$. 
Note that under $\theta_{0}=0$, we have $\FDR(\theta_{0},\vphi)=\P_{\theta_{0}=0}(\exists i\::\:\vphi_i(X)=1)$, which means that an $\alpha$--FDR controlling procedure provides in particular a (single) test of level $\alpha$ of the full null ``$\theta_{0,i}=0$ for all $i$". 
As already mentioned, in the framework of this paper, our goal  
is a control of the FDR around the pre-specified target level, as in  \eqref{fdrcontrol} or \eqref{fdrcontrolexact} (where $t=\al$).

\subsection{Prior, posterior, $\ell$-values and $q$-values}

Recall the definition of the prior distribution $\Pi=\Pi_{w,\gamma}$ from \eqref{prior} and let
\begin{equation} 
g(x)=
\int \gamma(x-u)\phi(u)du. \label{equg}
\end{equation}
The posterior distribution $\Pi[\cdot\given X]=\Pi_{w,\ga}[\cdot\given X]$ of $\theta$ is explicitly given by
\begin{align}
\theta \:|\: X&\,\sim\, \bigotimes_{i=1}^n\  \l_i(X)\, \delta_0 + (1-\l_i(X))\, \cG_{X_i},\label{posteriordistribution}
\end{align}
where $\cG_{x}$ is the distribution  with density $\gamma_{x}(u) := \phi(x-u) \gamma(u)/g(x)$ and
\begin{align}
\l_i(X)&=\l(X_i;w,g);\label{lvalues}\\
\l(x;w,g)&=\Pi(\theta_1=0 \:|\: X_1=x)= \frac{(1-w)\phi(x)}{(1-w)\phi(x) + w g(x)}.\label{lformula}
\end{align} 
The quantities $\l_i(X)$, $1\leq i \leq n$, given by \eqref{lvalues} are called the {\it $\ell$-values}. 
Note that, although we do not emphasize it in the notation for short, the $\ell$-values depend also on $w$ and $g$. The $\ell$-value measures locally, for a given observation $X_i$, the probability that the latter comes from pure noise.  This is why it is sometimes called `local-FDR', see \cite{ETST2001}. 

If one has in mind a range of values 
--i.e. those that exceed a given amplitude--, a different measure is given by the {\em q-values} defined by
\begin{align}
q_i(X)&=q(X_i;w,g);\label{qvalues}\\
q(x;w,g)&=\Pi(\theta_1=0 \:|\: |X_1|\geq |x|)= \frac{(1-w)\ol{\Phi}(|x|)}{(1-w)\ol{\Phi}(|x|) + w \:\ol{G}(|x|)};\label{qvaluesfunc}\\
\ol{G}(s)&=\int_{s}^{+\infty} g(x)dx.
\end{align}
The identity \eqref{qvaluesfunc} relating the $q$-value to $\ol{\Phi}, \ol{G}$ is proved in Section~\ref{sec:proplqval}.

\subsection{Assumptions}\label{sec:assumpnota}

We follow throughout the paper assumptions similar to those of \cite{js04}.
The prior $\gamma$ is assumed to be unimodal, symmetric and so that 
\begin{align} 
|\log\ga(x) - \log\ga(y)| & \le \La|x-y|,\quad x,y\in\RR; \label{asump1}\\
\ga(y)^{-1}\int_y^\infty\ga(u)du & \asymp y^{\kappa-1},\quad\text{as } y\to\infty, \:\: \kappa\in[1,2]; \label{asump2}\\
y\in \R&\to y^2\ga(y)\mbox{ is bounded}. \label{asump3}
\end{align}

Conditions  \eqref{asump1}, \eqref{asump2} and \eqref{asump3}  above are for instance true when $\gamma$ is Cauchy ($\kappa=2$, $\Lambda=1$) or Laplace ($\kappa=1$, $\Lambda$ is the scaling parameter). As we show in Remark~\ref{rem:explicit}, explicit expressions exist for $g$, see \eqref{equg}, in the Laplace case. In the Cauchy case, the integral is not explicit, but in practice  (to avoid approximating the integral) one can work with the quasi-Cauchy prior,  see \cite{js05}, that satisfies the above conditions and corresponds to
\begin{align}
\gamma(x)&=(2\pi)^{-1/2} (1- |x| \ol{\Phi}(x)/\phi(x))\label{gammaquasiCauchy};\\
 g(x)&=(2\pi)^{-1/2} x^{-2} (1-e^{-x^2/2}).\label{gquasiCauchy}
\end{align}
The condition  \eqref{asump3} is mostly for simplicity to get unified proofs, but heavier tails could be consider as well, by adapting estimates of \cite{cs18}.

\subsection{Bayesian Multiple Testing procedures (BMT)}

We define the multiple procedures defined from the $\ell$-values/$q$-values in the following way:
\begin{align}\label{def:lvalproc}
\vphi^{\mbox{\tiny $\ell$-val}}_i(t;w,g)&=\mathds{1}_{\{\l_i(X)\leq t\}},\:\:1\leq i\leq n;\\
\vphi^{\mbox{\tiny $q$-val}}_i(t;w,g)&=\mathds{1}_{\{q_i(X)\leq t\}},\:\:1\leq i\leq n, \label{def:qvalproc}
\end{align}
where $t\in(0,1)$ is some threshold, that possibly depends on $X$.
As we will see in Section~\ref{sec:BMT}, these two procedures, denoted $\vphi^{\mbox{\tiny $\ell$-val}}(t)$, $\vphi^{\mbox{\tiny $q$-val}}(t)$ for brevity, simply correspond to (hard) thresholding procedures that select the $|X_i|$'s larger than some ({\em random}) threshold. The value of the threshold is driven by the posterior distribution in a very specific way: it depends on $\gamma$,  $t$, and on the whole data vector $X$ through the empirical Bayes choice of the hyper-parameter $w$, that automatically ``scales" the procedure according to the sparsity of the data.

\subsection{Controlling the Bayes FDR}

If the aim is to control the FDR at some level $\alpha$, a first result indicates that choosing $t=\alpha$ in $\vphi^{\mbox{\tiny $\ell$-val}}(t)$ and $\vphi^{\mbox{\tiny $q$-val}}(t)$ may be  appropriate, because the corresponding procedures control the Bayes FDR, that is, the FDR where the parameter $\theta$ has been integrated with respect to the prior distribution (see, e.g., \citep{sarkaretal08}). 
More formally, for any multiple testing procedure $\vphi$, and hyper-parameters $w$ and $\gamma$, define
\begin{align}
\BFDR(\vphi;w,\gamma)&=\int_{\R^n} \FDR(\theta,\vphi) d\Pi_{w,\gamma}(\theta) \label{equBFDR}.
\end{align}
Then the following result holds.

\begin{proposition}\label{prop:BFDR}
Let $\alpha\in (0,1)$ and $w\in(0,1)$ and consider any density $\gamma$ satisfying the assumptions of Section~\ref{sec:assumpnota}. Let $\vphi^\l=\vphi^{\mbox{\tiny $\ell$-val}}(\alpha;w,g)$ as defined in \eqref{def:lvalproc} and $\vphi^q=\vphi^{\mbox{\tiny $q$-val}}(\alpha;w,g)$ as defined in \eqref{def:qvalproc}.
Then we have 
\begin{align}
\BFDR(\vphi^\l;w,\gamma) &\leq \alpha \:\P(\exists i\::\: \l_i(X)\leq \alpha)\label{equBFDRlval}\:\\
&\leq \alpha \:\P(\exists i\::\: q_i(X)\leq \alpha)=\BFDR(\vphi^q;w,\gamma) \leq \alpha \label{equBFDRqval}.
\end{align}
\end{proposition}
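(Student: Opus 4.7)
The plan is to exploit the exchangeable product structure of the Bayesian model: under $\theta\sim\Pi_{w,\gamma}$ and $X\mid\theta$ Gaussian, the pairs $(\theta_i,X_i)$ are i.i.d., the posterior factorises across coordinates, and the marginal posterior mass at $0$ is exactly $\ell_i(X)=\ell(X_i)$. First I would rewrite, via the tower property, for any data-driven rule $\vphi(X)$,
\[
\BFDR(\vphi;w,\ga)=\mbe\!\left[\frac{\sum_i \ell(X_i)\vphi_i(X)}{1\vee\sum_i \vphi_i(X)}\right],
\]
the expectation being under the joint law $(\theta,X)\sim\Pi_{w,\ga}\otimes\mathcal{N}(\theta,I_n)$. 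This identity is the only ``Bayesian'' ingredient; the rest of the argument is direct manipulation.

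For the first bound \eqref{equBFDRlval}, apply the previous identity with $\vphi=\vphi^\l$ and use the trivial pointwise inequality $\ell(X_i)\ind{\ell(X_i)\le\alpha}\le \alpha\ind{\ell(X_i)\le\alpha}$, together with the identity $k/(1\vee k)=\ind{k\ge 1}$ for $k\in\mbn$. This immediately gives $\BFDR(\vphi^\l)\le\alpha\,\mbp(\exists i:\ell_i(X)\le\alpha)$.

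For the middle inequality I would prove the pointwise fact $q(x)\le\ell(x)$ and deduce $\{\ell_i\le\alpha\}\subset\{q_i\le\alpha\}$. By the tower property and the definitions,
\[
q(x)=\mbp(\theta_1=0\mid |X_1|\ge|x|)=\mbe\!\left[\ell(X_1)\,\bigm|\,|X_1|\ge|x|\right],
\]
and since under the assumptions of Section~\ref{sec:assumpnota} the ratio $g/\phi$ is monotone nondecreasing in $|x|$ (the monotone likelihood ratio property, valid for Laplace, Cauchy, quasi-Cauchy slabs), $\ell$ is monotone nonincreasing in $|x|$; hence $\ell(X_1)\le \ell(x)$ on the conditioning event, and the bound $q(x)\le\ell(x)$ follows.

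The final equality $\BFDR(\vphi^q)=\alpha\,\mbp(\exists i: q_i(X)\le\alpha)$ is the delicate step. Applying the master identity to $\vphi^q$ and using exchangeability of the $X_i$'s, write
\[
\BFDR(\vphi^q)=n\,\mbe\!\left[\frac{\ell(X_1)\,I_1}{N'+1}\right],\qquad I_j:=\ind{q(X_j)\le\alpha},\ \ N':=\sum_{j\ge 2}I_j,
\]
where on $\{I_1=0\}$ the numerator vanishes. Independence of $X_1$ from $N'$ factorises the expectation into $n\,\mbe[\ell(X_1)I_1]\cdot\mbe[1/(N'+1)]$. Since $\{q(x)\le\alpha\}$ is a threshold event $\{|x|\ge t_q\}$ (by the monotonicity of $q$ that follows from the MLR property), the definition of $q$ gives exactly $\mbe[\ell(X_1)\mid I_1=1]=q(t_q)=\alpha$, so $\mbe[\ell(X_1)I_1]=\alpha p$ with $p=\mbp(I_1=1)$. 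Finally, the standard binomial moment identity $\mbe[1/(N'+1)]=(1-(1-p)^n)/(np)$ with $N'\sim\text{Bin}(n-1,p)$ yields $\BFDR(\vphi^q)=\alpha(1-(1-p)^n)=\alpha\,\mbp(\exists i:q_i(X)\le\alpha)$; the bound $\le\alpha$ is then trivial. The main technical nuisance is checking the continuity of $q$ at the threshold $t_q$ (so that $q(t_q)$ is actually $\alpha$ rather than $\le\alpha$), which holds under the smoothness of $\ga$ assumed in Section~\ref{sec:assumpnota}; otherwise an inequality $q(t_q)\le\alpha$ suffices to preserve the stated upper bound.
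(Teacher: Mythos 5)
Your proof is correct and overlaps with the paper's for the first two steps, but takes a genuinely different route for the final equality, so a comparison is worth spelling out. Your ``master identity'' $\BFDR(\vphi;w,\ga)=\E\left[\sum_i \ell(X_i)\vphi_i(X)/(1\vee\sum_i\vphi_i(X))\right]$ is exactly the paper's first move (chain rule over $X$), the bound $\ell(X_i)\ind{\ell(X_i)\le\alpha}\le\alpha\ind{\ell(X_i)\le\alpha}$ is identical, and your derivation of $q(x)\le\ell(x)$ via the tower property and monotonicity of $\ell$ in $|x|$ is exactly the paper's Lemma on $q$- versus $\ell$-values. Where you diverge is in proving $\BFDR(\vphi^q)=\alpha\,\P(\exists i:q_i\le\alpha)$. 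The paper conditions on the whole vector $(\vphi_1^q,\ldots,\vphi_n^q)$ and uses the coordinatewise structure of the posterior to replace $\ind{\theta_i=0}$ by the constant $P(\theta_i=0\mid\vphi_i^q=1)=q(\Psi(\alpha))=\alpha$ inside the ratio, after which the fraction is identically $\ind{\exists i:\vphi_i^q=1}$. You instead exchange the sum, split off coordinate $1$, exploit independence to factor $n\,\E[\ell(X_1)I_1]\cdot\E[1/(1+N')]$ with $N'\sim\mathrm{Bin}(n-1,p)$, and then invoke the explicit binomial moment identity $\E[1/(1+N')]=(1-(1-p)^n)/(np)$. Both arguments are valid; the paper's is more conceptual (conditional expectation cancels without any binomial computation), yours is more elementary and makes the dependence on $p=\P(q_1\le\alpha)$ explicit, which some readers may find instructive. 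One small point: the equality $\E[\ell(X_1)\mid I_1=1]=q(t_q)=\alpha$ (and the paper's $q(\Psi(\alpha))=\alpha$) requires $\alpha$ to lie in the range $(0,1-w]$ of the map $x\mapsto q(x;w,g)$; for $\alpha>1-w$ one has $t_q=0$, $q(t_q)=1-w<\alpha$, and the displayed equality degrades to an inequality. You flag this explicitly, which the paper's proof does not, and the resulting inequality still preserves the chain of bounds.
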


This result can be certainly considered as well known, as \eqref{equBFDRlval} (resp. \eqref{equBFDRqval}) is similar in essence to Theorem~4 of \cite{SC2009} (resp. Theorem~1 of \cite{Storey2003}). It is essentially a consequence of Fubini's theorem, see Section~\ref{sec:prop:BFDR} for a proof.
While Proposition~\ref{prop:BFDR} justifies the use of $\ell$/$q$-values from the purely Bayesian perspective, it does not bring any information about $\FDR(\theta_0,\vphi^\l)$ and $\FDR(\theta_0,\vphi^q)$ at an arbitrary sparse vector $\theta_0\in \R^n$. 

\subsection{Marginal maximum likelihood}

In order to choose the hyper-parameter $w$, we explore now the choice made in \cite{js04}, following the popular marginal maximum likelihood method.
Let us introduce the auxiliary functions
\begin{align}\label{functionbeta}
 \beta(x) &= \frac{g}{\phi}(x)-1 ;\:\:\:
\beta(x,w)=\frac{\be(x)}{1+w\be(x)} .
 \end{align}
A useful property is that $\beta$  is increasing on $[0,\infty)$ from $\beta(0)\in (-1,0)$ to infinity, see Section \ref{sec:assumg}.  
The marginal likelihood for $w$ is by definition the marginal density of $X$, given $w$, in the Bayesian setting. Its logarithm is equal to  
$$ L(w)=
 \sum_{i=1}^n \log \phi(X_i) + \sum_{i=1}^n \log\left( 1+w \be(X_i)\right),$$
 which is a differentiable function on $[0,1]$. The derivative $\cS$ of $L$, the score function, can be written as
\begin{equation}\label{equ:score}
 \cS(w) = \sum_{i=1}^n \frac{\be(X_i)}{1+w\be(X_i)} =  \sum_{i=1}^n \be(X_i,w). 
 \end{equation}
The function $w\in[0,1]\to \cS(w)$ is (a.s.) decreasing and thus $w\in[0,1]\to L(w)$ is (a.s.) strictly concave. Hence, almost surely, the maximum of the function $L$ on a compact interval exists,  is  unique, and we can define the marginal  maximum likelihood estimator $\hat w$ by 
\begin{equation}\label{defw}
 \hat w \ =\ \underset{w\in\left[\frac1n,1\right]}{\text{argmax}}\  L(w)\:\:\:\: \mbox{ (a.s.)}.
\end{equation} 
This choice of $\hat w$ is close to the one in \cite{js04}. The only difference is in the lower bound, here $1/n$, of the maximisation interval, which differs from the choice in \cite{js04} by a slowly varying term. This difference is important for multiple testing in case of weak or zero signal (in contrast to the estimation task, for which this different choice does not modify the results).
Another slightly different choice of interval, still close to $[1/n,1]$, will also be of interest below.
In addition, if $\hat w\in(1/n,1)$,  it solves the equation $\cS(w)=0$ in $w$.  
However, note that in general the maximiser $\hat w$ can be at the boundary and thus may not be a zero of $\cS$.

\section{Main results}\label{sec:EBw}

Let us first describe the $\ell$-value algorithm.\\

\begin{center}
\fbox{\begin{minipage}{0.9\textwidth}
   \begin{center}
{ Algorithm$\,$ {\tt EBayesL} }
   \end{center}
{\tt Input:} $X_1,\ldots,X_n$, slab prior $\ga$, target confidence $t$\\
{\tt Output:} BMT procedure $\vphi^{\mbox{\tiny $\l$-val}}$
   \begin{enumerate}
     \item Find the maximiser $\hat w$ given by \eqref{defw}.
      \item Compute $\hat\ell_i(X)=\ell(X_i;\hat w,g)$ given by \eqref{lformula}.
      \item Return, for $1\le i\le n$,
      \begin{equation}
       \vphi_i^{\mbox{\tiny $\l$-val}}=\ind{\hat\ell_i(X)\le t}.
       \label{def:ebayesl}
      \end{equation} 
    \end{enumerate}
\end{minipage}}
\end{center}

\vp

\begin{theorem}\label{th1}
Consider the parameter space $\ell_0[s_n]$ given by \eqref{equ:l0sn} with sparsity $s_n \le n^\upsilon$ for some $\upsilon\in(0,1)$. 
Let $\ga$ be a unimodal symmetric slab density that satisfies \eqref{asump1}--\eqref{asump3} with $\kappa$ as in \eqref{asump2}. Then the algorithm {\tt EBayesL} produces as output the BMT $\vphi^{\mbox{\tiny $\l$-val}}$ defined in \eqref{def:ebayesl} that satisfies the following: there exists a constant $C=C(\gamma,\upsilon)$ such that for any $t\le 3/4$, there exists an integer $N_0=N_0(\gamma,\upsilon,t)$ such that, for any $n\ge N_0$,
\begin{equation}\label{resultth1}
 \sup_{\te_0\in\ell_0[s_n]} 
\FDR(\theta_0,\vphi^{\mbox{\tiny $\l$-val}}) 
\le C\frac{\log\log{n}}{(\log{n})^{\kappa/2}}. 
\end{equation} 
\end{theorem}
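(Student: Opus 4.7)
The strategy rests on two pillars: a bijective reformulation of the $\ell$-value test as a thresholding rule on $|X_i|$ and a sharp uniform upper bound on the empirical Bayes weight $\hat w$. Inverting \eqref{lformula}, the inequality $\ell(x;w,g)\le t$ is equivalent to $\beta(x)\ge (1-w-t)/(wt)$, and since $\beta$ is even and strictly increasing on $[0,\infty)$ this reads $|x|\ge \tau(w,t)$ for a deterministic threshold $\tau(w,t)$. Hence $\vphi^{\mbox{\tiny $\l$-val}}_i(t)=\ind{|X_i|\ge \tau(\hat w,t)}$, and classical Mills-type estimates combined with \eqref{asump2} furnish $\tau(w,t)\sim \sqrt{2\log(1/(wt))}$ for small $w$, together with the key tail bound
\[\ol\Phi(\tau(w,t))\leqa \frac{wt}{(\log(1/(wt)))^{\kappa/2}}.\]

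The second pillar is a uniform control of $\hat w$. Following the empirical Bayes analyses of \cite{js04,cm17}, I would split the score $\cS$ from \eqref{equ:score} as $\cS=\cS_0+\cS_1$ along the null/signal partition. The signal part is controlled pointwise by $\cS_1(w)\le \sigma_0/w\le s_n/w$, while the null part $\cS_0(w)$ is handled via Bernstein-type concentration around its mean $(n-\sigma_0)\mbe_{0}[\beta(X,w)]$. Balancing the two contributions forces, on a good event $\cA_n$ of probability $1-o(1)$,
\[\hat w\le K\frac{(s_n\vee 1)\log n}{n}\quad\text{uniformly over }\te_0\in\ell_0[s_n].\]

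Combining these estimates yields, on $\cA_n$, $\ol\Phi(\tau(\hat w,t))\leqa \hat w t/(\log(n/s_n))^{\kappa/2}$. To close the proof, writing
\[\FDR\le \sum_{i\notin S_0}\mbe_{\te_0}\!\left[\frac{\ind{|X_i|\ge \tau(\hat w,t)}}{1\vee R}\ind{\cA_n}\right]+\P_{\te_0}(\cA_n^c),\]
I would condition on $X_{-i}$ and invoke a quantitative leave-one-out lemma showing $\hat w^{(i)}\asymp \hat w$ on $\cA_n$, so as to treat $\tau(\hat w,t)$ as essentially deterministic under the conditional law of $X_i$. Bounding $1/(1\vee R)\le 1$, this delivers $\FDR\le 2n\,\mbe_{\te_0}[\ol\Phi(\tau(\hat w,t))\ind{\cA_n}]+o(1)$, which together with the estimates above collapses to the announced rate $C(\log\log n)/(\log n)^{\kappa/2}$; the $\log\log n$ factor absorbs residual slowly varying terms from the inversion of the threshold equation.

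The main obstacle is the uniform bound on $\hat w$ in the weak- or zero-signal regime: there, the score function is nearly flat on $[1/n,1]$ and the maximiser may saturate at the left boundary $1/n$, so a precise upper bound on $\cS(1/n)$ is needed to prevent $\hat w$ from exceeding $(\log n)/n$. This boundary analysis is exactly where the modified lower endpoint $1/n$ in \eqref{defw} (versus the slightly larger choice of \cite{js04}) proves essential to guarantee the FDR conclusion under the worst-case $\te_0=0$.
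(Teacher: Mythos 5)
There is a genuine gap at the heart of your argument: you bound $1/(1\vee R)\le 1$ and then sum the individual type I errors, which is a FWER (union) bound. That cannot deliver the announced rate uniformly over $\ell_0[s_n]$. Indeed, under your claimed bound $\hat w\lesssim (s_n\vee 1)\log n/n$ and with the thresholding reformulation, the per-coordinate null probability is of order $\hat w\, t/(\log n)^{\kappa/2}$, so the union bound gives
\[
\FDR\ \lesssim\ n\cdot\frac{(s_n\vee 1)\log n}{n}\cdot\frac{t}{(\log n)^{\kappa/2}}\ \asymp\ \frac{s_n\,t\,\log n}{(\log n)^{\kappa/2}},
\]
which, even at $s_n=1$, equals $t(\log n)^{1-\kappa/2}$ and is never $\lesssim t\log\log n/(\log n)^{\kappa/2}$; for $s_n\asymp n^{\upsilon}$ it diverges. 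In the moderate-to-strong signal regime you \emph{must} retain the denominator and lower-bound the number of (true) discoveries.

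This is exactly what the paper's proof does in its ``Case 2'': rather than a one-sided upper bound on $\hat w$, it establishes a two-sided deterministic sandwich $\hat w\in[w_2,w_1]$ with high probability (Lemma~\ref{lembernw12}), where $w_1,w_2$ are defined through the score-balance equations \eqref{equw1}--\eqref{equw2} and shown to be comparable (Lemma~\ref{lemw1w2}). Then, exploiting the monotonicity in $w$ of both $V_\ell$ and $S_\ell$ and the independence of $V_\ell(w_1)$ and $S_\ell(w_2)$ (they depend on disjoint sets of coordinates), Lemma~\ref{lembinom} reduces the FDR to the deterministic ratio $\E V_\ell(w_1)/\E S_\ell(w_2)$, which is then shown to be $O(\zeta_1^{-\kappa})$ by refined two-sided estimates on $m_1$ and $\tilde m$. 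Your union bound is only used by the paper in ``Case 1'' (weak or absent signal), precisely the regime you single out at the end — but there the threshold $w_0$ satisfies $n w_0\tilde m(w_0)=M$ for $M$ as small as $t^{-1}\log\log n$, so the expected number of false positives is $O(M)$, not $O(s_n\log n)$; this is what makes the FWER bound viable in that case and what the choice of lower endpoint $1/n$ in \eqref{defw} supports. Your leave-one-out idea, while not wrong in spirit, does not repair the missing denominator; and a one-sided bound on $\hat w$ is insufficient to lower-bound the true discoveries, which is where the sharpest part of the proof (the two-sided concentration of the MMLE and the $m_1/\tilde m$ estimates) resides.
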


Theorem~\ref{th1} is proved in Section~\ref{sec:proof:th1}. The proof relies mainly on two different arguments: first, 
a careful analysis of the concentration of $\hat w$, which requires to distinguish between two regimes (weak/moderate or strong signal, basically); second, the study of the FDR of the $\l$-value procedure taken at some sparsity parameter $w$ (not random but depending on $n$) in each of these two regimes. 
This requires to analyse the mathematical behavior of a number of functions of $w,\theta_0$, uniformly over a wide range of possible sparsities, which is one main technical difficulty of our results. In particular, the concentration of $\hat w$ is obtained uniformly over all sparse vectors with polynomial sparsity, without any strong-signal or self-similarity-type assumption, as would typically be the case for obtaining adaptive confidence sets. Such assumptions would of course simplify the analysis significantly, but the point here is precisely that a uniform FDR control is possible for rate-adaptive procedures without any assumption on the true sparse signal. The uniform concentration of $\hat w$ is expressed implicitly and requires  sharp estimates, contrary to rate results for which a concentration in a range of values is typically sufficient. In particular, some of our lemmas in the supplementary file \cite{CR2017supp} are refined versions of lemmas in \cite{js04}.

As a corollary, \eqref{resultth1} entails
\begin{align*}
\varlimsup_n \sup_{\te_0\in\ell_0[s_n]}
\FDR(\theta_0,\vphi^{\mbox{\tiny $\l$-val}})&  =0,
\end{align*}
and this for any chosen threshold $t\in (0,1)$ in $\vphi^{\mbox{\tiny $\l$-val}}$. 
From a pure $\alpha$-FDR controlling point of view, 
while making a vanishing small proportion of errors is obviously desirable, it implies that $\vphi^{\mbox{\tiny $\l$-val}}$ is, as far as the FDR is concerned, somewhat conservative, in the sense that it does not spend all the allowed type I errors ($0$ instead of $\alpha$) and thus will make too few (true) discoveries at the end. It turns out that in the present setting 
 $\ell$-values are not quite on the ``exact"  scale for FDR control. 
An alternative is to consider the $q$-value scale, as we now describe.\\

\begin{center}
\fbox{\begin{minipage}{0.9\textwidth}
   \begin{center}
{ Algorithm $\,${\tt EBayesq} }
   \end{center}
{\tt Input:} $X_1,\ldots,X_n$, slab prior $\ga$, target confidence $t$\\
{\tt Output:} BMT procedure $\vphi^{\mbox{\tiny $q$-val}}$
   \begin{enumerate}
     \item  Find the maximiser $\hat w$ given by \eqref{defw}.
      \item Compute $\hat{q}_i(X)=q(X_i;\hat w,g)$.
      \item Return, for $1\le i\le n$,
      \begin{equation}
       \vphi_i^{\mbox{\tiny $q$-val}}=\ind{\hat{q}_i(X)\le t}.
              \label{def:ebayesq}
      \end{equation} 
    \end{enumerate}
\end{minipage}}\\
\end{center}
\vp

We also consider the following variant of the procedure {\tt EBayesq}, which is mostly the same, except that it does not allow for too small estimated weight $\hat w$. Set, for $L_n$ tending slowly to infinity, 
\begin{equation} \label{defom}
 \omega_n = \frac{L_n}{n \overline{G}(\sqrt{2.1\log{n}})}. 
\end{equation} 
For instance, for $\ga$ Cauchy or quasi-Cauchy, we have $\omega_n\asymp (L_n/n)\sqrt{\log{n}}$ while for $\ga$ Laplace(1) we have $\omega_n\asymp (L_n/n)\exp\{C\sqrt{\log{n}}\}$.
\vp

\begin{center}
\fbox{\begin{minipage}{0.9\textwidth}
   \begin{center}
{ Algorithm $\,$\EBayesqseuillage }
   \end{center}
{\tt Input:} $X_1,\ldots,X_n$, slab prior $\ga$, target confidence $t$, sequence $L_n$\\
{\tt Output:} BMT procedure $\vphiqvalseuillage$
   \begin{enumerate}
     \item[1.-2.]  Same as for  {\tt EBayesq}, returning $\hat q_i(X)$.
      \item[3.] Return, for $1\le i\le n$, and $\omega_n$ as in \eqref{defom},
      \begin{equation}
       \vphiqvalseuillage_i=\ind{\hat {q}_i(X)\le t}\ind{\hat w>\omega_n}.
       \label{def:ebayesQ}
      \end{equation} 
    \end{enumerate}
\end{minipage}}\\
\end{center}
\vp

\begin{theorem}\label{th2}
Consider the same setting as Theorem~\ref{th1}.
Then the algorithm {\tt EBayesq}  produces the BMT procedure $\vphi^{\mbox{\tiny $q$-val}}$  in \eqref{def:ebayesq} that satisfies the following:  there exists a constant $C=C(\gamma,\upsilon)$ such that for any $t\le 3/4$, there exists an integer $N_0=N_0(\gamma,\upsilon,t)$ such that, for any $n\ge N_0$,
\begin{align}\label{result1th2}
  \sup_{\te_0\in\ell_0[s_n]} 
\FDR(\theta_0,\vphi^{\mbox{\tiny $q$-val}}) & \le  C t\log(1/t). %\\
\end{align}
 In addition, the algorithm \EBayesqseuillage  produces the BMT procedure $\vphiqvalseuillage$  in \eqref{def:ebayesQ} that satisfies, for $\omega_n$ as in \eqref{defom} with $L_n\to \infty$, $L_n\le \log{n}$, $t\le 3/4$ and $C, N_0$ as before (but with possibly different numerical values), for any $n\ge N_0$,
\begin{align}\label{result2th2}
  \sup_{\te_0\in\ell_0[s_n]} 
\FDR(\theta_0,\vphiqvalseuillage) & \le  C t. %\\
\end{align}
\end{theorem}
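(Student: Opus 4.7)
The plan is to adapt the skeleton of the proof of Theorem~\ref{th1}, reusing the same uniform concentration of $\hat w$ over $\ell_0[s_n]$, but replacing the $\ell$-value thresholding analysis by a $q$-value one. The first step is to observe that since $x\mapsto \ol\Phi(x)/\ol G(x)$ is strictly decreasing on $[0,\infty)$ (its numerator has lighter tails than its denominator under assumption \eqref{asump2}), the event $\{q(X_i;w,g)\le t\}$ is equivalent to $\{|X_i|\ge \tau(w,t)\}$ where $\tau(w,t)>0$ is the unique solution of $\ol\Phi(\tau)/\ol G(\tau) = tw/((1-t)(1-w))$. Hence $\vphi^{\mbox{\tiny $q$-val}}$ is a hard thresholding rule at the data-driven threshold $\tau(\hat w,t)$, and, writing $\sigma_0=|S_0|$,
\begin{equation*}
\FDR(\theta_0,\vphi^{\mbox{\tiny $q$-val}})= E_{\theta_0}\!\left[\frac{\sum_{i\notin S_0}\ind{|X_i|\ge \tau(\hat w,t)}}{1\vee \sum_{i=1}^n\ind{|X_i|\ge \tau(\hat w,t)}}\right].
\end{equation*}

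Next, I would import from the proof of Theorem~\ref{th1} the uniform two-sided control of $\hat w$ over $\ell_0[s_n]$: with $P_{\theta_0}$-probability close to $1$, $\hat w\in[\underline w(\theta_0),\overline w(\theta_0)]$, where the boundaries are sharp quantities reflecting the amount of detectable signal. I would split into two regimes according to $\theta_0$. In the \emph{enough-signal} regime, this sandwich forces $\tau(\hat w,t)$ into a narrow range in which the number of null $|X_i|$'s above the threshold is well concentrated at its mean $2(n-\sigma_0)\ol\Phi(\tau(\hat w,t))$, while the number of rejected signals is bounded below by a constant multiple of $\sigma_0$ thanks to a Markov/Bernstein lower bound applied to the non-null exceedances. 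The defining equation of $\tau$ combined with $\hat w\asymp \sigma_0/n$ then yields $2(n-\sigma_0)\ol\Phi(\tau(\hat w,t))\lesssim t\sigma_0$, and dividing by the denominator contributes $\lesssim t$ to the FDR.

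The delicate case is the opposite regime, where either $\theta_0$ contains essentially no detectable signal or $\hat w$ saturates at the lower boundary $1/n$. Then $\tau(\hat w,t)$ can be as large as $\sqrt{2\log n}$, both numerator and denominator involve only a few exceedances, and the ratio of expectations of the FDP picks up, via the balance of $\ol\Phi$ and $\ol G$ at large arguments under \eqref{asump2}, the factor $\log(1/t)$ appearing in \eqref{result1th2}. To eliminate that factor for \EBayesqseuillage, the additional indicator $\ind{\hat w>\omega_n}$ with $\omega_n$ as in \eqref{defom} is calibrated so that $\tau(\omega_n,t)$ stays essentially at the scale $\sqrt{2.1\log n}$: on $\{\hat w\le \omega_n\}$ the procedure rejects nothing and contributes $0$ to the FDR, while on $\{\hat w>\omega_n\}$ the enough-signal analysis applies to produce the clean bound $Ct$ of \eqref{result2th2}.

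The main obstacle will be making these heuristics quantitative \emph{uniformly} over all $\theta_0\in\ell_0[s_n]$ with no assumption on signal strength or placement. This requires refined asymptotics for $\ol\Phi/\ol G$ and for the score $\cS$ at the critical values $w=1/n$ and $w=\omega_n$ (in order to pinpoint precisely when $\hat w$ saturates and with what probability), together with a lower bound on the total number of rejections that remains effective when signals lie close to the noise level. These are exactly the places where sharp concentration is needed rather than rate-type estimates, and the refined versions of the lemmas of \cite{js04} gathered in \cite{CR2017supp}, together with the $\hat w$-analysis developed for Theorem~\ref{th1}, should provide the required inputs.
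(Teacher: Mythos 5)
Your high-level plan — rewrite $\vphi^{\mbox{\tiny $q$-val}}$ as a hard-thresholding rule at $\tau(\hat w,t)=\chi(r(\hat w,t))$, import a two-sided concentration of $\hat w$ from the proof of Theorem~\ref{th1}, and split into a strong-signal and a weak-signal regime — is in the spirit of the paper's argument, but the quantitative engine in your strong-signal branch does not hold uniformly over $\ell_0[s_n]$ and would leave a genuine gap.

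The problematic step is the pair of claims that (i) in the enough-signal regime $\hat w\asymp\sigma_0/n$ and (ii) the number of true discoveries is bounded below by a constant multiple of $\sigma_0$. Both are false for intermediate-strength signals: if all $\sigma_0$ nonzero entries have magnitude, say, $\tfrac12\sqrt{2\log(n/\sigma_0)}$, then the marginal maximum likelihood estimate $\hat w$ concentrates at a value that can be far smaller than $\sigma_0/n$, and only a vanishing fraction of the $\sigma_0$ signals is detected. Your two cases then do not cover the intermediate regime: it is neither a regime where the denominator is $\gtrsim\sigma_0$, nor a regime with ``only a few exceedances''. The paper's proof handles this precisely by \emph{not} splitting on signal strength directly, but on whether the expected-score equation \eqref{equw1} has a solution $w_1\in[w_0,1)$. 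When it does, the companion solution $w_2$ of \eqref{equw2} sandwiches $\hat w$ with $w_1\asymp w_2$ (Lemma~\ref{lemw1w2}), and — this is the crucial point — the denominator $\E_{\te_0}S_q(w_2)$ is lower-bounded not by $\sigma_0$ but by $(n-\sigma_0)w_2\tilde m(w_2)$, which exactly matches (up to constants) the upper bound $\E_{\te_0}V_q(w_1)\lesssim t(n-\sigma_0)w_2\tilde m(w_2)$ on the numerator. This self-calibration — the score equation balancing numerator and denominator — is what makes the FDR $\lesssim t$ uniformly, and it has no analogue in your sketch.

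Your explanation of the $t\log(1/t)$ factor is also off. It does not arise from ``the balance of $\ol\Phi$ and $\ol G$ at large arguments'' in a weak-signal regime. It appears because the free parameter $M$ in the definition $nw_0\tilde m(w_0)=M$ must be taken of size $\log(1/t)$ so that the concentration probability $\P_{\te_0}(\hat w>w_0)\le e^{-C_0\nu^2 M}$ is itself $\lesssim t$; the FWER-type bound in the no-solution case then reads $\lesssim Mt + e^{-C_0\nu^2 M}$, and optimizing in $M$ yields $t\log(1/t)$. Correspondingly, the improvement for $\vphiqvalseuillage$ is not that ``the enough-signal analysis applies on $\{\hat w>\omega_n\}$''; rather, in the no-solution case the $Mt$ term disappears entirely because on $\{\hat w\le\omega_n\}$ nothing is rejected, so the FDR is bounded directly by $\P_{\te_0}(\hat w>\omega_n)\le\P_{\te_0}(\hat w>w_0)$, and $M=L_n\to\infty$ can then be decoupled from $t$.
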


The proof of Theorem~\ref{th2} is technically close to that of Theorem~\ref{th1} and is given in Section~\ref{sec:proof:th1}, see also Section~\ref{sec:heu} for an informal heuristic that serves as guidelines for the proof. 
The statements of Theorem~\ref{th2} are however of different nature, because the $q$-value threshold $t$ appears explicitly in the bounds \eqref{result1th2}-\eqref{result2th2}, that do not vanish as $n$ tends to infinity. 

The two bounds \eqref{result1th2} and \eqref{result2th2}  differ from a $\log(1/t)$ term, which may become significant for small $t$.  
This term appears in the  case where the signal is weak (only few rejected nulls), for which the calibration $\hat w$ is slightly too large. This may not be the case using a different type of sparsity--adaptation, or a different estimate $\hat w$.  Indeed, this phenomenon disappears when using \EBayesqseuillage,  
since $\hat w$ is then set to $0$ when it is not large enough, in which case the FDR control is shown to be guaranteed, and we retrieve a dependence in terms of a constant times the target level $t$.
 
A consequence of Theorem~\ref{th2} is that an $\alpha$--FDR control can be achieved with {\tt EBayesq}/\EBayesqseuillage  procedures by taking $t=t(\alpha)$ sufficiently small (although not tending to zero). Again, it is important to know how small the constant $C>0$ can be taken in \eqref{result1th2} and \eqref{result2th2}. 
When the signal is strong enough, the following result shows that $C=1$ and the $\log(1/t)$ factor can be removed in \eqref{result1th2}.

Let us first introduce a set $\cL_0[s_n]$ of `large' signals, for arbitrary $a>1$, 
\begin{align} 
\cL_0[s_n]= \bigg\{ \theta\in \l_0[s_n] \::\: |\theta_{i}| \geq &\:a\sqrt{2\log (n/s_n)} \text{ for } i\in S_{\te},\ \ |S_{\te}|=s_n\bigg\}.\label{L0}
\end{align}  

\begin{theorem}\label{theorem:psharpFDRcontrol}
Consider $\cL_0[s_n]=\cL_0[s_n;a]$ defined by \eqref{L0} with an arbitrary $a>1$, for $s_n \rightarrow \infty$ and $s_n\le n^{\upsilon}$ for some $\upsilon\in(0,1)$. 
Assume that $\ga$ is a unimodal symmetric slab density that satisfies \eqref{asump1}--\eqref{asump3} with $\kappa$ as in \eqref{asump2}. Then, for any pre-specified level $t\in (0,1)$, 
 {\tt EBayesq}  produces the BMT procedure $\vphi^{\mbox{\tiny $q$-val}}$  in \eqref{def:ebayesq} 
such that
\begin{align}
\lim_n \sup_{\theta_0 \in \cL_0[s_n]} \FDR(\theta_0,\vphi^{\mbox{\tiny $q$-val}}) = \lim_n \inf_{\theta_0 \in \cL_0[s_n]} \FDR(\theta_0,\vphi^{\mbox{\tiny $q$-val}}) = t\:. \label{FDRcontrolsharp}
\end{align}
In addition,  \EBayesqseuillage with $L_n\to \infty$, satisfies the same property whenever $s_n/n\ge 2\omega_n$, for $\omega_n$ as in \eqref{defom}, which is in particular the case if $s_n$ grows faster than a given power of $n$ and $L_n\le \log{n}$.
\end{theorem}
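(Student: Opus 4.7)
The plan is to use that $\vphi^{\mbox{\tiny $q$-val}}$ is a thresholding rule: since $\overline{\Phi}/\overline{G}$ is strictly decreasing on $[0,\infty)$ under the assumptions on $\ga$, the condition $\hat q_i\le t$ is equivalent to $|X_i|\ge\hat\tau$, where $\hat\tau=\hat\tau(\hat w,t)$ is the unique positive solution of $(1-t)(1-\hat w)\overline{\Phi}(\tau)=t\,\hat w\,\overline{G}(\tau)$. On the high-probability event that every signal is detected, $\FDR(\theta_0,\vphi^{\mbox{\tiny $q$-val}})=\E_{\theta_0}[V/(V+s_n)]$, where $V=\sum_{i\notin S_0}\ind{|X_i|\ge\hat\tau}$. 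Thus the limit $t$ will follow once we show (i) all signals in $\cL_0[s_n]$ are detected uniformly with probability $1-o(1)$, (ii) $\hat\tau$ concentrates sharply via $\hat w$, and (iii) $V$ concentrates around its conditional expectation.

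\textbf{Detection of signals.} For $i\in S_0$, $|X_i|\ge |\theta_{0,i}|-|\varepsilon_i|\ge a\sqrt{2\log(n/s_n)}-\max_{j\le n}|\varepsilon_j|$, and $\max_j|\varepsilon_j|\le\sqrt{2\log n}(1+o(1))$ with probability $1-o(1)$. Combined with the statement, proved in the next step, that $\hat\tau=\sqrt{2\log(n/s_n)}\,(1+o(1))$ uniformly in $\theta_0\in\cL_0[s_n]$, and with $a>1$, this yields $\sum_{i\in S_0}\vphi^{\mbox{\tiny $q$-val}}_i=s_n$ uniformly with probability $1-o(1)$.

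\textbf{Concentration of $\hat w$ and $\hat\tau$.} This is the technical heart, refining the score-function analysis already used in the proofs of Theorems~\ref{th1}--\ref{th2}. On an event of high probability $\hat w\in(1/n,1)$, so $\cS(\hat w)=0$. For $i\in S_0$, $\beta(X_i)\asymp g(X_i)/\phi(X_i)$ is very large, so $\beta(X_i,\hat w)\approx 1/\hat w$ and summing gives $s_n/\hat w$. For $i\notin S_0$, $\E_0\beta(X_i)=0$ (since $\E_0[g(X)/\phi(X)]=1$) and the null part is a controlled mean-zero fluctuation bounded via Bernstein. Balancing these contributions in $\cS(\hat w)=0$, jointly with the $q$-value threshold equation, forces $\hat w$ and $\hat\tau$ to concentrate on values $w^\ast_n,\tau^\ast_n$ satisfying $n\,w^\ast_n\,\overline{G}(\tau^\ast_n)=\tfrac12 s_n(1+o(1))$ and $2(1-t)(n-s_n)\overline{\Phi}(\tau^\ast_n)=t\,s_n(1+o(1))$, uniformly in $\theta_0\in\cL_0[s_n]$.

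\textbf{Conclusion and variant.} Given $\hat\tau$, $V$ is a sum of $n-s_n$ independent $\text{Bernoulli}(2\overline{\Phi}(\hat\tau))$ variables. A grid/sandwich argument on $\hat\tau$ (needed because $\hat\tau$ depends on the $X_i$, $i\notin S_0$, through $\hat w$) together with Bernstein's inequality gives $V=2(n-s_n)\overline{\Phi}(\hat\tau)(1+o_\P(1))=\tfrac{t}{1-t}s_n(1+o_\P(1))$. Hence $V/(V+s_n)\to t$ in probability, uniformly in $\theta_0\in\cL_0[s_n]$; dominated convergence (since $V/(V+s_n)\in[0,1]$) then gives \eqref{FDRcontrolsharp}. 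For \EBayesqseuillage, the lower bound $\hat w\gtrsim s_n/n$ extracted from the concentration step, combined with $s_n/n\ge 2\omega_n$, ensures $\hat w>\omega_n$ with probability $1-o(1)$, so the extra indicator in \eqref{def:ebayesQ} equals one and the same conclusion follows. The hard part is the first-order-exact uniform concentration of $\hat w$: rate-type estimates as in Theorems~\ref{th1}--\ref{th2} are not sharp enough, and one must control remainders in the expansion of $\beta(X_i,w)$ uniformly in $w$ near $w^\ast_n$ while separating the dominant signal contribution to $\cS$ from the lower-order null fluctuation.
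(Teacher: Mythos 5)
Your overall scaffold mirrors the paper's (show detection of signals, show $\hat w$ concentrates between two deterministic values $w_2\le w_1$ sandwiching $s_n/n$, show $V$ concentrates, conclude), but there is a genuine gap in the ``Detection of signals'' step, and it is not a cosmetic one: the max-based argument is provably too crude to cover the whole parameter range assumed in the theorem.

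You bound $|X_i|\ge a\sqrt{2\log(n/s_n)}-\max_{j\le n}|\varepsilon_j|$ and then subtract $\max_j|\varepsilon_j|\approx\sqrt{2\log n}$. Take $s_n=n^{\upsilon}$, so $\sqrt{2\log(n/s_n)}=\sqrt{2(1-\upsilon)\log n}$. For $|X_i|\ge\hat\tau\approx\sqrt{2\log(n/s_n)}$ you would need $(a-1)\sqrt{1-\upsilon}\ge 1$, i.e.\ $a\ge 1+1/\sqrt{1-\upsilon}$. Even restricting the max to $j\in S_0$, which is all you need, gives $\sqrt{2\log s_n}=\sqrt{2\upsilon\log n}$, hence the requirement $(a-1)\sqrt{1-\upsilon}\ge\sqrt{\upsilon}$, i.e.\ $a\ge 1+\sqrt{\upsilon/(1-\upsilon)}$. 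Either way, detection of \emph{all} signals fails for $a>1$ close to $1$ once $\upsilon$ is bounded away from $0$, so the claim ``$\sum_{i\in S_0}\vphi^{\mbox{\tiny $q$-val}}_i=s_n$ uniformly w.p.\ $1-o(1)$'' is simply false at that generality. The paper sidesteps this by showing only that $(1-o(1))s_n$ signals are detected: set $b=(a+1)/2$, observe $P_{\theta_0}(|X_i|\le b\sqrt{2\log(n/s_n)})\le 2\ol\Phi\bigl((a-b)\sqrt{2\log(n/s_n)}\bigr)=:p_n\to0$ (polynomial in $s_n/n$, not exponential), then apply Bernstein to the binomial count to get $\#\{i\in S_0:|X_i|\le b\sqrt{2\log(n/s_n)}\}\le K_n$ with $K_n=\max(2s_np_n,s_n/\log s_n)=o(s_n)$ with probability $1-o(1)$. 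Since $K_n/s_n\to0$, replacing $s_n$ by $s_n-K_n$ in the denominator of the FDR costs only an $o(1)$ term. Your proof needs this fractional (binomial-count) detection argument; the union/max bound cannot be repaired.

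Two minor remarks on the rest. First, you describe the null part of the score as a ``mean-zero fluctuation'', but $\E_0\beta(X,w)=-\tilde m(w)\ne0$; it is precisely the nonzero mean $-(n-s_n)\tilde m(w)$ that balances $s_n/\hat w$ and fixes $w^\ast_n$, so the bias is the dominant term, not a fluctuation. Second, you say that rate-type estimates as in Theorems~\ref{th1}--\ref{th2} ``are not sharp enough'' and one must prove first-order-exact concentration of $\hat w$; in fact the paper reuses exactly those estimates ($\hat w\in[w_2,w_1]$ with $w_1/C_\nu\le w_2$ from Lemmas~\ref{lembernw12} and \ref{lemw1w2}), obtains FDR bounds depending on $\nu$ and on a small $\veps$ from Lemma~\ref{lemxizeta}, and then lets $\nu,\veps\to0$; no new sharp concentration of $\hat w$ is established.
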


Theorem~\ref{theorem:psharpFDRcontrol}, although focused on a specific regime, shows that empirical Bayes procedures are able to produce an asymptotically exact FDR control. 
Again, this may look surprising at first, as the prior slab density $\gamma$ is not particularly linked to the true value of the parameter $\theta_{0}\in \cL_0[s_n]$ in \eqref{FDRcontrolsharp}. This puts forward a strong adaptive property of the spike and slab prior for multiple testing.

We conclude this section by giving results on the type II risk of the introduced multiple testing procedures. 
This is done by controlling the average 
number of false negatives (also called false non-discoveries) among the non-zero coordinates,
which is called below False Negative Rate (FNR).
For a given multiple testing procedure $\vphi$, following \cite{erychen}, we let \begin{equation} \label{deffnr}  
\FNR(\theta_{0},\vphi) 
= \E_{\theta_0}\left[\frac{\sum_{i=1}^n \ind{\theta_{0,i}\neq 0} (1-\vphi_i(X))}{1\vee \sum_{i=1}^n \ind{\theta_{0,i}\neq 0}}\right].
\end{equation}
Clearly, in the present setting, controlling this quantity is only possible under signal strength  assumptions.  
Below, we provide such  a control over 
the class $\cL_0[s_n]$ defined in \eqref{L0} above, and for the procedures $\vphi^{\mbox{\tiny $\ell$-val}}$ and $\vphi^{\mbox{\tiny $q$-val}}$ (results for $\vphiqvalseuillage$ are the same as for $\vphi^{\mbox{\tiny $q$-val}}$ under the conditions of Theorem \ref{theorem:psharpFDRcontrol} and are omitted).
\begin{theorem}\label{theorem:power}
Let $t\in (0,1)$ be any pre-specified level. 
Consider the setting and notation of Theorem \ref{theorem:psharpFDRcontrol} and recall the $\ell$-values procedure from Theorem~\ref{th1}. The BMT procedures $\vphi^{\mbox{\tiny $\ell$-val}}$ and $\vphi^{\mbox{\tiny $q$-val}}$ verify
\begin{align}\label{power_qval} 
\lim_n \sup_{\theta_0 \in \cL_0[s_n]} 
 \FNR(\theta_0,\vphi^{\mbox{\tiny $\ell$-val}}) = 
 \lim_n \sup_{\theta_0 \in \cL_0[s_n]} 
 \FNR(\theta_0,\vphi^{\mbox{\tiny $q$-val}})
& = 0\:. 
\end{align}
\end{theorem}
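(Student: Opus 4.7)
\textbf{Proof plan for Theorem \ref{theorem:power}.}
The plan is to exploit the fact, recalled in Section \ref{sec:BMT}, that both $\vphi^{\mbox{\tiny $\ell$-val}}$ and $\vphi^{\mbox{\tiny $q$-val}}$ are (random) hard-thresholding procedures of the form $\vphi_i=\ind{|X_i|>\zeta(\hat w,t)}$, where the threshold $\zeta(w,t)$ (equal to $\zeta^\ell$ or $\zeta^q$) is decreasing in $w\in(0,1)$ and, under \eqref{asump1}--\eqref{asump3}, satisfies $\zeta(w,t)=(1+o(1))\sqrt{2\log(1/w)}$ as $w\to 0$, uniformly in $t\in(0,3/4]$. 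For $\theta_0\in\cL_0[s_n]$ one has $|S_0|=s_n$, so
\[
\FNR(\theta_0,\vphi)\;\le\;\frac{1}{s_n}\sum_{i\in S_0}\P_{\theta_0}\bigl(|X_i|\le\zeta(\hat w,t)\bigr).
\]
It therefore suffices to bound $\zeta(\hat w,t)$ from above by a deterministic quantity strictly smaller than $a\sqrt{2\log(n/s_n)}$ on an event of high probability, and then apply a Gaussian tail bound to each summand.

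\textbf{Key step: uniform lower bound on $\hat w$.} Building on the concentration analysis of $\hat w$ already developed for the proofs of Theorems \ref{th1}, \ref{th2} and \ref{theorem:psharpFDRcontrol}, one shows that there exist $c>0$, $N_0$ (depending on $\gamma,\upsilon,t,a$) and a deterministic sequence $w_n^-\ge c\,s_n/n$ such that
\[
\sup_{\theta_0\in\cL_0[s_n]}\P_{\theta_0}(\hat w<w_n^-)\;\longrightarrow\;0.
\]
Intuitively, the score $\cS(w)=\sum_i\be(X_i,w)$ from \eqref{equ:score} is, on $\cL_0[s_n]$, dominated by its signal part: each non-null term satisfies $\be(X_i,w)\asymp 1/w$ with high probability when $|\theta_{0,i}|\ge a\sqrt{2\log(n/s_n)}$, so the signal contribution at $w=c\,s_n/n$ is of order $s_n/w\asymp n$ and dominates the null contribution, forcing $\cS(c\,s_n/n)>0$ and therefore $\hat w\ge c\,s_n/n$. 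By monotonicity of $w\mapsto\zeta(w,t)$ and the asymptotic $\zeta(w,t)\sim\sqrt{2\log(1/w)}$, one obtains, on an event $E_n$ with $\P_{\theta_0}(E_n)\to 1$ uniformly over $\cL_0[s_n]$,
\[
\zeta(\hat w,t)\;\le\;\zeta(w_n^-,t)\;\le\;b\sqrt{2\log(n/s_n)}\quad\text{for some constant }b\in(1,a).
\]

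\textbf{Conclusion and main obstacle.} On $E_n$, writing $X_i=\theta_{0,i}+\varepsilon_i$ with $|\theta_{0,i}|\ge a\sqrt{2\log(n/s_n)}$ and using a standard Gaussian tail bound,
\[
\P_{\theta_0}\bigl(|X_i|\le b\sqrt{2\log(n/s_n)}\bigr)\;\le\;\P\bigl(|\varepsilon_i|\ge(a-b)\sqrt{2\log(n/s_n)}\bigr)\;\le\;2(n/s_n)^{-(a-b)^2},
\]
which vanishes since $s_n\le n^\upsilon$ entails $n/s_n\to\infty$. Together with the $o(1)$ contribution from $E_n^c$, this yields $\sup_{\theta_0\in\cL_0[s_n]}\FNR(\theta_0,\vphi)=o(1)$, proving the claim for both procedures (which differ only in the form of $\zeta$, with identical leading asymptotics). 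The main technical obstacle is the uniform lower-deviation bound on $\hat w$ in the second step: whereas the FDR analysis primarily required upper control on $\hat w$, here one must exploit the strong-signal condition defining $\cL_0[s_n]$ to preclude $\hat w$ from being too small, uniformly over the class. This can be done by reusing the sharp estimates on $\be(X_i,w)$ in the supplement \cite{CR2017supp} together with a Bernstein-type concentration inequality applied to $\cS(c\,s_n/n)$.
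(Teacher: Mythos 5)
Your proposal follows essentially the same route as the paper's own proof: both hinge on the uniform lower-deviation bound $\hat w\ge w_2\ge s_n/n$ (with high probability over $\cL_0[s_n]$) already established in the proof of Theorem~\ref{theorem:psharpFDRcontrol}, use the resulting upper bound $\xi(r(\hat w,t))\le y\sqrt{2\log(n/s_n)}$ for any $y>1$, and conclude from the fact that all but $o(s_n)$ of the signal observations $|X_i|$ exceed $b\sqrt{2\log(n/s_n)}$ for $b=(a+1)/2$ with high probability (via the per-coordinate Gaussian tail bound together with a binomial concentration). The paper additionally makes the small observation that, since $q$-values are more liberal than $\ell$-values (Lemma~\ref{lem:qlvalue}), it suffices to treat $\ell$-values only, but this is a cosmetic shortcut rather than a different argument.
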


\begin{corollary}\label{cor:power}
In the setting of Theorem \ref{theorem:power}, for any pre-specified level $t\in (0,1)$, the multiple testing procedures $\vphi^{\mbox{\tiny $\ell$-val}}$ and $\vphi^{\mbox{\tiny $q$-val}}$ satisfy 
\begin{align}\label{sumerr_qval} 
\ \lim_n & \left[\sup_{\theta_0 \in \ell_0[s_n]} \FDR(\theta_0,\vphi^{\mbox{\tiny $\ell$-val}}) 
+ \sup_{\theta_0 \in \cL_0[s_n]} \FNR(\theta_0,\vphi^{\mbox{\tiny $\ell$-val}}) \right]
 = 0\:. \\
\ \lim_n & \sup_{\theta_0 \in \cL_0[s_n]} \FDR(\theta_0,\vphi^{\mbox{\tiny $q$-val}}) = t,\quad 
\lim_n \sup_{\theta_0 \in \cL_0[s_n]} \FNR(\theta_0,\vphi^{\mbox{\tiny $q$-val}}) 
& = 0\:. 
\end{align}
\end{corollary}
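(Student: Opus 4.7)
The statement is a direct combination of the three preceding results, so the plan is essentially to assemble them.

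First, for the $\ell$-values procedure: the FDR bound in Theorem~\ref{th1} gives, for any $t\le 3/4$,
\[
\sup_{\theta_0\in\ell_0[s_n]}\FDR(\theta_0,\vphi^{\mbox{\tiny $\ell$-val}})\le C\frac{\log\log n}{(\log n)^{\kappa/2}},
\]
whose right-hand side tends to $0$ as $n\to\infty$ since $\kappa\in[1,2]$. On the FNR side, Theorem~\ref{theorem:power} yields $\lim_n \sup_{\theta_0\in\cL_0[s_n]}\FNR(\theta_0,\vphi^{\mbox{\tiny $\ell$-val}})=0$. Since both terms converge to $0$, their sum does as well, which is exactly \eqref{sumerr_qval}.

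Second, for the $q$-value procedure: the FDR limit is precisely the content of Theorem~\ref{theorem:psharpFDRcontrol}, namely
\[
\lim_n \sup_{\theta_0\in\cL_0[s_n]}\FDR(\theta_0,\vphi^{\mbox{\tiny $q$-val}})=t,
\]
(in fact it equals the $\inf$ as well), while the FNR limit is the second half of \eqref{power_qval} in Theorem~\ref{theorem:power}. So both equalities claimed in the second display are already available, and the corollary simply records them jointly.

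In short, there is no real obstacle here beyond citing: the only point requiring a moment's attention is that the additivity in the first display uses that the FDR bound from Theorem~\ref{th1} is uniform in $\theta_0\in\ell_0[s_n]\supset\cL_0[s_n]$, so the supremum over $\ell_0[s_n]$ (not $\cL_0[s_n]$) is what appears; this is consistent with the statement. No further estimates are needed.
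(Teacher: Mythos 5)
Your proof is correct and follows exactly the route the paper implicitly takes: the corollary is stated without a separate proof in the supplement precisely because it is a direct assembly of Theorem~\ref{th1}, Theorem~\ref{theorem:psharpFDRcontrol}, and Theorem~\ref{theorem:power}, as you do. The only tiny imprecision—citing the bound of Theorem~\ref{th1} which is stated for $t\le 3/4$, while the corollary covers $t\in(0,1)$—is glossed over identically by the paper itself in the remark just after \eqref{resultth1}, so this is consistent with the source.
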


Let us consider, similarly to \cite{erychen}, the (multiple testing) classification risk $\mathfrak{R}(\theta_0,\vphi) = \FDR(\theta_0,\vphi)+\FNR(\theta_0,\vphi)$ for any $\theta_0\in\R^n$ and procedure $\vphi$. 
It follows from Corollary~\ref{cor:power} that  for any  $a>1$ and $t<1$, 
\begin{equation*}%\label{equ:risk}
\lim_n \sup_{\theta_0 \in \cL_0[s_n;a]} \{\mathfrak{R}(\theta_0,\vphi^{\mbox{\tiny $\ell$-val}})\}=0,\qquad 
\lim_n \sup_{\theta_0 \in \cL_0[s_n;a]} \{\mathfrak{R}(\theta_0,\vphi^{\mbox{\tiny $q$-val}})\}=t,
\end{equation*}
so the procedure $\vphi^{\mbox{\tiny $\ell$-val}}$ is consistent for this risk on this range of signals, while $\vphi^{\mbox{\tiny $q$-val}}$ controls it at level $t<1$.
%} 

We can legitimately ask if this property is optimal in some sense. We establish below that  the  
classification task is impossible (that is, the risk is at least $1$)  below the boundary $\sqrt{2\log (n/s_n)}$, at least over a fairly large class of procedures.

Define the class $\cC$ of two-sided thresholding-based multiple testing procedures $\varphi$ of the form
$$
\varphi_i(X)=\ind{X_i\geq \tau_1(X) \mbox{ or } -X_i\geq \tau_2(X)},\:\:1\leq i\leq n,
$$
for some measurable $\tau_1(X), \tau_2(X)\geq 0$. The following result adapts a result of \cite{erychen} to the two-sided context. 

\begin{proposition}\label{prop:lb}
Consider $\cL_0[s_n]=\cL_0[s_n;a]$ defined by \eqref{L0} with an arbitrary $a<1$, for $s_n \rightarrow \infty$ and $s_n\le n^{\upsilon}$ for some $\upsilon\in(0,1)$. Consider the class of two-sided thresholding-based multiple testing procedures $\mathcal{C}$ defined above. Then, for $\mathfrak{R}$ is the FDR$+$FNR classification risk defined above,
\begin{align*}
\varliminf_n \inf_{\varphi\in \mathcal{C}} \sup_{\theta_0\in \mathcal{L}_0[s_n;a]} 
\mathfrak{R}(\theta_0,\vphi)
\geq 1.
\end{align*}
\end{proposition}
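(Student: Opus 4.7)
The plan is to exhibit a hard configuration at the boundary and show, via a two-regime decomposition, that either too many nulls are rejected or too few signals are detected. Fix $a<1$ and set $\mu_n=a\sqrt{2\log(n/s_n)}\to\infty$ (since $s_n\leq n^\upsilon$). Consider $\theta_0^\star\in\cL_0[s_n;a]$ defined by $\theta^\star_{0,i}=\mu_n$ for $1\leq i\leq s_n$ and $\theta^\star_{0,i}=0$ otherwise, and let $\tau_n^\star$ be the deterministic quantile defined by $(n-s_n)\ol\Phi(\tau_n^\star)=s_n$, so that $\tau_n^\star\sim\sqrt{2\log(n/s_n)}$. Pick $\ep>0$ with $1-\ep-a>0$. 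For $\vphi\in\cC$ with thresholds $\tau_1,\tau_2\geq 0$, write $V=\sum_{i>s_n}(\ind{X_i\geq\tau_1}+\ind{X_i\leq-\tau_2})$, $TP=\sum_{i\leq s_n}(\ind{X_i\geq\tau_1}+\ind{X_i\leq-\tau_2})$, $R=V+TP$, and decompose the expectations defining $\FDR+\FNR$ along the random event $A=\{\tau_1\leq(1-\ep)\tau_n^\star\}$.

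On $A$ one has the pathwise inequality $V\geq V_0:=\sum_{i>s_n}\ind{X_i\geq(1-\ep)\tau_n^\star}$, and under $\theta_0^\star$ the variable $V_0$ is Binomial with parameters $(n-s_n,\,\ol\Phi((1-\ep)\tau_n^\star))$. Using $\ol\Phi(x)\asymp\phi(x)/x$ as $x\to\infty$, its mean $m_n$ satisfies $m_n\asymp s_n(n/s_n)^{2\ep-\ep^2}/\tau_n^\star\to\infty$, so a Chebyshev bound yields $\P_{\theta_0^\star}(V_0<m_n/2)=o(1)$. On $A\cap\{V_0\geq m_n/2\}$, since $TP\leq s_n$, one gets $V/(R\vee 1)\geq V_0/(V_0+s_n)\geq 1-2s_n/m_n=1-o(1)$. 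On the complementary event $A^c=\{\tau_1>(1-\ep)\tau_n^\star\}$, for $i\leq s_n$ we have $\ind{X_i\geq\tau_1}\leq\ind{X_i\geq(1-\ep)\tau_n^\star}$ and, using $\tau_2\geq 0$, $\ind{X_i\leq-\tau_2}\leq\ind{X_i\leq 0}$. Under $\theta_0^\star$, $\P(X_i\geq(1-\ep)\tau_n^\star)=\ol\Phi((1-\ep-a)\tau_n^\star)\leq(s_n/n)^{(1-\ep-a)^2}=o(1)$ and $\P(X_i\leq 0)=\Phi(-\mu_n)=o(1)$, so $\E_{\theta_0^\star}[TP\cdot\ind{A^c}]=o(s_n)$, and hence $\E_{\theta_0^\star}[(s_n-TP)/s_n\cdot\ind{A^c}]\geq\P_{\theta_0^\star}(A^c)-o(1)$.

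Combining the two regimes gives $\FDR(\theta_0^\star,\vphi)+\FNR(\theta_0^\star,\vphi)\geq(1-o(1))\P_{\theta_0^\star}(A)+\P_{\theta_0^\star}(A^c)-o(1)=1-o(1)$, uniformly over $\vphi\in\cC$, whence the conclusion. The main technical point is that $\tau_1,\tau_2$ are data-dependent, so one cannot simply condition on them and appeal to Binomial tail estimates; this is circumvented by the pathwise monotone dominations anchoring the comparison at the deterministic level $(1-\ep)\tau_n^\star$, chosen precisely so that the null-side count $V_0$ diverges (since $1-\ep<1$) while the signal-side count vanishes (since $1-\ep>a$, using $a<1$). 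The residual negative-side contributions are absorbed by $\tau_2\geq 0$ and $\Phi(-\mu_n)=o(1)$.
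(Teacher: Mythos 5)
Your proof is correct. The high-level idea is the same as the paper's: decompose on an event defined by the size of the data-dependent threshold, and show that on the ``small threshold'' side the FDR is forced close to one by a pathwise monotone domination of the null-side count by a deterministic Binomial (whose mean diverges because the anchor sits strictly below $\tau_n^\star$), while on the ``large threshold'' side the FNR is forced close to one because the anchor sits strictly above the signal level $a\sqrt{2\log(n/s_n)}$.

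The implementations differ, though, and it is worth recording the trade-offs. You work directly in expectation for a single well-chosen $\theta_0^\star$ (constant, all-positive signals) and bound the event $\{V_0< m_n/2\}$ with Chebyshev, which suffices because all you need is $o(1)$. Since your signals are all positive, it is enough to decompose on $\tau_1$ alone; the negative-side rejections contribute $\Phi(-\mu_n)=o(1)$ per coordinate and can be absorbed. The paper instead proves a non-asymptotic concentration statement, $\mathbb{P}_{\theta_0}(\FDP+\FNP\le 1-\ep)\le 3e^{-s_n\ep/6}$, uniformly over the larger class $\cL^-_0[s_n;a_{n,\ep}]$ (signals of arbitrary sign, bounded in magnitude rather than exactly constant), using Bernstein's inequality and an anchor at $a_n+\delta$ applied to $\tau_1\wedge\tau_2$ to account for both signs; it then integrates over $\ep$. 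Your route is more elementary and shorter, and clean for the limiting statement of Proposition~\ref{prop:lb}; the paper's route is heavier but yields the explicit convergence rate for the impossibility just below $\sqrt{2\log(n/s_n)}$, which is the refinement alluded to after the proposition's statement. One small caveat in your write-up: your displayed order of magnitude $m_n\asymp s_n(n/s_n)^{2\ep-\ep^2}/\tau_n^\star$ has a spurious $1/\tau_n^\star$ factor (the correct normalization gives $m_n\asymp s_n(n/s_n)^{2\ep-\ep^2}$), but this is immaterial since you only use $m_n\to\infty$ and $s_n/m_n\to 0$, both of which hold.
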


The proof of Proposition~\ref{prop:lb} is given in Section~\ref{sec:lb}. Let us underline that therein, much sharper results are provided, which allow to derive explicit convergence rates for the classification impossibility for a signal strength just below $\sqrt{2\log (n/s_n)}$.

Finally, we have established that the procedures $\vphi^{\mbox{\tiny $\ell$-val}}$ and $\vphi^{\mbox{\tiny $q$-val}}$ both achieve asymptotically the optimal classification boundary $\sqrt{2\log (n/s_n)}$: they asymptotically control the risk on $\cL[s_n;a]$ for arbitrary $a>1$ (at levels $0$ and $t$ respectively), while any such control is impossible if $a<1$.

\begin{remark}\label{remarkg}
Our results can be extended to the case where $g$ is not of the form  \eqref{equg} (that is, not necessarily of the form of a convolution with the standard gaussian), but satisfies some weaker properties, see Section~\ref{sec:assumg}. This extended setting corresponds to  a ``quasi-Bayesian" approach where the $\ell$-values (resp. $q$-values) are directly given by the formulas \eqref{lvalues} (resp. \eqref{qvalues}), without specifying a slab prior $\gamma$.
\end{remark}

\section{Numerical experiments}\label{sec:simu}

In this section, our theoretical findings are illustrated via numerical experiments. 
A motivation here is also to evaluate how the parameters $s_n$, $\theta_0 \in \ell_0[s_n]$, and the hyper-parameter $\gamma$ (or $g$) affect the FDR control, in particular the value of the constant in the bound of Theorem~\ref{th2}.  

For this we consider $n=10^4$, $s_n\in\{10,10^2,10^3\}$ and the following two possible scenarios for $\theta_0\in \ell_0[s_n]$:
\begin{itemize}
\item constant alternatives: $\theta_{0,i}=\mu$ if $1\leq i \leq s_n$ and $0$   otherwise; or
\item randomized alternatives:  $\theta_{0,i}$ i.i.d. uniformly distributed on $(0,2\mu)$ if $1\leq i \leq s_n$ and $0$   otherwise.
\end{itemize}
The parameter range for $\mu$ is taken equal to $\{0.01,0.5,1, 2,\dots,10\}$.
The marginal likelihood estimator $\hat{w}$ given by \eqref{defw} is computed by using a modification of the function \texttt{wfromx} of the package \texttt{EbayesThresh} \cite{js05}, that accommodates the lower bound $1/n$ in our definition (instead of $w_n=\zeta^{-1}(\sqrt{2\log n})$, see \eqref{zeta}, in the original version).
The parameter $\gamma$ is either given by the quasi-Cauchy prior \eqref{gammaquasiCauchy}-\eqref{gquasiCauchy} or by the Laplace prior of scaling parameter $a=1/2$ (see Remark~\ref{rem:explicit} for more details).  
For any of the above parameter combinations, the FDR of the procedures {\tt EBayesL}, {\tt EBayesq} (defined in Section~\ref{sec:EBw}) is evaluated empirically via $2000$ replications.

Figure~\ref{fig1} displays the FDR of the procedures  {\tt EBayesL} ($\ell$-values) and {\tt EBayesq} ($q$-values). Concerning {\tt EBayesL},  in all situations, the FDR is small while not exactly equal to the value $0$, which seems to indicate that the bound found in Theorem~\ref{th1} is not too conservative. Moreover, the   quasi-Cauchy version seems more conservative than the Laplace version, which corroborates  our theoretical findings (in our bound \eqref{resultth1}, we have the factor $(\log n)^{-1}$ for quasi-Cauchy and $(\log n)^{-1/2}$ for Laplace). 
As for {\tt EBayesq}, when the signal is large, the FDR curves are markedly close to the threshold value $t$ when $s_n/n$ is small, which is in line with Theorem~\ref{theorem:psharpFDRcontrol}. However, for a weak sparsity $s_n/n=0.1$, the FDR values are slightly inflated (above the threshold $t$), which seems to indicate that  the asymptotical regime is not yet reached for this value.
Looking now at the whole range of signal strengths, one notices the presence of a `bump` in the regime of intermediate values of $\mu$, especially for the Laplace prior. However, this bump seems to disappear when $s_n/n$ decreases. 
We do not known presently whether this bump is vanishing with $n$ or if this corresponds to a necessary additional constant $C=C(\gamma,\upsilon)>1$ (or $\log(1/t)$) in the achieved FDR level, but we suspect that this is related to the fact that the intermediate regime was the most challenging part of our proofs.
Overall, the Cauchy slab prior seems to have a particularly suitable behavior. This was not totally surprising for us as it already showed more stability than the Laplace prior in the context of estimation with the full empirical Bayes posterior distribution, as seen in \cite{cm17}.

Finally, we provide additional experiments in the supplement, see Section~\ref{sec:addnum}. The findings can be summarized as follows.
\begin{itemize}
\item The curves behave qualitatively similarly for randomized alternatives (second scenario).
\item The procedure  \EBayesqseuillage (with $L_n=\log\log n$) has a global behavior similar to {\tt EBayesq}, with  more conservativeness for weak signal (as expected).
\item It is possible to uniformly improve \EBayesqseuillage by considering the following modification (named \EBayesqplus below): 
 if $w\leq \omega_n$, instead of rejecting no null, \EBayesqplus performs a standard Bonferroni correction, that is, rejects the $H_{0,i}$'s such that $p_i(X)\leq t/n$. Note that a careful inspection of the proof of Theorem~\ref{th2} (\EBayesqseuillage part) shows that the bound \eqref{result2th2} is still valid for \EBayesqplus\hspace{-0.05cm}.
\end{itemize}

\begin{figure}
\begin{tabular}{ccc}
\vspace{-0.5cm}
&quasi-Cauchy & Laplace\\
\vspace{-1cm}
\rotatebox{+90}{\hspace{3cm}$s_n/n=0.1$} &\includegraphics[scale=0.35]{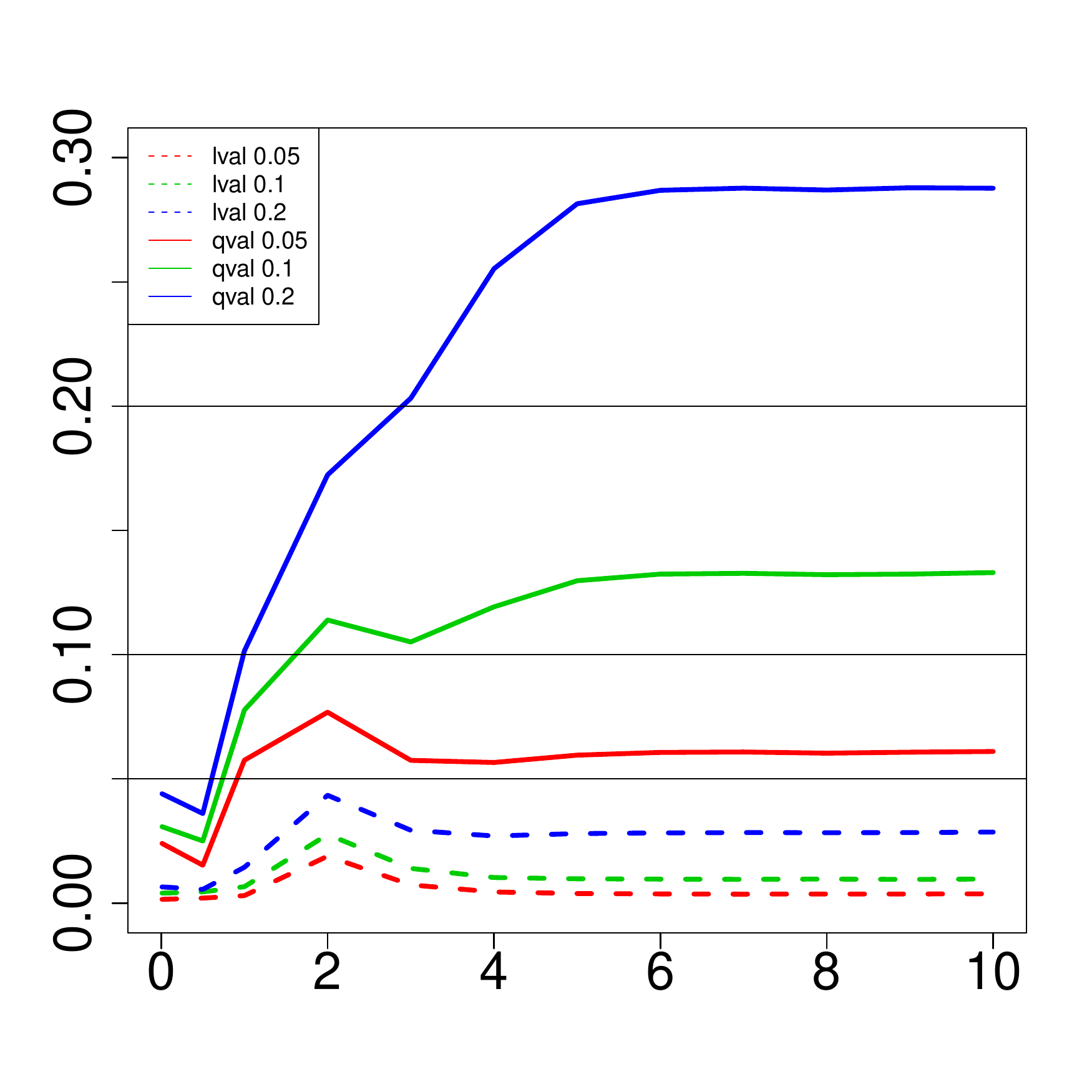}
&\includegraphics[scale=0.35]{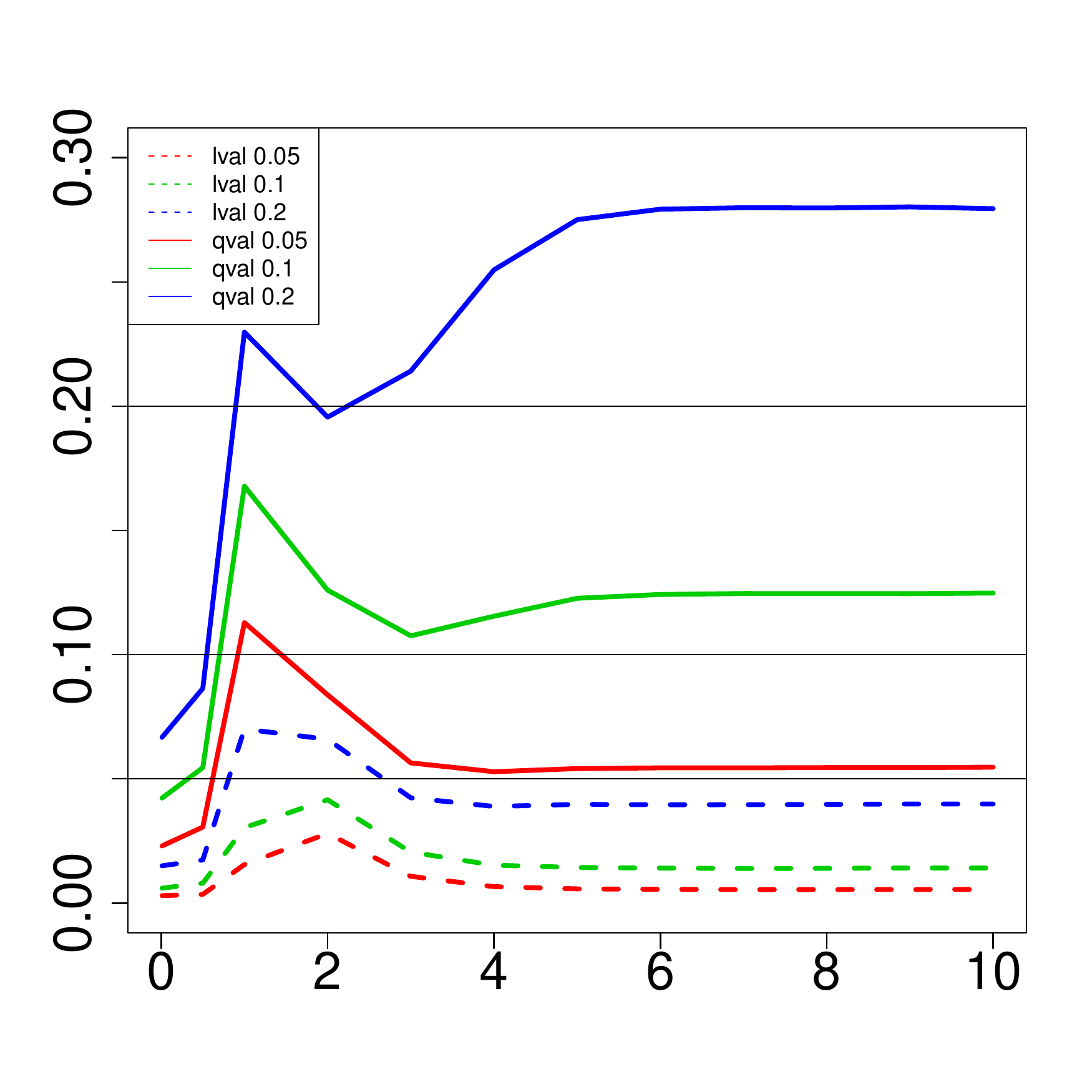}
\\
\vspace{-1cm}
\rotatebox{+90}{\hspace{3cm}$s_n/n=0.01$} &\includegraphics[scale=0.35]{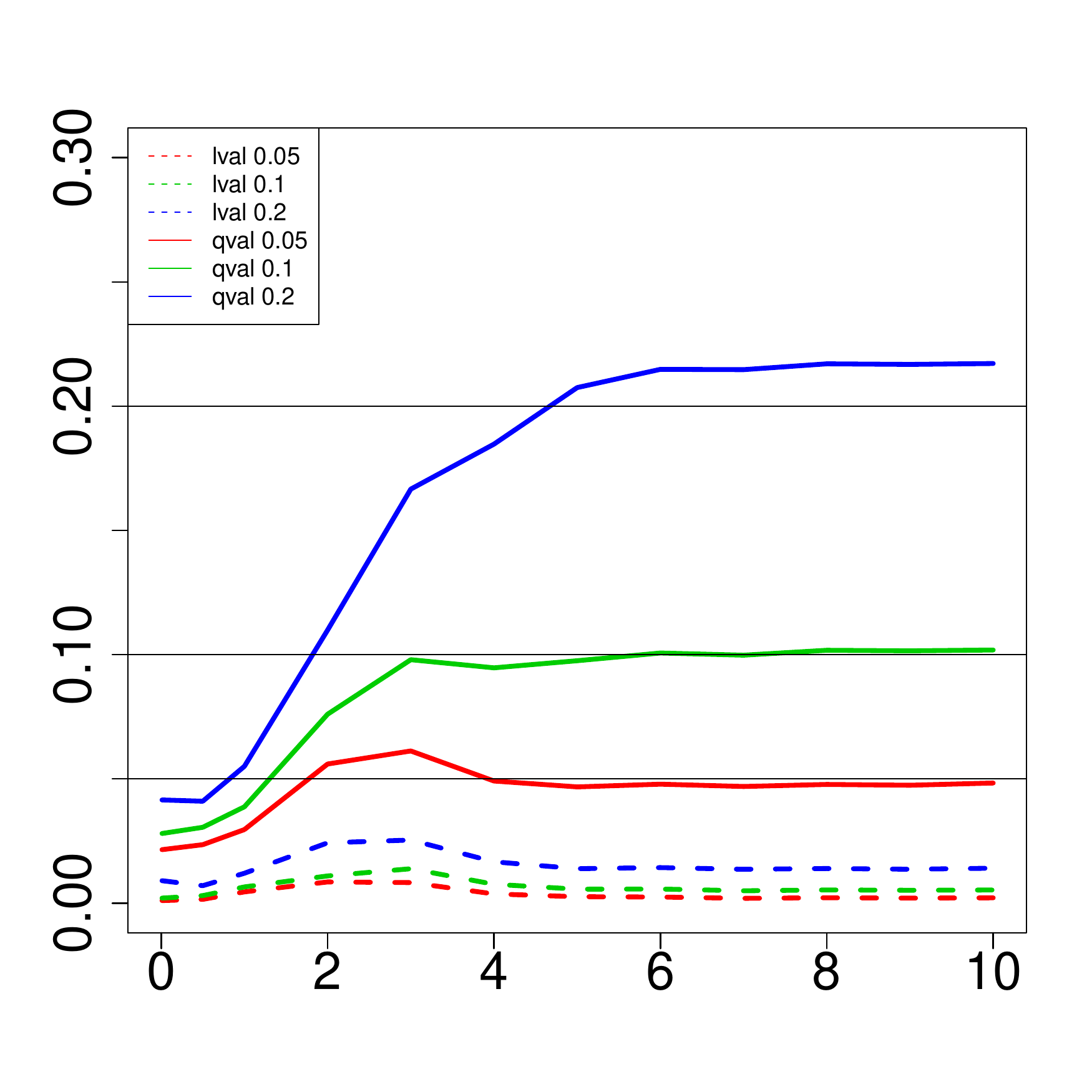}
&\includegraphics[scale=0.35]{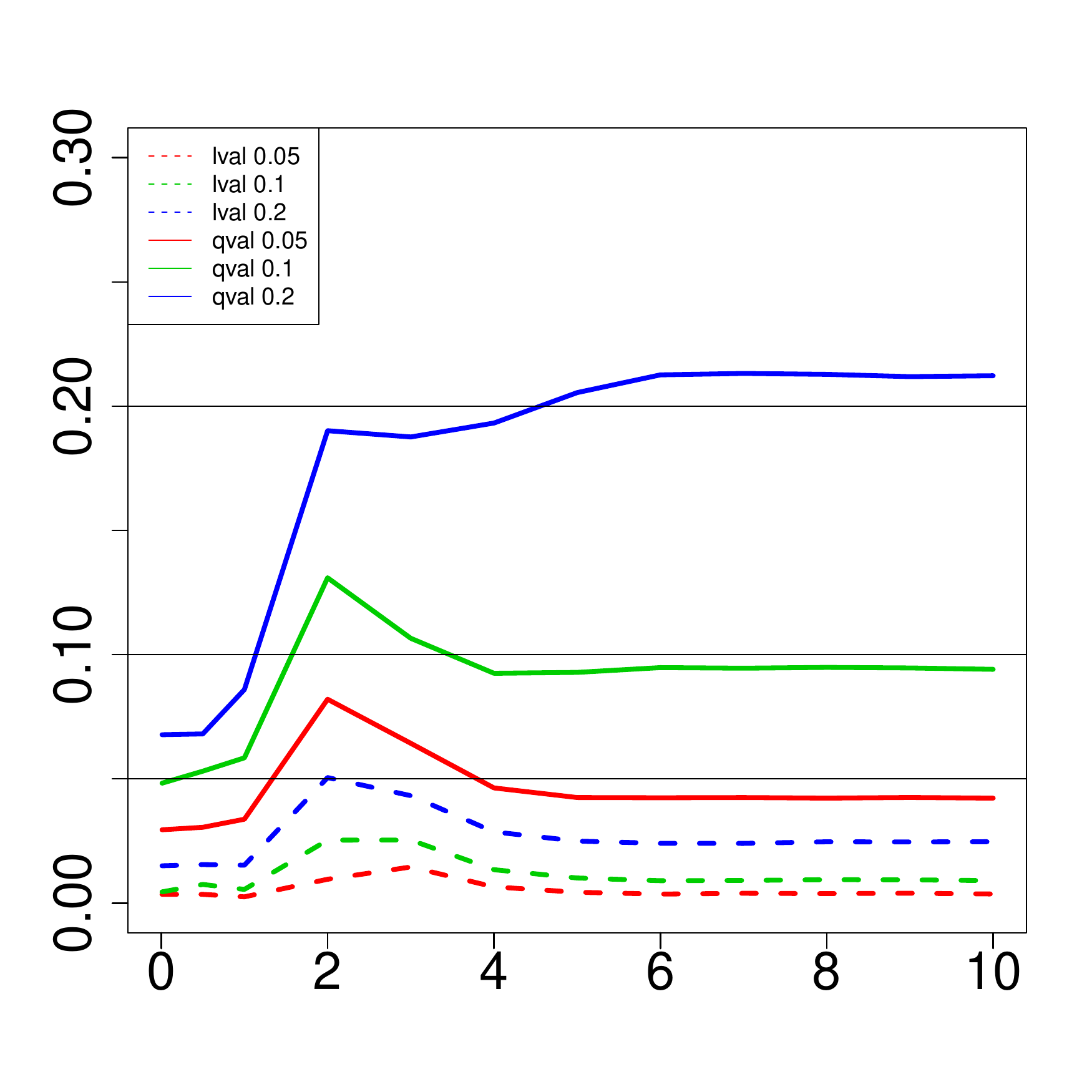}
\\
\vspace{-0.5cm}
\rotatebox{+90}{\hspace{3cm}$s_n/n=0.001$} &\includegraphics[scale=0.35]{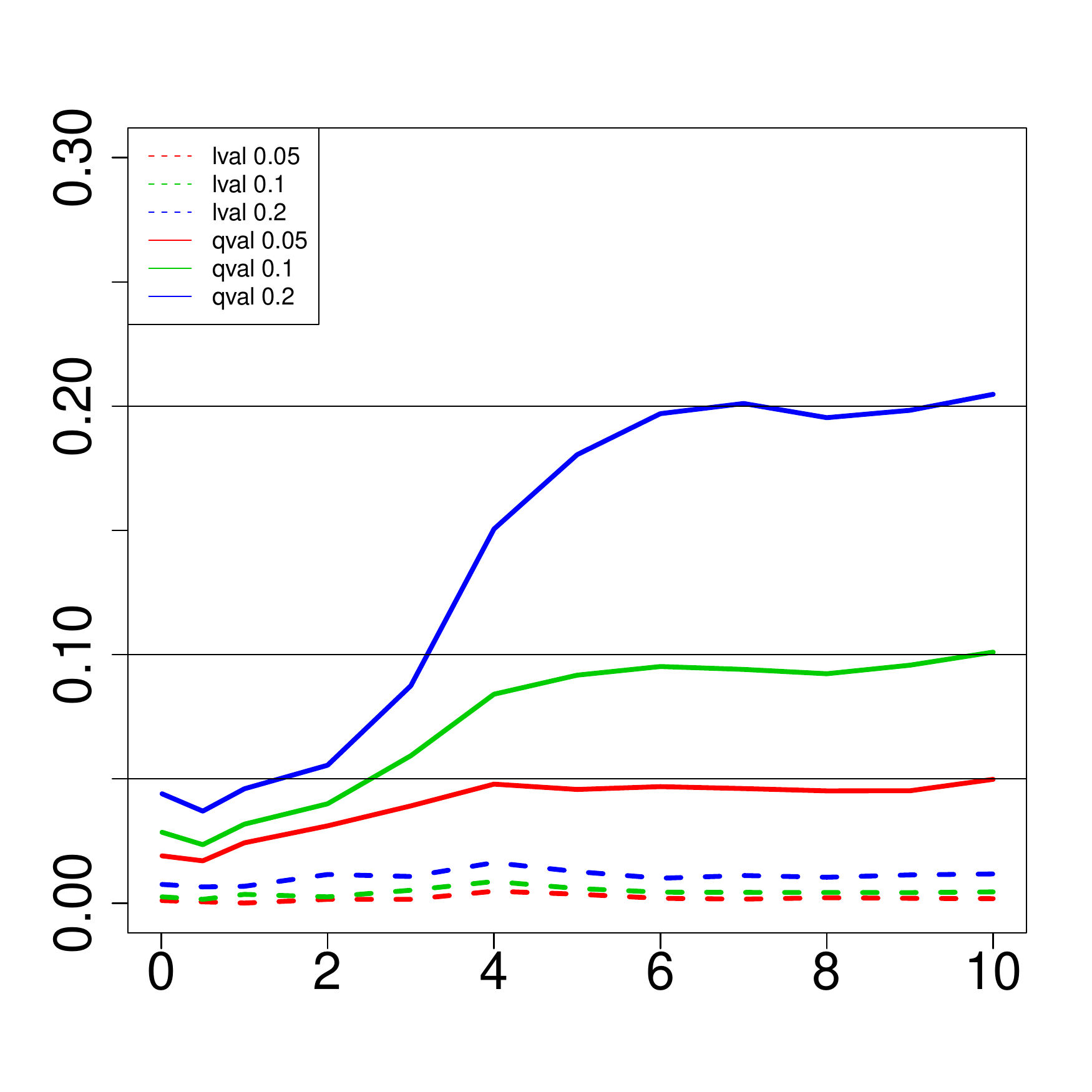}
&\includegraphics[scale=0.35]{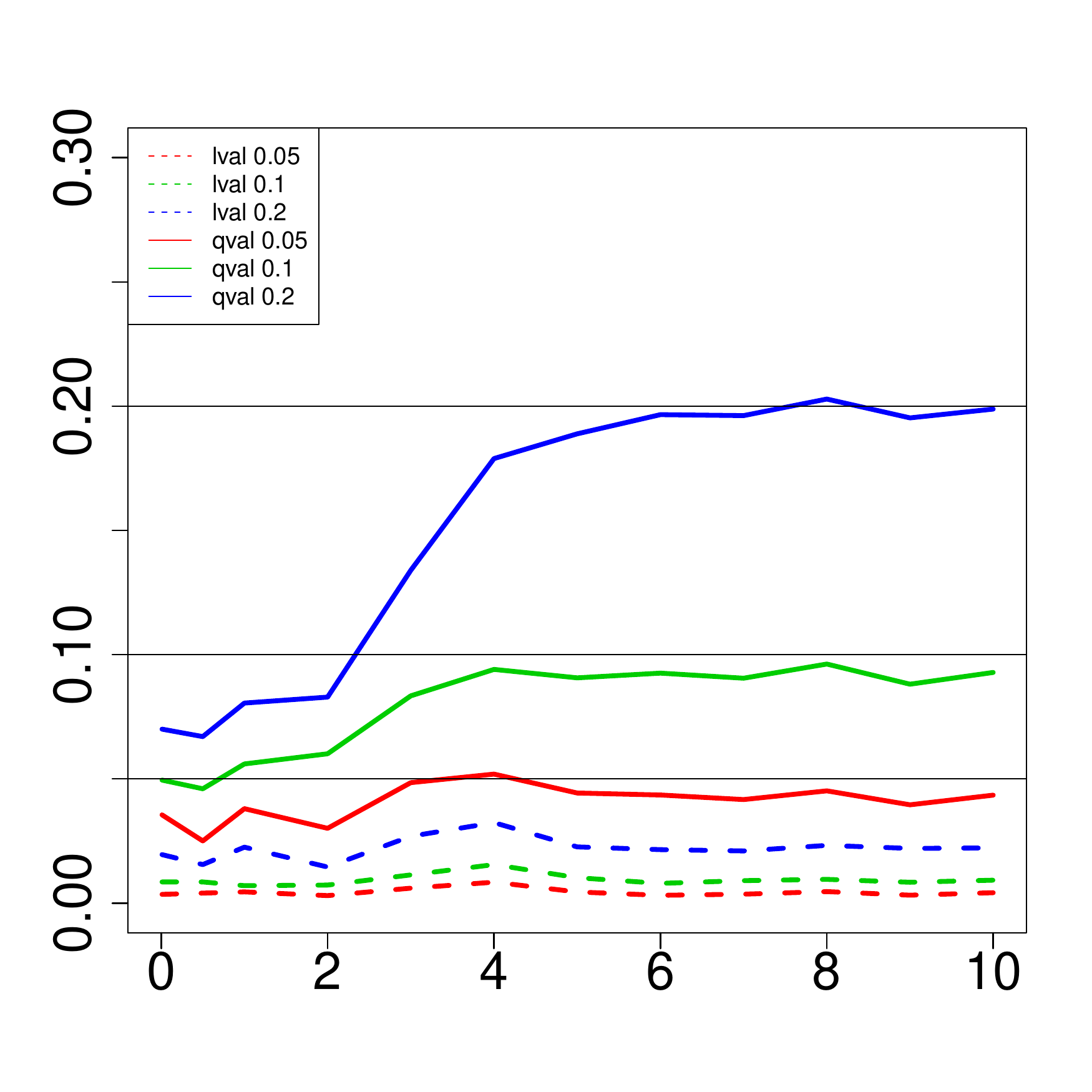}
\end{tabular}
\caption{\label{fig1}
FDR of {\tt EBayesL} and {\tt EBayesq} procedures with threshold $t\in\{0.05,0.1,0.2\}$; $n=10,\,000$; $2000$ replications; alternative all equal to $\mu$ (on the $X$-axis).}
\end{figure}

\section{Further procedures} \label{sec:other} Two other popular Bayesian multiple testing procedures are now briefly discussed as well as their links to both $\ell$- and $q$-value procedures. 

\subsection{MCI procedures}\label{rem:MCI} Given a posterior distribution, one may test the presence of signal on a coordinate by looking at whether $0$ belongs to a certain interval on this coordinate with high posterior probability. We refer to any such procedure based on marginal {\em credible} intervals as MCI procedure for short. Let $\cI_i(X)=\cI_i(t,X)$ be an interval with credibility at least $1-t$ for coordinate $i$ for the empirical Bayes posterior, then by definition 
$\Pi[\cI_i(X) \given X] \ge 1- t$.
Hence, $0\notin \cI_i(X)$ implies, for $\hat\ell_i(X)$ as in \eqref{def:ebayesl},
\[  \hat\ell_i(X) = \Pi[\te_i=0 \given X]  \le \Pi[\te_i\notin \cI_i(X) \given X]  \le  1-(1-t)=t.\]
One deduces that any MCI procedure at level $1-t$ is more conservative than the $\ell$-value procedure at level $t>0$. 
For a natural quantile-based MCI procedure and spike-and-slab priors, it can be shown that the converse is also true up to taking a slightly lower level, say $t-\epsilon$, any $\epsilon>0$, for the $\ell$-value procedure, see Section \ref{sec:MCIanalysis}. This property means that in the present setting this quantile-based MCI procedure is essentially equivalent to the $\ell$-value procedure, which leads to Theorem \ref{thm-mci} below, proved in Section \ref{sec:MCIanalysis}.
 
Let $z_{i}^t(X)$ denote the quantile at level $t\in(0,1)$ of the marginal empirical Bayes posterior distribution of the $i$-th coordinate, 
\[ z_i^t(X)=\inf \left\{z\in\RR:\ \Pi_{\hat w,\ga}[\te_i\le z\given X] \ge t\right\},\]
and define a procedure $\vphi^m$ at level $t$  as follows. For $i=1,\ldots,n$, 
\begin{align}
\vphi^m_i & = \ind{0\notin [z_{i}^t(X),z_{i}^{1-t}(X)] }, \qquad \qquad t\in(0,1/2),  \label{mval} \\
& = \ind{0< z_{i}^{t}(X)} + \ind{z_{i}^{1-t}(X)< 0}=\ind{0\notin \cI_i(X)},\nonumber
\end{align}
where $\cI_i(X)=(z_i^t(X),+\infty)$ if $X_i\ge0$, and $\cI_i(X)=(-\infty,z_i^{1-t}(X))$ if $X_i<0$. Note that such an interval $\cI_i(X)$ is an MCI at level $1-t$, as its credibility is indeed $1-t$ in both cases.  

\begin{theorem} \label{thm-mci}
For $t< 1/2$, under the assumptions of Theorem \ref{th1}, the conclusion of Theorem \ref{th1} holds for the MCI procedure $\vphi^m$ at level $t$.
\end{theorem}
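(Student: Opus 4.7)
The plan is to deduce the theorem from Theorem \ref{th1} by establishing, for any $\eps\in(0,t)$ and $n$ large enough, the bracketing
\[
\vphi^{\mbox{\tiny $\ell$-val}}_i(t-\eps;\hat w,g) \le \vphi^m_i \le \vphi^{\mbox{\tiny $\ell$-val}}_i(t;\hat w,g),\qquad i=1,\ldots,n.
\]
The upper inequality is exactly the implication argued in the text preceding the theorem: $0\notin\cI_i(X)$ combined with $\Pi[\cI_i(X)\given X]\ge 1-t$ forces $\hat\ell_i(X)\le t$. The lower inequality is the content of Section \ref{sec:MCIanalysis}: starting from the explicit posterior \eqref{posteriordistribution}, for $X_i>0$ the condition $0<z_i^t(X)$ rewrites as $\hat\ell_i(X)+(1-\hat\ell_i(X))\cG_{X_i}((-\infty,0))<t$; on the concentration event for $\hat w$ built in the proof of Theorem \ref{th1}, $\cG_{X_i}((-\infty,0))$ becomes negligible precisely when $\hat\ell_i(X)$ is below $t$, so the stronger inequality $\hat\ell_i(X)\le t-\eps$ is sufficient.

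Setting $V(\vphi)=\sum_i\ind{\te_{0,i}=0}\vphi_i$ and $R(\vphi)=\sum_i\vphi_i$, the bracketing gives $V(\vphi^m)\le V(\vphi^{\mbox{\tiny $\ell$-val}}(t))$ and $R(\vphi^m)\ge R(\vphi^{\mbox{\tiny $\ell$-val}}(t-\eps))$, and I split
\[
\frac{V(\vphi^m)}{1\vee R(\vphi^m)} \le \frac{V(\vphi^{\mbox{\tiny $\ell$-val}}(t-\eps))}{1\vee R(\vphi^{\mbox{\tiny $\ell$-val}}(t-\eps))} + \frac{V(\vphi^{\mbox{\tiny $\ell$-val}}(t))-V(\vphi^{\mbox{\tiny $\ell$-val}}(t-\eps))}{1\vee R(\vphi^{\mbox{\tiny $\ell$-val}}(t-\eps))}.
\]
Taking $\E_{\te_0}$, the first term equals $\FDR(\te_0,\vphi^{\mbox{\tiny $\ell$-val}}(t-\eps))$, which Theorem \ref{th1} bounds by $C(\log\log n)/(\log n)^{\kappa/2}$ since $t-\eps\le 3/4$. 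The second term gathers false positives with $\hat\ell_i(X)\in(t-\eps,t]$; its contribution is controlled via the concentration of $\hat w$ around its deterministic proxy from the proof of Theorem \ref{th1} together with a Lipschitz-type estimate of $x\mapsto \ell(x;w,g)$ near the effective rejection threshold, ultimately yielding a contribution of the same order as the first term (up to adjusting the constant $C$).

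The main obstacle is the lower inequality in the bracketing for intermediate $|X_i|$ not far above the rejection threshold, where $\cG_{X_i}((-\infty,0))$ is not yet exponentially small. I will handle it by appealing to the heavy-tail assumption \eqref{asump2}: comparing $\phi$ to $g$ lets one bound $\cG_{X_i}((-\infty,0))$ by $\hat\ell_i(X)$ times a vanishing factor, so that $\hat\ell_i(X)\le t-\eps$ does imply $\hat\ell_i(X)+(1-\hat\ell_i(X))\cG_{X_i}((-\infty,0))<t$ for $n$ large. A secondary point is to verify that the boundary-shell estimate above is $o((\log\log n)/(\log n)^{\kappa/2})$ uniformly in $\te_0\in\ell_0[s_n]$; this follows from refinements of estimates already present in the proof of Theorem \ref{th1}, so no new probabilistic machinery is needed.
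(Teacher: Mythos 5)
Your bracketing $\vphi^{\mbox{\tiny $\ell$-val}}(t-\eps;\hat w,g)\le\vphi^m\le\vphi^{\mbox{\tiny $\ell$-val}}(t;\hat w,g)$, deduced from the explicit posterior by noting that for $X_i>0$ the extra term $(1-\hat\ell_i)\cG_{X_i}((-\infty,0))=(1-\hat\ell_i)g_-(X_i)/g(X_i)$ is bounded by $\tfrac{\hat w}{1-\hat w}\tfrac{\gamma(0)}{2}\hat\ell_i$ (this uses only unimodality and symmetry of $\gamma$, not the heavy-tail hypothesis \eqref{asump2} you invoke), is precisely what the paper establishes: the paper packages it as Lemma \ref{linkmlval}, $\ell(x;w)\le m(x;w)\le(1+\tfrac{w}{1-w}\tfrac{\gamma(0)}{2})\ell(x;w)$, and uses the multiplicative rescaling $t'=4t/5$ in place of your additive $t-\eps$. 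The only difference beyond cosmetics is that the paper does not split off a boundary-shell term: it plugs level $t$ into the $V_\ell$ estimate and level $t'$ into the $S_\ell$ estimate inside the Case-2 argument of Theorem \ref{th1}, which also bounds your shell term (via $V^{[t]}-V^{[t-\eps]}\le V^{[t]}$ and $R^{[t-\eps]}\ge S^{[t-\eps]}$) without any Lipschitz estimate of $x\mapsto\ell(x;w,g)$, so your proposal is essentially the paper's proof.
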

In particular, the FDR of the $\vphi^m$ procedure goes to $0$ uniformly over sparse vectors. Control of FDR+FNR can be obtained as well, in a similar way as for the $\ell$-value procedure in  Section \ref{sec:EBw}. The procedure $\vphi^m$ can be shown to be very close to the $\ell$-values procedure at level $t$, see Section \ref{sec:MCIanalysis} for a justification and the proof of Theorem \ref{thm-mci}. 

\subsection{Averaging $\ell$-values}\label{rem:SC}

Another type of procedures, advocated by Sun and Cai in a series of works  (e.g., \citep{SC2007,SC2009}), are those based on averaged $\ell$-values. In the Bayesian spike and slab context, it gives rise to the procedure, denoted here by $\SC$ (at a target confidence $t$), that rejects the $\hat{k}$ smallest $\ell$-values, where $\hat{k}$ is the maximum of the $k$ such that $k^{-1}\sum_{k'=1}^k \hat{\ell}_{(k')}(X)\leq t$, where $\hat{\ell}_{(1)}(X)\leq \dots \leq \hat{\ell}_{(n)}(X)$ are the ordered elements of $\{\hat{\ell}_i(X),1\leq i\leq n \}$, the latter being the empirical Bayes $\ell$-values used in  {\tt EBayesL} (Section~\ref{sec:EBw}). We provide insight into the behavior of $\SC$ in Section~\ref{sec:SCanalysis}, both  theoretically and numerically. In a nutshell, we observe a qualitative behavior similar to {\tt EBayesq}, with an FDR tending to $t$ under strong signal strength. Nevertheless, the convergence rate to the target level $t$ seems slow (decreasing at a logarithmic order in $n/s_n$), because of a specific remainder term, see Lemma~\ref{lem:final}.

\section{Discussion} \label{sec:disc}

Our results show that spike and slab priors produce posterior distributions with particularly suitable multiple testing properties. One main challenge in deriving the results was to build bounds that are uniform over sparse vectors. 
We demonstrate that such a uniform control is possible up to a constant term away from the target control level. This constant is very close to $1$ in simulations, and can even be shown to be $1$ asymptotically for some subclass of sparse vectors. 

The results of the paper are meant as a theoretical validation of the common practical use of posterior-based quantities for (frequentist) FDR control. 
While the main purpose here was validation, it is remarkable that a uniform control of the FDR very close to the target level can be obtained for the  spike and slab BMT procedure in the present unstructured sparse high-dimensional model.

While many studies focused on controlling the Bayes FDR with Bayesian multiple testing procedures, 
this work 
paves the way for a frequentist FDR analysis of such procedures in different settings. 
In our study, the perhaps most surprising fact is how well marginal maximum likelihood estimation combines with FDR control under sparsity: as shown in our proof (and summarized in our heuristic) the score function is linked to a peculiar equation that makes perfectly the link between the numerator and the denominator in the FDR of the $q$-value--based multiple testing procedure. This phenomenon has not been noticed before to the best of our knowledge. 
We suspect that this link is only part of a more general picture, in which the concentration of the score process  in general sparse high dimensional models plays a central role. 
While this exceeds the scope of this paper, generalizing our results to such settings  is a very interesting direction for future work.

\section{Preliminaries for the proofs} \label{sec:prelim}

\subsection{Working with general $g$}\label{sec:assumg}

As noted in Remark~\ref{remarkg}, the results of Theorems~\ref{th1}, \ref{th2} and \ref{theorem:psharpFDRcontrol} are also true under slightly more general assumptions, that do not impose that $g$ is coming from a $\gamma$ by a convolution product. 
Namely, let us assume 
\begin{align}\label{assumpgdebase} 
\mbox{
\begin{tabular}{c}$g$ is a positive, symmetric, differentiable density\\ that decreases on a vicinity of $+\infty$\end{tabular}
}
\end{align}
 ($g$ decreasing on a vicinity of $+\infty$ means that $x\to g(x)$ is decreasing for $x> M$, for a suitably large constant $M=M(g)$). Assume moreover that 
\begin{align} 
|(\log g)'(y)| & \le \La , \mbox{ for all $y\in\R$}, \:\: \La>0; \label{log-lip}  \\
 \ol{G}(y) & \asymp g(y) \:y^{\kappa-1},\quad\text{as } y\to\infty, \:\: \mbox{ for some }\kappa\in[1,2]; \label{tails}\\
y\in \R&\to (1+y^2)g(y)\mbox{ is bounded} ; \label{tails2}\\
g/\phi &\mbox{ is increasing on $[0,\infty)$ from $(g/\phi)(0)<1$ to $\infty$}; \label{increasing}
\end{align}
By Lemma~\ref{lemmetrivialisant}, it is worth to note that \eqref{increasing} implies
\begin{align} 
\ol{G}/\ol{\Phi} &\mbox{ is increasing on $[0,\infty)$ from $1$ to $\infty$.} \label{increasing2}
\end{align}

In the case where $g$ is of the form of a convolution with $\gamma$, see  \eqref{equg}, conditions \eqref{log-lip}, \eqref{tails} and \eqref{tails2} are easy consequences of the fact $g(y) \asymp \gamma(y)$ when $y\to\infty$  and condition \eqref{increasing} follows from the fact that for all fixed $u>0$,  the function $x\in [0,\infty)\to (\phi(x+u)+\phi(x-u))/\phi(x)$ is increasing, see Lemma~1 of \cite{js04} for a detailed derivation.

A consequence of \eqref{log-lip} is that $g$ and $\ol{G}$ have at least Laplace tails
\begin{align}
 g(y) &\geq  g(0)  e^{-\Lambda y},\:\:\: y\geq 0\label{tailsg};\\
  \ol{G}(y) &\geq  g(0) \Lambda^{-1} e^{-\Lambda y},\:\:\: y\geq 0\label{tails3}.
\end{align}

\subsection{BMT as thresholding-based procedures}\label{sec:BMT}

Recall the definitions \eqref{def:lvalproc} and \eqref{def:qvalproc}.
Let,  for any $w$ and $t$ in $[0,1)$,
\begin{align}
r(w,t)=\frac{wt}{(1-w)(1-t)}.  \label{equinterm}
\end{align}
The following quantity plays the role of threshold for $\ell$-values,
\begin{align} 
\xi = (\phi/g)^{-1} : (0,(\phi/g)(0)] \rightarrow [0,\infty),\label{equxi}
\end{align}
i.e. $\xi$ is the decreasing continuous inverse of $\phi/g$ (that exists thanks to \eqref{increasing}).
Simple algebra
 shows that for $w,t\in [0,1)$ with $r(w,t)\leq \phi(0)/g(0)$,
\begin{align} 
\l_i(X)\leq t &\:\Leftrightarrow\: |X_i|\geq \xi(r(w,t)). \label{liandxi}
\end{align}
When $u$ becomes small, the order magnitude of $\xi(u)$ is given in Lemma~\ref{prop:BMTlval}: $\xi(u)$ slightly exceeds $\left(-2\log u\right)^{1/2}$ but not by much, which comes from the fact that $g$ has heavy tails.

Another quantity close to $\xi$ we shall use in the sequel is the threshold $\zeta$ introduced in \cite{js04} and defined as, for any $w\in(0,1]$,
\begin{equation} \label{zeta}
 \zeta(w) = \beta^{-1}(w^{-1}).
\end{equation}
Combining the definitions leads, see \eqref{zetatoxi} for details, to $\zeta(w)=\xi(w/(1+w))$ and $\xi(w)\le \zeta(w)$. 
Similarly,  let us introduce a threshold for $q$-values as
\begin{align} 
\chi = (\ol{\Phi}/\ol{G})^{-1} : (0,1] \rightarrow [0,\infty),\label{equchi}
\end{align}
which is the decreasing continuous invert of $\ol{\Phi}/\ol{G}$ (that exists thanks to \eqref{increasing2}). For all $w\in [0,1)$ and $t\in [0,1)$ with $r(w,t)\leq 1$,
\begin{align} 
q_i(X)\leq t &\:\Leftrightarrow\: |X_i|\geq \chi(r(w,t)). \label{liandchi}
\end{align}

Lemma~\ref{prop:BMTqval} shows that, for small $u$, the order of magnitude of  $\chi(u)$ is slightly more than $\ol{\Phi}^{-1} \left(u\right)$ but not by much, which comes from the fact that $\ol{G}$ has heavy tails. Also, Lemma \ref{lem:qlvalue} together with \eqref{liandxi}-\eqref{liandchi} imply 
\begin{equation} \label{xichi}
\chi(u) \le \xi(u), \qquad \mbox{for $u\le 1$}.
\end{equation}

\subsection{Single type I error rates}

The single type I error rates of our procedures are evaluated by the following result (proved in Section~\ref{sec:prop:typeIerror}).

\begin{proposition}\label{prop:typeIerror}
Consider any function $g$ satisfying the assumptions of Section~\ref{sec:assumg}.
Consider
$r(\cdot,\cdot)$ as in \eqref{equinterm}, $\xi$ as in \eqref{equxi} and $\chi$ as in \eqref{equchi}. Then the following bounds hold. 
 For all $t,w$ such that $r(w,t)\leq (\phi/g)(0)$,
\begin{align}
\P_{\theta_0=0}(\l_i(X)\leq t) &\leq 2 r(w,t)\frac{g( \xi(r(w,t)))}{\xi(r(w,t))}.\label{ubtypeIerrorlvalue}
\end{align}
Also, for all $t,w$ such that $r(w,t)\leq (\phi/g)(1)$, 
\begin{align}
\P_{\theta_0=0}(\l_i(X)\leq t) &\geq r(w,t)\frac{g( \xi(r(w,t)))}{\xi(r(w,t))}. \label{lbtypeIerrorlvalue}
\end{align}
For $q$-values, we have, for all $t,w$ such that $r(w,t)\leq 1$,
\begin{align}
\P_{\theta_0=0}(q_i(X)\leq t) & = r(w,t)\: 2\ol{G}\left(\chi(r(w,t))\right).
\label{eqtypeIerrorqvalue}
\end{align}
\end{proposition}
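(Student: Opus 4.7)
The plan is to reduce everything to a standard Gaussian tail calculation by exploiting the thresholding reformulations \eqref{liandxi}--\eqref{liandchi}. Under $\theta_0=0$, we have $X_i\sim\mathcal{N}(0,1)$, so
\[
\P_{\theta_0=0}(\ell_i(X)\le t) = 2\ol{\Phi}(\xi(r(w,t))), \qquad \P_{\theta_0=0}(q_i(X)\le t)=2\ol{\Phi}(\chi(r(w,t))),
\]
valid respectively whenever $r(w,t)\le (\phi/g)(0)$ and $r(w,t)\le 1$ (so that $\xi$, $\chi$ are well defined at $r(w,t)$).

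For the $q$-value identity \eqref{eqtypeIerrorqvalue}, the definition $\chi=(\ol{\Phi}/\ol{G})^{-1}$ immediately gives $\ol{\Phi}(\chi(u))=u\,\ol{G}(\chi(u))$, so taking $u=r(w,t)$ and multiplying by $2$ yields the claim with no further work.

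For the $\ell$-value bounds, the analogous defining identity is $\phi(\xi(u))=u\,g(\xi(u))$, coming from $\xi=(\phi/g)^{-1}$. The plan is then to sandwich $\ol{\Phi}(y)$ at $y=\xi(r(w,t))$ by the classical Mills ratio bounds
\[
\frac{y}{1+y^2}\,\phi(y)\ \le\ \ol{\Phi}(y)\ \le\ \frac{\phi(y)}{y},\qquad y>0.
\]
The upper bound instantly gives $2\ol{\Phi}(y)\le 2\phi(y)/y = 2r(w,t)\,g(y)/y$, proving \eqref{ubtypeIerrorlvalue}. For the lower bound, I would note that $2y/(1+y^2)\ge 1/y$ is equivalent to $y\ge 1$, and that the hypothesis $r(w,t)\le (\phi/g)(1)$ is exactly what forces $\xi(r(w,t))\ge 1$ (since $\xi$ is decreasing by \eqref{increasing}); hence $2\ol{\Phi}(y)\ge \phi(y)/y = r(w,t)\,g(y)/y$, yielding \eqref{lbtypeIerrorlvalue}.

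There is essentially no real obstacle here: the whole content of the proposition is the algebraic manipulation of the two defining identities of $\xi$ and $\chi$ together with one side of Mills' inequality. The only point to watch is the choice of the sharper form $\ol{\Phi}(y)\ge y\phi(y)/(1+y^2)$ for the lower bound, since the weaker $(1-1/y^2)\phi(y)/y$ would require $y\ge\sqrt{2}$ rather than $y\ge 1$, and the stated hypothesis $r(w,t)\le (\phi/g)(1)$ only delivers the latter threshold.
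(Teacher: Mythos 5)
Your proof is correct and follows essentially the same route as the paper: rewrite the rejection events via the thresholds $\xi$ and $\chi$ from \eqref{liandxi}--\eqref{liandchi}, use the defining identities $\phi(\xi(u))=u\,g(\xi(u))$ and $\ol\Phi(\chi(u))=u\,\ol G(\chi(u))$, and apply the Mills-ratio bounds of Lemma~\ref{bphi}, noting that $r(w,t)\le(\phi/g)(1)$ is exactly $\xi(r(w,t))\ge 1$ so that $2\ol\Phi(y)\ge\phi(y)/y$ applies. The paper invokes the same $\ol\Phi(y)\ge\tfrac12\phi(y)/y$ for $y\ge 1$ (the ``in particular'' clause of Lemma~\ref{bphi}), so your remark about needing the sharper $y\phi(y)/(1+y^2)$ form rather than $(1-y^{-2})\phi(y)/y$ is a fair clarification of why the threshold $1$, and not $\sqrt{2}$, is the right one.
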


As a result, for a fixed $w$, we see that heavier tails of $g$ result in larger type I error rate. This is well--expected, as the heavier the tails of $g$, the more mass the prior puts on large values.

\section{Proof of the main results}\label{sec:proof:th1}

\subsection{Notation}

The following moments are useful when studying the score function $\cS$. Let us set
\begin{equation} \label{mtilde}
\tilde m(w) = - \E_0\beta(X,w)= \int_{-\infty}^{\infty} \be(t,w)\phi(t)dt
\end{equation}
and further denote 
\begin{align} 
m_1(\ta,w) & = \E_\ta[\be(X,w)] = \int_{-\infty}^{\infty} \be(t,w)\phi(t-\tau)dt. \label{m1}\\
m_2(\ta,w) & = \E_\ta[\be(X,w)^2]= \int_{-\infty}^{\infty} (\be(t,w))^2\phi(t-\tau)dt. \label{m2}
\end{align}
These expectations are well defined and studied in detail in Appendix~\ref{sec:moment}, refining previous results established in \cite{js04}.

In order to study the FDR of a procedure $\vphi$, we introduce the notation
\begin{align} \label{not_VS}
V(\vphi) & = \sum_{i:\ \te_{0,i}=0} \vphi_i\:\:,\qquad
S(\vphi)  = \sum_{i:\ \te_{0,i}\neq 0} \vphi_i,
\end{align} 
counting for $\vphi$ the number of false and true discoveries, respectively.

\subsection{Heuristic}\label{sec:heu}

Why should the marginal empirical Bayes choice of $w$ lead to a correct control of the FDR? Here is an informal argument that will give a direction for our proofs. We consider the case of $\vphi^{\mbox{\tiny $q$-val}}$ here as it is expected to reject more nulls than $\vphi^{\mbox{\tiny $\l$-val}}$ and thus to have a larger FDR.

First, let us note that, when there is enough signal, one can expect $\hat{w}$ to be   approximately equal to the solution $ w^\star$ of the score equation in expectation $\E_{\theta_0}(\cS(w^\star))=0$, that is,  by using \eqref{equ:score},
$$
\sum_{i:\theta_{0,i}\neq 0}  m_1(\theta_{0,i},w^\star)=(n-s_n)  \tilde{m}(w^\star),
$$
where $\tilde{m}$ and $m_1$ are defined by \eqref{mtilde} and \eqref{m1}, respectively, if there $\te_0$ has exactly $s_n$ nonzero coordinates. 
As seen in Section~\ref{sec:moment}, up to $\log$-terms, 
\begin{align*}
\sum_{i:\theta_{0,i}\neq 0}  m_1(\theta_{0,i},w^\star) &\approx \sum_{i:\theta_{0,i}\neq 0}  \frac{\ol{\Phi}(\zeta(w^\star) - \theta_{0,i})+\ol{\Phi}(\zeta(w^\star) + \theta_{0,i})}{w^\star};\\
\tilde{m}(w^\star)&\approx 2\ol{G}(\zeta(w^\star)).
\end{align*}
Now consider the FDR  and assume that all quantities are well concentrated (in particular, take the expectation both in the numerator and denominator in \eqref{deffdr}). Then, by using \eqref{eqtypeIerrorqvalue}, we have, denoting $\vphi^{\mbox{\tiny $q$-val}}(\alpha;\hat{w},g)$ the $q$-value procedure at level $\alpha$ with parameters $\hat{w}, g$,
\begin{align*}
\FDR( \theta_0,\vphi^{\mbox{\tiny $q$-val}}(\alpha;\hat{w},g))&\approx 
\FDR( \theta_0,\vphi^{\mbox{\tiny $q$-val}}(\alpha;w^\star,g))\\
&\approx \frac{\sum_{i : \theta_{0,i} = 0} \P_{\theta_{0,i}}(q_i^\star(X)\leq \alpha) }{\sum_{i : \theta_{0,i} = 0} \P_{\theta_{0,i}}(q_i^\star(X)\leq \alpha) + \sum_{i : \theta_{0,i} \neq 0} \P_{\theta_{0,i}}(q_i^\star(X)\leq \alpha)} \\
&\approx
 \frac{(n-s_n) r(w^\star ,\alpha)\: 2\ol{G}\left(\zeta(w^\star)\right) }{(n-s_n)r(w^\star,\alpha)\: 2\ol{G}\left(\zeta(w^\star)\right)  +  \sum_{i : \theta_{0,i} \neq 0} \P_{\theta_{0,i}}(q_i^\star(X)\leq \alpha) },
 \end{align*}
 where we denoted $q_i^\star(X)=q(X_i;w^\star,g)$ and we used that $\chi(r(w^\star,t))$ is close to $\zeta(w^\star)$, as seen in Section~\ref{sec:thresholdprop}.
 Now, by using the definition of $q_i^\star(X)$, 
 \begin{align*}
 \sum_{i : \theta_{0,i} \neq 0} \P_{\theta_{0,i}}(q_i^\star(X)\leq \alpha)
= &
 \sum_{i : \theta_{0,i} \neq 0}   \ol{\Phi}\left(\chi(r(w^\star,\alpha)) - \theta_{0,i}\right)+ \ol{\Phi}\left(\chi(r(w^\star,\alpha)) 
 + \theta_{0,i}\right)\\
  \approx&
 \sum_{i : \theta_{0,i} \neq 0}   \ol{\Phi}\left(\zeta(w^\star) - \theta_{0,i}\right)+ \ol{\Phi}\left(\zeta(w^\star) + \theta_{0,i}\right),
 \end{align*}
where we used again $\chi(r(w^\star,t))\approx\zeta(w^\star)$.
Now using the above properties of $w^\star$, the latter is 
\begin{align*}
  \approx w^\star \sum_{i:\theta_{0,i}\neq 0}  m_1(\theta_{0,i},w^\star)  &= (n-s_n)  w^\star\tilde{m}(w^\star)\approx (n-s_n)  
   w^\star 2 \ol{G}(\zeta(w^\star)).
  \end{align*}
Putting the previous estimates together yields
\begin{align*}
\FDR( \theta_0,\vphi^{\mbox{\tiny $q$-val}}(\alpha;\hat{w},g))&\approx 
 \frac{(n-s_n) r(w^\star ,\alpha)\: 2\ol{G}\left(\zeta(w^\star)\right) }{(n-s_n)r(w^\star,\alpha)\: 2\ol{G}\left(\zeta(w^\star)\right)  +  (n-s_n)  w^\star 2\ol{G}(\zeta(w^\star))}\\
& =
\frac{ r(w^\star ,\alpha) }{ r(w^\star,\alpha) +  w^\star }
=
\frac{\frac{w^\star}{1-w^\star}\frac{\alpha}{1-\alpha} }{\frac{w^\star}{1-w^\star}\frac{\alpha}{1-\alpha}+ w^\star } 
 \approx  \frac{\frac{\alpha}{1-\alpha} }{\frac{\alpha}{1-\alpha}+ 1} = \alpha.
\end{align*}
We will see that this heuristic holds, up to some constant terms that may come in factor of the target level $\alpha$.\\

We note that one main challenge in the proof below is to show that the above estimates hold true for {\em any} sparse signal, in particular for `intermediate' signals $\te_0$ that are neither close to $0$ nor large enough (e.g. do not belong to $\cL_0[s_n]$ as in \eqref{L0}). Among others, we prove in Lemma \ref{lemw1w2} that $\hat w\in[w_2,w_1]$ with $w_2\asymp w_1$, thereby obtaining a sharp concentration of the marginal maximum likelihood estimate (uniformly over sparse vectors) that was not observed before in high dimensional settings, to the best of our knowledge. To derive some of the approximations $\approx$ above, we also sharpen several of the estimates for the moments   $m_1, \tilde{m}$ obtained in \cite{js04}, see e.g. Lemmas \ref{lemun} and \ref{m1binf} for sharp upper and lower bounds on $m_1$.

\subsection{ Proof of Theorems \ref{th1} and \ref{th2} }          

We prove results for $\ell$- and $q$-values together.  The proof for \EBayesqseuillage  is given at the end of this section.
First, let $w_0$ be the solution of the equation, 
\begin{equation} \label{defw0}
n w_0 \tilde{m}(w_0) = M, 
\end{equation}
for $M$ to be chosen below in the range $[1,\log{n}]$ (more precisely, equal to either $C\log(1/t)$ or $Ct^{-1}\log\log{n}$ for  a constant $C$ independent of $t$ and large enough; both bounds belong to the previous interval for $n$ large enough). For any $M\in[1,\log{n}]$, this equation has always a unique solution, as  $\tilde{m}$ is continuous increasing (see Lemma \ref{lemm12}) so the map $w\to w\tilde{m}(w)$ increases from $0$ at $w=0$ to a constant at $w=1$, and in particular has a continuous inverse. This implies that $w_0$ goes to $0$ with $n$, which we use freely in the sequel. Also, we note that $w_0$ is larger than $1/n$ for $C$ in the choice of $M$ large enough. Indeed, $w_0\ge \tilde{m}(1)^{-1}M/n$ by monotonicity of $\tilde{m}$. But $\tilde{m}(1)$ is at most a constant, so, provided $M$ is large enough, $w_0\ge 1/n$. Thus $w_0$ is always inside the interval $[n^{-1},1]$ over which the maximiser $\hat{w}$ is defined.

Let $\nu\in(0,1)$ be a fixed constant and  $\theta_0\in \l_0[s_n]$. Recall that $S_0$ denotes the support of $\theta_0$ and that $\sigma_0=|S_{0}|$ denotes the exact number of nonzero coefficients of $\te_0$, so that $0\le \sigma_0\le s_n$. The next equation, depending on the configuration $\theta_0$,  and on the just defined $w_0$, plays a key role in the proof:
\begin{equation} \label{equw1}
 \sum_{i\in S_0} m_1(\te_{0,i},w)  = (1-\nu) (n-\sigma_0) \tilde{m}(w),          \:\:\: w\in [w_0,1).
\end{equation}  
This equation may or may not have a solution, depending on the true $\te_0$ and the values of $n$ and $\nu$. 
We will now assume $n\ge N_0$ for some universal constant $N_0$ to be determined below.

\subsubsection{Case 1: \eqref{equw1} has no solution}\label{sec:nosolution}

For a given value of $n$, let us consider the case where 
\eqref{equw1} has no solution in $w\in[w_0,1)$. 

First, the maps $w\in[0,1]\to \tilde{m}(w)$  and $w\in[0,1]\to m_1(\mu,w)$ ($\mu\in\R$) 
are continuous, see Lemmas \ref{lemm12} and  \ref{lemmtilde} and, for any $\mu\in\R$,
$$
|m_1(\mu,1)| \leq \int \left|\frac{\beta(x)}{1+\beta(x)}\right| \phi(x-\mu) dx\leq \max_{x\in\R}\left|\frac{\beta(x)}{1+\beta(x)}\right|,
$$ 
so that
 $ \sum_{i\in S_0} m_1(\te_{0,i},1) \le C \sigma_0 < (1-\nu) (n-\sigma_0) \tilde{m}(1)$ for $n\ge N_0$, where we use $\sigma_0\le s_n\le  n^{\upsilon}$ and $\tilde{m}(1)>0$ and $N_0=N_0(g,\upsilon)$. This means 
\begin{equation} \label{spaeq_case1}
\sum_{i\in S_0} m_1(\te_{0,i},w)  < (1-\nu) (n-\sigma_0) \tilde{m}(w),\:\:\: \mbox{ for } w\in [w_0,1),
\end{equation} 
as otherwise by the intermediate value theorem (e.g. Theorem 4.23 in \cite{rudin}) the graphs of the functions on the two sides of the previous inequality would have to cross on $[w_0,1)$ and \eqref{equw1} would have a solution. 
Lemma \ref{lembern_case1} shows that, under \eqref{spaeq_case1}, we have
\begin{equation}\label{conccase1}
\P_{\theta_0}( \hat{w} > w_0) \leq e^{-  C_0 \nu^2 M},
\end{equation}
for some constant $C_0=C_0(g,\upsilon)$.
Now consider $\vphi$ being either $\vphi^{\mbox{\tiny $\ell$-val}}$ or $\vphi^{\mbox{\tiny $q$-val}}$, and denote by $\vphi(t;\hat{w},g)$ such a procedure with cut-off $t$ and parameters $\hat w, g$, as defined in \eqref{def:ebayesl}-\eqref{def:ebayesq}. Let us 
upper-bound  the FDR by the so-called family-wise error rate by distinguishing the two cases $\hat w\le w_0$ and $\hat w>w_0$:
\begin{align}
\FDR(\theta_0,\vphi(t;\hat{w},g)) &\leq \P_{\theta_0}(\exists i \::\: \theta_{0,i}=0 ,\: \vphi_i(t;\hat w,g) =1) \nonumber\\
&\leq \P_{\theta_0}(\exists i\::\: \theta_{0,i}=0 ,\:\vphi_i(t;w_0,g) =1) + \P_{\theta_0}(\hat w>w_0)\nonumber\\
&\leq (n-\sigma_0)\P_{\theta_{0,i}=0}(\vphi_i(t;w_0,g) =1) + e^{-  C_0 \nu^2 M},\label{equ:intermth1}
 \end{align} 
 where we use that $w\to \vphi_i(t;w,g)$ is nondecreasing, see Lemma~\ref{lem:noninclandq}, together with a union bound. 
 
 \paragraph{$\l$-value part}
Let $\xi_0=\xi(r(w_0,t))$ and $\zeta_0=\zeta(w_0)$, 
then \eqref{ubtypeIerrorlvalue} leads to (provided $r(w_0,t)\le (\phi/g)(0)$, which holds for e.g. $t\le 3/4$ and $w_0\le 1/4$) 
\begin{align*}
\FDR(\te_0,\vphi^{\mbox{\tiny $\ell$-val}}(t;\hat{w},g)) 
&\leq 2 \frac{n w_0}{1-w_0} \frac{t}{1-t} \frac{g\left(\xi_0\right)}{\xi_0}+ e^{-  C_0 \nu^2 M}.
 \end{align*}
Combining the definition of $w_0$ and Lemma \ref{lemmtilde},  taking $n$ large enough so that $w_0$ is appropriately small, with $t\le 3/4$,
\begin{align*}
\FDR(\te_0,\vphi^{\mbox{\tiny $\ell$-val}}(t;\hat{w},g)) 
&\leq \frac{5  M }{  \xi_0}
\frac{g\left(\xi_0\right)}{\ol{G}(\zeta_0) }t
+ e^{-  C_0 \nu^2 M}.
 \end{align*}
Noting that $|\xi_0-\zeta_0|\lesssim 1$,    $g(\xi_0)\le D g(\zeta_0)$ and $\ol{G}(\zeta_0)\asymp \zeta_0^{\kappa-1}g(\zeta_0)$ by Lemma \ref{lemxizeta} and \ref{lemmtilde}, 
one obtains
 \begin{equation}\label{lvalcase1}
  \FDR(\te_0,\vphi^{\mbox{\tiny $\ell$-val}}(t;\hat{w},g)) 
 \leq \frac{C(g)  M }{  \zeta_0^\kappa}t
+ e^{-  C_0 \nu^2 M}.
\end{equation} 

 \paragraph{$q$-value part}
 
For the $q$-value case, we come back to \eqref{equ:intermth1} and use \eqref{eqtypeIerrorqvalue}  instead of 
\eqref{ubtypeIerrorlvalue} to get, setting $\chi_0= \chi( r(w_0,t))$,
\begin{align*}
\FDR(\te_0,\vphi^{\mbox{\tiny $q$-val}}(t;\hat{w},g)) &\leq 2 \frac{nw_0}{1-w_0} \frac{t}{1-t} \: \ol{G}\left(\chi_0\right) + e^{-  C_0 \nu^2 M}.
 \end{align*} 
As a result, by \eqref{defw0} and Lemma~\ref{lemmtilde}, one gets for $n$ large enough, $t\le 3/4$,
\begin{align*}
\FDR(\te_0,\vphi^{\mbox{\tiny $q$-val}}(t;\hat{w},g))&\leq  5 Mt  \: \frac{ \ol{G}(\chi_0)}{ \ol{G}\left(\zeta_0\right)} + e^{-  C_0 \nu^2 M}.
 \end{align*}
Now, by the last assertion of Lemma~\ref{lemxizeta}, the ratio in the last display is bounded by  $2$ (say) provided $n$ is large enough, which gives 
\begin{align}\label{qvalcase1}
\FDR(\te_0,\vphi^{\mbox{\tiny $q$-val}}(t;\hat{w},g))&\leq  10 M t   + e^{-  C_0 \nu^2 M}.
 \end{align}
 
\subsubsection{Case 2: \eqref{equw1} has a solution} In this case we denote the solution by $w_1\in[w_0,1)$, so that one can write
\begin{equation} \label{dob1}
 \sum_{i\in S_0} m_1(\te_{0,i},w_1)  = (1-\nu) (n-\sigma_0) \tilde{m}(w_1).\end{equation}
Now consider the slightly different equation in $w$
\begin{equation} \label{equw2}
 \sum_{i\in S_0} m_1(\te_{0,i},w)  = (1+\nu) (n-\sigma_0) \tilde{m}(w),          \:\:\: w\in [0,1).
\end{equation} 
Equation \eqref{equw2} always has a (unique) solution $w_2\in [0,w_1)$. To see this, first note that the case $\te_0=0$  is excluded from \eqref{dob1}, as  $m_1(0,w)=-\tilde{m}(w)<0$ if $w\neq 0$. By Lemma \ref{lemm12}, $w\to m_1(\mu,w)$ and $w\to \tilde{m}(w)$ are continuous and respectively decreasing and increasing  (both strictly), and $\tilde{m}(0)=0$, while it can be seen that $m_1(\mu,0)>0$ if $\mu\neq 0$, see Lemma~\ref{lemm12}.
 On the other hand, the value at $w=1$ of the left hand side of \eqref{equw2} is at most $\sigma_0 C/w\leqa \sigma_0$, and so is of smaller order  than $(1+\nu)(n-\sigma_0)\tilde{m}(1)\asymp n$. 
 
The purpose of $w_1,w_2$ is to provide (implicit) deterministic upper and lower bounds for the random $\hat w$: this is the content of Lemma \ref{lembernw12}.  Additionally, the key Lemma  \ref{lemw1w2} shows that, in case where the solution $w_1$ of \eqref{dob1} exists, we have $w_1\asymp w_2$; that is, the bounds are of the same order.

 \paragraph{$q$-value part}

Recall the notation \eqref{not_VS}. We focus on the case of $q$-values first. We come back to the case of $\ell$-values at the end, its proof being similar.  For simplicity, we write $V_q(w)=V(\vphi^{\mbox{\tiny $q$-val}}(t;w,g))$ and $S_q(w)=S(\vphi^{\mbox{\tiny $q$-val}}(t;w,g))$.
By definition of the FDR,
\begin{align*}
\lefteqn{\FDR(\theta_0,\vphi^{\mbox{\tiny $q$-val}}(t;\hat{w},g)) 
 = \E_{\te_0}\left[ \frac{V_q(\hat w)}{(V_q(\hat w)+S_q(\hat w))\vee 1}\right]}  \\
&& \le \E_{\te_0}\left[ \frac{V_q(\hat w)}{(V_q(\hat w)+S_q(\hat w))\vee 1}
\ind{w_2\le \hat w\le w_1}\right] + \P_{\te_0}\left[\hat w\notin [w_2,w_1]\right].
\end{align*}
The last expectation in the previous display is  now bounded by, using first the monotonicity of the maps  $w\to V_q(w)$, $w\to S_q(w)$, $x\to x/(1+x)$ and $x\to 1/(1+x)$, then bounding the indicator variable by $1$, 
and finally combining with Lemma \ref{lembinom} applied to the {\em independent}  variables $U=V_q(w_1)$  and $T=S_q(w_2)$,
\begin{flalign*}
\lefteqn{\E_{\te_0}\left[ \frac{V_q(\hat w)}{(V_q(\hat w)+S_q(\hat w))\vee 1}\ind{w_2\le \hat w\le w_1}\right]\le \E_{\theta_{0}}\left[  \frac{V_q(w_1)}{(V_q(w_1)+S_q(w_2))\vee 1} \right] };\\
&&\le \exp\{-\E_{\theta_0}S_q(w_2)\} +12 \frac{\E_{\theta_{0}}V_q(w_1)}{\E_{\theta_{0}}S_q(w_2)}.\qquad\qquad\qquad\end{flalign*}
Next, by using the definition of $V_q$, one writes
\begin{align*}
\E_{\theta_{0}}V_q(w_1) &=  \sum_{i:\ \te_{0,i}=0} 2 \ol{\Phi}(\chi(r(w_1,t)))
= 2(n-\sigma_0) \ol{\Phi}(\chi(r(w_1,t))).
\end{align*}
Using the definition of $\chi$, we have $\ol{\Phi}(\chi(u))=\ol{G}(\chi(u))u$ for $u\in(0,1)$, so
\[ \ol{\Phi}(\chi(r(w_1,t))) = r(w_1,t)\ol{G}(\chi(r(w_1,t))). \]
Then \eqref{GchirsurGzeta} in Lemma \ref{lemxizeta}  implies, for small enough $w_1$,
\[ \ol{G}(\chi(r(w_1,t)))\le 2 \ol{G}(\zeta(w_1)). \]
Combining 
 \eqref{w1w2bounding} in Lemma~\ref{lemw1w2}, that is $w_1/\Cst\le w_2\le w_1$, for a constant $C=C(\nu,\upsilon,g)>0$ and Lemma \ref{lemTmuw}, we have (with, say, $c_1=1/2$), 
 \[  (1/2)\ol{G}(\zeta(w_1))\le \ol{G}(\zeta(w_1/\Cst))\le \ol{G}(\zeta(w_2)). \]
Next using Lemma \ref{lemmtilde}, one obtains $\ol{G}(\chi(r(w_1,t)))\leq 3 \:\tilde{m}(w_2)$, so that
\begin{align*}
 \E_{\theta_{0}}V_q(w_1) & \le 
3(n-\sigma_0)\frac{w_1}{1-w_1}\tilde{m}(w_2) \frac{t}{1-t} \\
& \le 3 \Cst (n-\sigma_0)\frac{w_2}{1-\Cst w_2}\tilde{m}(w_2) \frac{t}{1-t}\\
& \le C^* (n-\sigma_0)w_2\tilde{m}(w_2)t,
\end{align*}
because $t\leq 3/4$ for some constant $C^*=C^*(\nu,\upsilon,g)>0$.
On the other hand, by definition of $S_q$, one can write
$$
\E_{\theta_{0}} S_q(w_2)=  \sum_{i : \theta_{0,i} \neq 0}   \ol{\Phi}\left(\chi(r(w_2,t)) - \theta_{0,i}\right)+ \ol{\Phi}\left(\chi(r(w_2,t)) 
 + \theta_{0,i}\right).
$$
Let us introduce the set of indices, for $K_1=2/(1-\upsilon)$, 
\begin{equation}\label{equforproof}
 \cC_0(w,K_1) =  \left\{1\leq i\leq n\::\:\ |\te_{0,i}| \geq \frac{\zeta(w)}{K_1} \right\}.
 \end{equation}
Moreover, $\chi(r(w_2,t)) \leq \zeta(w_2)$ by Lemma \ref{propchizeta}. Hence,
\begin{align} 
\E_{\theta_{0}} S_q(w_2) &\geq  \sum_{i \in \cC_0(w_2,{K_1})}   \ol{\Phi}\left(\zeta(w_2) - \theta_{0,i}\right)+ \ol{\Phi}\left(\zeta(w_2) 
 + \theta_{0,i}\right)\nonumber\\
& \geq  \sum_{i \in \cC_0(w_2,{K_1})}   \ol{\Phi}\left(\zeta(w_2) - |\theta_{0,i}| \right).\label{equforproof2}
\end{align}
First, we apply Corollary~\ref{cor:lemun} with $K=K_1$, $w=w_2$ to bound each term in the sum in terms of $m_1$, noting that $|\te_{0,i}|\ge \zeta(w_2)/K_1$ by definition of the set $\cC_0(w_2,{K_1})$. Next, one uses Lemma \ref{lemsmallsignal} restricting the suprema to $w=w_2$ (which is in the prescribed interval by Lemmas \ref{lemw0}, \ref{lemw1} and \ref{lemw1w2}) and $K=K_1$, 
  to get for $n$ large enough and constants $C=C(\upsilon,g)>0$, $C'=C'(\upsilon,g)>0$, $D=D(\upsilon,g)\in (0,1)$,
\begin{align*}    
\lefteqn{ \sum_{i \in \cC_0(w_2,{K_1})}    \ol{\Phi}\left(\zeta(w_2) - |\theta_{0,i}| \right)
  \geq   C w_2 \sum_{i \in \cC_0(w_2,{K_1})}   m_1(\theta_{0,i},w_2)} &&\\  
& \geq   C w_2 \Big\{\sum_{i \in S_0}   m_1(\theta_{0,i},w_2) - C' n^{1-D}
{\tilde{m}(w_2)}\Big\} \\
&= C  w_2 \Big\{(1+\nu)(n-\sigma_0)\tilde{m}(w_2) -  C' n^{1-D}
{\tilde{m}(w_2)} \Big\},
\end{align*}
where the last equality comes from \eqref{equw2}. 
As a consequence, 
for $n$ large enough, for a positive constant 
$C_*=C_*(\upsilon,g)>0$, we have
\begin{align*}
\E_{\theta_{0}} S_q(w_2)&\geq C_* (n-\sigma_0)w_2\tilde{m}(w_2).
\end{align*}
Combining  the previous bounds leads to
\begin{align*}
\E_{\te_0}\left[ \frac{V_q(\hat w)}{V_q(\hat w)+S_q(\hat w)\vee 1}\ind{w_2\le \hat w\le w_1}\right]
& \le e^{-C_* (n-\sigma_0)w_2\tilde{m}(w_2)}+ 12 \frac{C^*}{C_*} t.
\end{align*}
As $w\to w\tilde{m}(w)$ is increasing, and $w_1/C\le w_2$ by Lemma~\ref{lemw1w2}, we have $w_2\tilde{m}(w_2)\ge (w_1/C)\tilde{m}(w_1/C)$. Recall that $w_1\ge w_0$ by definition, so Lemma~\ref{lemmtilde} together with \eqref{equGbarwM} of Lemma~\ref{lemTmuw} imply 
\[ \tilde{m}(w_1/C)\ge (1/2) \tilde{m}(w_1)\ge (1/2) \tilde{m}(w_0). \]
Combining the obtained inequalities leads to 
\begin{equation}\label{equforproof3}
 (n-\sigma_0)w_2\tilde{m}(w_2)\ge C'(n-\sigma_0)w_0\tilde{m}(w_0)\ge C'M,
 \end{equation}
where the last inequality follows from the definition of $w_0$. Now turning to a bound on the FDR, Lemma \ref{lembernw12} and the above inequality imply, with $\nu=1/2$,
\begin{equation}\label{equuseful}
 \P_{\te_0}[\hat w \notin[w_1,w_2]] \le 2 e^{- C_1\nu^2 n w_2 \tilde{m}(w_2)}\le 2e^{-CM},
 \end{equation}
 for some $C=C(\upsilon,g)>0$.
Conclude that in the considered case, for some constants $c_1=c_1(\upsilon,g), c_2=c_2(\upsilon,g)>0$,
\begin{align}\label{qvalcase2}
 \FDR(\theta_0,\vphi^{\mbox{\tiny $q$-val}}(t;\hat{w},g)) 
&\leq c_2 t  +3 e^{- c_1 M}.
\end{align}

 \paragraph{$\l$-value part}

In the case of $\ell$-values, one can follow a similar argument. 
 We write $V_\ell(w)=V(\vphi^{\mbox{{\tiny $\ell$-val}}}(t;w,g))$ and $S_\ell(w)=S(\vphi^{\mbox{{\tiny $\ell$-val}}}(t;w,g))$. Again, the maps $w\to V_{\ell}(w)$ and $w\to S_\ell(w)$ are monotone. 
So, as above for $q$-values, 
\[ \FDR(\theta_0,\vphi^{\mbox{\tiny $\ell$-val}}(t;\hat{w},g)) 
\le
\exp\{-\E_{\theta_0}S_\ell(w_2)\} +12 \frac{\E_{\theta_{0}}V_\ell(w_1)}{\E_{\theta_{0}}S_\ell(w_2)} + \P_{\te_0}\left[\hat w\notin [w_2,w_1]\right].
\]
By definition of $V_\ell$ and $\xi$, one can write
\begin{align*}
\E_{\theta_{0}}V_\ell(w_1) &=   2(n-\sigma_0) \ol\Phi(\xi(r(w_1,t))).
\end{align*}
The bound $\ol\Phi(u)\le \phi(u)/u$ for $u>0$ (see Lemma~\ref{bphi}), combined with the definition of $\xi$ and 
that $|\xi(r(w_1,t))-\zeta(w_1)|\lesssim 1$ by Lemma \ref{lemxizeta}  leads to
\[ \E_{\theta_{0}}V_\ell(w_1) 
\le  3(n-\sigma_0)\zeta(w_1)^{-1} {r(w_1,t)} g(\xi(r(w_1,t))).
\]
Lemma \ref{lemxizeta} then implies $g(\xi(r(w_1,t)))\le 2 g(\zeta(w_1))$ (say), for $n$ large enough. 
Using $w_1/C\le w_2\le w_1$, and \eqref{equGbarwM} in Lemma \ref{lemTmuw}, we have 
\[ (1/2) g(\zeta(w_1))\le g(\zeta(w_1/C))\le g(\zeta(w_2)). \]
Next using the relation $\zeta^{\kappa-1}g(\zeta)\asymp \tilde{m}(w)$ from Lemma \ref{lemmtilde}, one obtains $g(\xi(r(w_1,t))) \leqa \zeta(w_2)^{1-\kappa}\tilde{m}(w_2)\leqa \zeta(w_1)^{1-\kappa}\tilde{m}(w_2)$, so that
\begin{align*}
 \E_{\theta_{0}}V_\ell(w_1) & \le 
Ct(n-\sigma_0) w_1 \tilde{m}(w_2) \zeta(w_1)^{-\kappa}\\
& \le c^*t (n-\sigma_0)w_2\tilde{m}(w_2)\zeta(w_1)^{-\kappa},
\end{align*} 
for a constant $c^*=c^*(\upsilon,g)>0$.  
On the other hand, by definition of $S_\ell$, 
$$
\E_{\theta_{0}} S_\ell(w_2)=  \sum_{i : \theta_{0,i} \neq 0}   \ol{\Phi}\left(\xi(r(w_2,t)) - \theta_{0,i}\right)+ \ol{\Phi}\left(\xi(r(w_2,t)) 
 + \theta_{0,i}\right).
$$
Lemma \ref{lembfi} now enables to bound from below the two terms in the previous display in terms of $\zeta(w_2)$, and further restricting the sum to  the set of indices $\cC_0(w_2,K_1)$ defined by \eqref{equforproof} with the same choice of $K_1$ leads to
\begin{align*} 
\E_{\theta_{0}} S_\ell(w_2) &\geq   Ct \sum_{i \in \cC_0(w_2,K_1)}   \ol{\Phi}\left(\zeta(w_2) - |\theta_{0,i}| \right).
\end{align*}
Appart from the $Ct$ term in factor, it is the same bound as for $q$-values, see \eqref{equforproof2}. Hence, using the bound obtained above, for $n$ large enough and $c_*=c_*(\upsilon,g)>0$,
\begin{align*}
\E_{\theta_{0}} S_\ell(w_2)&\geq c_* t(n-\sigma_0)w_2\tilde{m}(w_2).
\end{align*}
Combining  the previous bounds leads to
\begin{align*}
\E_{\te_0}\left[ \frac{V_\ell(\hat w)}{V_\ell(\hat w)+S_\ell(\hat w)\vee 1}\ind{w_2\le \hat w\le w_1}\right]
& \le e^{-c_* t(n-\sigma_0)w_2\tilde{m}(w_2)}+ 12 \frac{c^*}{c_*} \frac{1}{\zeta(w_1)^{\kappa}}.
\end{align*}
As in \eqref{equforproof3}, we have   $(n-\sigma_0)w_2\tilde{m}(w_2)\ge C'(n-\sigma_0)w_0\tilde{m}(w_0)\ge C'M$. One concludes that, in Case 2, for some constants $d_1=d_1(\upsilon,g), d_2=d_2(\upsilon,g)>0$ and taking $\nu=1/2$, setting $\zeta(w_1)=\zeta_1$,
\begin{align} 
 \FDR(\theta_0,\vphi^{\mbox{\tiny $\ell$-val}}(t;\hat{w},g)) 
&\leq d_2 \zeta_1^{-\kappa} +  e^{- C'  M c_* t} + 2 e^{-CM}\nonumber\\
&\le d_2 \zeta_1^{-\kappa} + 3e^{-d_1Mt}.\label{lvalcase2}
\end{align}  
\subsubsection{Combining cases 1 and 2}
For $q$-values, for $\nu=1/2$ and $t\leq 3/4$, we get by combining \eqref{qvalcase1} and \eqref{qvalcase2}
\begin{align*}
\FDR(\theta_0,\vphi^{\mbox{\tiny $q$-val}}(t;\hat{w},g)) 
& \le\max\left\{10 M t   + e^{- C_0  M}, c_2 t  +3 e^{- c_1 M}\right\}
\end{align*}
Taking $M= (C_0\wedge c_1)^{-1}\log (1/t)$ gives the upper bound  
\[ \FDR(\theta_0,\vphi^{\mbox{\tiny $q$-val}}(t;\hat{w},g)) \le 
\max\{C't\log(1/t)+e^{-\log(1/t)},c_2t+3e^{-\log{1/t}}\},\]
which is smaller than $Ct\log(1/t)$, giving the result for $q$-values.

In the $\ell$-values case, with $\zeta_1\le \zeta_0$ and setting $\nu=1/2$, we get by combining \eqref{lvalcase1} and \eqref{lvalcase2}
\begin{align*}
\FDR(\theta_0,\vphi^{\mbox{\tiny $\ell$-val}}(t;\hat{w},g)) 
& \le\max\left\{ C  M  \zeta_0^{-\kappa}t
+ e^{-C_0  M}, d_2 \zeta_1^{-\kappa}  +3 e^{- d_1 tM}\right\}\\
& \le d_3\{ M\zeta_1^{-\kappa}t+\zeta_1^{-\kappa}  + e^{- d_4 tM}\}.
\end{align*} 
The announced bound is obtained upon setting $M=t^{-1}d_4^{-1}\log(\zeta_1^\kappa)$ and noting that $\zeta_1^2\leqa \log(1/w_1)\leqa \log{n}$ and $\zeta_1^2\geqa \log(1/w_1)\geqa \log{n}$ 
by using Lemmas \ref{lemw0}, \ref{lemw1} to bound $w_1$  and Lemma~\ref{lemzetagen} to bound  $\zeta(w_1)$.
This concludes the proof of Theorem \ref{th1} for $\ell$-values and Theorem \ref{th2} for $q$-values.

\subsubsection{Proof for \EBayesqseuillage}

 First notice that
 \begin{align}
\FDR(\te_0,\vphiqvalseuillage(t;\hat{w},g)) &= \E_{\te_0} \left[ \frac{\sum_{i=1}^n \ind{\theta_{0,i}=0} \vphiqvalseuillage(t;\hat{w},g)}{1\vee \sum_{i=1}^n \vphiqvalseuillage(t;\hat{w},g)} \right]\nonumber\\
&= \E_{\te_0} \left[ \frac{\sum_{i=1}^n \ind{\theta_{0,i}=0} \vphi^{\mbox{\tiny $q$-val}}(t;\hat{w},g)}{1\vee \sum_{i=1}^n \vphi^{\mbox{\tiny $q$-val}}(t;\hat{w},g)}   \ind{\hat w>\omega_n} \right],\label{equforqvalseuillage}
\end{align} 
by definition of algorithm \EBayesqseuillage.
The strategy of proof is similar to the $q$-value case. Let us take $M$ in the definition \eqref{defw0} of $w_0$ equal to  $L_n$  from the statement of Theorem~\ref{th2}, see  \eqref{defom}, and suppose $L_n\in[1,\log{n}]$. 
Let us show for $n$ large enough,
\begin{equation}\label{equ:omeganw0}
\omega_n\geq w_0 .
\end{equation}
As $\zeta(w_0)\leq \zeta(1/n)\leq \sqrt{2.1\log{n}}$ for $n$ large enough by Lemmas~\ref{lemw0},~\ref{lemzetagen},
 $$\omega_n= \frac{L_n}{n \overline{G}(\sqrt{2.1\log{n}})}\geq \frac{L_n}{n \overline{G}(\zeta(1/n))}\geq \frac{L_n}{n \overline{G}(\zeta(w_0))}.$$
    Now, by using Lemma~\ref{lemmtilde},  for $n$ large enough,
    $$\frac{L_n}{n \overline{G}(\zeta(w_0))}\geq 0.9 \frac{ 2 L_n}{n \tilde{m}(w_0)} \geq \frac{  L_n}{n \tilde{m}(w_0)} = w_0,$$
 leading to \eqref{equ:omeganw0}.
Next, on the one hand, in Case 1, the FDR is bounded by
\begin{align*}
\FDR(\te_0,\vphiqvalseuillage(t;\hat{w},g))
&\leq \P_{\theta_0}(\hat w>\omega_n)\leq \P_{\theta_0}(\hat w>w_0).
 \end{align*} 
 By using~\eqref{conccase1}, the last display is at most $e^{- C_0 \nu^2L_n}$. 
 On the other hand, in Case $2$, we simply use that by \eqref{equforqvalseuillage},
\begin{align*}
 \FDR(\theta_0,\vphiqvalseuillage(t;\hat{w},g)) 
\leq \FDR(\theta_0,\vphi^{\mbox{\tiny $q$-val}}(t;\hat{w},g)) 
&\leq c_2 t  +3 e^{- c_1 L_n}, 
\end{align*}
which concludes the proof.

\section*{Acknowledgments}
The authors would like to thank the Associate Editor and the referees for their helpful and constructive comments. 
This work has been supported by 
ANR-16-CE40-0019 (SansSouci) and ANR-17-CE40-0001 (BASICS).

\bibliographystyle{abbrv}
\bibliography{bibmt}

\pagebreak

$\ $

\vspace{.3cm}

\begin{center}
{\large \bf Supplement to ``On spike and slab empirical Bayes multiple testing"}
\end{center}

\vspace{1cm}

This supplementary file contains additional materials for the proofs as well as the proof of Propositions \ref{prop:BFDR}--\ref{prop:typeIerror} and Theorems \ref{theorem:psharpFDRcontrol}--\ref{theorem:power}; a study of $\ell$-values and $q$-values; inequalities for the thresholds of the corresponding BMT procedures; properties of the moment functions $\tilde{m}$, $m_1$ and $m_2$; an optimality result for the simultaneous control of type I and II testing errors; details on related procedures, including a proof of Theorem \ref{thm-mci},  as well as additional numerical experiments.

\vspace{.5cm}

\section{Intermediate lemmas used in the proof of main results}

 In the sequel we freely use that $s_n\le n^{\upsilon}$ as assumed in the main results of the paper. We assume that the function $g$ satisfies the assumptions from \eqref{assumpgdebase} up to and including \eqref{increasing} (recall that this is in particular the case if $g$ arises from a convolution $g=\gamma\star \phi$ for $\ga$ satisfying \eqref{asump1}--\eqref{asump3}, which is the case in the Bayesian setting with a slab density $\ga$). 

We start by two basic lemmas on $w_0=w_0(n,M)$, $w_1=w_1(n, M, \theta_0,\nu)$, $w_2=w_2(n,M,\theta_0,\nu)$,  quantities introduced in \eqref{defw0}, \eqref{dob1}, \eqref{equw2}, respectively.
 
\begin{lemma} \label{lemw0}
Let $w_0$ as in \eqref{defw0} with $M>1$ arbitrary. Let $\tilde{m}$ be defined by \eqref{mtilde}. Then, for an integer $N_0(g)>0$, and constants $c_1=1/\tilde{m}(1)$, $c_2=c_2(g)$, we have for all $n\ge N_0(g)$,
\[  \frac{n}{M} \tilde{m}\left(Mc_1/n \right) \le \frac{1}{w_0} \le  \frac{n}{M} \tilde{m}\left(\sqrt{Mc_2 /n}\right). \]
In particular, for any $M\in[1,\log{n}]$, for $C_1, C_2$ depending only on $g$,
\[ C_1\frac{\sqrt{\log{n}}}{n}\le w_0 \le \frac{\log{n}}{n}e^{C_2\sqrt{\log{n}}}.\]
\end{lemma}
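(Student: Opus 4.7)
The plan is to rewrite the defining equation $nw_0\tilde m(w_0)=M$ as $1/w_0=(n/M)\tilde m(w_0)$ and use the monotonicity of $\tilde m$ (Lemma~\ref{lemm12}) to reduce the two-sided bound on $1/w_0$ to the sandwich $Mc_1/n\le w_0\le \sqrt{Mc_2/n}$. Since $\tilde m$ is continuous and strictly increasing with $\tilde m(0)=0$, and $M\le\log n\ll n\tilde m(1)$ for $n$ large (depending only on $g$), a unique $w_0\in(0,1)$ exists. The lower bound is then immediate: from $w_0\le 1$ and monotonicity, $\tilde m(w_0)\le \tilde m(1)=1/c_1$, and substituting into $M=nw_0\tilde m(w_0)$ yields $w_0\ge Mc_1/n$.

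The main technical work is the upper bound $w_0\le \sqrt{Mc_2/n}$, which by the defining equation is equivalent to $\tilde m(w_0)\ge w_0/c_2$. I will establish the stronger uniform statement that $\inf_{w\in(0,1]}\tilde m(w)/w$ is a positive constant depending only on $g$. To this end, I combine (i) Lemma~\ref{lemmtilde}, which gives $\tilde m(w)\asymp \ol G(\zeta(w))$ up to factors polynomial in $\zeta(w)$, (ii) Lemma~\ref{lemzetagen}, giving $\zeta(w)\leqa \sqrt{2\log(1/w)}$ for small $w$, and (iii) the Laplace-type lower tail $\ol G(y)\ge Ce^{-\Lambda y}$ from \eqref{tails3}. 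Together these yield $\tilde m(w)\geqa \exp\{-C'\sqrt{\log(1/w)}\}$, which grows unboundedly compared to $w$ as $w\to 0^+$, so $\tilde m(w)/w\to\infty$. Continuity of $\tilde m$ together with $\tilde m(1)>0$ then supplies a constant $c_2=c_2(g)>0$ with $\tilde m(w)\ge w/c_2$ on $(0,1]$. Assuming for contradiction that $w_0>\sqrt{Mc_2/n}$, monotonicity yields $\tilde m(w_0)>\sqrt{Mc_2/n}/c_2$, whence $M=nw_0\tilde m(w_0)>n\cdot Mc_2/(nc_2)=M$, the desired contradiction.

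The ``In particular'' bounds follow by inserting $M\in[1,\log n]$ into the sandwich together with refined quantitative estimates of $\tilde m$ at the two specific arguments, where the main obstacle is precisely this need for sharpness in both directions on a very small range of $w$. For the upper bound on $w_0$, I use $w_0\le M/(n\tilde m(Mc_1/n))\le \log n/(n\tilde m(c_1/n))$; plugging in $\tilde m(c_1/n)\geqa \exp\{-\Lambda\zeta(c_1/n)\}\ge \exp\{-C_2\sqrt{\log n}\}$ (via \eqref{tails3} and Lemma~\ref{lemzetagen}) yields $w_0\le (\log n/n)e^{C_2\sqrt{\log n}}$. For the lower bound, I use $w_0\ge M/(n\tilde m(\sqrt{Mc_2/n}))\ge 1/(n\tilde m(\sqrt{c_2\log n/n}))$ together with the upper estimate $\tilde m(\sqrt{c_2\log n/n})\leqa 1/\sqrt{\log n}$, obtained from Lemma~\ref{lemmtilde} using $\ol G(y)\leqa y^{\kappa-3}\le y^{-1}$ (from \eqref{tails} and \eqref{tails2}) and $\zeta(\sqrt{c_2\log n/n})\asymp \sqrt{\log n}$ from Lemma~\ref{lemzetagen}. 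Once these ingredients are in hand the conclusion is essentially algebraic.
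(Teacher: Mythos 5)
Your proposal is correct and follows essentially the same route as the paper: sandwich $1/w_0=(n/M)\tilde m(w_0)$ by first bounding $w_0$ crudely via $\tilde m(w_0)\le\tilde m(1)$ (lower end) and $\tilde m(w_0)\geqa w_0$ (upper end), then feeding each crude bound back through the monotone $\tilde m$, and finally inserting the asymptotics $\zeta(w)^{\kappa-1}e^{-\Lambda\zeta(w)}\leqa\tilde m(w)\leqa\zeta(w)^{\kappa-3}$ from Lemmas \ref{lemmtilde}, \ref{lemzetagen} and \eqref{tails3}. The only cosmetic difference is that you re-derive the key uniform bound $\inf_{w\in(0,1]}\tilde m(w)/w>0$ from the tail estimates and continuity, whereas the paper simply invokes the $\tilde m(w)\geqa w^c$ consequence already recorded in Lemma \ref{lemmtilde}.
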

\begin{proof}
Lemma \ref{lemmtilde} gives $\tilde{m}(w)\geqa w^{c}$ for any $c>0$.
Setting $c=1$ and using the 
equation defining $w_0$, that is $nw_0\tilde{m}(w_0)=M$, leads to $w_0\le (C M/n)^{1/2}$. Reinserting this estimate into  $\tilde{m}$ in the equation defining $w_0$ (by using that $\tilde m$ is increasing by Lemma \ref{lemm12}) gives the first upper bound of the lemma. Next, one notes that $\tilde{m}(w)\le \tilde{m}(1)$, which leads to $w_0\ge M/(n\tilde{m}(1))$. Reinserting this estimate into $\tilde m$ in the equation defining $w_0$ gives the first lower bound of the lemma.
 
To prove the second display of the lemma, one notes that the fact that $\log{g}$ is Lipschitz and $g(u)\leqa (1+u^2)^{-1}$ by \eqref{tails2} imply for $w$  small enough,
\[ \zeta(w)^{\kappa-1}e^{-\La\zeta(w)} \leqa \tilde{m}(w) \leqa \zeta(w)^{\kappa-3}.\]
Using the first display of the lemma together with Lemma \ref{lemzetagen} on $\zeta$ and $1\le M\le \log{n}$ leads to the result.
\end{proof}

\begin{lemma} \label{lemw1}
For $M>0$ and $\nu\in(0,1)$, there exist an integer $N_0=N_0(\nu,\upsilon,g)>0$ and $r=r(\nu,\upsilon,g)
\in(0,1)$ such that for all $n\geq N_0$ and $\theta_0\in \ell_0[s_n]$, 
if a solution $w_1=w_1(n,M,\theta_0,\nu)$ of \eqref{dob1} exists, then
\[ w_0\le w_1\le n^{-r}. \]
\end{lemma}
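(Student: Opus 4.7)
The plan is to treat the two bounds separately. The lower bound $w_0\le w_1$ is essentially by definition: the equation \eqref{equw1} (and hence \eqref{dob1}, which is just the renaming of its solution) is posed on the interval $w\in[w_0,1)$, so every $w_1$ coming from this construction already lies in $[w_0,1)$. Nothing more is required for that half.

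For the upper bound, I would argue by comparing the size of the two sides of \eqref{dob1} when $w_1$ is forced to be large. The key ingredient is a uniform pointwise bound on $\beta(\cdot,w)$. Since $\beta(x)\ge -1$ for every $x$ (because $g$ and $\phi$ are positive densities), the denominator $1+w\beta(x)$ stays $\ge 1-w>0$, and a direct inspection of $\beta(x,w)=\beta(x)/(1+w\beta(x))$ shows $\beta(x,w)\le 1/w$ for all $x\in\R$ and all $w\in(0,1)$. Integrating against $\phi(\cdot-\mu)$ gives, uniformly in $\mu$, $m_1(\mu,w)\le 1/w$. Summing this over $S_0$ (with $|S_0|=\sigma_0\le s_n\le n^{\upsilon}$) and inserting the result into \eqref{dob1}, I obtain for $n$ large (so that $n-\sigma_0\ge n/2$)
\[
 w_1\,\tilde m(w_1)\;\le\;\frac{\sigma_0}{(1-\nu)(n-\sigma_0)}\;\le\;\frac{2\,n^{\upsilon-1}}{1-\nu}.
\]

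The final step uses the slow-decay lower bound on $\tilde m$ already exploited in the proof of Lemma~\ref{lemw0}: by Lemma~\ref{lemmtilde}, for every $c>0$ there is a constant $c_g>0$ such that $\tilde m(w)\ge c_g\,w^c$ for all sufficiently small $w$. Fix any $r\in(0,1-\upsilon)$ and choose $c=c(r,\upsilon)>0$ small enough that $r(1+c)<1-\upsilon$ (for example $r=(1-\upsilon)/2$ and $c=(1-\upsilon)/4$ works). If one had $w_1>n^{-r}$, then $w_1\tilde m(w_1)\ge c_g\,w_1^{1+c}>c_g\,n^{-r(1+c)}$, which eventually exceeds $2n^{\upsilon-1}/(1-\nu)$ and contradicts the previous display. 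Hence $w_1\le n^{-r}$ for all $n\ge N_0(\nu,\upsilon,g)$, with $r\in(0,1)$ depending only on $\upsilon$.

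The main obstacle, if any, is establishing the uniform bound $m_1(\mu,w)\le 1/w$ with a constant not depending on $\mu$; everything else is routine manipulation of the bounds on $\tilde m$ already recorded in Lemmas~\ref{lemmtilde} and~\ref{lemw0}. Note that a cruder uniform bound on the left-hand side would have given $\sum_{i\in S_0}m_1(\theta_{0,i},w_1)$ depending on $\theta_0$; the virtue of using $\beta(x,w)\le 1/w$ is precisely that the resulting estimate is uniform over $\theta_0\in\ell_0[s_n]$, which is exactly what the lemma asserts.
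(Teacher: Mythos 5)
Your proof is correct and follows essentially the same route as the paper: lower bound immediate from the definition of $w_1\in[w_0,1)$, and upper bound by combining a global upper bound on $m_1(\cdot,w)$ with the slow‑decay estimate $\tilde m(w)\geqa w^c$ from Lemma~\ref{lemmtilde}. The only cosmetic difference is that you rederive the one‑sided pointwise bound $\beta(x,w)\le 1/w$ from scratch, whereas the paper cites the two‑sided bound $|\beta(x,w)|\le 1/(w\wedge c_1)$ from Lemma~\ref{lembeta}; both yield the same conclusion $\sum_{i\in S_0}m_1(\theta_{0,i},w_1)\le \sigma_0/w_1$ for $w_1$ small, which is all that is used. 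One small point worth making explicit: the estimate $\tilde m(w)\ge c_g w^c$ is only stated for $w$ near $0$, so in your contradiction argument you should either first dispose of the regime $w_1\ge \omega$ (a fixed constant) by monotonicity of $w\mapsto w\tilde m(w)$, or note that $\tilde m(w_1)\ge\tilde m(n^{-r})\ge c_g n^{-rc}$ for large $n$ by monotonicity of $\tilde m$; either patch is trivial and the paper's phrasing implicitly covers it.
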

\begin{proof}
The lower bound follows from the definition of $w_0$ and $w_1$. For the upper bound, one uses  the definition of $w_1$ and the global bound $|m_1(\mu,w)|\le 1/(w\wedge c_1)$ (which follows from Lemma \ref{lembeta}) to get, 
\[ \frac{\sigma_0}{w_1\wedge c_1}\ge (1-\nu)(n-\sigma_0)\tilde{m}(w_1).\]
As $\tilde m$ is increasing and $\tilde{m}(w)\geqa w^c$ for arbitrary $c\in(0,1)$ (see Lemma \ref{lemmtilde}), one gets $(w_1\wedge c_1)^{1+c}\le C\sigma_0/n\le Cs_n/n$. Using $s_n\le n^{\upsilon}$ gives the result. 
\end{proof}

\begin{lemma}[Bernstein $w_0$]  \label{lembern_case1} 
There exist an integer $N_0=N_0(g,\upsilon)>0$ and $C_0=C_0(g)>0$ such that the following holds for all $n\geq N_0$ and $\theta_0\in \ell_0[s_n]$. Let $M\in[1,\log{n}]$ and  $w_0$ as in \eqref{defw0}. Let $\nu\in (0,1)$ and assume
 \eqref{spaeq_case1} (which is implied by the fact that \eqref{equw1} has no solution). 
Then the MMLE estimate $\hat w$ satisfies 
\begin{equation}\label{conccase1supp}
\P_{\theta_0}( \hat{w} > w_0) \leq e^{- C_0 \nu^2 n w_0 \tilde{m}(w_0)} = e^{- C_0 \nu^2M}.
\end{equation}
\end{lemma}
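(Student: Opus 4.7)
The plan is to reduce the event $\{\hat w > w_0\}$ to a one-sided deviation for the score process and then to apply Bernstein's inequality. Since $\cS$ is almost surely strictly decreasing on $[1/n,1]$ and $L$ is strictly concave there, on the event $\{\hat w > w_0\}$ the maximiser $\hat w$ must satisfy either $\cS(\hat w)=0$ (interior case) or $\hat w = 1$ (right-boundary case, where necessarily $\cS(1)\ge 0$); in both cases monotonicity of $\cS$ forces $\cS(w_0) > 0$ (the degenerate case $\hat w = 1/n$ does not occur since $w_0 \ge 1/n$ for $M$ in the prescribed range, by Lemma~\ref{lemw0}). Hence
\[ \P_{\theta_0}(\hat w > w_0) \ \le\  \P_{\theta_0}(\cS(w_0) \ge 0). \]

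Second, I would compute the mean of $\cS(w_0) = \sum_i \be(X_i,w_0)$ by splitting over the support $S_0$: because $\E_{\te_{0,i}}[\be(X_i,w)] = m_1(\te_{0,i},w)$ for $i\in S_0$ and $\E_0[\be(X_i,w)] = -\tilde m(w)$ for $i\notin S_0$,
\[ \E_{\theta_0}[\cS(w_0)] \ =\ \sum_{i\in S_0} m_1(\te_{0,i},w_0) \ -\ (n-\sigma_0)\tilde m(w_0). \]
Applying assumption \eqref{spaeq_case1} at $w=w_0$ gives $\E_{\theta_0}[\cS(w_0)] < -t_\star$ with $t_\star := \nu(n-\sigma_0)\tilde m(w_0)$, so the problem reduces to bounding $\P_{\theta_0}(\cS(w_0)-\E_{\theta_0}\cS(w_0)\ge t_\star)$ for a sum of independent centred random variables.

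Third, I would invoke Bernstein's inequality. The uniform envelope is $\|\be(\cdot,w_0)\|_\infty \le C/w_0$, which follows from $1+w\be\ge 1-w > 0$ together with $\be(x)\in[(g/\phi)(0)-1,\infty)\subset(-1,\infty)$. The variance bound $\var_{\theta_0}\cS(w_0) \leqa n\tilde m(w_0)/w_0$ follows from a uniform second-moment control $m_2(\mu,w_0)\leqa \tilde m(w_0)/w_0$ over $\mu\in\R$, which is provided by the moment lemmas of Section~\ref{sec:moment}. Bernstein then gives
\[ \P_{\theta_0}(\cS(w_0)\ge 0) \ \le\  \exp\!\Bigl(-\frac{t_\star^2/2}{C_1 n\tilde m(w_0)/w_0 \,+\, C_2 t_\star/(3w_0)}\Bigr). \]
Because $\sigma_0\le s_n\le n^{\upsilon}$ with $\upsilon<1$ ensures $(n-\sigma_0)\asymp n$, the sub-Gaussian term in the denominator contributes a factor of order $\nu^2 nw_0\tilde m(w_0) = \nu^2 M$ to the exponent, while the sub-exponential term contributes a factor of order $\nu M$; for $\nu\in(0,1)$ the sub-Gaussian term is the binding one and yields the announced bound $e^{-C_0\nu^2 M}$. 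The main obstacle is the sharp uniform second-moment estimate $m_2(\mu,w_0)\leqa \tilde m(w_0)/w_0$, which refines the corresponding lemmas in \cite{js04}; once it is in hand the remainder is a routine application of Bernstein.
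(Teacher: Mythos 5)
The overall skeleton of your argument (reduce $\{\hat w>w_0\}$ to $\{\cS(w_0)>0\}$, centre the score, apply Bernstein) is the right one and matches the paper, but there is a genuine and fatal gap in the variance step. You claim a \emph{uniform} second-moment bound $m_2(\mu,w_0)\leqa \tilde m(w_0)/w_0$ for all $\mu\in\R$, and attribute it to ``the moment lemmas of Section~\ref{sec:moment}.'' No such uniform bound exists, and it is in fact false: Lemma~\ref{lemdeux} gives $m_2(\mu,w)\leqa \ol\Phi(\zeta(w)-|\mu|)/w^2$, which for $|\mu|\geq \zeta(w)$ is of order $1/w^2$, while $\tilde m(w)/w = o(1/w^2)$ since $w\tilde m(w)\to 0$. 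So for the (up to $s_n$) indices with large $|\te_{0,i}|$, your claimed pointwise bound overshoots by the divergent factor $1/(w_0\tilde m(w_0))$, and $s_n/w_0^2$ can be far larger than $n\tilde m(w_0)/w_0=M/w_0^2$ when $s_n\gg M$ (recall $M\leq\log n$ but $s_n$ can be $n^\upsilon$). The Bernstein arithmetic you write at the end is fine \emph{if} the variance bound holds, but the variance bound is precisely where the work is.

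The correct treatment, and this is the point where the hypothesis \eqref{spaeq_case1} must enter a second time, is to split the sum $\sum_i m_2(\te_{0,i},w_0)$ according to whether $|\te_{0,i}|$ exceeds a fixed threshold $M_0$. For large signals one uses Corollary~\ref{m2m1}, i.e.\ $m_2(\mu,w)\leq C_2\, m_1(\mu,w)/w$, and then controls $\sum_{|\te_{0,i}|>M_0} m_1(\te_{0,i},w_0)$ using \eqref{spaeq_case1}, which gives an upper bound of order $n\tilde m(w_0)/w_0$. For small nonzero signals one uses Lemma~\ref{lemdeux} directly together with $s_n\leq n^\upsilon$ and $\zeta_0\leqa\sqrt{\log n}$ to show $s_n e^{M_0\zeta_0}/n\leq C$, giving a contribution $\leqa n\tilde m(w_0)/(\zeta_0^\kappa w_0)$. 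Only after this split, with \eqref{spaeq_case1} used in the variance estimate and not merely in centring the mean, do you get $V\leqa n\tilde m(w_0)/w_0$ and the announced $e^{-C_0\nu^2 M}$. Your proposal uses \eqref{spaeq_case1} only once, for the mean; it must also be used to bound the variance, and that is the step you are missing.
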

\begin{proof}[Proof of Lemma \ref{lembern_case1}]
One first notes the almost sure equality of  events $\{\hat w > w_0\} = \{\cS(w_0)>0\}$. This follows since 
 $\cS$ is (strictly) decreasing and continuous on $[0,1]$ (except in the case that $g(X_i)=\phi(X_i)$ for all $i$ which happens with probability $0$). Then,  with $P=P_{\te_0}, E=E_{\te_0}$ as shorthand,
\begin{align*}
\P(\hat{w} > w_0) &= \P(\cS(w_0) >0) = \P(\cS(w_0) - \E \cS(w_0)>- \E \cS(w_0))\\
&\leq \P(\cS(w_0) - \E \cS(w_0)> \nu (n-\sigma_0)  \tilde{m}(w_0) ), 
\end{align*}
as $\E \cS(w_0) =  \sum_{i\in S_0} m_1(\te_{0,i},w_0) - (n-\sigma_0)  \tilde{m}(w_0) < -  \nu (n-\sigma_0)  \tilde{m}(w_0)$ using \eqref{spaeq_case1}. 
Now, the score function equals $\cS(w_0)=\sum_{i=1}^n \beta(X_i,w_0)$, a sum of independent variables. One applies Bernstein's inequality (see Lemma~\ref{th:bernstein} and notation therein) to the variables $W_i=\beta(X_i,w_0)-\E\beta(X_i,w_0)$. 
Note that $|W_i|\leq 2/w_0=:\cM$ as $|\be|\le (w_0\wedge c_1)^{-1}=w_0^{-1}$ by Lemma \ref{lembeta} for $n$ large enough (indeed, $w_0$ goes to $0$ with $n$ by Lemma \ref{lemw0}).   
Also, 
\[ V :=\sum_{i=1}^n\var(W_i) \le \sum_{i=1}^n m_2(\te_{0,i},w_0).
 \]
 One splits the last sum in two. Consider $\zeta_0=\beta^{-1}(w_0^{-1})$ the pseudo-threshold associated to $w_0$.
  Using Corollary \ref{m2m1} (recall as noted above that $w_0$ goes to $0$ with $n$), with $M_0$ the constant therein, combined with \eqref{spaeq_case1}, one gets 
\begin{align*} 
 \sum_{i:\ |\te_{0,i}|>M_0} m_2(\te_{0,i},w_0)&\le  \frac{C_2}{w_0}\sum_{i:\ |\te_{0,i}|>M_0} m_1(\te_{0,i},w_0)\\
 & \le \frac{C_2}{w_0} (1-\nu ) (n-\sigma_0) \tilde{m}(w_0)  - \frac{C_2}{w_0}\sum_{i:\ |\te_{0,i}|\leq M_0} m_1(\te_{0,i},w_0)\\
 &\leq \frac{2 C_2}{w_0} (1-\nu ) n \tilde{m}(w_0),
 \end{align*}
 because $\mu\in\R_+\to m_1(\mu,w_0)$ is nondecreasing (see Lemma~\ref{lemm12}) and bounded from below by $-\tilde m(w_0)$.

For small non-zero signals, one uses  Lemma \ref{lemdeux} to get, with 
$\zeta_0:=\zeta(w_0)$,
 \[ \sum_{i:\ 0<|\te_{0,i}|\le M_0} m_2(\te_{0,i},w_0)\le C \sum_{i:\ 0<|\te_{0,i}|\le M_0} \frac{\ol\Phi(\zeta_0-|\te_{0,i}|) }{w_0^2}\le C\sigma_0 \frac{\ol\Phi(\zeta_0-M_0) }{w_0^2},\]
 and one uses $\ol\Phi(\zeta_0-M_0)\le C\phi(\zeta_0-M_0)/\zeta_0\le C' e^{M_0\zeta_0}\phi(\zeta_0)/\zeta_0$. With Lemma \ref{lemmtilde}, one gets  $\phi(\zeta)/\zeta\asymp w g(\zeta)/\zeta\asymp w\tilde{m}(w)/\zeta^\kappa$ for small $w$, so that 
\[ \sum_{i:\ |\te_{0,i}|\le M_0} m_2(\te_{0,i},w_0)\leqa \frac{s_n e^{M_0\zeta_0}}{n\zeta_0^\kappa } \frac{n \tilde{m}(w_0)}{w_0} \leqa \frac{n \tilde{m}(w_0)}{\zeta_0^\kappa w_0},\]
where we use that $s_n e^{M_0\zeta_0}/n\le C$, as follows from $s_n=O(n^\upsilon)$ and $\zeta_0^2\leqa \log{n}$ (combining Lemmas \ref{lemw0} on $w_0$ and Lemma~\ref{lemzetagen}). 
With $A = (n-\sigma_0)\nu\tilde{m}(w_0)$, one gets, for $n\ge N_0$,
\[ \frac{V+\frac13 \cM A}{A^2} \leqa 
\frac{\nu^{-2}}{nw_0\tilde m(w_0)} + \frac{ \nu^{-2}}{n w_0 \tilde m(w_0) \zeta_0^\kappa}  
\leqa \frac{\nu^{-2}}{n w_0 \tilde m(w_0)},\]
An application of Bernstein's inequality (see Lemma~\ref{th:bernstein}) now gives \eqref{conccase1supp}.
\end{proof}
 
\begin{lemma}[Bernstein $w_1, w_2$] \label{lembernw12}
There exist an integer $N_0=N_0(g,\upsilon)>0$ and $C_1=C_1(g)>0$ such that the following holds for all $n\geq N_0$ and $\theta_0\in \ell_0[s_n]$:
for $\nu\in(0,1)$, suppose that a solution $w_1$ of \eqref{dob1} exists, and let $w_2$ be the solution of \eqref{equw2}. Then the MMLE estimate $\hat w$ satisfies  
\begin{equation}\label{concw2}
\P_{\theta_0}( \hat{w} \notin [w_2, w_1]) \leq e^{- C_1\nu^2 n w_1 \tilde{m}(w_1)}+ e^{- C_1\nu^2 n w_2 \tilde{m}(w_2)}.
\end{equation}
\end{lemma}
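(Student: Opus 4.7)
The plan is to mimic the argument of Lemma \ref{lembern_case1}, applying Bernstein's inequality twice to the score $\cS(w) = \sum_{i=1}^n \be(X_i,w)$, once at $w_1$ and once at $w_2$. The key observation is that the (a.s.) strict concavity of $L$ and the monotonicity of $\cS$ give the almost sure identity $\{\hat w \notin [w_2,w_1]\} = \{\cS(w_1)>0\}\cup\{\cS(w_2)<0\}$ (up to the probability-zero event that the maximizer is attained at the boundary $1/n$ or $1$, which can be handled using Lemmas \ref{lemw0} and \ref{lemw1} to check $w_0,w_1 \in (1/n,1)$ for $n$ large). A union bound then reduces the task to controlling each of the two events separately.

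I would first compute the expectations. Using \eqref{equ:score}, \eqref{mtilde} and \eqref{m1},
\[
\E_{\theta_0}\cS(w) = \sum_{i\in S_0} m_1(\te_{0,i},w) - (n-\sigma_0)\tilde m(w),
\]
so by the defining equations \eqref{dob1} and \eqref{equw2} of $w_1$ and $w_2$ respectively,
\[
\E_{\theta_0}\cS(w_1) = -\nu(n-\sigma_0)\tilde m(w_1),\qquad \E_{\theta_0}\cS(w_2) = +\nu(n-\sigma_0)\tilde m(w_2).
\]
Hence $\P_{\te_0}(\cS(w_1)>0) = \P_{\te_0}(\cS(w_1)-\E_{\te_0}\cS(w_1) > A_1)$ with $A_1 = \nu(n-\sigma_0)\tilde m(w_1)$, and symmetrically for $w_2$ with $A_2 = \nu(n-\sigma_0)\tilde m(w_2)$.

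I would then apply Bernstein's inequality (Lemma \ref{th:bernstein}) to the independent, centered variables $W_i = \be(X_i,w_j) - \E_{\te_0}\be(X_i,w_j)$, $j\in\{1,2\}$. By Lemma \ref{lembeta}, $|W_i|\le 2/w_j =: \cM_j$ for $n$ large enough, since $w_1,w_2 \to 0$ (Lemmas \ref{lemw0} and \ref{lemw1}). The variance is bounded by $V_j \le \sum_{i=1}^n m_2(\te_{0,i},w_j)$, which I would split exactly as in the proof of Lemma \ref{lembern_case1}: for large signals $|\te_{0,i}|>M_0$, Corollary \ref{m2m1} gives $m_2 \le (C_2/w_j) m_1$, and summing and combining with the defining equation of $w_j$ (which bounds $\sum_{i\in S_0} m_1(\te_{0,i},w_j)$ by $(1+\nu)(n-\sigma_0)\tilde m(w_j)$) yields a contribution $\leqa n\tilde m(w_j)/w_j$; for small non-zero signals $|\te_{0,i}|\le M_0$, Lemma \ref{lemdeux} together with $\ol\Phi(\zeta(w_j)-M_0) \leqa e^{M_0\zeta(w_j)}\phi(\zeta(w_j))/\zeta(w_j)$ and $s_n e^{M_0\zeta(w_j)}\le Cn$ (using $s_n\le n^\upsilon$ and $\zeta(w_j)^2\leqa \log n$, which follows from Lemmas \ref{lemw0}, \ref{lemw1}, \ref{lemzetagen}) yields a contribution of lower order. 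This gives $V_j \leqa n\tilde m(w_j)/w_j$, and hence
\[
\frac{V_j + \tfrac{1}{3}\cM_j A_j}{A_j^2} \;\leqa\; \frac{\nu^{-2}}{n\, w_j\, \tilde m(w_j)},
\]
so Bernstein's inequality yields $\P_{\te_0}(\cS(w_1)>0) \le e^{-C_1\nu^2 n w_1\tilde m(w_1)}$ and symmetrically for $w_2$, which combined by union bound yields \eqref{concw2}.

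The main obstacle I anticipate is the variance control step: unlike in Lemma \ref{lembern_case1}, where \eqref{spaeq_case1} directly provided an upper bound on $\sum m_1(\te_{0,i},w_0)$, here at $w=w_2$ the defining equation gives an \emph{equality} $\sum m_1 = (1+\nu)(n-\sigma_0)\tilde m(w_2)$, so the corresponding inequality $\sum m_1 \le (1+\nu)(n-\sigma_0)\tilde m(w_2)$ needed in the Corollary \ref{m2m1}-based splitting is immediate; at $w=w_1$, the equation gives $\sum m_1 = (1-\nu)(n-\sigma_0)\tilde m(w_1) \le (n-\sigma_0)\tilde m(w_1)$ which suffices. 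The delicate part is ensuring that the small-signal contribution remains negligible uniformly over $\te_0\in\ell_0[s_n]$, which crucially requires the $s_n\le n^\upsilon$ assumption to absorb the $e^{M_0\zeta(w_j)}$ factor, and the sharp estimate $\tilde m(w)\asymp \zeta(w)^{\kappa-1}g(\zeta(w))$ from Lemma \ref{lemmtilde} to convert the small-signal bound into a multiple of $n\tilde m(w_j)/w_j$.
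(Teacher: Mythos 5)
Your proposal follows essentially the same structure as the paper's proof: reduce $\{\hat w\notin[w_2,w_1]\}$ to $\{\cS(w_1)>0\}\cup\{\cS(w_2)<0\}$ via monotonicity of $\cS$, compute the expectations from the defining equations \eqref{dob1}--\eqref{equw2}, and apply Bernstein's inequality with the variance split from Lemma \ref{lembern_case1}. The $w_1$ side is fine: since $w_1\ge w_0$ by definition and Lemma \ref{lemw0} lower-bounds $w_0$, you indeed get $\zeta(w_1)\le\zeta(w_0)\leqa\sqrt{\log n}$ via Lemma \ref{lemzetagen}.

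However, there is a gap on the $w_2$ side. You assert that $\zeta(w_2)^2\leqa\log n$ follows from Lemmas \ref{lemw0}, \ref{lemw1}, \ref{lemzetagen}. It does not: those lemmas give $w_0\le w_1\le n^{-r}$ but say nothing about a \emph{lower} bound for $w_2$, and a priori $w_2$ is only known to lie in $(0,w_1)$. If $w_2$ were, say, $e^{-n^{1/2}}$, then $\zeta(w_2)$ would be polynomially large in $n$ and the factor $s_ne^{M_0\zeta(w_2)}$ in the small-signal variance contribution would dominate $n$, breaking the bound $V_2\leqa n\tilde m(w_2)/w_2$. The paper's proof closes this by invoking Lemma \ref{lemw1w2} (whose proof is independent of the present lemma, so no circularity): it gives $w_2\geq w_1/C\geq w_0/C\geqa 1/n$, hence $\zeta(w_2)\leqa\sqrt{\log n}$. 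You need that ingredient; without it the variance estimate at $w_2$ is not uniform over sparse $\te_0$.
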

\begin{proof}
One bounds successively each of the probabilities $\P(\hat w>w_1)$ 
and $\P(\hat w<w_2)$. The first bound is obtained in exactly the same way as in the proof of Lemma \ref{lembern_case1}, with $w_0$ replacing $w_1$. We note the two minor differences: $\E \cS(w_1)=\sum_{i\in S_0} m_1(\te_{0,i},w_1) - (n-\sigma_0)  \tilde{m}(w_1)$ now {\em equals} 
$-  \nu (n-\sigma_0)  \tilde{m}(w_1)$ by the definition \eqref{dob1} of $w_1$. 
Then bounds on $m_2$ can be carried out in the same way -- now evaluated at $w=w_1$ -- as in the proof of Lemma \ref{lembern_case1}. We note that $w_1$ goes to zero with $n$ by Lemma \ref{lemw1}.
This means that we can use the bounds of Lemma \ref{lemdeux} and Corollary \ref{m2m1} as in the proof of Lemma \ref{lembern_case1}. Further, if $\zeta_1:=\zeta(w_1)$, we have $\zeta_1\le \zeta_0$, so one also has $s_ne^{M_0\zeta_1}/n\le C$ using the corresponding bound for $\zeta_0$. This shows the desired result for $w_1$.

For $w_2$, one proceeds similarly.
If $w_2=0$ the result is immediate. 
Otherwise we have $\{\hat w < w_2\} = \{\cS(w_2)<0\}$.   Again, one applies Bernstein's inequality to the score function $\cS(w)=\sum_{i=1}^n \beta(X_i,w)$ and set $W_i=\beta(X_i,w_2)-m_1(\te_{0,i},w_2)$.  
As $W_i$ are centered independent variables with $|W_i|\le \cM$ and $\sum_{i=1}^n\text{Var}(W_i)\le \sum_{i=1}^n E[\beta(X_i,w_2)^2] =: V_2$,  for any $B>0$,
\[ P\left[ \sum_{i=1}^n W_i <-B\right] \le \exp\{-\frac12 B^2/(V_2+\frac13 MB) \}.\]
One can take $\cM=c_3/w$, using Lemma \ref{lembeta}. Set $B = \sum_{i=1}^n m_1(\te_{0,i},w_1)$. 
By definition of $w_2$ in \eqref{equw2}, we have
\[ B=-(n-\sigma_0)\tilde m(w_2) + \sum_{i\in S_0} m_1(\te_{0,i},w_2) 
= \nu(n-\sigma_0)\tilde m(w_2). \] 
The term $V_2$ is bounded in a similar way as in the proof of Lemma \ref{lembern_case1}, using  the bounds of Lemma \ref{lemdeux} and Corollary \ref{m2m1}. As for $w_1$ above, one notes that, if $\zeta_{2}=\zeta(w_2)$,  one has $s_ne^{M_0\zeta_2}/n\le C$ as, using Lemma \ref{lemw1w2}, we have $w_1\leqa w_2$, so that $w_2\geqa 1/n$ and $\zeta_2\leqa \sqrt{\log n}$. 
One obtains $V_2\leqa (nw_2\tilde{m}(w_2))^{-1}$ which leads to 
\[ \frac{V_2+\frac13 \cM B}{B^2} \leqa \frac{\nu^{-2}}{n w_2 \tilde m(w_2)},\]
and the desired bound on $w_2$ is obtained. 
\end{proof}

\begin{lemma} \label{lemw1w2}
Let $\nu\in(0,1)$. There exist some integer $N=N(\nu,\upsilon,g)>0$ and $C=C(\nu,\upsilon,g)>1$ such that, for all  $n\geq N$ and $\theta_0\in \ell_0[s_n]$, if \eqref{dob1} has a solution $w_1$, the solution $w_2$ of \eqref{equw2} verifies 
\begin{align}\label{w1w2bounding}
w_1/C \leq  w_2\leq w_1.
\end{align}
\end{lemma}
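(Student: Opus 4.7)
The plan is to handle each direction separately. For the \emph{upper bound} $w_2 \leq w_1$: the function $F(w) = \sum_{i\in S_0} m_1(\theta_{0,i}, w) - (1+\nu)(n-\sigma_0)\tilde m(w)$ is continuous and strictly decreasing on $(0,1)$, since by Lemma \ref{lemm12}, $w \mapsto m_1(\theta, w)$ is decreasing and $w \mapsto \tilde m(w)$ is increasing. From \eqref{dob1}, $F(w_1) = -2\nu(n-\sigma_0)\tilde m(w_1) < 0$, so the zero $w_2$ of $F$ must satisfy $w_2 < w_1$.

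For the \emph{lower bound} $w_2 \geq w_1/C$, my plan is to show $F(w_1/C) > 0$ for some constant $C = C(\nu, \upsilon, g)$, which by monotonicity of $F$ forces $w_2 > w_1/C$. The identities $\partial_w m_1(\theta, w) = -m_2(\theta, w)$ and $\partial_w \tilde m(w) = m_2(0, w)$ follow by differentiating $\beta(x,w) = \beta(x)/(1+w\beta(x))$ inside the integral. They give $m_1(\theta, w_1/C) = m_1(\theta, w_1) + \int_{w_1/C}^{w_1} m_2(\theta, u)\, du$ and likewise for $\tilde m$. Substituting into $F(w_1/C)$ and applying \eqref{dob1} at $w_1$:
\[F(w_1/C) = -2\nu(n-\sigma_0)\tilde m(w_1) + \int_{w_1/C}^{w_1} \Bigl[\sum_{i \in S_0} m_2(\theta_{0,i}, u) + (1+\nu)(n-\sigma_0)\, m_2(0, u)\Bigr] du.\]
Dropping the nonnegative $\sum m_2$ term and invoking a uniform lower bound of the form $m_2(0, u) \geq c_1 \tilde m(u)/u$ for small $u$, one obtains $\int_{w_1/C}^{w_1} m_2(0,u)\, du \geq c_1 \tilde m(w_1/C) \log C \geq (c_1/2) \tilde m(w_1) \log C$ for $n$ large, using monotonicity of $\tilde m$ together with Lemma \ref{lemTmuw} which ensures $\tilde m(w_1/C) \geq \tilde m(w_1)/2$. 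This produces
\[F(w_1/C) \geq (n-\sigma_0)\tilde m(w_1)\bigl[-2\nu + (1+\nu)(c_1/2) \log C\bigr],\]
which is positive as soon as $C \geq \exp(4\nu/[c_1(1+\nu)])$, a constant depending only on $\nu$ and $g$.

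The hard part will be establishing the matching lower bound $m_2(0, u) \geq c_1 \tilde m(u)/u$ for small $u$: this is a genuine two-sided moment estimate, in contrast to Corollary \ref{m2m1} which only supplies an upper bound on $m_2$. I expect it to follow from sharp asymptotics showing that both $\tilde m(u)$ and $u\, m_2(0, u)$ receive their main contribution from $|x| \asymp \zeta(u)$, where $\beta(x,u) \asymp 1/u$, in the spirit of the refined versions of Johnstone--Silverman-type moment lemmas announced in the supplementary material. A secondary concern is that Lemma \ref{lemTmuw} may only give $\tilde m(w/C)/\tilde m(w) \geq 1/2$ for $C$ within a prescribed range $[1, c_0]$; if the value $\exp(4\nu/[c_1(1+\nu)])$ exceeds $c_0$, one chains the bound across successive factors of $c_0$, at the cost of enlarging the final constant $C$.
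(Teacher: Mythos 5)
The upper bound $w_2\le w_1$ is correct and is exactly the observation the paper also makes, and the identities $\partial_w m_1(\mu,w)=-m_2(\mu,w)$, $\partial_w\tilde m(w)=m_2(0,w)$ together with the fundamental-theorem-of-calculus rewriting of $F(w_1/C)$ are valid and rather elegant. However, the step on which the whole lower bound rests --- dropping $\sum_{i\in S_0}m_2(\theta_{0,i},u)$ and winning from $(1+\nu)(n-\sigma_0)m_2(0,u)$ alone --- fails, because the moment estimate $m_2(0,u)\geq c_1\tilde m(u)/u$ is \emph{false}. From the proof of Lemma \ref{lemmtilde}, $\tilde m(u)\asymp \ol G(\zeta(u))\asymp \zeta(u)^{\kappa-1}g(\zeta(u))$, whereas $m_2(0,u)=\int\beta(x,u)^2\phi(x)dx\asymp \ol\Phi(\zeta(u))/u^2\asymp g(\zeta(u))/(u\,\zeta(u))$ (the Gaussian tail is localized in a window of width $\asymp 1/\zeta$, while $\ol G$ has heavy tails). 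Hence
\[
m_2(0,u)\ \asymp\ \frac{\tilde m(u)}{u\,\zeta(u)^{\kappa}},
\]
which is smaller than $\tilde m(u)/u$ by a factor $\zeta(u)^\kappa\to\infty$. Your heuristic that both $u\,m_2(0,u)$ and $\tilde m(u)$ receive their main contribution from $|x|\asymp\zeta$ is misleading: for $\tilde m$ the dominant piece is the full heavy tail $\int_\zeta^\infty g$, while for $m_2$ it is a Gaussian-localized window around $\zeta$, and the two are of genuinely different orders precisely by the factor $\zeta^\kappa$.

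Consequently $(1+\nu)(n-\sigma_0)\int_{w_1/C}^{w_1}m_2(0,u)\,du\asymp (n-\sigma_0)\tilde m(w_1)\zeta(w_1)^{-\kappa}\log C$, which cannot offset $2\nu(n-\sigma_0)\tilde m(w_1)$ for any fixed $C$ once $\zeta(w_1)\to\infty$. The term you discard, $\int_{w_1/C}^{w_1}\sum_{i\in S_0}m_2(\theta_{0,i},u)\,du$, is the one that actually carries the argument. But bounding it from below by (a constant times) $(n-\sigma_0)\tilde m(w_1)$ requires controlling $m_2(\theta_{0,i},u)$ against $m_1(\theta_{0,i},w_1)$ in a signal-dependent way, and this comparison is only of the right order for $|\theta_{0,i}|$ comparable to or larger than $\zeta/K$ (the ratio degrades by $T_{\theta_{0,i}}(w)$ for smaller signals). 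One is then forced back to separating large from small signals and tracking the growth of the resulting signal-carrying sum as $w$ shrinks --- exactly the role of Lemmas \ref{lemsmallsignal} and \ref{lemhcirc} in the paper's proof via the quantity $H_{\theta_0}$. So while your FTC decomposition is a clean starting point, it does not in itself bypass the key difficulty, and the specific shortcut you propose rests on a moment inequality that is false by a divergent polylogarithmic factor.
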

\begin{proof}
The behaviour of $w_1, w_2$ for a given specific true signal $\te_0$ is determined through properties of the function
$$H_{\te_0}(w)= \sum_{i\in S_0} m_1(\te_{0,i},w)/\tilde{m}(w).$$ This function is decreasing, as $w\to m_1(\te_{0,i},w), 1\le i\le n,$ and $w\to \tilde{m}(w)^{-1}$ both are, by Lemma~\ref{lemm12}. 
It suffices to show that
 for an appropriately large constant $z\ge 1$ (possibly depending on  $\upsilon,g,\nu$), for $n$ large enough, 
 \begin{equation} \label{proph}
H_{\te_0}\left(\frac{w_1}{z}\right) \ge \frac{1+\nu}{1-\nu} H_{\te_0}(w_1),
\end{equation}
Indeed, by definition of $w_1, w_2$, one has $H_{\te_0}(w_2) = (1+\nu)(n-\sigma_0)=(1+\nu)(1-\nu)^{-1}H_{\te_0}(w_1)$. 
 So, if \eqref{proph} holds, $H_{\te_0}(w_2)\le H_{\te_0}(w_1/z)$ which in turn yields $w_2\ge w_1/z$ by monotonicity. 

Now, \eqref{proph} is obtained in two steps. First, one shows that appropriately small signals do not contribute too much to the sum defining $H_{\te_0}$, so that one can replace the sum in \eqref{proph} by a sum $H_{\te_0}^\circ$, to be defined now, on large signals only. For $w\in(0,1)$ and $K>1$,  set $\cC_0(w,K)=\{1\leq i\leq n\::\: |\theta_{0,i}|\geq \zeta(w)/K\}$ and 
\[ H^\circ_{\te_0}(w, K) = \sum_{i\in \cC_0(w,K)} m_1(\te_{0,i},w)/\tilde m(w). \]
Set $K_2=4/(1-\upsilon)$.  
By Lemmas~\ref{lemw0} and~\ref{lemw1}, both $w_1$ and  $w_1/z$ belong to the interval $[1/n,1/\log n]$, provided $z\leqa (\log n)^{1/4}$ (which will be the case below). 
Let us now use, with $K_1=K_2/2$ and $D>0$, both Lemmas \ref{lemsmallsignal} and \ref{lemhcirc}, and $z=z(\nu,\upsilon, g)$ a constant to be chosen below,
\begin{align*}
H_{\te_0}\Big(\frac{w_1}{z}\Big) & = H_{\te_0}^\circ\Big(\frac{w_1}{z}, K_2\Big) + 
H_{\te_0}\Big(\frac{w_1}{z}\Big)-H_{\te_0}^\circ\Big(\frac{w_1}{z}, K_2\Big)\\
& \ge   Cz^{1/(2K_2)} H_{\te_0}^\circ(w_1,K_2/1.1) - C'n^{1-D}\\
& \ge Cz^{(1-\upsilon)/8} H_{\te_0}^\circ(w_1, K_1) - C'n^{1-D},
\end{align*}
where in the last inequality one uses that $K\to H_{\te_0}^\circ(w,K)$ is nondecreasing by definition. 
Using Lemma \ref{lemsmallsignal} again now shows that, for $D>0$, 
\[ |H_{\te_0}(w_1) - H_{\te_0}^\circ(w_1, K_1)| \le C'n^{1-D}.\]
One deduces that, for $C$ the constant in the one but last display,
\[ H_{\te_0}\Big(\frac{w_1}{z}\Big) \ge Cz^{(1-\upsilon)/8} H_{\te_0}(w_1) +o(n).\]
Since $H_{\te_0}(w_1)\asymp n$ by definition of $w_1$, the latter is bounded from below by $(C/2)z^{(1-\upsilon)/8}H_{\te_0}(w_1)$ for $n$ large enough. Taking $z=\{\max((2/C),1)(1+\nu)/(1-\nu)\}^{8/(1-\upsilon)}$  shows \eqref{proph} and the proof is complete.
\end{proof}

\section{Auxiliary proofs}

\subsection{Proof of Proposition~\ref{prop:BFDR}}\label{sec:prop:BFDR}

For any multiple testing procedure $\vphi$, 
\begin{align*}
\BFDR(\vphi;w,\gamma)=\int_{\R^n} \FDR(\theta,\vphi) d\Pi_{w,\gamma}(\theta)  &= \E_{X,\theta} \left[\frac{\sum_{i=1}^n \ind{\theta_{i}=0} \vphi_i}{1\vee \sum_{i=1}^n  \vphi_i}\right].
\end{align*}
For $\vphi^\l$, using the chain rule $E[\cdot]= E[E[\cdot\given X]]$, one gets
\begin{align*}
\BFDR(\vphi^\l;w,\gamma) &=\E_{X} \left[\frac{\sum_{i=1}^n \l_i(X) \vphi^\l_i}{1\vee \sum_{i=1}^n  \vphi^\l_i}\right]=\E_{X} \left[\frac{\sum_{i=1}^n \l_i(X) \mathds{1}_{\{\l_i(X)\leq \alpha\}}}{1\vee \sum_{i=1}^n  \mathds{1}_{\{\l_i(X)\leq \alpha\}}}\right]\\
&\leq \alpha \:\P(\exists i\::\: \l_i(X)\leq \alpha).
\end{align*} 
For $\vphi^q$, conditioning this time on the variables $\vphi_1^q(X),\ldots,\vphi_n^q(X)$ and using that for the prior $\Pi_{w,g}$ the conditional distribution of $\te_i\given X$ only depends on $X_i$ for all $i$, so that $E[\ind{\theta_{i}=0}\given \vphi_1^q,\ldots,\vphi_n^q]\vphi_i^q = P(\te_i=0\given \vphi_i^q=1)\vphi^q_i$ a.s., one obtains
\begin{align*}
\BFDR(\vphi^q;w,\gamma)
& = \E_{X} \left[\frac{\sum_{i=1}^n \P(\theta_{i}=0\:|\: \vphi_i^q=1) \vphi_i^q}{1\vee \sum_{i=1}^n  \vphi_i^q}\right]\\
&=\E_{X} \left[\frac{\sum_{i=1}^n \P(\theta_{i}=0\:|\: q_i(X)\leq \alpha) \mathds{1}_{\{q_i(X)\leq \alpha\}}}{1\vee \sum_{i=1}^n  \mathds{1}_{\{q_i(X)\leq \alpha\}}}\right].
\end{align*}
Now observe that from \eqref{increasing2}, $q_i(X)\leq \alpha$ if and only if $|X_i|\geq \Psi(\alpha)$, for some function $\Psi$ such that $q(\Psi(\alpha);w,g)=\alpha$ (namely, $\Psi$ is the inverse of $u\in (0,\infty)\to q(u;w,g)$). Now, the result follows from
$$
\P(\theta_{i}=0\:|\: q_i(X)\leq \alpha) = \P(\theta_i=0 \:|\: |X_i|\geq \Psi(\alpha)) = q(\Psi(\alpha);w,g)=\alpha.
$$
Finally, the relation between \eqref{equBFDRlval} and \eqref{equBFDRqval} comes from Lemma~\ref{lem:qlvalue}.

\subsection{Proof of Proposition~\ref{prop:typeIerror}}\label{sec:prop:typeIerror}
For the $\ell$-value part, we use Lemma~\ref{bphi}:
 \begin{align*}
\P_{\theta_0=0}(\l_i(X)\leq t) &= 2 \ol{\Phi}\left(\xi(r(w,t))\right)\leq 2 \frac{\phi\left(\xi(r(w,t))\right)}{\xi(r(w,t))},
\end{align*}
which provides \eqref{ubtypeIerrorlvalue} because $\phi\left(\xi(r(w,t))\right)=r(w,t) g\left(\xi(r(w,t))\right)$ by definition of $\xi(\cdot)$. Next, if $\xi(r(w,t))\geq 1$, that is if $r(w,t)\le (\phi/g)(1)$ using \eqref{equxi},
\begin{align*}
\P_{\theta_0=0}(\l_i(X)\leq t) 
&\geq  \frac{\phi\left(\xi(r(w,t))\right)}{\xi(r(w,t))},
\end{align*}
which provides \eqref{lbtypeIerrorlvalue}. 
The $q$-values part follows from the definition of $\chi$. 

\subsection{Proof of Theorem~\ref{theorem:psharpFDRcontrol}}

We prove the result first for {\tt EBayesq}.
Recall that the exact number of nonzero coefficients  $\sigma_0$ of $\te_0$ is $s_n$ by definition of $\cL_0[s_n]$. 
Set $b=(a+1)/2>1$ and let $\cA$ be the event, for $K_n< s_n$ to be specified below,
\[ \cA=\left\{ \# \{i\in S_0,\ |X_i| > b\{2\log (n/s_n)\}^{1/2}\}\ge s_n-K_n\right\}. \]
If $\cA^c$ denotes the complement of $\cA$,
\begin{align*}
  \cA^c & =\left\{ \# \{i\in S_0,\ |X_i| > b\{2\log (n/s_n)\}^{1/2}\} < s_n-K_n\right\} \\
  & = \left\{ \# \{i\in S_0,\ |X_i| \le b\{2\log (n/s_n)\}^{1/2}\} >  K_n\right\} \\
  & \subset 
  \left\{ \# \{i\in S_0,\ |\veps_i| > (a-b)\{2\log (n/s_n)\}^{1/2}\} > K_n\right\}=:\cC, 
\end{align*}  
where we have used $X_i=\te_{0,i}+\veps_i$ to get $|\veps_i|\ge |\te_{0,i}|-|X_i|$ by the triangle inequality. Let $c=\sqrt{2}(a-b)>0$.  
By looking at the indicator variables $Z_i=\1_{|\veps_i|\ge x_n}$ with $x_n= c\{\log (n/s_n)\}^{1/2}$, one can translate the event $\cC$ in the last display into an event for a binomial trial, leading to 
\[ \sup_{\te_0\in\cL_0[s_n]} P_{\te_0}[\cA^c] \le P\left[ \text{Bin}(s_n, 2\ol{\Phi}(x_n))>K_n\right].\]
Let $p_n=2\ol{\Phi}(x_n)$, then using the expression of $x_n$ above,
\[ p_n \le 2\phi(x_n)/x_n \le C(s_n/n)^{c^2/2}/(c\sqrt{\log(n/s_n)}),\]
which goes to $0$ with $n$ as $s_n/n\to 0$.  

Let $K_n=\max(2s_np_n,s_n/\log{s_n})$.
By Bernstein's inequality, see Lemma \ref{th:bernstein}, as $K_n\ge 2s_np_n$ and $\sum_{i\in S_0}\text{Var}(Z_i)\le s_n p_n$,
\[ P\Bigg[\sum_{i\in S_0} Z_i >K_n\Bigg]\le P\Bigg[\sum_{i\in S_0} (Z_i-p_n) >K_n/2\Bigg]\le \exp\Bigg\{-\frac18 \frac{K_n^2}{K_n/6+s_np_n}\Bigg\},\] 
which is less, using $s_np_n\le K_n/2$ again, than $\exp(-CK_n)$, which goes to $0$ with $n$, since $K_n\ge s_n/\log{s_n}\to\infty$. So, we have obtained $P_{\te_0}[\cA^c]=o(1)$, uniformly over $\te_0\in\cL_0[s_n]$. 

Now one can follow the proof of Theorems~\ref{th1}~and~\ref{th2} and consider the fundamental equation \eqref{equw1}, for some fixed $\theta_0 \in \cL_0[s_n]$, and $n$ large enough. The lower bound on $w$ is given here by $w_0$ in \eqref{defw0}, for some $M=M_n$ that we choose as $M_n=\min(c_0 s_n,\log{n})$, so that $M_n\to\infty$ and $c_0$ a small enough constant to be chosen below.

Consider both sides of the equation \eqref{equw1} at the point $w=s_n/n$. On the one hand, by definition of $\cL_0[s_n]$, we have $|\te_{0,i}|\ge a\{2\log(n/s_n)\}^{1/2}$ for $i\in S_0$. Lemma \ref{lemzetagen} implies $\zeta(s_n/n)\sim \{2\log(n/s_n)\}^{1/2}$, so one can apply Lemma \ref{m1binflargesignal} (recall $\mu\to m_1(\mu,w)$ is even for all $w$) for a small $\veps>0$ to get, for large enough $n$,
\[ \sum_{i\in S_0} m_1(\te_{0,i},s_n/n) \ge (1-\veps) \frac{s_n}{(s_n/n)}=(1-\veps)n. \]
On the other hand, the right hand side of \eqref{equw1} equals $(1-\nu)(n-s_n)\tilde{m}(s_n/n)= o(n)$, since $\tilde m(w)$ goes to $0$ as $w\to 0$. 
Recall that $\sum_{i\in S_0}m_1(\te_{0,i},1)$ is bounded from above by a constant times $s_n$ (as $m_1(\te_{0,i},1)$ is bounded, see Section~\ref{sec:nosolution}) and that $(1-\nu)n\tilde{m}(1)$ is of the order $n$. Combining the previous inequalities, the intermediate values theorem shows that \eqref{equw1} has a solution, at least on $[s_n/n,1)$, for $n$ large enough. 

To show that $w_1$ exists, it is enough to check that the solution also belongs to $[w_0,1)$. We distinguish two cases. If $w_0\le s_n/n$ then this is obvious by definition. In case $w_0>s_n/n$, let us evaluate both sides of \eqref{equw1} this time at $w=w_0$. First, using the second display of Lemma \ref{lemw0} (compatible with the present choice on $M_n=\min(c_0 s_n,\log{n})$) combined with Lemma \ref{lemzetagen} on $\zeta$, one gets,  for arbitrary $\veps>0$ and using $w_0>s_n/n$, that 
\[ \zeta(w_0)\le (1+\veps)\sqrt{2\log(1/w_0)}\le (1+\veps)\sqrt{2\log(n/s_n)},\] 
for large enough $n$. Deduce that one can apply Lemma \ref{m1binflargesignal} as $(1+\rho)\zeta(w_0)\le |\te_{0,i}|$ for small enough $\rho$. In particular
\[ \sum_{i\in S_0} m_1(\te_{0,i},w_0) \ge (1-\veps) \frac{s_n}{w_0}. \]
On the other hand, the right hand side of \eqref{equw1} is $(1-\nu)(n-s_n)\tilde{m}(w_0) = (1-\nu)\{(n-s_n)/n\} M_n/w_0$ by definition of $w_0$. As $M_n\le c_0s_n$,  this quantity is thus smaller than the last display, provided $c_0$  is small enough. By the same reasoning as above, this shows that the solution to \eqref{equw1} indeed belongs to $[w_0,1)$, so $w_1$ exists.   

Now that we have the existence of $w_1$, the fact that $w=s_n/n$ cannot be a solution of \eqref{equw1} (for $n$ large enough) and the monotonicity of both sides of \eqref{equw1} show that $w_1\ge s_n/n$, for $n$ large enough. Using the same argument with equation \eqref{equw2} leads to $w_2\ge s_n/n$, for $n$ large enough.

As \eqref{equw1} has a solution, we can use the properties of the proof of Section~\ref{sec:proof:th1} in this case (referred to as Case 2 in that proof). In particular,  \eqref{equuseful} provides  for some constant $C>0$,
$$
\sup_{\theta_0 \in \cL_0[s_n]}P_{\theta_0}( \hat{w} \notin [w_2, w_1]) \leq 2e^{-CM_n}.
$$

Let us introduce the event $\Omega_0 = \cA \cap \{\hat w \in[w_2,w_1]\}$. By the previous bounds, we have $P_{\te_0}[\Omega_0^c]=o(1)$, uniformly over $\te_0\in\cL_0[s_n]$.  
Note that, on the event $\Omega_0$,
\[ \chi(r(\hat{w},t)) \leq  \zeta(\hat w)\leq \zeta(w_2) 
\]
 using  Lemma~\ref{propchizeta} and the monotonicity of $\zeta(\cdot)$. We have seen that here  $w_2\ge s_n/n$, so $\zeta(w_2)\le \zeta(s_n/n)$ and combining  with the equivalent of $\zeta(w)$ as $w\to 0$ from Lemma~\ref{lemzetagen}, one finally gets $\chi(r(\hat{w},t)) \le c(2\log(n/s_n))^{1/2}$ for any $c>1$ for $n$ large enough, so in particular for $c=b$ as defined above.  
One deduces that on $\Omega_0$, the $q$-value procedure  
  $\vphi^{\mbox{\tiny $q$-val}}$
rejects the null hypotheses corresponding to the (at least $s_n-K_n$) indexes $i$ in $S_0$ such that  $|X_i| > b\{2\log (n/s_n)\}^{1/2}$, because $b\{2\log (n/s_n)\}^{1/2}\geq \chi(r(\hat{w},t))$ by using the previous bounds and the definition of the event $\cA$. 

Combining the above facts, we obtain
\begin{align*}
&\sup_{\theta_0 \in \cL_0[s_n]} \FDR(\te_0,\vphi^{\mbox{\tiny $q$-val}}(t;\hat{w},g)) \\
&= \sup_{\theta_0 \in \cL_0[s_n]}  \E_{\te_0} \left[ \frac{\sum_{i=1}^n \ind{\theta_{0,i}=0} \vphi^{\mbox{\tiny $q$-val}}(t;\hat{w},g)}{1\vee \sum_{i=1}^n \vphi^{\mbox{\tiny $q$-val}}(t;\hat{w},g)} \right]\\
&\leq \sup_{\theta_0 \in \cL_0[s_n]}  \E_{\te_0} \left[ \frac{\sum_{i=1}^n \ind{\theta_{0,i}=0} \vphi^{\mbox{\tiny $q$-val}}(t;\hat{w},g)}{1\vee \sum_{i=1}^n \vphi^{\mbox{\tiny $q$-val}}(t;\hat{w},g)}   \ind{\Omega_0}\right] + o(1).
\end{align*} 
Therefore, since $\vphi^{\mbox{\tiny $q$-val}}(t;\hat{w},g)$ 
makes at least $s_n-K_n$ correct rejections, 
 that is, $\#\{i\in S_0:\ \vphi^{\mbox{\tiny $q$-val}}_i(t;\hat{w},g)=1\}\ge s_n-K_n$, we derive
\begin{align}
&\sup_{\theta_0 \in \cL_0[s_n]} \FDR(\theta_0,\vphi^{\mbox{\tiny $q$-val}}) \nonumber\\
&\leq \sup_{\theta_0 \in \cL_0[s_n]}  \E_{\theta_0}\left[\frac{\sum_{i=1}^n \mathds{1}{\{\theta_{0,i}=0\}}  \vphi_i^{\mbox{\tiny $q$-val}}(t;w_1)}{\sum_{i=1}^n \mathds{1}{\{\theta_{0,i}=0\}} \vphi_i^{\mbox{\tiny $q$-val}}(t;w_1) + s_n-K_n }\right] + o(1)\nonumber\\
&\leq \frac{\sup_{\theta_0 \in \cL_0[s_n]}\E_{\theta_0}\left[ \sum_{i=1}^n \mathds{1}{\{\theta_{0,i}=0\}}  \vphi_i^{\mbox{\tiny $q$-val}}(t;w_1) \right] }{\sup_{\theta_0 \in \cL_0[s_n]}\E_{ \theta_0}\left[\sum_{i=1}^n \mathds{1}{\{\theta_{0,i}=0\}} \vphi_i^{\mbox{\tiny $q$-val}}(t;w_1) \right]  + s_n-K_n}+ o(1),\label{equ:intemth4}
\end{align}
by concavity and monotonicity of the function $x\in [0,+\infty)\to x/(x+1)$. 

Now combine \eqref{eqtypeIerrorqvalue}, Lemma~\ref{lemxizeta} and Lemma~\ref{lemmtilde} to get for any $\veps\in(0,1)$, for any $\theta_0 \in \cL_0[s_n]$, 
\begin{align*}
\E_{\theta_0}\left[ \sum_{i=1}^n \mathds{1}{\{\theta_{0,i}=0\}}  \vphi_i^{\mbox{\tiny $q$-val}}(t;w_1) \right] 
&= (n-s_n) r(w_1,t)\: 2\ol{G}\left(\chi( r(w_1,t))\right) \\
&\leq (1+\veps) t (1-t)^{-1} w_1 (n-s_n) \: 2\ol{G}\left(\zeta(w_1)\right) \\
&\leq (1+\veps)^2 t (1-t)^{-1} (n-s_n)  w_1 \tilde{m}(w_1).
\end{align*}
Next, since  $w_1$ is a solution of \eqref{equw1}, 
the latter is bounded above by
$$
(1+\veps)^2  (1-\nu)^{-1}  t (1-t)^{-1} \sum_{i\in S_0} w_1 m_1(\te_{0,i},w_1) \leq   (1+\veps)^2  (1-\nu)^{-1}  t (1-t)^{-1} s_n,
$$ 
by using that $m_1(\cdot,w)$ is always upper-bounded by $1/w$ for small $w$, see Lemma~\ref{lemm12} (recall that $w_1$ goes to $0$ with $n$ by Lemma \ref{lemw1}). 
Putting this back into \eqref{equ:intemth4} gives for $n$ large enough,
\begin{align*}
\sup_{\theta_0 \in \cL_0[s_n]} \FDR(\theta_0,\vphi^{\mbox{\tiny $q$-val}}) 
&\leq \frac{(1+\veps)^2  (1-\nu)^{-1} t (1-t)^{-1} s_n }{(1+\veps)^2  (1-\nu)^{-1} t (1-t)^{-1} s_n + s_n-K_n}+ o(1).
\end{align*}
As $K_n=o(s_n)$ as shown above, taking the limsup as $n\to \infty$ and then letting $\veps, \nu$ go to $0$, we get,  observing that  $\frac{t (1-t)^{-1} }{t (1-t)^{-1} +1} = t$,
\begin{align*}
\varlimsup_n \sup_{\theta_0 \in \cL_0[s_n]} \FDR(\theta_0,\vphi^{\mbox{\tiny $q$-val}}) 
&\leq t.
\end{align*}

Let us now turn to prove
\begin{align}\label{equ:lowerboundth4}
\varliminf_n \inf_{\theta_0 \in \cL_0[s_n]} \FDR(\theta_0,\vphi^{\mbox{\tiny $q$-val}}) 
&\geq t,
\end{align}
which will lead to the conclusion.
 Fix some $\delta\in(0,1)$ and for any $\te_0\in\cL_0[s_n]$ 
   consider $w_1$ and $w_2$ the associated solution of \eqref{equw1} and \eqref{equw2}, respectively. The fact that both exist has been seen above. 
   Let $\Omega_1 = \{\hat w \in[w_2,w_1]\}$, then   
 \begin{align*}
&\inf_{\theta_0 \in \cL_0[s_n]} \FDR(\theta_0,\vphi^{\mbox{\tiny $q$-val}}) \\
&\geq  \inf_{\theta_0 \in \cL_0[s_n]} \E_{\theta_0}\left[\frac{V_q}{V_q + s_n } \mathds{1}{\{\Omega_1\}} \right]\\
&\geq  \inf_{\theta_0 \in \cL_0[s_n]} \E_{\theta_0}\left[\frac{\E_{\theta_0} V_q (1-\delta)}{\E_{\theta_0} V_q (1-\delta) + s_n } \mathds{1}{\{\Omega_1\}} \mathds{1}{\{ |V_q-\E_{\theta_0} V_q|\leq \delta \E_{\theta_0} V_q\}}  \right],
\end{align*}
where we have denoted $V_q=\sum_{i=1}^n \mathds{1}{\{\theta_{0,i}=0\}}  \vphi_i^{\mbox{\tiny $q$-val}}(t;w_2)$, which is a Binomial variable.
Similarly to the upper bound, combine \eqref{eqtypeIerrorqvalue}, Lemma~\ref{propchizeta} and Lemma~\ref{lemmtilde} to get for any $\veps\in(0,1)$ and $\theta_0 \in \cL_0[s_n]$, 
\begin{align*}
\E_{\theta_0} V_q 
&= (n-s_n) r(w_2,t)\: 2\ol{G}\left(\chi( r(w_2,t))\right) \\
&\geq  t (1-t)^{-1} w_2 (1-w_2)^{-1} (n-s_n) \: 2\ol{G}\left(\zeta(w_2)\right) \\
&\geq (1-\veps) t (1-t)^{-1} w_2  (n-s_n) \: 2\ol{G}\left(\zeta(w_2)\right) \\
&\geq (1-\veps)^2 t (1-t)^{-1} (n-s_n)  w_2 \tilde{m}(w_2).
\end{align*}
Now using  that $w_2$ is a solution of \eqref{equw2} and Lemma~\ref{m1binflargesignal}, we obtain
\begin{align*}
\E_{\theta_0} V_q
&\geq (1-\veps)^2 (1+\nu)^{-1} t (1-t)^{-1}  \sum_{i\in S_0} w_2 \:m_1(\te_{0,i},w_2)  \\
&\geq (1-\veps)^3 (1+\nu)^{-1} t (1-t)^{-1} s_n.
\end{align*}
 Next, observe that by Chebychev's inequality, the supremum over $\te_0\in\cL_0[s_n]$ of the following probability
 $$
\P_{\theta_0}( |V_q-\E_{\theta_0} V_q|> \delta \E_{\theta_0} V_q)\leq \frac{\var_{\theta_0}(V_q)}{\delta^2 (\E_{\theta_0} V_q)^2} \leq \frac{1}{\delta^2 \E_{\theta_0} V_q}
 $$
goes to $0$, because $s_n$ tends to infinity. 
Combining the above facts leads to
  \begin{align*}
\inf_{\theta \in \cL_0[s_n]} \FDR(\theta,\vphi^{\mbox{\tiny $q$-val}}) 
\geq  \frac{(1-\veps)^3 (1-\delta)(1+\nu)^{-1} t (1-t)^{-1}  }{(1-\veps)^3 (1-\delta)(1+\nu)^{-1} t (1-t)^{-1}  + 1}+ o(1),
\end{align*}
and the result is proved by taking the liminf in $n$ and then $\delta,\veps,\nu$ tending to zero.

Finally, to prove the result for \EBayesqseuillage
 one notes that by the previous arguments $\hat w$ belongs to $[w_1,w_2]$ with probability tending to $1$, and $w_2\ge s_n/n$, 
  which is larger than $2\omega_n$ by assumption. Deduce that the event $\{\hat w>\omega_n\}$ has probability going to $1$ so the procedures  {\tt EBayesq} and \EBayesqseuillage coincide with probability going to $1$, which proves that  \EBayesqseuillage also satisfies the desired property.

\subsection{Proof of Theorem \ref{theorem:power}}

Since the denominator in the definition \eqref{deffnr} of the FNR is a constant, and as $q$-values are more liberal than $\ell$-values, it is enough to prove the result for $\ell$-values, i.e. that
$\FNR(\theta_0,\vphi^{\mbox{\tiny $\ell$-val}})$ goes to $0$ uniformly over $\theta_0$ in the set $\cL_0[s_n]$. As we work with $\te_0$ in $\cL_0[s_n]$, we are in the setting of the proof of Theorem \ref{theorem:psharpFDRcontrol}. We now recall some elements from that proof that are helpful here as well. First recall the notation $c=\sqrt{2}(a-b)>0$ and  
\[x_n= c\{\log (n/s_n)\}^{1/2}, \qquad p_n=2\ol{\Phi}(x_n).\]
Setting $K_n=\max(2s_np_n,s_n/\log{s_n})$, it has been seen in the proof of Theorem \ref{theorem:psharpFDRcontrol} that for the event
\[ \cA=\left\{ \# \{i\in S_0,\ |X_i| > b\{2\log (n/s_n)\}^{1/2}\}\ge s_n-K_n\right\}, \]
one has, uniformly over $\cL_0[s_n]$, that
$P_{\te_0}[\cA^c]=o(1)$. It was also shown that if further $\Omega_0=\cA \cap \{\hat w \in[w_2,w_1]\}$, then $P_{\te_0}[\Omega_0^c]=o(1)$ uniformly over $\cL_0[s_n]$ as well as $w_2\ge s_n/n$.
  
Combining the previous facts implies $\zeta(\hat{w})\le \zeta(w_2)\le \zeta(s_n/n)$ as well as $\zeta(\hat{w})\ge \zeta(w_1)$.
 The definition of $w_1$ as solution of the fundamental equation \eqref{equw1} implies that $w_1$ is smaller than an arbitrary small constant for $n$ large enough, by Lemma \ref{lemw1}.
  
From \eqref{relzetaxi} in Lemma \ref{lemxizeta}, one deduces
\[ \xi(r(\hat{w},t)) \le \zeta(\hat{w}) + \frac{2|\log\left(\frac{t}{1-t}\right)|+C}{\zeta(\hat{w})}.\]
By combining with the previous upper and lower bounds on $\zeta(\hat{w})$, one obtains $\xi(r(\hat{w},t)) \le \zeta(s_n/n)+C'$ on $\Omega_0$, so that $\xi(r(\hat{w},t)) \le y(2\log(n/s_n))^{1/2}$ for any $y>1$ for $n$ large enough, by Lemma \ref{lemzetagen}. By definition of the event $\cA$ above, one deduces that the $\ell$-value procedure $\vphi^{\mbox{\tiny $\ell$-val}}$ rejects the null hypotheses for the (at least $s_n-K_n$ by definition of the set $\cA$ part of $\Omega_0$) indexes $i\in S_0$ such that $|X_i|>b(2\log(n/s_n))^{1/2}$ with $b=(a+1)/2>1$. 

One deduces that, uniformly for $\theta_0\in\cL_0[s_n]$,
\begin{align*}
\FNR(\theta_0,\vphi^{\mbox{\tiny $\ell$-val}}) 
 &\le   \E_{\theta_0}\left[\frac{\sum_{i=1}^n \ind{\theta_{0,i}\neq 0} (1-\vphi_i(X))}{s_n\vee 1}1_{\Omega_0}\right] + P_{\te_0}(\Omega_0). \\
 & \le \frac{K_n\wedge s_n}{s_n\vee 1} + o(1).
 \end{align*}
which is a $o(1)$ as by definition $K_n\le \max(2s_np_n,s_n/\log{s_n})=o(s_n)$, which concludes the proof of Theorem \ref{theorem:power}.

\section{Basic properties of $\ell$--, $q$-- and $p$--values}\label{sec:proplqval}

Let us assume that $g$ satisfies \eqref{assumpgdebase} throughout this section. Recall that this assumption holds in particular whenever $g$ is of the form $g=\phi\star \gamma$ as in the Bayesian setting.
 
\begin{lemma}
The $q$-value functional \eqref{qvaluesfunc} has the explicit expression 
\[ q(x;w,g) = \frac{(1-w)\ol{\Phi}(|x|)}{(1-w)\ol{\Phi}(|x|) + w \:\ol{G}(|x|)} , \:\: x\in\R, \:\:w\in [0,1].
\]
\end{lemma}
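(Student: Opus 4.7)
The plan is to apply Bayes' rule directly to the definition $q(x;w,g) = \Pi(\theta_1 = 0 \mid |X_1| \geq |x|)$, decomposing the event $\{|X_1|\ge |x|\}$ according to whether $\theta_1=0$ or $\theta_1\neq 0$ under the spike and slab prior $\Pi_{w,\gamma}$.

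First I would compute the two relevant conditional probabilities. Under $\theta_1=0$ (which has prior mass $1-w$), one has $X_1 = \varepsilon_1 \sim \mathcal{N}(0,1)$, so by symmetry of $\phi$,
\[
\Pi\bigl(|X_1|\ge |x| \,\big|\, \theta_1=0\bigr) = 2\ol{\Phi}(|x|).
\]
Under $\theta_1 \neq 0$ (prior mass $w$, drawn from $\cG$), the marginal density of $X_1$ is $g$ from \eqref{equg}, which is symmetric since $\gamma$ is. Hence
\[
\Pi\bigl(|X_1|\ge |x| \,\big|\, \theta_1\neq 0\bigr) = 2\ol{G}(|x|).
\]

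Then I would apply Bayes' formula:
\[
q(x;w,g) = \frac{(1-w)\cdot 2\ol{\Phi}(|x|)}{(1-w)\cdot 2\ol{\Phi}(|x|) + w \cdot 2\ol{G}(|x|)},
\]
and cancel the factor of $2$ in numerator and denominator to obtain the stated expression. The proof is essentially a one-line Bayes computation; the only things to verify are the two conditional probabilities, which reduce to symmetry of $\phi$ and $g$. There is no real obstacle here, as both symmetries are given by the assumptions on $\gamma$ (and thus on $g$ via convolution) recalled in Section~\ref{sec:assumpnota}.
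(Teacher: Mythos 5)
Your proof is correct and follows essentially the same approach as the paper: compute the two conditional probabilities $\Pi(|X_1|\ge|x|\given\theta_1=0)=2\ol\Phi(|x|)$ and $\Pi(|X_1|\ge|x|\given\theta_1\neq0)=2\ol G(|x|)$, then apply Bayes. The only cosmetic difference is that you obtain the second probability directly by observing that the marginal density of $X_1$ given $\theta_1\neq0$ is the convolution $g=\phi\star\gamma$ (symmetric since $\phi$ and $\gamma$ are), whereas the paper derives $2\ol G(|x|)$ by an explicit Fubini-type computation; both are sound.
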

\begin{proof}
The latter comes from the fact that, for $s\ge 0$ and by symmetry of $\ga$ and $\phi$,  
\begin{align*}
\P( |X_i|\geq s \:|\: \theta_i = 0)&= \P( |\varepsilon_1|\geq s) = 2 \ol{\Phi}(s),\\
\P( |X_i|\geq s \:|\: \theta_i \neq 0)&= \int \P( |\varepsilon_1+u|\geq s) \gamma(u) du = \int (\ol{\Phi}(s-u)+ \ol{\Phi}(s+u)) \gamma(u) du\\
&=2 \int  \ol{\Phi}(s-u) \gamma(u) du = 2 \int \int \mathds{1}_{\{s-x\leq u\}} \gamma(u) du \phi(x)  dx \\
&= 2 \int \int \mathds{1}_{\{s\leq v\}} \gamma(v-x) dv \phi(x)  dx = 2 \int \mathds{1}_{\{s\leq v\}} g(v)dv.
\end{align*}
\end{proof}

\begin{lemma}\label{lem:noninclandq}
For any fixed $x\in\R$, the $\l$-value functional $\l(x;w,g)$ \eqref{lformula} and the $q$-value functional $q(x;w,g)$ \eqref{qvaluesfunc} are both nonincreasing in $w$.
\end{lemma}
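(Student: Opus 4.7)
The plan is to rewrite both functionals in a common form that makes the monotonicity in $w$ transparent. Specifically, for any positive constants $a,b>0$ and $w\in[0,1)$, one has
\[ \frac{(1-w)a}{(1-w)a+wb} = \frac{1}{1+\frac{w}{1-w}\cdot\frac{b}{a}}. \]
Since $w\mapsto w/(1-w)$ is strictly increasing on $[0,1)$ and the ratio $b/a$ is a nonnegative constant (not depending on $w$), the right-hand side is a nonincreasing function of $w$. The value at $w=1$ is defined by continuity (or directly from \eqref{lformula}--\eqref{qvaluesfunc}) to be $0$, which is consistent with monotonicity.

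First I would apply this observation to $\ell(x;w,g)$ with the choice $a=\phi(x)>0$ and $b=g(x)\ge 0$ (both well defined for every $x\in\R$ under the assumptions of Section~\ref{sec:assumg}, in particular \eqref{assumpgdebase}). This directly yields
\[ \ell(x;w,g)=\frac{1}{1+\frac{w}{1-w}\cdot\frac{g(x)}{\phi(x)}}, \]
which is nonincreasing in $w\in[0,1)$. Next I would do the analogous manipulation for $q(x;w,g)$ with $a=\ol{\Phi}(|x|)>0$ and $b=\ol{G}(|x|)\ge 0$, giving
\[ q(x;w,g)=\frac{1}{1+\frac{w}{1-w}\cdot\frac{\ol{G}(|x|)}{\ol{\Phi}(|x|)}}, \]
which is again nonincreasing in $w$.

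There is essentially no obstacle: the statement is a direct algebraic observation once the common fractional structure is exposed, and no monotonicity properties of $g$ or $\ol{G}$ in $x$ (only positivity of $\phi$ and $\ol{\Phi}$) are needed. The only minor point to handle cleanly is the boundary case $w=1$, which can be treated as a limit (both functionals equal $0$ there), so monotonicity on the full interval $[0,1]$ follows from monotonicity on $[0,1)$.
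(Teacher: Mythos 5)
Your proof is correct and follows the same route as the paper, which simply observes the monotonicity is ``immediate from their explicit expression''; you have just spelled out that observation by isolating the $w/(1-w)$ factor.
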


\begin{proof}
This is immediate from their explicit expression.
\end{proof}

\begin{lemma}
Under \eqref{tails}, $\log \ol{G}$ is Lipschitz on $\RR^{+}$
\end{lemma}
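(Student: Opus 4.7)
The plan is straightforward: since $g$ is continuous (being differentiable per \eqref{assumpgdebase}) and $\ol{G}$ is strictly positive on $\RR^+$ (as $g>0$), the function $\log\ol{G}$ is differentiable on $\RR^+$ with
\[ (\log\ol{G})'(y) = -\frac{g(y)}{\ol{G}(y)}. \]
By the mean value theorem, it therefore suffices to show that $g/\ol{G}$ is bounded on $\RR^+$, which I would do by splitting the argument into a ``large $y$'' regime, handled via \eqref{tails}, and a ``small $y$'' regime, handled by continuity and positivity.

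For the large-$y$ regime, I would invoke \eqref{tails}: there exist $y_0>0$ and $c>0$ such that $\ol{G}(y)\ge c\,g(y)\,y^{\kappa-1}$ for all $y\ge y_0$. Setting $y_1 := \max(y_0,1)$, and using that $\kappa\in[1,2]$, one has $y^{\kappa-1}\ge 1$ for $y\ge y_1$, hence
\[ \frac{g(y)}{\ol{G}(y)} \le \frac{1}{c\, y^{\kappa-1}} \le \frac{1}{c}, \qquad y\ge y_1. \]
This is the one place where the condition $\kappa\ge 1$ is essential; without it, $g/\ol{G}$ could blow up at infinity.

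For the small-$y$ regime, on the compact interval $[0,y_1]$ the function $g$ is continuous (hence bounded from above by some $M$), while $\ol{G}$ is continuous and, since $g>0$, strictly positive, with $\ol{G}(y)\ge \ol{G}(y_1)>0$ on $[0,y_1]$. Thus $g/\ol{G}\le M/\ol{G}(y_1)$ on $[0,y_1]$.

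Combining the two bounds gives a uniform Lipschitz constant $L := \max\bigl(1/c,\ M/\ol{G}(y_1)\bigr)$ such that $|(\log\ol{G})'(y)|\le L$ on $\RR^+$, which yields the claim. There is no real obstacle here beyond keeping track of the two regimes; the short content of the lemma is really that the tail condition \eqref{tails} with $\kappa\ge 1$ rules out any pathological acceleration of $\ol{G}$ to zero.
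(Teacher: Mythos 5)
Your argument is correct and follows essentially the same route as the paper: compute $(\log\ol{G})'=-g/\ol{G}$ and bound it, invoking \eqref{tails} (and $\kappa\ge 1$) for large $y$. You merely make explicit the compactness argument for the small-$y$ regime, which the paper leaves implicit.
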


\begin{proof}
We have $(\log \ol{G})'=-g/\ol{G}$. Now using \eqref{tails}, we have  $(g/\ol{G})(x)\asymp x^{1-\kappa}$ ($x\to\infty$). This provides that $(\log \ol{G})'$ is a bounded function. \end{proof}

\begin{lemma}\label{lemmetrivialisant}
Assumption \eqref{increasing} implies \eqref{increasing2}.
\end{lemma}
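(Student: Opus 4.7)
The plan is to reduce the monotonicity of $\ol{G}/\ol{\Phi}$ on $[0,\infty)$ to a pointwise inequality via differentiation, and then check the two boundary values separately.

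First, I would compute the derivative. Since $(\ol G)'=-g$ and $(\ol\Phi)'=-\phi$, one has
\[
\bigl(\ol{G}/\ol{\Phi}\bigr)'(x) \;=\; \frac{\ol{G}(x)\phi(x) - g(x)\ol{\Phi}(x)}{\ol{\Phi}(x)^2},
\]
so it suffices to show that $\ol{G}(x)\phi(x) > g(x)\ol{\Phi}(x)$ for all $x\ge 0$. The key step is the elementary factorisation
\[
g(y)\phi(x) - g(x)\phi(y) \;=\; \phi(x)\phi(y)\bigl[(g/\phi)(y) - (g/\phi)(x)\bigr],
\]
which, after integrating $y$ over $[x,+\infty)$, gives
\[
\ol{G}(x)\phi(x) - g(x)\ol{\Phi}(x) \;=\; \phi(x)\int_x^{+\infty}\phi(y)\bigl[(g/\phi)(y) - (g/\phi)(x)\bigr]\,dy.
\]
By assumption \eqref{increasing}, $g/\phi$ is strictly increasing on $[0,\infty)$, so the bracket in the integrand is nonnegative on $\{y\ge x\}$ and strictly positive on a set of positive Lebesgue measure; hence the integral is strictly positive, which yields strict monotonicity of $\ol{G}/\ol{\Phi}$ on $[0,\infty)$.

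For the boundary values: by symmetry of both densities $g$ and $\phi$ one has $\ol{G}(0)=\ol{\Phi}(0)=1/2$, so the ratio equals $1$ at the origin. To see that it tends to infinity, rewrite
\[
\frac{\ol{G}(x)}{\ol{\Phi}(x)} \;=\; \frac{\int_x^{+\infty}(g/\phi)(y)\,\phi(y)\,dy}{\int_x^{+\infty}\phi(y)\,dy}
\]
as a weighted average of $(g/\phi)(y)$ for $y\ge x$. Given $M>0$, assumption \eqref{increasing} provides $x_0$ with $(g/\phi)(y)\ge M$ for $y\ge x_0$, and then $\ol{G}(x)/\ol{\Phi}(x)\ge M$ for every $x\ge x_0$. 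This completes the plan; no step looks delicate, the only small obstacle being spotting the factorisation that turns the pointwise inequality into an immediate consequence of \eqref{increasing}.
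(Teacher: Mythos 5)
Your proof is correct, and it takes a genuinely different route from the paper's. The paper introduces the function $\Psi(u)=\ol{G}\bigl(\ol{\Phi}^{-1}(u)\bigr)$, observes $\Psi'(u)=(g/\phi)\bigl(\ol{\Phi}^{-1}(u)\bigr)$ is decreasing in $u$ by assumption \eqref{increasing}, concludes $\Psi$ is strictly concave with $\Psi(0)=0$, and deduces that $u\mapsto\Psi(u)/u$ is decreasing, which is equivalent to the claimed monotonicity after the substitution $u=\ol\Phi(x)$; the limit at infinity is read off as $\lim_{u\to0^+}\Psi'(u)=\lim_{u\to 0^+}\Psi(u)/u$. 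You instead differentiate the ratio $\ol G/\ol\Phi$ directly and reduce positivity of the numerator to the pointwise factorisation $g(y)\phi(x)-g(x)\phi(y)=\phi(x)\phi(y)\bigl[(g/\phi)(y)-(g/\phi)(x)\bigr]$, integrated over $y\ge x$; the boundary values are handled by symmetry at $0$ and a weighted-average lower bound at infinity. Both arguments are sound and of comparable length. The paper's concavity reformulation is a bit more structural and packages the limit at $\infty$ for free from $\Psi'(0^+)$; yours is more elementary and gives a transparent pointwise reason for the monotonicity without changing variables.
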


\begin{proof}
Let us consider the function 
$$
\Psi:u\in(0,1/2) \to \ol{G}(\ol{\Phi}^{-1}(u)) = \int_{\ol{\Phi}^{-1}(u)}^\infty g(x)dx.
$$
This defines a continuous function on $[0,1/2)$ by setting $\Psi(0)=0$.
For all $u\in(0,1/2)$, we have 
$
\Psi'(u) = \frac{g}{\phi}(\ol{\Phi}^{-1}(u))
$, which means by \eqref{increasing} 
 that $\Psi'$ is decreasing on $(0,1/2)$ and therefore $\Psi$ is strictly concave on $(0,1/2)$. 
This implies that $u\in(0,1/2) \to \Psi(u)/u$ is decreasing and thus that $x\in \R_+ \to\ol{G}(x)/\ol{\Phi}(x)$ is increasing by letting $u=\ol\Phi(x)$, $x>0$. 
Moreover, since $\infty= \lim_{u\to 0^+}\Psi'(u) =\lim_{u\to 0^+}\Psi(u)/u = \lim_{x\to \infty} \ol{G}(x)/\ol{\Phi}(x) $ and $\ol{G}(0)/\ol{\Phi}(0)=1$,  \eqref{increasing2} is proved.
\end{proof}

\begin{lemma}\label{lem:qlvalue}
Assume that $g$ comes from \eqref{log-lip}--\eqref{increasing}.
For $w\in [0,1]$, the functions $x\to \l(x;w,g)$ and $x\to q(x;w,g)$ are symmetric and decreasing on $\R_+$.
For all $x\in\R$, $w\in [0,1]$, we have $q(x;w,g)\leq \l(x;w,g)$. In particular, $q_i(X)\leq \ell_i(X)$ almost surely.
\end{lemma}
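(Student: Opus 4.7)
The plan is to exploit the explicit formulas \eqref{lformula} and \eqref{qvaluesfunc} together with the monotonicity assumption \eqref{increasing} and its consequence \eqref{increasing2} (which holds by Lemma~\ref{lemmetrivialisant}). Rewriting
\[
\ell(x;w,g) = \frac{1}{1+\tfrac{w}{1-w}\,\tfrac{g(x)}{\phi(x)}}\,,\qquad q(x;w,g) = \frac{1}{1+\tfrac{w}{1-w}\,\tfrac{\overline{G}(|x|)}{\overline{\Phi}(|x|)}},
\]
all three statements reduce to properties of the ratios $g/\phi$ and $\overline{G}/\overline{\Phi}$.

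For the symmetry and monotonicity part, $\ell(x;w,g)$ is symmetric because both $\phi$ and $g$ are symmetric; $q(x;w,g)$ is symmetric by construction as it only depends on $|x|$. On $\R_+$, \eqref{increasing} makes $g/\phi$ increasing, so $\ell$ is decreasing; by Lemma~\ref{lemmetrivialisant}, \eqref{increasing2} holds, so $\overline{G}/\overline{\Phi}$ is increasing and $q$ is decreasing.

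For the inequality $q(x;w,g)\le \ell(x;w,g)$, by the displayed formulas it suffices to prove, for all $y\ge 0$,
\[
\frac{\overline{G}(y)}{\overline{\Phi}(y)} \ \ge \ \frac{g(y)}{\phi(y)}.
\]
I would obtain this by integration: by \eqref{increasing}, for every $u\ge y\ge 0$, $g(u)/\phi(u)\ge g(y)/\phi(y)$, i.e.\ $g(u)\ge (g(y)/\phi(y))\,\phi(u)$. Integrating over $u\in[y,\infty)$ yields $\overline{G}(y)\ge (g(y)/\phi(y))\,\overline{\Phi}(y)$, which is the required inequality. Applying this at $y=|x|$ and using the symmetry of $g$ and $\phi$ gives $q(x;w,g)\le \ell(x;w,g)$ for every $x\in\R$ and $w\in[0,1]$. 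The ``in particular'' statement follows at once by evaluating at $x=X_i$.

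No serious obstacle is expected: the only substantive step is the integration argument turning the pointwise monotonicity of $g/\phi$ into the tail-ratio comparison $\overline{G}/\overline{\Phi}\ge g/\phi$, and one should be careful to invoke \eqref{increasing} (not merely \eqref{increasing2}) to initiate that step.
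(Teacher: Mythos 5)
Your proof is correct, but for the key inequality $q(x;w,g)\le \l(x;w,g)$ it takes a genuinely different route from the paper. The paper's argument is probabilistic: interpreting $q$ and $\l$ as posterior probabilities in the Bayesian model, it uses the tower property to write $q(x;w,g)=\E[\ell_i(X)\mid |X_i|\ge |x|]$ and then invokes the monotonicity of $\l$ on $\R_+$ (the first assertion of the lemma) to bound this conditional expectation by $\l(x;w,g)$. Your argument is instead purely analytic: after rewriting both quantities as $1/\bigl(1+\tfrac{w}{1-w}\,r\bigr)$ with $r=g/\phi$ or $r=\ol{G}/\ol{\Phi}$, you reduce the claim to the tail-ratio comparison $\ol{G}(y)/\ol{\Phi}(y)\ge (g/\phi)(y)$ for $y\ge 0$, and you obtain this by integrating the pointwise bound $g(u)\ge (g(y)/\phi(y))\,\phi(u)$ over $u\in[y,\infty)$, which is an immediate consequence of \eqref{increasing}. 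That tail-ratio inequality is in fact also established in the paper's Lemma~\ref{propchizetabase} (there via a concavity argument, for the purpose of comparing the thresholds $\chi$ and $\xi$); your direct-integration derivation is arguably the most elementary of the three and avoids any appeal to the Bayesian interpretation. The symmetry/monotonicity part of your proof matches the paper's. One cosmetic point: your rewritten formulas implicitly assume $w\in(0,1)$, so the boundary cases $w\in\{0,1\}$ (where both $\l$ and $q$ are identically $1$ or $0$) should be noted separately, though they are trivial.
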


\begin{proof}
The first claim comes from the explicit expressions of $\l(x;w,g)$ and $q(x;w,g)$ together with \eqref{increasing} and \eqref{increasing2}, respectively.
Now, denoting $\P$ the probability operator in the Bayesian setting, a simple relation is that for all $x\in\R$, 
\begin{align*}
q(x;w,g)&=\P (\theta_i=0 \:|\: |X_i|\geq |x|) \\
&=\E( \mathds{1}\{\theta_i=0\} \:|\: |X_i|\geq |x|)\\
&=\E[ \,\P(\theta_i=0 \:|\: X_i)\:|\: |X_i|\geq |x|]\\
&=\E[ \ell_i(X)\:|\: |X_i|\geq |x|]\\
&\leq \l(x;w,g),
\end{align*} 
by using the monotonicity of $x\to \l(x;w,g)$.
\end{proof}

Figure~\ref{fig:gGbar} shows how the choice of the prior influences the quantities $g$ and $\ol{G}$. The Laplace calculations are done thanks to Remark~\ref{rem:explicit}. Strikingly, while the quantities $g$ stays of the same order (which guided the choice $a=1/2$), the difference for $\ol{G}$ is more substantial.

\begin{center}
\begin{figure}[h!]
\includegraphics[scale=0.3]{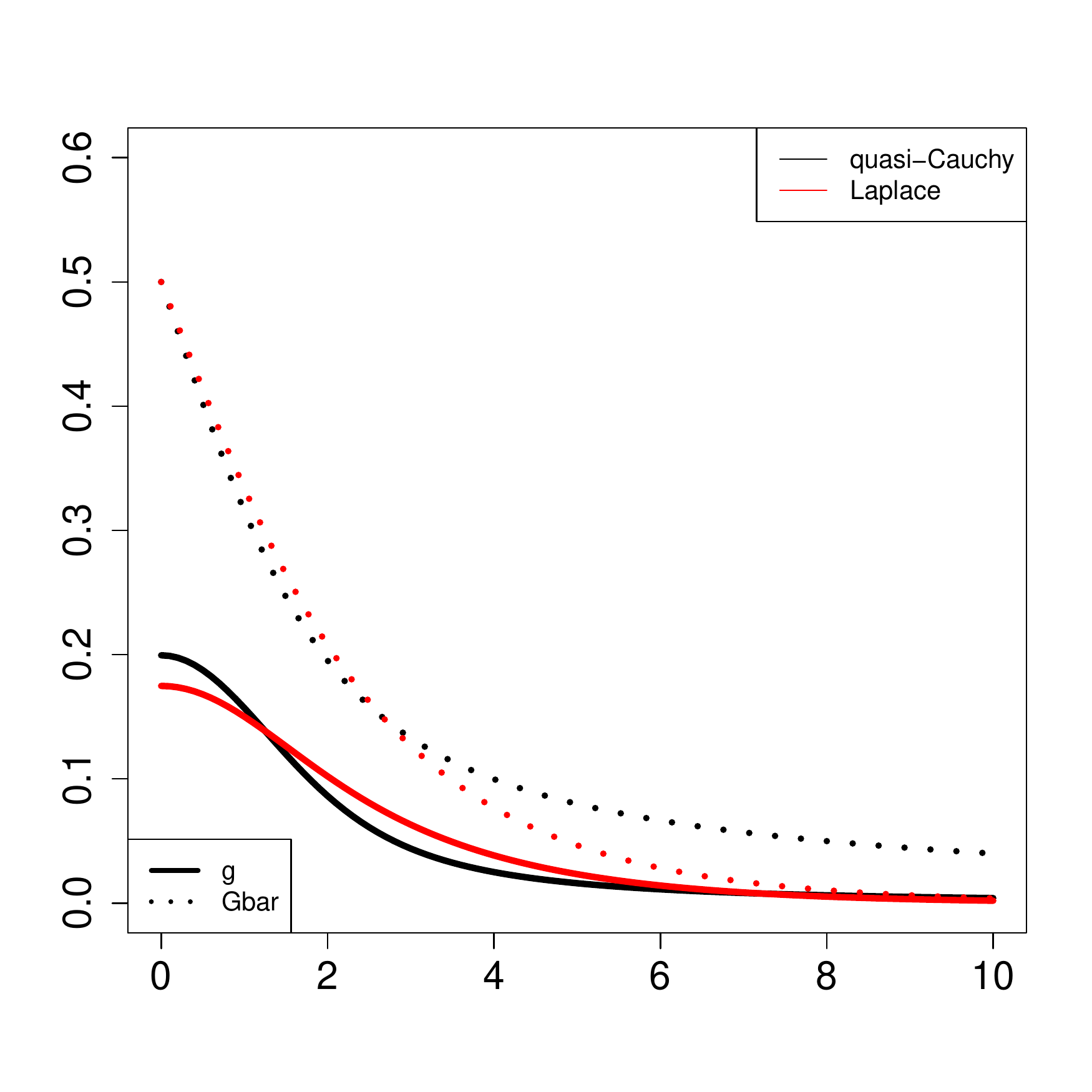}
\includegraphics[scale=0.3]{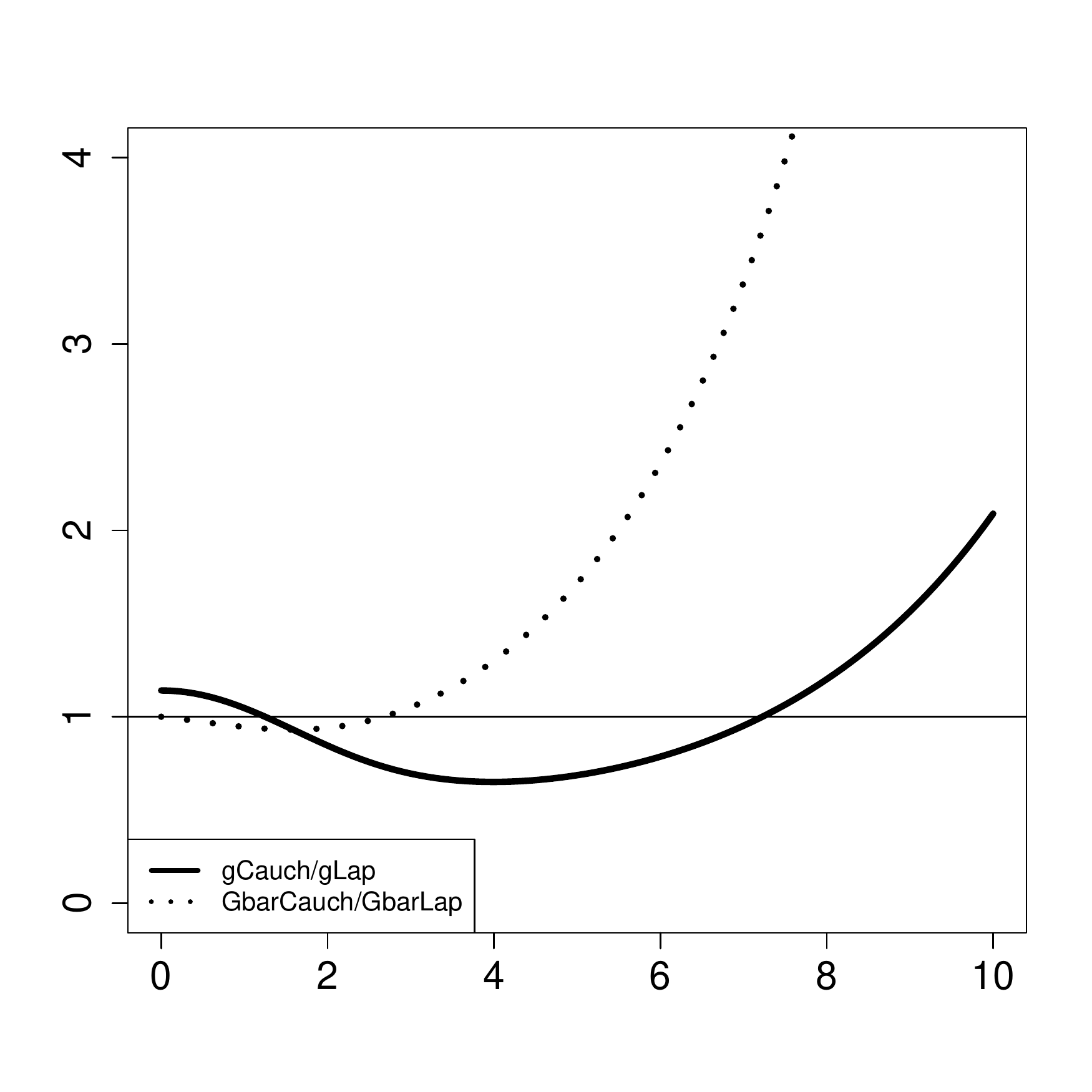}
\caption{\label{fig:gGbar}
Plots of the functions $g$ and $\ol{G}$ for the quasi-Cauchy and Laplace ($a=1/2$) priors respectively (left) and ratio (right).
}
\end{figure}
\end{center}

Figure~\ref{fig:lqvaluew} below shows how the parameters $w$ and $g$ interplay in the quantities $q(x;w,g)$ and $\l(x;w,g)$: for  large values of $|x|$ (which play a central role in the multiple testing phase), the quantity $\l(x;w,g)$ decreases as the prior puts its mass away from $0$, that is, making the tail distribution heavier or increasing $w$. 

\begin{figure}[h!]
\begin{tabular}{ccc}
\vspace{-0.5cm}
$w=0.01$  
& 
$w=0.2$\\
\includegraphics[scale=0.3]{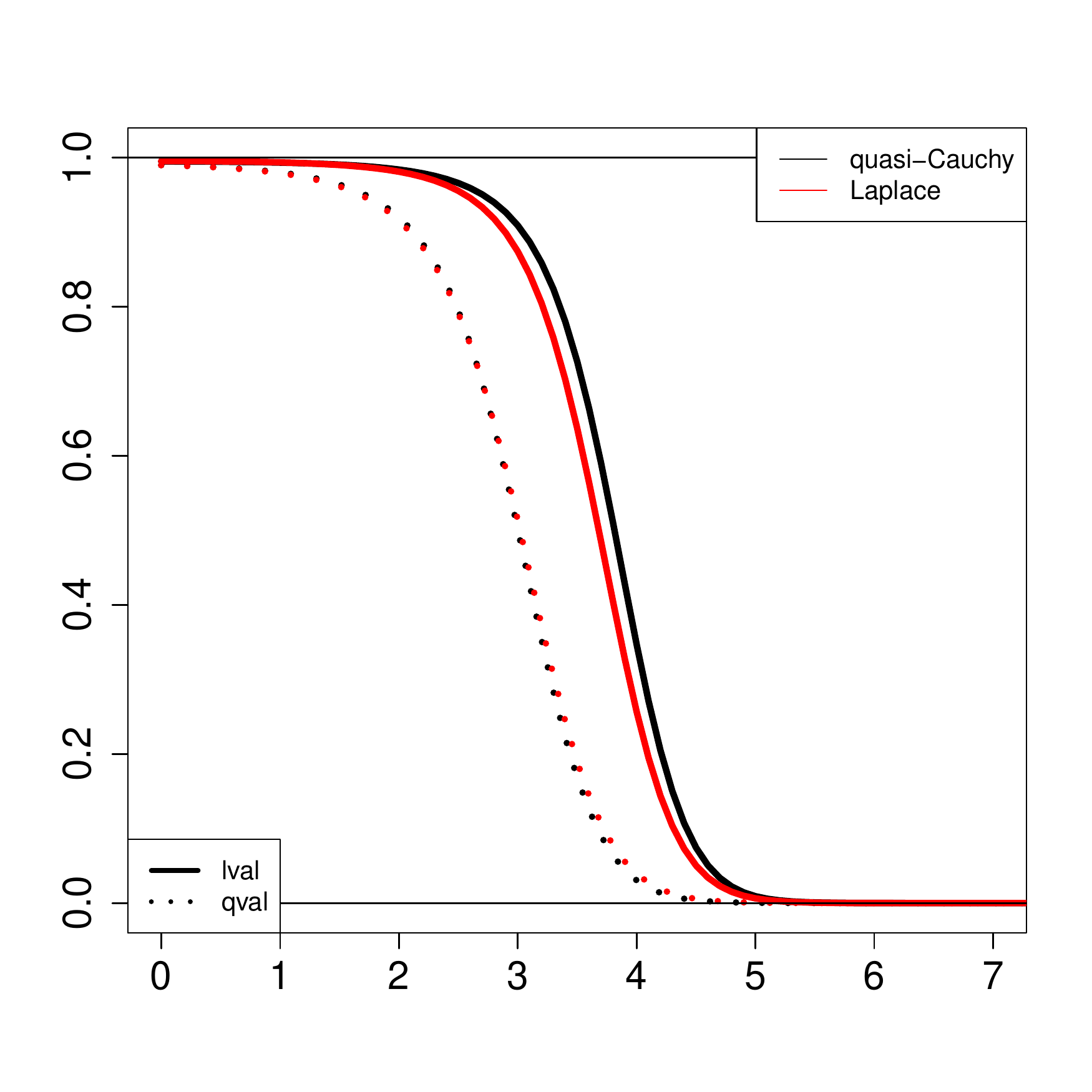}
&\includegraphics[scale=0.3]{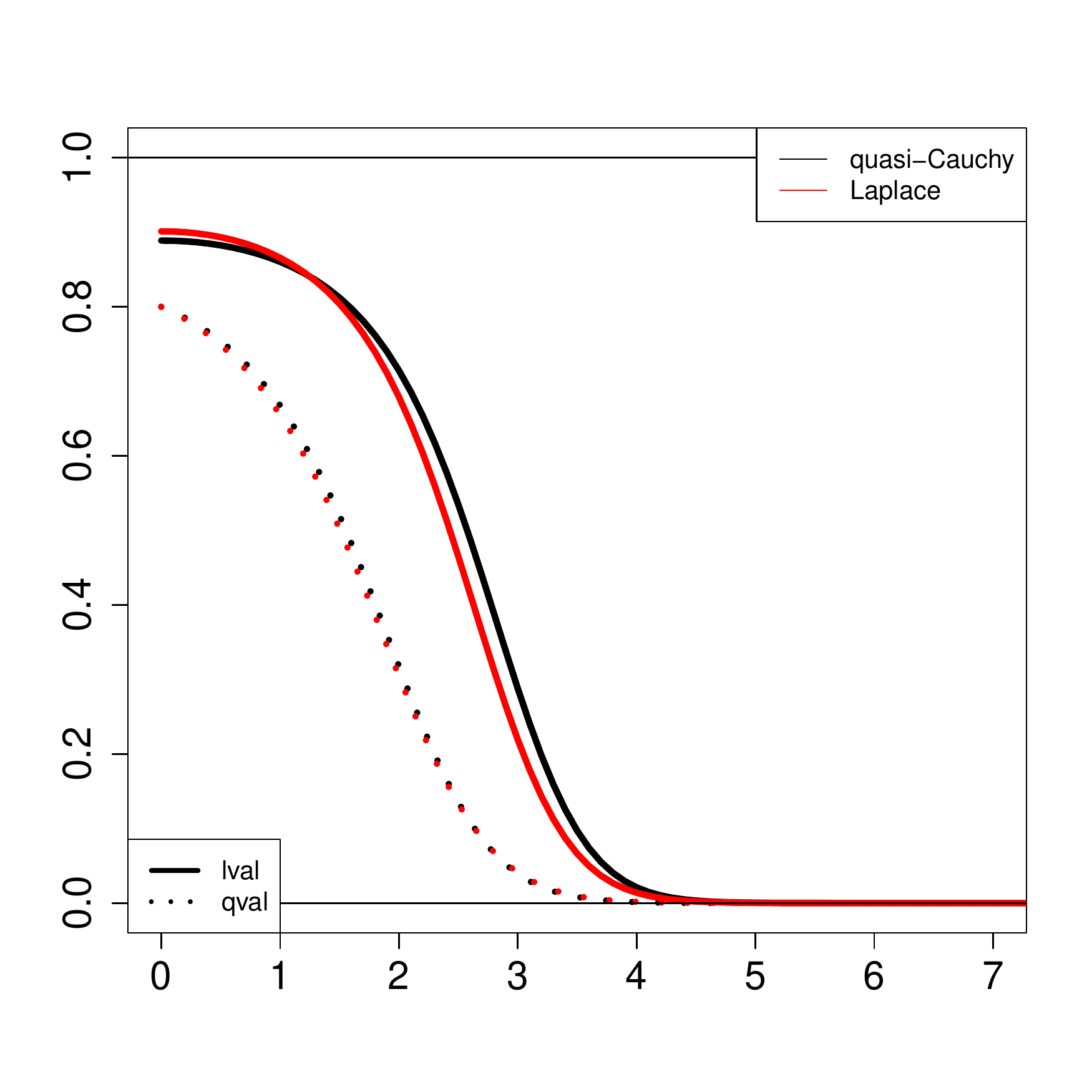}\\
\includegraphics[scale=0.3]{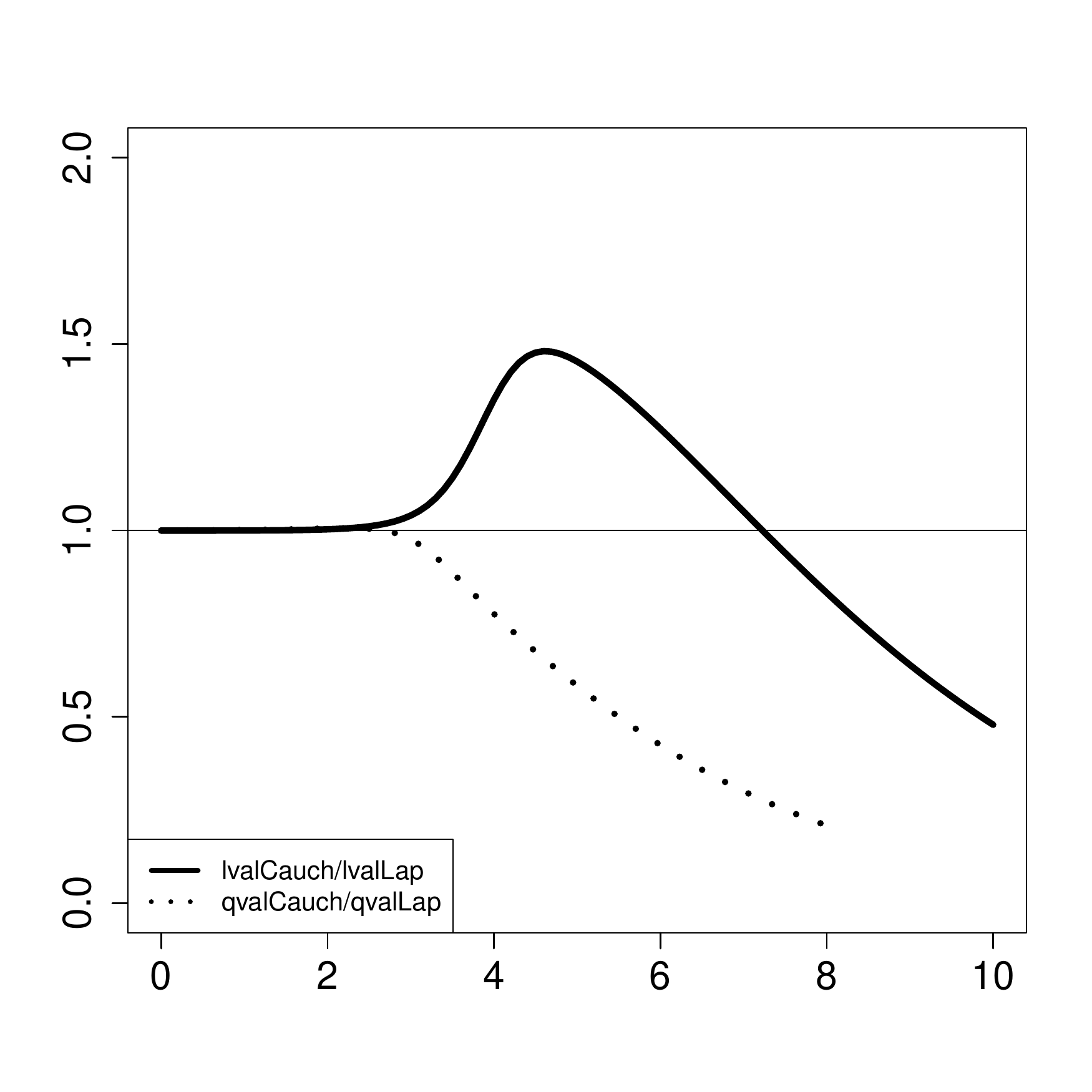}
&\includegraphics[scale=0.3]{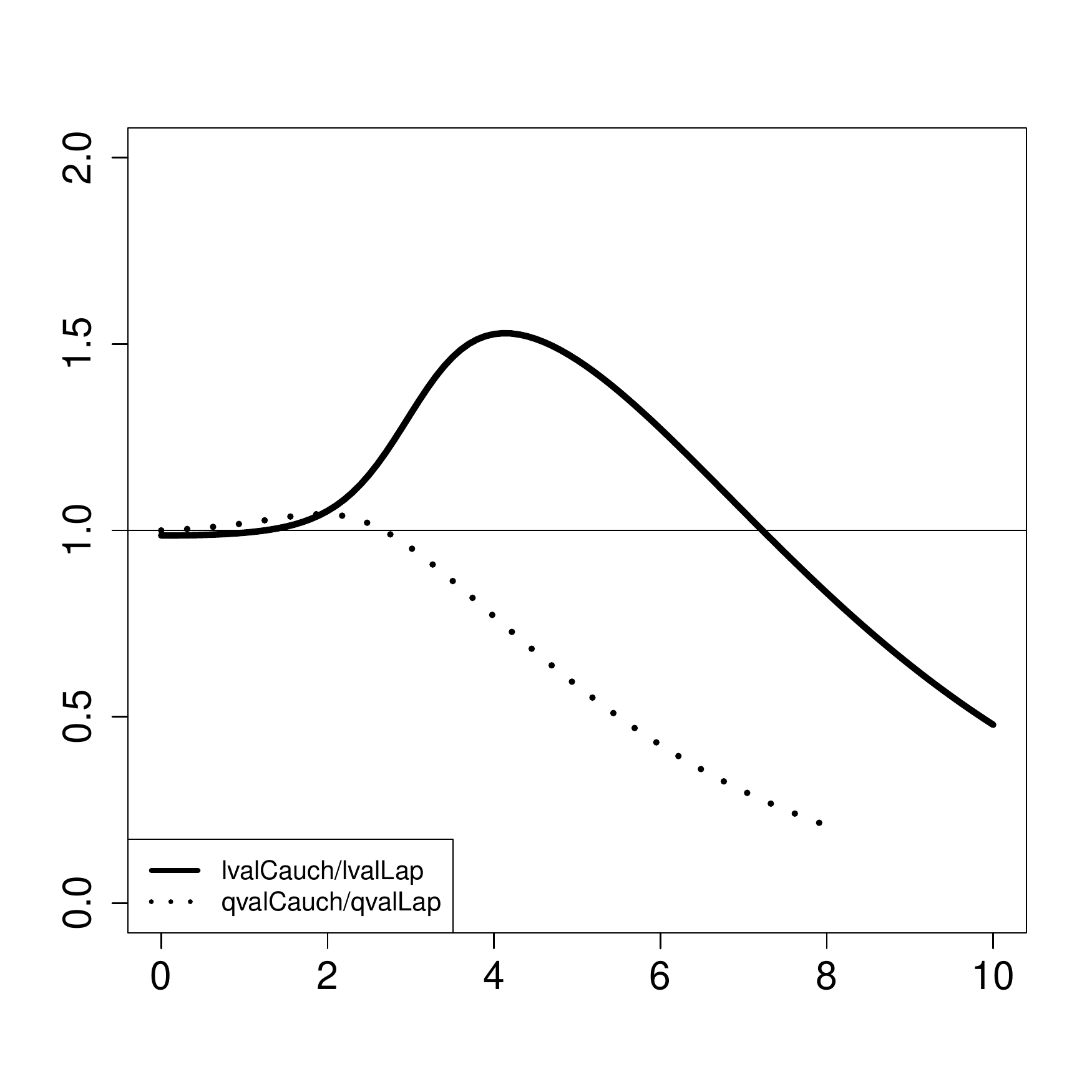}
\end{tabular}
\caption{\label{fig:lqvaluew}
Plot of the functions $x\to \ell(x,g,w)$ and $x\to q(x,g,w)$ for different values of $w$ and $g$ (see text, top) and ratio (bottom). 
}
\end{figure}

\begin{remark}[Explicit expressions for Laplace prior]\label{rem:explicit}
The Laplace prior of parameter $a>0$ is given by 
\begin{equation}
\gamma(x)= \gamma_a(x)=(a/2)\:e^{-a |x|}, \quad x\in\R.\label{equ:Laplaceprior}
\end{equation}
Straightforward calculations show, for $\ga$ as in \eqref{equ:Laplaceprior},
\begin{align*}
g(x)&= (a/2) e^{a^2/2}\left(e^{-ax} \ol{\Phi}(a-x) + e^{ax} \ol{\Phi}(a+x) \right) ;
\\
g(x)/\phi(x) &= (a/2) \left( \frac{\ol{\Phi}(a-x)}{\phi(a-x)} +  \frac{\ol{\Phi}(a+x)}{\phi(a+x)} \right) ;
\\
\ol{G}(x)&= (1/2)\:e^{a^2/2} \left(e^{-ax} \ol{\Phi}(a-x) - e^{ax} \ol{\Phi}(a+x) \right) + \ol{\Phi}(x).
\end{align*}
\end{remark}

\section{Threshold properties}\label{sec:thresholdprop}

We henceforth assume that $g$ satisfies \eqref{assumpgdebase}--\eqref{increasing}. In this section, all the non-universal constants appearing in the results depend on $g$.

\subsection{Link between $\xi$, $\chi$ and $\zeta$} Recall the definitions \eqref{equxi}-\eqref{zeta}-\eqref{equchi} of the thresholds $\xi,\zeta, \chi$.
We start by a simple connection between  $\zeta$ and $\xi$. Namely, 
$$
\frac{\phi(\zeta)}{g(\zeta)} = \frac{1}{\beta(\zeta)+1}= 1/(1/w+1)=w/(1+w), 
$$
so 
\begin{equation}\label{zetatoxi}
\zeta(w)= (\phi/g)^{-1}(w/(1+w))=\xi(w/(1+w)),
\end{equation}
 which implies in particular that $\zeta(w) \ge \xi(w)$.
The next lemma relates these quantities to $\chi(w)$.

\begin{lemma}\label{propchizetabase}
For any $w\in (0,1)$, we have $ \chi(w)\le \xi(w) \leq \zeta(w). $
 \end{lemma}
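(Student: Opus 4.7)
The plan is to handle the two inequalities separately, since they have quite different flavors.

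The inequality $\xi(w)\le \zeta(w)$ is essentially already established by the identity \eqref{zetatoxi} derived immediately before the lemma, namely $\zeta(w) = \xi(w/(1+w))$. Since $\xi$ is the inverse of the (strictly) decreasing map $\phi/g$, it is itself decreasing, and $w/(1+w)<w$ for $w\in(0,1)$, so $\xi(w/(1+w))\ge \xi(w)$, which is what we want. I will just record this in one line.

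The substance of the lemma is the bound $\chi(w)\le \xi(w)$. My strategy is to reduce this monotonicity comparison of two inverse functions to a single pointwise inequality between $\phi/g$ and $\ol\Phi/\ol G$. Precisely, since $\ol\Phi/\ol G$ is decreasing on $[0,\infty)$ by \eqref{increasing2}, with $\chi$ its inverse, the inequality $\chi(w)\le \xi(w)$ is equivalent to
\[
\frac{\ol\Phi(\xi(w))}{\ol G(\xi(w))} \;\le\; \frac{\ol\Phi(\chi(w))}{\ol G(\chi(w))} \;=\; w \;=\; \frac{\phi(\xi(w))}{g(\xi(w))},
\]
so it is enough to prove the pointwise bound $\ol\Phi(x)/\ol G(x)\le \phi(x)/g(x)$ for every $x\ge 0$, equivalently $\ol G(x)/\ol\Phi(x)\ge g(x)/\phi(x)$.

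To obtain this pointwise inequality I will rewrite the left-hand ratio as a weighted average of the likelihood ratio $g/\phi$:
\[
\frac{\ol G(x)}{\ol\Phi(x)} \;=\; \frac{\int_x^\infty g(t)\,dt}{\int_x^\infty \phi(t)\,dt} \;=\; \int_x^\infty \frac{g(t)}{\phi(t)}\,\frac{\phi(t)\,dt}{\int_x^\infty \phi(s)\,ds}.
\]
The right-hand side is the expectation of $g/\phi$ under the probability measure $\phi(t)\mathbf{1}_{t\ge x}\,dt/\ol\Phi(x)$. By the monotonicity assumption \eqref{increasing}, $g/\phi$ is nondecreasing on $[0,\infty)$, so this average is bounded below by its value at the left endpoint $x$, giving $\ol G(x)/\ol\Phi(x)\ge (g/\phi)(x)$, as desired.

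There is no real obstacle here; the only point to watch is that all quantities in the weighted-average argument are well defined and positive, which is immediate from \eqref{assumpgdebase} (positivity of $g$) together with standard properties of $\phi$. The argument only uses the monotonicity condition \eqref{increasing} and its consequence \eqref{increasing2}, so the proof is short and self-contained.
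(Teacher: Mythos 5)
Your proof is correct, and the overall architecture mirrors the paper's: the bound $\xi(w)\le\zeta(w)$ is read off from $\zeta(w)=\xi(w/(1+w))$ and the monotonicity of $\xi$, and the bound $\chi(w)\le\xi(w)$ is reduced, by inverting the decreasing function $\ol\Phi/\ol G$, to the pointwise inequality $\ol G(x)/\ol\Phi(x)\ge g(x)/\phi(x)$ for $x\ge 0$. Where you diverge is in how that pointwise inequality is obtained. The paper invokes (the proof of) Lemma~\ref{lemmetrivialisant}, which shows $\Psi(u)=\ol G(\ol\Phi^{-1}(u))$ is concave with $\Psi(0)=0$, hence $\Psi(u)/u\ge\Psi'(u)$, i.e.\ $\ol G(x)/\ol\Phi(x)\ge (g/\phi)(x)$. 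You instead write $\ol G(x)/\ol\Phi(x)=\int_x^\infty (g/\phi)(t)\,\phi(t)\,dt/\ol\Phi(x)$ and bound this expectation from below by the value of the nondecreasing integrand at the left endpoint of the support, using \eqref{increasing} directly. Both arguments ultimately rest only on the monotonicity of $g/\phi$; yours is a bit more elementary and self-contained (it does not require introducing the concave reparametrisation $\Psi$), while the paper's concavity formulation is what it actually needs again elsewhere (it is used to prove \eqref{increasing2}), so the paper gets both facts from a single device. Either route is valid here.
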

\begin{proof}
From the proof of Lemma \ref{lemmetrivialisant}, by concavity 
$\ol{G}(\ol{\Phi}^{-1}(u))/u\geq 
 \frac{g}{\phi}(\ol{\Phi}^{-1}(u))$ holds  for any $u\in(0,1/2)$.
 Any $x>0$ can be written $\ol\Phi^{-1}(u)$ for $u\in(0,1/2)$, so for such $x$ we have $(\ol{\Phi}/\ol{G})(x)\le (\phi/g)(x)$. As $\ol{\Phi}/\ol{G}$ is decreasing by \eqref{increasing2}, so is its reciprocal, which implies $x\ge  (\ol\Phi/\ol{G})^{-1}((\phi/g)(x))$. The inequality follows by setting $x=(\phi/g)^{-1}(w)=\xi(w)$.   
\end{proof}

\subsection{Bounds for $\xi$, $\chi$ and $\zeta$} 
\begin{lemma}\label{prop:BMTlval}
Consider $\xi$ as in \eqref{equxi}. Then for $C=(2\pi)^{1/2}\|g\|_\infty$ we have for $u\in(0,1]$ small enough,
\begin{align} 
 \xi(u)&\geq \left(-2\log u-2\log g\left(\sqrt{-2  \log ( C u)}\right)-\log(2\pi)\right)^{1/2} ;\label{rightlvalues}\\
 \xi(u)&\leq  \left(-2\log u-2\log g\left(\sqrt{-4 \log u}\right)-\log(2\pi)\right)^{1/2}. \label{leftlvalues}
\end{align}
We also have the following sharper bound: for $u\in(0,1]$ small enough,
\begin{align} 
 \xi(u)&\leq  \left(-2\log u-2\log g\left(\left(-2\log u + 5 \Lambda (-  \log u)^{1/2}\right)^{1/2}\right)-\log(2\pi)\right)^{1/2}. \label{leftlvaluessharp}
\end{align}

In particular, $ \xi(u)\sim \left(-2\log u\right)^{1/2}$ when $u$ tends to zero.
\end{lemma}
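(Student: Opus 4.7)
The plan is to work throughout from the defining identity $\phi(\xi(u))=u\,g(\xi(u))$, which after taking logarithms becomes
\begin{equation}\label{eq:planid}
\xi(u)^2 = -2\log u - 2\log g(\xi(u)) - \log(2\pi).
\end{equation}
All three displayed inequalities then reduce to controlling the term $\log g(\xi(u))$ by two-sided bounds on $\xi(u)$ itself, combined with the eventual monotonicity of $g$ from \eqref{assumpgdebase}.

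For the lower bound \eqref{rightlvalues}, I would simply use the crude upper bound $g(\xi(u))\le \|g\|_\infty$, which via \eqref{eq:planid} gives $\xi(u)\ge \sqrt{-2\log(Cu)}$ with $C=(2\pi)^{1/2}\|g\|_\infty$. Monotonicity of $g$ at $+\infty$ (valid for $u$ small enough that $\xi(u)$ exceeds the threshold in \eqref{assumpgdebase}) then yields $g(\xi(u))\le g(\sqrt{-2\log(Cu)})$, and reinserting into \eqref{eq:planid} produces exactly the claimed lower bound.

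For the crude upper bound \eqref{leftlvalues}, the Laplace-type lower tail \eqref{tailsg} gives $\log g(\xi(u))\ge \log g(0) - \Lambda\xi(u)$, which turns \eqref{eq:planid} into the quadratic inequality $\xi(u)^2 - 2\Lambda\xi(u)\le -2\log u - 2\log g(0) - \log(2\pi)$. Solving yields
\[
\xi(u)\le \Lambda + \sqrt{\Lambda^2 - 2\log u - 2\log g(0) - \log(2\pi)},
\]
and for $u$ small enough the right-hand side is bounded by $\sqrt{-4\log u}$ by elementary comparison of the leading $\sqrt{-2\log u}$ term with $2\sqrt{-\log u}$. Monotonicity of $g$ then provides \eqref{leftlvalues}. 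The sharper upper bound \eqref{leftlvaluessharp} is then obtained by one iteration: squaring the quadratic solution above and using $2\Lambda\sqrt{-2\log u}\le 5\Lambda(-\log u)^{1/2}$ for $u$ small enough (absorbing the $O(\Lambda^2)$ remainder into this slack since $2\sqrt{2}<5$), one gets $\xi(u)^2\le -2\log u + 5\Lambda(-\log u)^{1/2}$, and plugging this tighter upper bound into the monotonicity argument for $g$ yields \eqref{leftlvaluessharp}.

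Finally, the asymptotic $\xi(u)\sim \sqrt{-2\log u}$ follows by sandwiching $\xi(u)^2$ between the two bounds and observing that, under the log-Lipschitz assumption \eqref{log-lip}, $|\log g(y)|\lesssim 1+|y|$, so the nuisance terms $-2\log g(\cdot)$ appearing in both bounds are $O(\sqrt{-\log u})$, which is of smaller order than the leading $-2\log u$. The only place requiring minor care is the constant-tracking in the iteration step for \eqref{leftlvaluessharp}; this is bookkeeping rather than a genuine obstacle, since the inequality $(2\sqrt{2})\Lambda + o(1)\le 5\Lambda$ holds for $u$ sufficiently small in a quantifiable way.
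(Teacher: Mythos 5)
Your proof is correct and follows essentially the same strategy as the paper's: work from the log of the defining identity $\phi(\xi(u))=u\,g(\xi(u))$, use $g\le\|g\|_\infty$ for the lower bound and the Laplace tail $g(y)\ge g(0)e^{-\Lambda|y|}$ for the upper bound, then exploit eventual monotonicity of $g$ to reinsert the resulting two-sided estimate on $\xi(u)$ into the identity. The only stylistic departure is in obtaining the intermediate upper bound on $\xi(u)$: where the paper bootstraps from the inequality $\phi/g\le e^{-x^2/4}$ for large $|x|$, you substitute the Laplace bound directly into the log-identity and solve the resulting quadratic in $\xi(u)$—an equivalent and, if anything, slightly cleaner derivation.
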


\begin{proof}
Now fix $u\in(0,1]$. Since $\phi(\xi(u))= g(\xi(u)) u$, we have $\phi(\xi(u)) \leq  \|g\|_\infty u$ which implies $\xi(u)\geq \sqrt{-2 \log ( C u)}$, so  $g(\xi(u)) \leq  g\left(\sqrt{-2 \log ( C u)}\right)$ for $u$ small enough. This in turn implies $\phi(\xi(u))\leq  u g\left(\sqrt{-2 \log ( C u)}\right)$ and thus \eqref{rightlvalues}.
Conversely, using \eqref{log-lip}, $g(|x|)\geq g(0) e^{-\Lambda |x|}$ 
for all $x\in\R$ 
and thus $\phi(|x|)/g(|x|)\leq (g(0)\sqrt{2\pi})^{-1} e^{-x^2/2} e^{\Lambda |x|} \leq e^{-x^2/4}$ for $|x|$ larger than a constant, 
which in turn provides $|x|\leq \sqrt{-4\log (\phi(|x|)/g(|x|))}$ and thus 
$\phi(|x|)/g(|x|)\leq (g(0)\sqrt{2\pi})^{-1} e^{-x^2/2} e^{\Lambda \sqrt{-4\log (\phi(|x|)/g(|x|))}}$.
On the one hand, this gives that if $u$ is small enough, 
$\phi(\xi(u)) \geq g(0) u e^{-\Lambda \sqrt{-4\log u}} $, so
\begin{align} 
\xi(u)& \leq \left(-2\log u+4 \Lambda (-  \log u)^{1/2}-2\log g(0)-\log(2\pi)\right)^{1/2}\nonumber\\ 
&\leq  \left(-2\log u+5 \Lambda (- \log u)^{1/2}\right)^{1/2}.
\label{inter-xi}
\end{align}
As $g$ decreases on a vicinity of $\infty$, we have 
$
g(\xi(u))\geq g\left(\sqrt{-4\log{u}}\right)
$ for $u$  small enough. Hence,
$$
\phi(\xi(u))
 \geq 
 (\phi(\xi(u))/g(\xi(u)))  \:g\left(\sqrt{-4\log{u}}\right) = u \:g\left(\sqrt{-4\log{u}}\right),
 $$
which leads to  \eqref{leftlvalues}. To get \eqref{leftlvaluessharp} we use the same reasoning as above with the bound \eqref{inter-xi} instead of $\sqrt{-4\log{u}}$.
\end{proof}

\begin{lemma}\label{prop:BMTqval}
Consider  $\chi$ as in \eqref{equchi}. Then we have for all $u\in(0,1]$,
\begin{align} 
 \chi(u)&\geq \ol{\Phi}^{-1} \left(u\: \ol{G}\left(\ol{\Phi}^{-1}(u)\right)\right);\label{rightqvalues}\\
  \chi(u)&\leq \ol{\Phi}^{-1}  \left(u \:\ol{G}\left(\left(-2\log u+4 \Lambda (-  \log u)^{1/2}+C\right)^{1/2}\right)\right) \mbox{ for $u$ small enough},\label{leftqvalues2}
\end{align}
and  $C=-2\log g(0)-\log(2\pi)$.
We also have the following sharper bound: for some constant $C'>0$, for $u\in(0,1]$ small enough,
\begin{align} 
 \chi(u)&\geq  \left(-2\log \left(u\ol{G}\left(\ol{\Phi}^{-1}(u)\right)\right) -\log \log (1/u)-C'\right)^{1/2}. \label{lowchisharp}
\end{align}
\end{lemma}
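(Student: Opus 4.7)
The plan is to derive all three bounds from the defining identity $\ol{\Phi}(\chi(u)) = u\,\ol{G}(\chi(u))$ coming from \eqref{equchi}, together with monotonicity and the ordering $\ol{G} \ge \ol{\Phi}$ on $\R_+$ supplied by \eqref{increasing2}. The preliminary step I would establish first is the simple inequality $\chi(u) \ge \ol{\Phi}^{-1}(u)$: evaluating $\ol{\Phi}/\ol{G}$ at $x_0 := \ol{\Phi}^{-1}(u)$ gives $u/\ol{G}(x_0) \ge u = (\ol{\Phi}/\ol{G})(\chi(u))$, and since $\ol{\Phi}/\ol{G}$ is decreasing this yields the claim.

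Bound \eqref{rightqvalues} then falls out by monotonicity: applying $\ol{G}$ to $\chi(u) \ge \ol{\Phi}^{-1}(u)$, substituting into the defining identity, and inverting $\ol{\Phi}$. For the upper bound \eqref{leftqvalues2}, I would combine $\chi(u) \le \xi(u)$ from Lemma~\ref{propchizetabase} with the intermediate estimate on $\xi$ that already appears inside the proof of Lemma~\ref{prop:BMTlval}, namely $\xi(u)^2 \le -2\log u + 4\Lambda\sqrt{-\log u} + C$ with $C = -2\log g(0) - \log(2\pi)$ valid for $u$ small; calling $K$ the right-hand side, $\chi(u)\le K$ gives $\ol G(\chi(u))\ge \ol G(K)$, and running the same monotonicity chain in reverse delivers the claimed inversion.

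For the sharper bound \eqref{lowchisharp}, I would first prove a Mills-type lower inversion of the form $\ol{\Phi}^{-1}(v) \ge (-2\log v - \log\log(1/v) - C'')^{1/2}$ for $v$ small, obtained by taking logs in $\ol{\Phi}(x) \ge \phi(x)/(2x)$ (valid for $x \ge 1$) and then inserting the rough bound $x \le \sqrt{-2\log v}$ into the $\log(2x)$ remainder. Applied at $v := u\,\ol{G}(\ol{\Phi}^{-1}(u))$ and combined with \eqref{rightqvalues}, this already gives an inequality of the right shape, except with $\log\log(1/v)$ in place of $\log\log(1/u)$.

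The one mild obstacle is the final comparison $\log\log(1/v) \le \log\log(1/u) + O(1)$. For this I would invoke the Laplace-type lower tail $\ol{G}(x) \ge (g(0)/\Lambda)\,e^{-\Lambda x}$ recorded in \eqref{tails3}, which yields $\log(1/v) \le \log(1/u) + \Lambda\,\ol{\Phi}^{-1}(u) + O(1)$; since $\ol{\Phi}^{-1}(u) = O(\sqrt{\log(1/u)}) = o(\log(1/u))$, one gets $\log(1/v) = \log(1/u)(1+o(1))$, hence $\log\log(1/v) \le \log\log(1/u) + O(1)$, and the $O(1)$ is absorbed into $C'$. The whole argument is essentially monotonicity bookkeeping on $\ol{\Phi}$ and $\ol{G}$; the heavy-tail assumption on $g$ really enters only in this last step.
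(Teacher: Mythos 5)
Your proposal is correct and follows essentially the same route as the paper: the first two bounds come from the defining identity $\ol{\Phi}(\chi(u))=u\,\ol{G}(\chi(u))$ together with monotonicity and the bounds $\ol{G}\le 1$, $\chi\le\xi$ and the $\xi$-estimate \eqref{inter-xi}, and the sharper bound \eqref{lowchisharp} combines the Mills-type inversion of $\ol{\Phi}^{-1}$ (Lemma~\ref{bphi}) with the Laplace lower tail \eqref{tails3} of $\ol{G}$. The only (immaterial) difference is in how the $\log\log$ error term is controlled: you show $\log\log(1/v)\le\log\log(1/u)+O(1)$ for $v=u\ol{G}(\ol{\Phi}^{-1}(u))$ directly via the tail bound on $\ol G$, while the paper shows $u\ol{G}(\chi(u))\ge u^2$ and absorbs the extra $\log 2$.
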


\begin{proof}
Let $u\in(0,1]$.  Since $\ol{\Phi}(\chi(u))= \ol{G}(\chi(u)) u$, we have $\ol{\Phi}(\chi(u))\leq u$ and thus $\chi(u)\geq \ol{\Phi}^{-1}(u)$, which in turn implies
$\ol{\Phi}(\chi(u))\leq \ol{G}(\ol{\Phi}^{-1}(u)) u$ and \eqref{rightqvalues}. 
Conversely, as $\chi\le \xi$ by Lemma~\ref{propchizetabase}, 
using the bound on $\xi(u)$ just above \eqref{inter-xi} in the proof of Lemma \ref{prop:BMTlval},
\begin{align*}
\chi(u)\leq \xi(u)&\le \left(-2\log u+4 \Lambda (-  \log u)^{1/2}-2\log g(0)-\log(2\pi)\right)^{1/2},
\end{align*}
so the relation $\chi(u)= \ol{\Phi}^{-1}(\ol{G}(\chi(u)) u)$ leads to \eqref{leftqvalues2}. 
Let us now prove \eqref{lowchisharp}. First observe, by using \eqref{tails3}, that 
$
\ol{G}(\chi(u))\geqa e^{-\Lambda \chi(u)}$.
 Next using the upper bound \eqref{inter-xi} on $\xi\geq \chi$ leads to  $u\ol{G}(\chi(u))\geq u^2$
for $u$ small enough. 
Now, by the second part of Lemma~\ref{bphi},  for $u$ small enough,
\begin{align*}
\chi(u)&= \ol{\Phi}^{-1}(\ol{G}(\chi(u)) u)\\
&\geq \left\{ 2\log (1/\{u\ol{G}(\chi(u))\}) - \log \log (1/\{u\ol{G}(\chi(u))\})-C\right\}^{1/2}\\
&\geq \left\{ 2\log (1/\{u\ol{G}(\chi(u))\}) - \log \log (1/u^2)-C\right\}^{1/2},
\end{align*}
for some constant $C>0$, which gives the result.

\end{proof}

\begin{lemma} \label{lemzetagen}
Consider $\zeta$ as in \eqref{zeta}. Then for a constant $C>0$, we have for $w$ small enough,
\begin{align} 
 \zeta(w)&\geq \left(-2\log w-2\log g\left(\sqrt{-2  \log ( C w)}\right)-\log(2\pi)\right)^{1/2} ;\label{zetamin2}\\
 \zeta(w)&\leq  \left(-2\log w-2\log g\left(\sqrt{-5 \log w}\right)+C\right)^{1/2}. \label{zetamaj2}
\end{align}
We also have the following sharper bound: for $w\in(0,1]$ small enough,
\begin{align} 
 \zeta(w)&\leq  \left(-2\log w-2\log g\left(\left(-2\log w + 6 \Lambda (-  \log w)^{1/2}\right)^{1/2}\right)+C\right)^{1/2}. \label{zetamaj3}
\end{align}
In particular,  $\zeta(w) \sim  \left(-2\log w\right)^{1/2}$ as $w$ tends to zero.
\end{lemma}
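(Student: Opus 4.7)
The plan is to reduce everything to the bounds on $\xi$ established in Lemma~\ref{prop:BMTlval} via the identity \eqref{zetatoxi}, which reads $\zeta(w)=\xi(u)$ for $u=w/(1+w)$. Since for small $w$ we have $u<w$ and $-\log u=-\log w+\log(1+w)=-\log w+O(w)$, the arguments inside the bounds for $\xi(u)$ will differ from those involving $w$ only by controlled quantities, and the main task is to propagate these perturbations through the monotonicity properties of $g$.

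For the lower bound \eqref{zetamin2}, I would apply \eqref{rightlvalues} at $u=w/(1+w)$. Since $u<w$, $-2\log u\ge-2\log w$ and $\sqrt{-2\log(Cu)}\ge\sqrt{-2\log(Cw)}$; because $g$ is decreasing on a vicinity of $+\infty$ (part of \eqref{assumpgdebase}), this yields $-2\log g(\sqrt{-2\log(Cu)})\ge-2\log g(\sqrt{-2\log(Cw)})$ for $w$ small enough, and \eqref{zetamin2} follows directly. For the upper bounds, I would apply \eqref{leftlvalues} (resp.\ \eqref{leftlvaluessharp}) at the same $u$. The estimate $\log(1+w)\le w\le 1$ on $(0,1)$ gives $-2\log u\le-2\log w+2$, and for \eqref{zetamaj2} one checks that $-4\log u\le-5\log w$ for $w\le e^{-4}$ (so by monotonicity of $g$ at infinity, $-2\log g(\sqrt{-4\log u})\le-2\log g(\sqrt{-5\log w})$), absorbing the constant $2$ into the overall constant $C$. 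For \eqref{zetamaj3} one uses the similar but finer comparison $-2\log u+5\Lambda\sqrt{-\log u}\le-2\log w+6\Lambda\sqrt{-\log w}$ for $w$ small, the gain coming from $\Lambda\sqrt{-\log w}-5\Lambda w/(2\sqrt{-\log w})-2\log(1+w)\ge 0$ eventually.

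For the asymptotic equivalence $\zeta(w)\sim\sqrt{-2\log w}$, I would show that in both the lower and upper bounds the correction term $-2\log g(\cdot)$ is $o(-\log w)$. From the tail condition \eqref{tails2}, $g(y)\leqa (1+y^2)^{-1}$, so $-\log g(y)\leqa \log(1+y^2)$ and $-2\log g(\sqrt{-2\log(Cw)})=O(\log\log(1/w))$, taking care of the lower bound. From the Lipschitz logarithm assumption \eqref{log-lip} (equivalently \eqref{tailsg}), $g(y)\ge g(0)e^{-\Lambda y}$ gives $-2\log g(\sqrt{-5\log w})\le 2\Lambda\sqrt{-5\log w}+O(1)=O(\sqrt{-\log w})$, taking care of the upper bound. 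Both corrections are negligible compared to $-2\log w$, so $\zeta(w)^2/(-2\log w)\to 1$.

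The main obstacle here is really just bookkeeping: ensuring the constants and threshold values at which ``$w$ small enough'' applies are compatible across the three bounds, and in particular verifying the finer inequality for \eqref{zetamaj3} where the two $\sqrt{-\log(\cdot)}$ terms must be compared carefully. No new analytic input is needed beyond \eqref{zetatoxi}, Lemma~\ref{prop:BMTlval}, and the basic tail estimates on $g$ already recorded in Section~\ref{sec:assumg}.
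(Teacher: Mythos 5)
Your proposal is correct and follows essentially the same route as the paper: reduce to Lemma~\ref{prop:BMTlval} via the identity $\zeta(w)=\xi(w/(1+w))$, then propagate the perturbation $-\log u = -\log w + \log(1+w)$ through the monotonicity of $g$ near $+\infty$ and through the tail estimates \eqref{tails2} and \eqref{tailsg} for the asymptotic equivalence. The only cosmetic difference is the lower bound: the paper reads off \eqref{zetamin2} instantly from the companion inequality $\zeta(w)\ge\xi(w)$ (which holds since $\xi$ is decreasing and $w/(1+w)<w$), whereas you take the slightly longer road of starting from $\zeta(w)=\xi(w/(1+w))$ and then using $-\log u\ge-\log w$ together with the monotonicity of $g$; both arrive at the same place and your verification of the finer comparison needed for \eqref{zetamaj3} (comparing $5\Lambda\sqrt{-\log u}$ with $6\Lambda\sqrt{-\log w}$) is sound.
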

\begin{proof}
The result follows from Lemma~\ref{prop:BMTlval}, combined with the relations $\zeta(w) \ge \xi(w)$  and $\zeta(w)=\xi(w/(1+w))$ established above.
\end{proof}

\subsection{Relations between $\xi(r(w,t))$, $\chi(r(w,t))$ and $\zeta(w)$}

Let us recall the definition $r(w,t)=wt/\{(1-w)(1-t)\}$, see \eqref{equinterm}. 

\begin{lemma}\label{propchizeta}
For any $t\in (0,1)$, for $\omega_0=\omega_0(t)$ small enough,  for all $w\leq \omega_0$, we have
$ \chi(r(w,t))\leq \zeta(w). $
 \end{lemma}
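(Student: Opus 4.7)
The plan is to reduce the claim to a direct asymptotic comparison between $\overline{\Phi}/\overline{G}$ evaluated at $\zeta(w)$ and $r(w,t)$, and then to invoke the explicit relation $(\phi/g)(\zeta(w))=w/(1+w)$ together with the tail assumption \eqref{tails} to control the ratio.

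First, since $\overline{\Phi}/\overline{G}$ is (strictly) decreasing on $[0,\infty)$ by \eqref{increasing2}, so is its inverse $\chi$. Applying $\overline{\Phi}/\overline{G}$ to both sides of the desired inequality $\chi(r(w,t))\le \zeta(w)$ reverses the inequality, and the claim becomes equivalent to
\[
\frac{\overline{\Phi}(\zeta(w))}{\overline{G}(\zeta(w))} \;\le\; r(w,t) \;=\; \frac{w}{1-w}\cdot\frac{t}{1-t}.
\]
So it suffices to show that, for any fixed $t\in(0,1)$, the left-hand side is bounded by $r(w,t)$ for all $w$ below some threshold $\omega_0(t)$.

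Next, I would decompose the ratio using the defining property of $\zeta$, namely $(\phi/g)(\zeta(w))=w/(1+w)$, as
\[
\frac{\overline{\Phi}(\zeta(w))}{\overline{G}(\zeta(w))}
=\frac{\overline{\Phi}(\zeta(w))}{\phi(\zeta(w))}\cdot\frac{g(\zeta(w))}{\overline{G}(\zeta(w))}\cdot\frac{\phi(\zeta(w))}{g(\zeta(w))}
=\frac{\overline{\Phi}(\zeta(w))}{\phi(\zeta(w))}\cdot\frac{g(\zeta(w))}{\overline{G}(\zeta(w))}\cdot\frac{w}{1+w}.
\]
Since $\zeta(w)\to\infty$ as $w\to 0$ (by Lemma~\ref{lemzetagen}), Mills' ratio gives $\overline{\Phi}(\zeta(w))/\phi(\zeta(w))=O(1/\zeta(w))$ (see Lemma~\ref{bphi}), and assumption \eqref{tails} yields $g(\zeta(w))/\overline{G}(\zeta(w))\asymp \zeta(w)^{1-\kappa}$. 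Hence there is a constant $C=C(g)>0$ such that for all $w$ small enough,
\[
\frac{\overline{\Phi}(\zeta(w))}{\overline{G}(\zeta(w))}
\;\le\; \frac{C}{\zeta(w)^{\kappa}}\cdot\frac{w}{1+w}.
\]

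Finally, comparing to $r(w,t)=\frac{w}{1-w}\cdot\frac{t}{1-t}$, the required inequality reduces to
\[
\frac{C}{\zeta(w)^{\kappa}}\;\le\;\frac{1+w}{1-w}\cdot\frac{t}{1-t},
\]
and since the right-hand side is bounded below by $t$ while $\zeta(w)^{\kappa}\to\infty$ as $w\to 0$, this holds for all $w\le\omega_0(t)$ with $\omega_0(t)$ chosen small enough (say, such that $\zeta(\omega_0(t))^{\kappa}\ge C/t$). There is no real obstacle here; the only subtlety is to remember that $\omega_0$ must depend on $t$ because the ratio on the left is of order $w/\zeta(w)^\kappa$, which is $o(w)$ but not $o(wt)$ uniformly in $t$.
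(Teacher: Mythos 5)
Your argument is correct, and it takes a genuinely different — and more economical — route than the paper's. The paper proves this lemma by assembling an explicit asymptotic upper bound on $\chi(u)$ (from Lemma~\ref{prop:BMTqval}) and a lower bound on $\zeta(w)$ (from Lemma~\ref{lemzetagen}, namely \eqref{zetamin2}), and then directly establishing $\chi(r(w,t))^2-\zeta(w)^2\le 0$ for $w$ small; the argument there hinges on a cancellation of the leading $-2\log(\cdot)$ terms, the Lipschitz property \eqref{log-lip} of $\log g$ to control the $\log g$ remainders, and the $-\log\log(1/r(w,t))$ margin eventually absorbing everything. Your proof bypasses any explicit description of $\chi$: applying the decreasing bijection $\overline{\Phi}/\overline{G}$ reduces the claim to $(\overline{\Phi}/\overline{G})(\zeta(w))\le r(w,t)$, and then the factorization through $(\phi/g)(\zeta(w))=w/(1+w)$ together with Mills' ratio and \eqref{tails} produces the decisive polynomial gain of order $\zeta(w)^{-\kappa}$, which beats the bounded factor $\tfrac{1-w}{1+w}\cdot\tfrac{1-t}{t}$ once $w\le\omega_0(t)$. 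Two points worth flagging for completeness: you should check that $r(w,t)\le 1$ for $w$ small so that $\chi(r(w,t))$ is defined (immediate), and that $\zeta(w)$ is large enough that both \eqref{tails} and the Mills bound apply (ensured by $w\le\omega_0(t)$). The main thing the paper's heavier route buys is a quantitative handle on the difference $\zeta(w)-\chi(r(w,t))$, which is recycled in the proof of \eqref{GchirsurGzeta} in Lemma~\ref{lemxizeta}; for the bare inequality of this lemma your shorter argument suffices.
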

\begin{proof}
 Denote by $T(u)=\left(-2\log u+4 \Lambda (-  \log u)^{1/2}+C\right)^{1/2}$ the term appearing in \eqref{leftqvalues2}.
 By \eqref{leftqvalues2} and Lemma~\ref{bphi},  for $u$ small enough,
\begin{align*}
\chi(u)&\leq \ol{\Phi}^{-1}  \left(u \:\ol{G}\left(T(u)\right)\right)\\
&\leq \left\{ \left(2\log (1/u) - 2\log \ol{G}\left(T(u)\right) - \log \log (1/u)\right) \right\}^{1/2}.
\end{align*}
Now using that $\ol{G}(y)\geq D\: g(y)$ for $y$ large enough (see \eqref{tails}), we have for $u$ small enough,
\begin{align*}
\chi(u)^2&\leq  2\log (1/u) -2\log D- 2\log g\left(T(u)\right) - \log \log (1/u).
\end{align*}
Hence, for $w$ small enough, denoting $R=(1-t)(1-w)/t$ and  recalling $r(w,t)=w/R$ via \eqref{equinterm}, and using \eqref{zetamin2} together with assumption \eqref{log-lip},
\begin{align*} 
&\chi(r(w,t))^2 -\zeta(w)^2 \\
&\leq  2\log (1/r(w,t)) -2\log D- 2\log g\left(T\left( r(w,t) \right)\right) - \log \log (1/r(w,t))\\
&+2\log w+2\log g\left(\left\{-2  \log ( C w)\right\}^{1/2}\right)+\log(2\pi)\\
\leq  \:&2\log R + 2\Lambda \left|\left\{-2  \log ( C w)\right\}^{1/2}-   T\left( r(w,t) \right)\right| - \log \log (1/r(w,t))+C',
\end{align*}
for some constant $C'>0$.
Now using $|\sqrt{a}-\sqrt{b}|=|a-b|/(\sqrt{a}+\sqrt{b})$ one gets, for $w$ small enough,
\begin{align*}
\lefteqn{\left|\left\{-2  \log ( C w)\right\}^{1/2} - T\left( r(w,t) \right)\right| }\\
& \le \frac{\left| 2\log(r(w,t)/(Cw)) - 4\La\left(-\log r(w,t)\right)^{1/2} -C\right|}
{\{2\log(1/(Cw))\}^{1/2}} \\
& \leq C'_1 
\left(\frac{|\log ((1-t)/t)|}{\left(\log 1/w\right)^{1/2}}+1\right)
\end{align*}
As a result, for $w$ small enough and smaller than a threshold 
$
\omega_0(t)
$ (depending on $t$ in a way such that  
$\log(1/w)\ge \log^2((1-t)/t)$ as well as $\log\log(1/w)\ge 2\log{R}+C''$ for a large enough constant $C''>0$) 
we have $\chi(r(w,t))^2 -\zeta(w)^2\leq 0$ and the result holds.
\end{proof}

\begin{lemma} \label{lemxizeta} 
There exists some constant $C=C(g)>0$ such that for all $t\in(0,1)$ there exists $\omega_0(t)$ such that for all $w\leq \omega_0(t)$,
\begin{align}
 |\zeta(w) - \xi(r(w,t))| & \le \frac{2\big|\log\big(\frac{t}{1-t} \big)\big|+C}{\zeta(w)+\xi(r(w,t))}. 
\label{relzetaxi}
\end{align}
Furthermore, for all $\eps>0$ and $t\in(0,1)$, there exists $\omega_0(t,\eps)$ such that for 
$w\leq \omega_0(t,\eps)$, 
\begin{align}
\frac{g(\xi(r(w,t)))}{g(\zeta(w))}&\le 1+\veps;\label{gxirsurg}\\
\frac{\ol{G}(\chi(r(w,t)))}{\ol{G}(\zeta(w))}  &\le 1+\veps\label{GchirsurGzeta}.
\end{align} 
\end{lemma}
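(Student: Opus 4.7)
\textbf{Proof plan for Lemma \ref{lemxizeta}.} The approach is to use the defining identities of $\xi$, $\zeta$, $\chi$ to rewrite the three differences in terms of log-ratios, then exploit the Lipschitz property \eqref{log-lip} of $\log g$ (and its counterpart for $\log\ol{G}$ established just before Lemma \ref{lemmetrivialisant}) to close the estimates.

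\emph{Step 1 (the first inequality).} Write $\xi=\xi(r(w,t))$ for short. By \eqref{zetatoxi} one has $\phi(\zeta(w))/g(\zeta(w)) = w/(1+w)$ and by definition of $\xi$, $\phi(\xi)/g(\xi) = r(w,t)$. Taking logarithms and subtracting yields
\[
\frac{\zeta(w)^2 - \xi^2}{2} \;=\; \log\!\frac{t}{1-t} + \log\!\frac{1+w}{1-w} + \log g(\xi) - \log g(\zeta(w)).
\]
The second term on the right is bounded by a universal constant for $w$ small enough, and \eqref{log-lip} gives $|\log g(\xi) - \log g(\zeta(w))| \le \Lambda|\xi - \zeta(w)|$. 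Factoring $\zeta(w)^2 - \xi^2 = (\zeta(w)-\xi)(\zeta(w)+\xi)$ and noting that $\zeta(w)+\xi \to \infty$ as $w\to 0$ (by Lemmas \ref{prop:BMTlval} and \ref{lemzetagen}), we can absorb the $\Lambda|\xi - \zeta(w)|$ term into the left-hand side for $w\le\omega_0(t)$ small enough, which yields the claimed bound \eqref{relzetaxi}.

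\emph{Step 2 (ratio \eqref{gxirsurg}).} This is now immediate from Step 1 and the Lipschitz property:
\[
|\log g(\xi)-\log g(\zeta(w))| \le \Lambda|\xi - \zeta(w)| \le \Lambda\,\frac{2|\log(t/(1-t))|+C}{\zeta(w)+\xi},
\]
which tends to $0$ as $w\to 0$ for fixed $t$. Choosing $w\le\omega_0(t,\eps)$ small enough to make the right-hand side at most $\log(1+\eps)$ gives \eqref{gxirsurg}.

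\emph{Step 3 (ratio \eqref{GchirsurGzeta}).} The same strategy applies, but requires controlling $|\chi(r(w,t)) - \zeta(w)|$. Lemma \ref{propchizeta} gives $\chi(r(w,t))\le \zeta(w)$ for $w$ small, so the task is to lower-bound $\chi(r(w,t))$, equivalently to upper-bound $\xi - \chi$ since $\chi \le \xi$ by Lemma \ref{propchizetabase}. From $\ol\Phi(\chi)/\ol G(\chi) = r(w,t) = \phi(\xi)/g(\xi)$ together with the Mills-ratio estimates $\ol\Phi(x)/\phi(x)\sim 1/x$ and $\ol G(x)/g(x) \asymp x^{\kappa-1}$ (coming from \eqref{tails}), one derives an identity of the same form as in Step 1, now with an extra term of order $\kappa\log\chi$ on the right. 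This gives $|\xi - \chi| \lesssim \log\chi/(\xi+\chi)$, which vanishes as $w\to 0$ since $\chi, \xi, \zeta(w)$ are all of order $\sqrt{\log(1/w)}$. Combined with Step 1, this shows $|\chi - \zeta(w)| \to 0$; the Lipschitz property of $\log\ol G$ then yields \eqref{GchirsurGzeta}.

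The main technical obstacle will be Step 3: keeping track of the $\kappa$-dependent Mills-ratio constants uniformly in $w$ and ensuring that the thresholds $\omega_0(t)$, $\omega_0(t,\eps)$ depend only on the stated quantities (through $g$) and not on anything else. Steps 1 and 2 are essentially elementary given the tools already established, but Step 3 requires a careful two-stage comparison (first $\xi$ vs $\chi$, then combining with Step 1 to reach $\zeta$ vs $\chi$) that must not blow up the dependence on $t$.
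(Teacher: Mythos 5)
Your proposal is correct, and it takes a genuinely different route from the paper's. Where the paper's proof of \eqref{relzetaxi} uses the pre-packaged explicit two-sided bounds on $\zeta$ and $\xi$ from Lemmas~\ref{prop:BMTlval} and~\ref{lemzetagen} (involving the auxiliary quantities $S_1(w)$, $S_2(r(w,t))$, whose difference is bounded by a constant before the Lipschitz property of $\log g$ is applied), you instead start from the exact defining identities $\phi(\zeta(w))/g(\zeta(w)) = w/(1+w)$ and $\phi(\xi)/g(\xi) = r(w,t)$, take logs and subtract, then apply the Lipschitz property directly to $|\log g(\xi)-\log g(\zeta(w))|$ and absorb $\Lambda|\xi-\zeta(w)|$ into the left side. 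This is more self-contained and conceptually transparent, at the cost of a slightly fiddlier absorption step: you need $\zeta(w)+\xi \ge c(t)$, with $c(t)$ large enough in $|\log(t/(1-t))|$ and $\Lambda$, to recover the factor $2$ in the numerator with a $t$-independent additive constant $C$; this works but deserves a line of justification. For \eqref{GchirsurGzeta}, your two-stage comparison ($\xi$ vs.\ $\chi$ via the exact identity $\ol\Phi(\chi)/\ol G(\chi)=\phi(\xi)/g(\xi)$ plus Mills-ratio-type asymptotics $\ol\Phi/\phi\sim 1/x$ and \eqref{tails}, then splicing in Step~1) again differs from the paper, which estimates $\zeta(w)^2-\chi(r(w,t))^2$ directly using \eqref{zetamaj3} and the sharp lower bound \eqref{lowchisharp} on $\chi$, and then decomposes one remaining log-ratio in two pieces. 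Both end up exhibiting a gap of order $\log\log(1/w)/\sqrt{\log(1/w)}$ and concluding by Lipschitzness of $\log\ol G$. The paper's modular route has the advantage that Lemmas~\ref{prop:BMTlval}, \ref{lemzetagen}, \ref{prop:BMTqval} are reused elsewhere; your direct route is cleaner if one only cares about this lemma, but one should verify that the multiplicative Mills-ratio constants (which become bounded additive errors after taking logs, not errors that cancel) are tracked with a $g$-dependent $O(1)$, which your sketch correctly anticipates.
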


\begin{proof}  
Let us set \[ S_1(w)= \left(-2\log w + 6 \Lambda (-  \log w)^{1/2}\right)^{1/2}\]
 and $S_2(w)= \sqrt{-2  \log ( C w)}$ the terms appearing  in the  bounds \eqref{zetamaj3} and \eqref{rightlvalues}, respectively. Using these bounds, one obtains
\begin{align*}
&\zeta(w)^2 - \xi(r(w,t))^2 \\
&\leq  \:2\log (r(w,t)/w)  + 2\log g(S_2(r(w,t)))- 2\log g(S_1(w)) +D \\
&\leq 2\big|\log\big(t/(1-t) \big)\big| + D' ,
\end{align*}
for $w$ smaller than a threshold depending on $t$, by using that $\log{g}$ is Lipschitz and proceeding as in the proof of Lemma~\ref{propchizeta} to bound the difference $|S_1(w)-S_2(r(w,t))|$ by a universal constant. Conversely, by using \eqref{leftlvaluessharp} and \eqref{zetamin2}, we have, with $S_3(w)$ as $S_1(w)$ except that $6\Lambda$ is replaced by $5\Lambda$ and $S_4(w)$ as $S_2(w)$ with $C$ as in \eqref{zetamin2}, 
\begin{align*}
& \xi(r(w,t))^2 - \zeta(w)^2  \\
&\leq  -\:2\log (r(w,t)/w)  - 2\log g\left(S_3(w)\right) + 2\log g\left(S_4(w)\right) +D'' \\
&\leq 2\big|\log\big(t/(1-t) \big)\big| + D''',
\end{align*}
as above, which leads to \eqref{relzetaxi} by using $a^2-b^2=(a-b)(a+b)$.
Next, \eqref{gxirsurg} is a direct consequence of \eqref{relzetaxi} by using that $\log g$ is Lipschitz. Finally, let us prove  \eqref{GchirsurGzeta}. 
By Lemma~\ref{propchizeta} and the  bounds \eqref{zetamaj3} and \eqref{lowchisharp}, we have
for $w\leq w_0(t)$ and $S_1(w)$ as above,
\begin{align*} 
0\leq \:&\zeta(w)^2-\chi(r(w,t))^2\\
\leq \:& -2\log w-2\log g\left(S_1(w)\right)+C\\ 
 &+2\log \left\{r(w,t)\ol{G}\circ\ol{\Phi}^{-1}(r(w,t))\right\} +\log \log\{1/r(w,t)\}+C'\\
\leq \:& |2\log (t/(1-t))|+D+\log \log\{1/r(w,t)\}+2\log\Big\{\frac{ \ol{G}\circ\ol{\Phi}^{-1}(r(w,t))}{g\left(S_1(w)\right)}\Big\}.
\end{align*} 
Next, we have 
\begin{align*}
\log\Big\{\frac{ \ol{G}\circ\ol{\Phi}^{-1}(r(w,t))}{g\left(S_1(w)\right)}\Big\} &= \log\Big\{\frac{ \ol{G}\circ\ol{\Phi}^{-1}(r(w,t))}{\ol{G}\left(S_1(w)\right)}\Big\}+\log \Big\{ \frac{ \ol{G}\left(S_1(w)\right)}{g\left(S_1(w)\right)}\Big\}.
\end{align*}
The first term is bounded by a constant, by an argument similar to the proof of Lemma \ref{propchizeta}, as $\log\ol{G}$ is Lipschitz. For the second term, by \eqref{tails}, 
\begin{align*}
\log \Big\{ \frac{ \ol{G}\left(S_1(w)\right)}{g\left(S_1(w)\right)}\Big\}\leq \log S_1(w).
\end{align*}
This gives, upon dividing by $\zeta(w)+\chi(r(w,t)$ the obtained inequality on $\zeta(w)^2-\chi(r(w,t)^2$, that $|\zeta(w)-\chi(r(w,t)|$ is arbitrary small when $w$ is small, which 
 leads to \eqref{GchirsurGzeta}
by using again that $\log\ol{G}$  is Lipschitz.
\end{proof}

\begin{lemma} \label{lembfi}
There exists a constant $C=C(g)>0$ such that for all $t\in(0,0.9)$ there exists $\omega_0(t)$ such that for  $w\leq \omega_0(t)$ and $\mu\in\R$, 
\begin{align}
 \bfi(\xi(r(w,t))-\mu) & \ge C\:t\: \bfi(\zeta(w)-\mu) .\label{xizetawithmu}
\end{align} 
\end{lemma}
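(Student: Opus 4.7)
The plan is to first handle the trivial regime where $\xi(r(w,t)) \le \zeta(w)$, then to combine the sharp closeness estimate \eqref{relzetaxi} with a case analysis in $\mu$ via Mills' ratio. Write $a=\xi(r(w,t))$ and $b=\zeta(w)$ for brevity. By \eqref{zetatoxi}, $b=\xi(w/(1+w))$, and since $\xi$ is decreasing, $a\le b$ is equivalent to $r(w,t)\ge w/(1+w)$, i.e.\ $t\ge (1-w)/2$. In that regime, $\overline\Phi(a-\mu)\ge \overline\Phi(b-\mu)\ge t\,\overline\Phi(b-\mu)$ by monotonicity of $\overline\Phi$, so the inequality holds with $C=1$.

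The substantial case is $a>b$, which forces $t<1/2$ for $w$ small; set $h:=a-b>0$. Inequality \eqref{relzetaxi} then yields $h(a+b)\le 2\log((1-t)/t)+C_0$, and since $a+b\to\infty$ as $w\to 0$, one can take $\omega_0(t)$ small enough to force $h\le 1/2$. I would then split on the position of $\mu\ge 0$ relative to $a,b$. For $\mu\ge a$, $a-\mu\le 0$, so $\overline\Phi(a-\mu)\ge 1/2$ and the ratio is $\ge 1/2$. For $b\le \mu<a$, $a-\mu\in(0,h]\subset(0,1/2]$, so $\overline\Phi(a-\mu)\ge \overline\Phi(1/2)$. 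For $\mu<b$ with $b-\mu\le 1$, $\overline\Phi(a-\mu)\ge \overline\Phi(1+h)\ge \overline\Phi(2)$. The key case is $\mu<b$ with $b-\mu>1$: both arguments exceed $1$ and the Mills-ratio bounds of Lemma \ref{bphi} yield
\[
\frac{\overline\Phi(a-\mu)}{\overline\Phi(b-\mu)} \gtrsim \frac{b-\mu}{a-\mu}\,\exp\!\bigl(-\tfrac12 h(a+b-2\mu)\bigr),
\]
where the prefactor is $\ge 1/2$ because $h\le 1<b-\mu$, and the exponent is bounded above, for $\mu\ge 0$, by $h(a+b)/2 \le \log((1-t)/t)+C_0/2$, so the exponential is $\ge (t/(1-t))\,e^{-C_0/2}\ge t\,e^{-C_0/2}$. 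Combining the four sub-cases gives the lemma with some $C=C(g)>0$.

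The hard part will be producing the Mills-ratio estimate in the last sub-case using \emph{only} the sharp weighted bound $h(a+b)\le 2\log((1-t)/t)+C_0$: one must simultaneously handle the algebraic prefactor $(b-\mu)/(a-\mu)$ (which forces subcases based on whether $b-\mu$ exceeds $1$) and the Gaussian-tail exponent, avoiding any loss of order larger than $t$. A further obstacle for general $\mu\in\RR$ is that for $\mu<0$ the exponent $h(a+b-2\mu)=h(a+b)+2h|\mu|$ grows with $|\mu|$ and the above argument degrades below $Ct$; fortunately in the uses of the lemma in the proofs of Theorems~\ref{th1} and~\ref{th2} only $\mu=|\theta_{0,i}|\ge 0$ appears, after exploiting symmetry of $\overline\Phi(\xi(r)-\theta)+\overline\Phi(\xi(r)+\theta)$ in $\theta$ and retaining only the summand with $\zeta-|\theta|$, so only the $\mu\ge 0$ range is actually required in the applications.
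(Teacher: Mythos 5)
Your proof is correct, and it is a genuinely more elementary (unpacked) route than the paper's. The paper disposes of the lemma in two lines by invoking the standalone inequality of Lemma~\ref{lemphixphiy} — namely that $\ol{\Phi}(x)\ge \tfrac14\ol{\Phi}(y)\,e^{-(x^2-y^2)_+/2}$ whenever $|x-y|\le 1/4$ — applied with $x=\xi(r(w,t))-\mu$, $y=\zeta(w)-\mu$, after which it bounds $((x)^2-(y)^2)_+ \le |\xi^2-\zeta^2|$ and plugs in \eqref{relzetaxi}; your four-way case analysis with the Mills bounds of Lemma~\ref{bphi} is essentially the proof of Lemma~\ref{lemphixphiy} carried out inline for this specific pair, so the mathematical content is the same and both proofs ultimately rest on the sharp closeness estimate \eqref{relzetaxi}. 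One thing you identify that is worth flagging: the issue you raise for $\mu<0$ (the exponent $h(a+b-2\mu)$ is not bounded when $\mu\to-\infty$ with $a>b$, so the ratio $\ol{\Phi}(a-\mu)/\ol{\Phi}(b-\mu)\to 0$) is real, and in fact the paper's own proof has the same slip — the step $((\xi-\mu)^2-(\zeta-\mu)^2)_+\le |\xi^2-\zeta^2|$ only holds for $\mu\ge 0$ when $\xi>\zeta$. So strictly speaking the lemma should be stated with $\mu\ge 0$; as you observe, this is harmless because in the proofs of Theorems~\ref{th1}–\ref{th2} the lemma is only ever used with $\mu=|\theta_{0,i}|\ge 0$, after keeping only the summand $\ol{\Phi}(\xi(r(w_2,t))-|\theta_{0,i}|)$ from the pair $\ol{\Phi}(\xi-\theta_{0,i})+\ol{\Phi}(\xi+\theta_{0,i})$.
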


\begin{proof}
By Lemma~\ref{lemxizeta}, for small $w$,  $|\zeta(w) - \xi(r(w,t))|\leq 1/4$. Hence, we can apply Lemma~\ref{lemphixphiy}, which gives
\begin{align*}
\frac{\ol{\Phi}(\xi(r(w,t))-\mu)}{\ol{\Phi}(\zeta(w)-\mu)} &\geq  \frac{1}{4} e^{-|\xi(r(w,t)^2)-\zeta(w)^2|/2}\\
&\geq C e^{-\big|\log\big(\frac{t}{1-t} \big)\big|},
\end{align*}
by using again \eqref{relzetaxi}. This shows the desired result.
\end{proof}

\subsection{Variations of certain useful functions}

For any $w\in(0,1)$ and $\mu\neq 0$, let us denote 
\begin{equation}\label{Tmu}
T_\mu(w) =
1+\frac{|\zeta(w)-|\mu||}{|\mu|}.
\end{equation}

\begin{lemma} \label{lemTmuw}
First, for all $\eps\in(0,1)$, for any 
   $z\ge 1$, there exists $\omega_0=\omega_0(z,\eps)\in (0,1)$, such that for all $w\leq \omega_0$, 
\begin{equation}\label{equGbarwM} 
\left\{\begin{array}{c}1-\eps\:  \leq g(\zeta(w/z))/g(\zeta(w)) \leq 1\\ 
1-\eps\:  \leq \ol{G}(\zeta(w/z))/\ol{G}(\zeta(w)) \leq 1.
\end{array}\right.
\end{equation}
Second, for any $K\ge 1$, one can find $d_1=d_1(K)$ and $d_2=d_2(K)>0$ such that for all $z\geq 1$, for $w\leq \omega_0=\omega_0(z,1/2)$ as before and $|\mu|>\zeta(w)/K$, 
\begin{equation}\label{equTmu} 
   d_1 \: \leq T_\mu(w/z)/T_\mu(w) \leq d_2.
   \end{equation}
\end{lemma}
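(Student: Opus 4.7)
\textbf{Proof plan for Lemma \ref{lemTmuw}.} The strategy splits naturally according to the two assertions. For the first part, the upper bounds $\le 1$ are immediate: since $\zeta$ is decreasing and $z\ge 1$, we have $\zeta(w/z)\ge \zeta(w)$, and both $g$ and $\ol G$ are eventually decreasing (by \eqref{assumpgdebase} for $g$, and as a tail functional for $\ol G$), so for $w$ sufficiently small $g(\zeta(w/z))\le g(\zeta(w))$ and similarly for $\ol G$. The substantive work is the matching lower bounds, which rest on the key estimate that $|\zeta(w/z)-\zeta(w)|\to 0$ as $w\to 0$ for fixed $z$.

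To establish this estimate, I would exploit the defining identities $\beta(\zeta(w))=1/w$ and $\beta(\zeta(w/z))=z/w$, which give $\beta(\zeta(w/z))/\beta(\zeta(w))=z$. Since $\beta(x)=g(x)/\phi(x)-1 \sim g(x)/\phi(x)$ as $x\to\infty$, taking logarithms yields
\[
\bigl[\log g(\zeta(w/z)) + \tfrac12\zeta(w/z)^2\bigr]-\bigl[\log g(\zeta(w))+\tfrac12\zeta(w)^2\bigr] = \log z + o(1),
\]
as $w\to 0$ (and hence $\zeta(w)\to\infty$ by Lemma \ref{lemzetagen}). Writing $\Delta=\zeta(w/z)-\zeta(w)\ge 0$ and using that $\log g$ is $\Lambda$-Lipschitz by \eqref{log-lip}, one gets
\[
\Delta\bigl(\zeta(w/z)+\zeta(w)\bigr) = 2\log z + O(\Delta) + o(1),
\]
so $\Delta\cdot(\zeta(w/z)+\zeta(w)-O(1))=2\log z+o(1)$. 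Since $\zeta(w)\to\infty$, this forces $\Delta\to 0$. The lower bounds on the ratios in \eqref{equGbarwM} then follow from
\[
g(\zeta(w/z))/g(\zeta(w)) \ge e^{-\Lambda\Delta}, \qquad \ol G(\zeta(w/z))/\ol G(\zeta(w)) \ge e^{-\Lambda'\Delta},
\]
using that $\log\ol G$ is also Lipschitz (as stated in Section~\ref{sec:proplqval}), and choosing $\omega_0(z,\eps)$ small enough that $\Delta \le \eps/(\Lambda\vee\Lambda')$.

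For the second part, I take $\omega_0=\omega_0(z,1/2)$ so that by the previous step $\Delta=\zeta(w/z)-\zeta(w)\le C_0$ for some universal constant $C_0$, and we may also assume $\zeta(w)\ge 1$ by further shrinking $\omega_0$. I then perform a case analysis based on the position of $|\mu|$ relative to $\zeta(w)$ and $\zeta(w/z)$. (i) If $|\mu|\ge \zeta(w/z)\ge \zeta(w)$, then $T_\mu(w)=2-\zeta(w)/|\mu|\in[1,2]$ and similarly $T_\mu(w/z)\in[1,2]$, so the ratio lies in $[1/2,2]$. (ii) If $|\mu|\le \zeta(w)\le \zeta(w/z)$, then $T_\mu(w/z)/T_\mu(w)=\zeta(w/z)/\zeta(w)=1+\Delta/\zeta(w)\in[1,1+C_0]$. (iii) If $\zeta(w)<|\mu|<\zeta(w/z)$, both $T_\mu(w)=2-\zeta(w)/|\mu|$ and $T_\mu(w/z)=\zeta(w/z)/|\mu|$ lie in $[1,1+(\zeta(w/z)-\zeta(w))/|\mu|]\subseteq [1,1+K C_0]$ using the hypothesis $|\mu|>\zeta(w)/K$ together with $\zeta(w)\ge 1$. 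Combining the three cases gives $d_1(K)\le T_\mu(w/z)/T_\mu(w)\le d_2(K)$ with $d_1=(1+KC_0)^{-1}\wedge 1/2$ and $d_2=(1+KC_0)\vee 2$.

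The main obstacle is the lower-bound estimate in the first part, namely showing $\zeta(w/z)-\zeta(w)\to 0$: it requires inverting the relationship between $\zeta$ and $\beta$ quantitatively, and in particular using that the divergence of $\zeta(w)+\zeta(w/z)$ dominates the bounded contribution from $\log g$ differences. Once this is in hand, the remainder is bookkeeping via Lipschitz continuity and a clean case split.
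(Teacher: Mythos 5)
Your proof is correct. For the first part, you take a route different from the paper's: you work directly from the defining identity $\beta(\zeta(w))=1/w$ and its logarithm (using $\beta\sim g/\phi$), extracting $\Delta\bigl(\zeta(w/z)+\zeta(w)\bigr)=2\log z+O(\Delta)+o(1)$ and hence $\Delta\to 0$; the paper instead invokes the explicit two-sided bounds on $\zeta$ from Lemma~\ref{lemzetagen} (the $S_1,S_4$ expressions) to bound $\zeta(w/z)^2-\zeta(w)^2$ by a constant. Your version is more self-contained and structurally transparent (it does not need the explicit thresholds derived elsewhere), while the paper's reuses estimates already on hand. Both yield the crucial fact that $\Delta=\zeta(w/z)-\zeta(w)\to 0$ as $w\to 0$ for fixed $z$, from which the Lipschitz bounds on $\log g$ and $\log\ol G$ give \eqref{equGbarwM}.

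For the second part, your three-case analysis and the paper's argument (bounding $T_\mu(w)\le 2+K$ and $T_\mu(w/z)\le 2+K\zeta(w/z)/\zeta(w)$ while noting both are $\ge 1$) amount to the same thing; yours is slightly more explicit but reaches the same constants up to bookkeeping. One small point worth stating explicitly is that the constant $C_0$ bounding $\Delta$ can be taken universal (e.g.\ $C_0=1$), independent of $z$, precisely because your part-1 bound $\Delta\le 2(\log z+o(1))/\zeta(w)$ is $o(1)$ for fixed $z$ once $w\le\omega_0(z,1/2)$; this is what makes $d_1(K),d_2(K)$ independent of $z$, as the statement requires.
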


\begin{proof}
Since $\log g$ and $\log \ol{G}$ are Lipschitz and by monotonicity, it is sufficient to bound $\zeta(w/z)-\zeta(w)$ from above. For this, we combine \eqref{zetamin2} and \eqref{zetamaj3} to obtain, with $S_1, S_4$ as in the proof of Lemma \ref{lemxizeta},
\begin{align*}
&\zeta(w/z)^2-\zeta(w)^2 \\
\leq \:& 
2\log w+2\log g\left(S_4(w)\right)+\log(2\pi) -2\log (w/z)-2\log g\left(S_1(w/z)\right)+C\\
\leq \:& 2 \log z + D + 2\Lambda \left|S_4(w)-S_1(w/z)\right|,
\end{align*}
by using that $\log g$ is $\Lambda$-Lipschitz by  \eqref{log-lip}.
Since the last bound is bounded by some constant for $w\leq w_0(z)$, we obtain \eqref{equGbarwM}.

To prove \eqref{equTmu},
one notes that since $|\mu|>\zeta(w)/K$, we have $1\le T_\mu(w/z)\le 2+K\zeta(w/z)/\zeta(w)$ which itself is less than $2+K+K(\zeta(w/z)-\zeta(w))/\zeta(w)$. Using the previous bound on $\zeta(w/z)-\zeta(w)$ 
and the fact that $\zeta(w)$ goes to $\infty$ as $w$ goes to $0$ the last bound is no more than a constant $C=C(K)$ whenever $w\le \omega(z,1/2)$. On the other hand, $1\le T_\mu(w)\le 2+K$ for $|\mu|>\zeta(w)/K$. The desired inequality follows.
\end{proof}

 Let us denote, for $w\in(0,1)$ and $\mu\in \R$,
\begin{equation} \label{gmu}
 G_\mu(w) = \frac{\bfi(\zeta(w)-|\mu|)}{w}.
\end{equation} 

\begin{lemma} \label{lemregv}
Consider $G_\mu$ defined by \eqref{gmu}.
For all $K_0>1$ and any $ z\geq 1$, there exists $\omega_0=\omega_0(K_0,z)$ such that for all $w\leq \omega_0$, any $\mu\in\R$ with $ |\mu| \geq \zeta(w)/K_0$, we have
\begin{equation}\label{equGmuwM}
 G_\mu(w/z) \ge z^{1/(2K_0)} G_\mu(w).
\end{equation}
\end{lemma}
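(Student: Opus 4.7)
The plan is to estimate the ratio
\[ \frac{G_\mu(w/z)}{G_\mu(w)} = z \cdot \frac{\bar\Phi(\zeta(w/z) - |\mu|)}{\bar\Phi(\zeta(w) - |\mu|)} \]
directly, using Mills ratio bounds (Lemma \ref{bphi}) together with the quantitative behaviour of $\zeta$ from Lemma \ref{lemzetagen}. Setting $a = \zeta(w) - |\mu|$, $b = \zeta(w/z) - |\mu|$ and $\Delta = b - a = \zeta(w/z) - \zeta(w) > 0$, I would first combine the upper and lower estimates in Lemma \ref{lemzetagen} to obtain $\zeta(w/z)^2 - \zeta(w)^2 = 2\log z + O(1)$ as $w\to 0$. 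Since $\zeta(w)\sim \sqrt{-2\log w}\to\infty$, this yields $\Delta = (2\log z + O(1))/(\zeta(w) + \zeta(w/z))$, in particular $\zeta(w)\Delta \to \log z$ and $\Delta\to 0$, both uniformly for $w\leq \omega_0(z)$.

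I would then split into two regimes according to $a$. When $a \geq 1$, the standard Mills ratio bounds give $\bar\Phi(x)\asymp \phi(x)/x$ for $x\geq 1$, so
\[ \frac{\bar\Phi(b)}{\bar\Phi(a)} \geq (1-\epsilon)\,\frac{a}{b}\, e^{-(b^2 - a^2)/2} = (1-\epsilon)\,\frac{a}{b}\,e^{-\Delta(a+b)/2} \]
for $w$ small. The hypothesis $|\mu|\geq \zeta(w)/K_0$ gives $a\leq \zeta(w)(1-1/K_0)$, hence $\Delta(a+b)/2\leq (1-1/K_0)\zeta(w)\Delta + O(\Delta^2) \to (1-1/K_0)\log z$. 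Since $a/b\to 1$ (as $\Delta/a\to 0$ whenever $a\geq 1$), one concludes
\[ z\,\frac{\bar\Phi(b)}{\bar\Phi(a)} \geq (1+o(1))\,z^{1/K_0} \geq z^{1/(2K_0)} \]
for $w\leq \omega_0(K_0,z)$. When $a < 1$ (including negative values), both $\bar\Phi(a)$ and $\bar\Phi(b)=\bar\Phi(a+\Delta)$ are bounded away from $0$ by a universal constant, and because $\log\bar\Phi$ is locally Lipschitz on any bounded interval while $\Delta\to 0$, the ratio $\bar\Phi(b)/\bar\Phi(a)$ tends to $1$ uniformly; thus $z\bar\Phi(b)/\bar\Phi(a)\geq z(1-o(1))\geq z^{1/(2K_0)}$ for $w$ small.

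The main obstacle is handling the transition between the two regimes uniformly in $\mu$ and getting a bound that is valid for \emph{all} $z\geq 1$ (the inequality is trivial at $z=1$, so the delicate issue is tracking the $o(1)$ terms so that they can be absorbed into the slack factor $z^{1/(2K_0)}/z^{1/K_0} = z^{-1/(2K_0)}$ in the first regime, which shrinks as $z$ grows). This is done by first fixing a large enough threshold $A = A(K_0)$ for the case split and then choosing $\omega_0$ sufficiently small as a function of $K_0$ and $z$ so that $\Delta$ is smaller than any prescribed function of $A$ and $z$. Once this is arranged, the two regimes glue together and the claimed inequality follows.
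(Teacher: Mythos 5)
Your endpoint-comparison strategy is in the right spirit and shares the skeleton of the paper's argument: the paper likewise estimates $G_\mu(w/z)/G_\mu(w)$ by combining properties of $\zeta$ with the normal Mills ratio, but rather than comparing the two endpoints directly it bounds the derivative $\Gamma'(u)$ of $\Gamma(u) = \log G_\mu(e^{-u})$ from below by $1/(2K_0)$ throughout $[\log(1/w), \log(z/w)]$ and applies the mean value theorem. Your preliminary reductions -- that $\zeta(w/z)^2 - \zeta(w)^2 \to 2\log z$, hence $\Delta\to 0$ and $\zeta(w)\Delta \to \log z$, and the use of $|\mu|\ge \zeta(w)/K_0$ to control $a=\zeta(w)-|\mu|$ -- are sound and match what the paper's computation uses implicitly.

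There is, however, a gap in the Mills-ratio step. When $a\ge 1$ you write $\ol\Phi(b)/\ol\Phi(a)\ge(1-\epsilon)(a/b)e^{-(b^2-a^2)/2}$ ``for $w$ small'', treating $\epsilon$ as vanishing; it does not. For $a,b$ staying in a bounded range the endpoint Mills estimate $\frac{x^2}{1+x^2}\frac{\phi(x)}{x}\le\ol\Phi(x)\le\frac{\phi(x)}{x}$ gives $\ol\Phi(b)/\ol\Phi(a)\ge\frac{ab}{1+b^2}e^{-(b^2-a^2)/2}$, whose prefactor is bounded strictly below $1$ by a fixed amount regardless of $w$ (close to one-half when $a\approx 1$). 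The available slack $z^{-1/(2K_0)}$ tends to $1$ as $z\to 1^+$, so this fixed multiplicative loss cannot be absorbed when $z$ is close to $1$; your plan of fixing $A = A(K_0)$ and then shrinking $\omega_0$ does not repair it, because the Mills loss on $[1,A]$ does not shrink with $\omega_0$. The clean fix is to integrate the Mills inequality rather than apply it at the endpoints: since $\phi(x)/\ol\Phi(x)\le x+1/x$ for all $x>0$, integrating $(\log\ol\Phi)'=-\phi/\ol\Phi$ over $[a,b]$ yields $\ol\Phi(b)/\ol\Phi(a)\ge(a/b)e^{-(b^2-a^2)/2}$ for all $0<a<b$ with no constant loss; then every remaining correction is genuinely $o(1)$ as $w\to 0$ for each fixed $z>1$, and the claimed inequality follows with $\omega_0=\omega_0(K_0,z)$. (The case $a\le 0$ is straightforward, both tails being bounded away from $0$ with $\Delta\to0$.) This integrated Mills bound is exactly the endpoint form of the paper's derivative argument.
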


\begin{proof}
Let us focus on $\mu\geq 0$ without loss of generality.
Let us rewrite the desired inequality as, with $\Gamma(u)=\log G_\mu(e^{-u})$,
\[ \Ga\left(\log\frac{z}{w}\right)-\Ga\left(\log\frac1w\right) \ge \frac{1}{2K_0}\left(\log\frac{z}{w}-\log\frac{1}{w}\right).\]
To prove this, it is enough to check that $\Ga'(u)\ge 1/(2K_0)$ for  
$u\in[\log\frac{1}{w},\log\frac{z}{w}]$, for appropriately small $w$. To do so, one computes the derivative of $\Ga$ explicitly using the chain rule. First one notes that 
\[ \zeta'(w)=-\frac{1}{w^2 \beta'(\zeta(w))},\]
and from this one deduces that
\[ \Ga'(u) = 1- \frac{e^u}{\beta'( \zeta(e^{-u}) )} \frac{\phi}{\bfi}(\zeta(e^{-u})-\mu).\]
One further computes
$$
\beta'(x)= (\beta(x)+1) x Q(x) , \:\: \mbox{ for } \:Q(x)=1+\frac{(\log g)'(x)}{x},
$$
which gives
 $\be'(\zeta(e^{-u}))=\zeta(e^{-u})Q(\zeta(e^{-u}))(\beta(\zeta(e^{-u}))+1)$. Using the identity $\beta(\zeta(e^{-u}))=e^u$ leads to
\begin{align*}
  \Ga'(u) & = 
  1 - \frac{e^u}{1+e^u} \frac{1}{Q(\zeta(e^{-u}))} \frac{1}{\zeta(e^{-u})} 
  \frac{\phi}{\bfi}(\zeta(e^{-u})-\mu).
\end{align*}
  Now, by using \eqref{log-lip} one sees that the map $u\to e^u(1+e^u)^{-1}Q(\zeta(e^{-u}))^{-1}$ has limit $1$ as $u$ goes to infinity. So, for $u$ large enough, $e^u(1+e^u)^{-1}Q(\zeta(e^{-u}))^{-1}\leq 1+ \eps$ for some $\eps>0$ to be chosen later on.
  Now using Lemma~\ref{bphi}, 
   whenever $\mu\leq \zeta(e^{-u})-1$,
  \begin{align*}
  \frac{1}{\zeta(e^{-u})} 
  \frac{\phi}{\bfi}(\zeta(e^{-u})-\mu) &\leq  \frac{1}{\zeta(e^{-u})}  \frac{1+(\zeta(e^{-u})-\mu)^2}{\zeta(e^{-u})-\mu} \\
   &=  \frac{\zeta(e^{-u})-\mu}{\zeta(e^{-u})} + \frac{1}{\zeta(e^{-u}) (\zeta(e^{-u})-\mu)}.  
   \end{align*}
By definition of $u$, we have $e^{-u}\in[w/z,w]$, so $\zeta(e^{-u})\le \zeta(w/z)$. 
Deduce that, using that by assumption $\mu\ge \zeta(w)/K_0$,
\[ \frac{\zeta(e^{-u})-\mu}{\zeta(e^{-u})}  \le 1-\frac{1}{K_0}\frac{\zeta(w)}{\zeta(w/z)}. \]
The behaviour of the difference $\zeta(w/z)-\zeta(w)$ was studied in the proof of Lemma \ref{lemTmuw} where it is seen that this quantity is smaller  a certain universal  constant if $w$ is small enough. By writing 
\[ \zeta(w/z)/\zeta(w) = \left(1+\frac{\zeta(w/z)-\zeta(w)}{\zeta(w)}\right)^{-1},\]
one gets that this ratio is at least $1-1/8$ for $w$ small enough, using $\zeta(w)\to \infty$ as $w\to 0$. 
This shows that for $w\le \omega(z)$ small enough,
\[\frac{1}{\zeta(e^{-u})} 
  \frac{\phi}{\bfi}(\zeta(e^{-u})-\mu) \leq 1-(1/K_0)(1-1/8) +  \frac{1}{\zeta(e^{-u})},
   \]
where we have used $\zeta(e^{-u})-\mu\ge 1$. 
On the other hand, if $\mu\ge \zeta(e^{-u})-1$, 
   \begin{align*}
  \frac{1}{\zeta(e^{-u})} 
  \frac{\phi}{\bfi}(\zeta(e^{-u})-\mu) &\leq   \frac{\phi(0)}{\bfi(1) \zeta(e^{-u})},
   \end{align*}
 which can be made arbitrarily small for $w$ small enough.   
  As a result, in both cases, for $w\le \omega(K_0,z)$ small enough, for all $\mu \geq \zeta(w)/K_0$,
  $$
  1-\Ga'(u)\leq (1+ \eps) (1-7/8K_0 + 1/(4K_0)) \leq  (1+ \eps) (1-5/(8K_0)) = 1-1/(2K_0)
  $$
  by choosing $\eps^{-1}=8K_0-5$. 
This proves the desired inequality.  
\end{proof}

\section{Moment properties}\label{sec:moment}

The main results in this section concern the moments of the score function,  $\tilde m(w) = - \E_0\beta(X,w)= -\int_{-\infty}^{\infty} \be(t,w)\phi(t)dt$ and 
$m_1(\ta,w)  = \E_\ta[\be(X,w)]$, 
$m_2(\ta,w)  = \E_\ta[\be(X,w)^2]$. 
Remember that $g$ is assumed to enjoy \eqref{assumpgdebase}--\eqref{increasing}.
Also, since these functions only depends on $g$,  all the constants appearing in the results of this section only depend on $g$ (except in Section~\ref{m1tilderatio} where the sparsity comes in). In this section, we freely use $\zeta=\zeta(w)$ as a shorthand notation.

\subsection{Basic lemmas on moments}

The following two lemmas are (mostly) small parts of Lemmas 7--9 in \cite{js04}. We include the proofs for completeness.
\begin{lemma} \label{lembeta}
For $c_1=(-\beta(0))^{-1}-1>0$, for any $x \in\R$ and $w\in(0,1]$,
\begin{equation}\label{equmajbetaxw}
 |\beta(x,w)| \le \frac{1}{w\wedge c_1}.
 \end{equation}
 \end{lemma}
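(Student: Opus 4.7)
The plan is to bound $\beta(x,w)=\beta(x)/(1+w\beta(x))$ by a case split on the sign of $\beta(x)$. Property \eqref{increasing} together with the symmetry of $g$ and $\phi$ implies that $\beta$ is even and takes values in $[\beta(0),\infty)$, with $\beta(0)\in(-1,0)$ (as recalled just below \eqref{functionbeta}).

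First, in the case $\beta(x)\ge 0$, the denominator $1+w\beta(x)$ lies in $[1,\infty)$, so rewriting $\beta(x,w)=1/(w+1/\beta(x))$ immediately gives $0\le \beta(x,w)\le 1/w$.

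Second, in the case $\beta(x)<0$, I first observe that
\[
1+w\beta(x)\;\ge\;1+w\beta(0)\;\ge\;1+\beta(0)\;>\;0,
\]
using $w\le 1$ and $\beta(0)>-1$; so the ratio is well-defined and negative. Since the map $t\mapsto -t/(1+wt)$ has derivative $-(1+wt)^{-2}<0$, it is decreasing on $[\beta(0),0)$, so its supremum is attained at $t=\beta(0)$, yielding
\[
|\beta(x,w)|\;\le\;\frac{-\beta(0)}{1+w\beta(0)}\;\le\;\frac{-\beta(0)}{1+\beta(0)}\;=\;\frac{1}{c_1},
\]
where the last equality uses $c_1=(-\beta(0))^{-1}-1=(1+\beta(0))/(-\beta(0))$.

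Combining the two cases gives $|\beta(x,w)|\le \max(1/w,1/c_1)=1/(w\wedge c_1)$. No serious obstacle is expected: the argument is a short algebraic manipulation, and the only delicate point is keeping the denominator $1+w\beta(x)$ bounded away from zero in the negative case, which is exactly where the hypotheses $w\in(0,1]$ and $\beta(0)>-1$ are used.
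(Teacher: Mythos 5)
Your proof is correct and takes essentially the same approach as the paper's (which only sketches it in one line: distinguish $\beta(x)<0$ from $\beta(x)\ge 0$ and bound by $|\beta(0)|/(1+\beta(0))$ and $1/w$ respectively). You simply fill in the monotonicity and algebraic details.
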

\begin{proof}
It suffices to distinguish the cases $\beta(x)<0$ and $\beta(x)\ge 0$ and to bound 
$|\beta(x,w)|$ by $|\beta(0)|/(1+\beta(0))$ and $1/w$, respectively.
\end{proof}
\begin{lemma} \label{lemm12}
The function $w\in(0,1]\to \tilde{m}(w)$ is continuous, nonnegative, increasing and $\tilde{m}(0)=0$. The map $w\in(0,1]\to m_1(\mu,w)$ is continuous and decreasing. 
In addition, $m_1(\mu,0)>0$ if $\mu\neq 0$ and $\mu\in\R_+ \to m_1(\mu,w)$ is nondecreasing for any $w\in[0,1]$.
Also, there exists a constant  $\omega=\omega(g)$ such that, for  any $w\le \omega$ and any $\mu\in\RR$, 
\[ m_1(\mu,w)\le \frac{1}{w},\quad m_2(\mu,w)\le \frac{1}{w}. \]
\end{lemma}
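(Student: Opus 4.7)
The lemma bundles several properties of the moment functions that I would establish in an order that builds from the simplest to the most delicate.

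Continuity and $w$-monotonicity come from differentiation under the integral. A direct computation gives $\partial_w\beta(t,w)=-\beta(t)^2/(1+w\beta(t))^2=-\beta(t,w)^2\le 0$, strictly negative outside the null set $\{\beta=0\}$ (this set is null because $g/\phi$ is strictly increasing on $[0,\infty)$ by \eqref{increasing}, and $g,\phi$ are even). Lemma \ref{lembeta} supplies the uniform-in-$t$, locally-in-$w$ domination needed to differentiate under the integral, yielding continuity of $w\mapsto m_1(\mu,w)$ and of $w\mapsto \tilde m(w)$, strict monotonicity (decreasing for $m_1$, increasing for $\tilde m=-m_1(0,\cdot)$), and the identity $\partial_w m_1(\mu,w)=-m_2(\mu,w)$. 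The value $\tilde m(0)=-\int(g-\phi)(t)\,dt=0$ is immediate, and $\tilde m\ge 0$ on $[0,1]$ follows by monotonicity.

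Monotonicity of $\mu\mapsto m_1(\mu,w)$ on $\R_+$ would be obtained by symmetrization. Assumption \eqref{increasing} makes $\beta$, and hence $\beta(\cdot,w)$ through the increasing reparametrization $x\mapsto x/(1+wx)$, symmetric and strictly increasing on $[0,\infty)$. Substituting $s=t-\mu$ in the derivative,
\[ \partial_\mu m_1(\mu,w)=\int s\,\beta(s+\mu,w)\phi(s)\,ds=\int_0^\infty s\bigl[\beta(s+\mu,w)-\beta(s-\mu,w)\bigr]\phi(s)\,ds,\]
and $|s+\mu|\ge|s-\mu|$ for $s,\mu\ge 0$ makes the bracket nonnegative (strictly positive on a set of positive measure when $\mu>0$). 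Combining with $m_1(0,0)=-\tilde m(0)=0$ and passing to the $w\to 0^+$ limit via monotone convergence on $\beta(\cdot,w)_\pm$ separately (with $\beta(\cdot,w)_+\nearrow\beta_+$ and $\beta(\cdot,w)_-\searrow\beta_-$ as $w\to 0$, the heavy-tailed case $m_1(\mu,0)=+\infty$ trivially satisfying the claim) gives $m_1(\mu,0)>0$ for $\mu\neq 0$.

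Finally, the $1/w$-type bounds follow from Lemma \ref{lembeta}: set $\omega=c_1\wedge 1/2$, so that $|\beta(t,w)|\le 1/w$ uniformly in $t$ for $w\le\omega$, yielding $m_1(\mu,w)\le 1/w$ at once. For $m_2$, I would use the representation $\beta(t,w)=w^{-1}\bigl(1-\phi(t)/f_w(t)\bigr)$ with $f_w=(1-w)\phi+wg$; since $\phi/f_w\in[0,(1-w)^{-1}]$, on $\{\phi\le f_w\}$ one has $\beta(t,w)^2\le w^{-1}|\beta(t,w)|$, while on the complementary set $|\beta(t,w)|$ is bounded by a constant for $w\le 1/2$. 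Integrating against $\phi(t-\mu)$ and re-using the bound on $m_1$ yields the announced order. The mildly delicate part is precisely this last bookkeeping for $m_2$: a purely pointwise supremum bound loses a factor of $1/w$, so one must exploit the structure of $\beta$ through the marginal density $f_w$ rather than the crude estimate of Lemma \ref{lembeta}.
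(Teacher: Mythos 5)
Your differentiation-under-the-integral route for the $w$-regularity (via $\partial_w\beta(t,w)=-\beta(t,w)^2$, hence $\partial_w m_1=-m_2<0$ and $\partial_w\tilde m=m_2(0,\cdot)>0$) and your folding computation for the $\mu$-monotonicity of $m_1$ are both valid and arguably cleaner alternatives to the paper's more hands-on arguments. However, two steps do not go through as written.

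For $m_1(\mu,0)>0$: the passage to the $w\to 0^+$ limit only yields $m_1(\mu,0)\ge m_1(0,0)=0$, because a family of strictly increasing functions can degenerate to a merely nondecreasing one in the limit; the strict inequality is not preserved. You have to examine the $w=0$ integral directly. Writing $m_1(\mu,0)=\int(\beta(u+\mu)-\beta(u))\phi(u)\,du$ and noting that $\beta(u+\mu)-\beta(u)$ is actually \emph{negative} for $u<-\mu/2$, one must symmetrize in $u$ first, obtaining $\tfrac12\int(\beta(u+\mu)+\beta(u-\mu)-2\beta(u))\phi(u)\,du$, whose integrand is nonnegative and positive on a set of positive measure (this is where \eqref{increasing}, equivalently the representation of $g/\phi$ via $\cosh$, is actually used).

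For $m_2(\mu,w)\le 1/w$: your argument does not produce this bound, and no argument can, because it is false. When $\mu$ is large, $\beta(X,w)\to 1/w$ under $P_\mu$ (since $g/\phi\to\infty$), so $m_2(\mu,w)\to 1/w^2$, of strictly larger order than $1/w$. Your split gives $m_2(\mu,w)\le w^{-1}\int_{\{\beta\ge0\}}\beta(t,w)\phi(t-\mu)\,dt + C_0^2 \le w^{-1}(m_1(\mu,w)+C_0)+C_0^2$, and since $m_1(\mu,w)$ itself reaches order $1/w$ for large $\mu$ (see Lemma \ref{m1binflargesignal}), this is order $1/w^2$ — exactly the crude pointwise bound from Lemma \ref{lembeta} you were trying to improve on. The paper's own proof (citing Lemma \ref{lembeta}) also only gives $m_2\le 1/w^2$; the $1/w$ in the statement is almost certainly a typo for $1/w^2$, which is harmless because this bound is not used elsewhere — the proofs rely on the sharper Lemma \ref{lemdeux} and Corollary \ref{m2m1} instead. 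In short, your "delicate bookkeeping via $f_w$" does not actually save a factor of $1/w$.
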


\begin{proof}
Since $w\to \be(u,w)$ is decreasing (for any $u$ with $\beta(u)\neq 0$), so are $w\to -\tilde{m}(w)$ and $w\to m_1(\mu,w)$ for any real $\mu$. The continuity of $\tilde m$ follows by continuity of $\be(u,w)$ and domination of $\be(u,w)\phi(u)$ by $g(u)+\phi(u)$ (up to a constant). 
In addition, since, as $g$ is a density, $\int \beta(u)\phi(u)du=0$, and we have
\begin{align}\label{mtildepos}
\tilde m(w) &= - \int \frac{\beta(u)}{1+w \beta(u)}\phi(u)du= \int \frac{w\beta(u)^2}{1+w \beta(u)}\phi(u)du.
\end{align}
From this one deduces that $\tilde m$ is nonnegative. 
For $m_1$, the continuity follows by local domination using Lemma \ref{lembeta}. 
Next, if $\mu\neq 0$, say $\mu>0$, we have 
\begin{align*}
m_1(\mu,0) = \int_{-\infty}^{\infty} \beta(u+\mu)\phi(u)du = \int_{-\infty}^{\infty} (\beta(u+\mu)-\beta(u)) \phi(u)du.
\end{align*}
Moreover, by \eqref{increasing}, $u\to \beta(u+\mu)-\beta(u)$ is a positive function.
Since it is also continuous, the integral is positive, which means that $m_1(\mu,0)>0$. 
To see that $\mu\in\R_+ \to m_1(\mu,w)$ is nondecreasing, we compute its derivative 
$$
\frac{\partial m_1(\mu,w)}{\partial \mu} = \int_0^\infty \frac{\partial \{\beta(x)/(1+w\beta(x))\}}{\partial x} (\phi(x-\mu)-\phi(x+\mu)) dx \geq 0.
$$
Finally, the bounds on $m_1,  m_2$ follow from Lemma \ref{lembeta}, with $\omega=c_1$.
\end{proof}

The following is a reformulation of Corollary 1
 in \cite{js04} (see (58) therein). We provide a proof below for completeness.
 
\begin{lemma}\label{equ-beta2maj}
Consider $\Lambda$ as in \eqref{log-lip}. Then for all $z\geq 4\Lambda$ 
and all $\mu\geq 0$,
\begin{equation}
  \int_0^z \left(\frac{g(u)}{\phi(u)}\right)^2\phi(u-\mu)du \le \frac {8}{z} \left(\frac{g(z)}{\phi(z)}\right)^2\phi(z-\mu). \end{equation}
\end{lemma}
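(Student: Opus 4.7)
My plan is to view the integrand $f(u) := (g/\phi)^2(u)\,\phi(u-\mu)$ as a function that grows fast enough on $[0,z]$ that the integral is essentially concentrated near the endpoint $u=z$, with the concentration scale $1/z$ providing the $8/z$ factor.

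First I would compute the logarithmic derivative. Since $(\log\phi)'(u)=-u$ and $(\log\phi)'(u-\mu)=-(u-\mu)$, one gets
\[
(\log f)'(u) \;=\; 2(\log g)'(u) + 2u - (u-\mu) \;=\; 2(\log g)'(u) + u + \mu.
\]
Using the Lipschitz condition \eqref{log-lip}, i.e. $|(\log g)'|\le \La$, together with the assumption $\mu\ge 0$, this gives the pointwise bound $(\log f)'(u) \ge u - 2\La$ for all $u\ge 0$.

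Next I would integrate this inequality from $u$ to $z$ (for $u\in[0,z]$) to obtain
\[
\log f(z)-\log f(u) \;\ge\; \tfrac{1}{2}(z^2-u^2) - 2\La(z-u) \;=\; (z-u)\Bigl[\tfrac{z+u}{2}-2\La\Bigr].
\]
The assumption $z\ge 4\La$ together with $u\ge 0$ yields $(z+u)/2-2\La \ge u/2$, so
\[
f(u) \;\le\; f(z)\,\exp\!\bigl(-(z-u)u/2\bigr),\qquad u\in[0,z].
\]

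The last step is the elementary bound $\int_0^z e^{-(z-u)u/2}du\le 8/z$. I would split the integral at $u=z/2$: on $[0,z/2]$ one has $(z-u)u/2\ge zu/4$, giving a contribution $\le 4/z$; and the symmetric substitution $u\mapsto z-u$ handles $[z/2,z]$ with the same bound $4/z$. Adding the two pieces yields the desired estimate $\int_0^z f(u)\,du \le (8/z)\,f(z)$, which is exactly the claim.

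The only real issue is identifying the right pointwise comparison $f(u)\le f(z)e^{-(z-u)u/2}$ and the elementary Laplace-type computation giving the constant $8$; everything else is routine manipulation of the log-derivative and use of the standing assumption \eqref{log-lip}. The condition $z\ge 4\La$ enters exactly to absorb the $-2\La$ term coming from the Lipschitz bound on $\log g$, which is what dictates the threshold in the statement.
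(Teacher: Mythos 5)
Your proposal is correct and takes essentially the same approach as the paper's proof: both start from the same lower bound $(\log f)'(v) \ge v - 2\La$ for the logarithmic derivative of $f(u) = (g/\phi)^2(u)\,\phi(u-\mu)$, convert this into the pointwise comparison $f(u)\le f(z)\exp\{-\int_u^z(v-2\La)\,dv\}$, and finish with an elementary Laplace-type estimate to extract the $8/z$ factor; the paper completes the square to $e^{-(z-2\La)^2/2}e^{(u-2\La)^2/2}$ and substitutes $t=u-2\La$, while you invoke $z\ge 4\La$ a step earlier to reach the slightly cleaner bound $e^{-(z-u)u/2}$ and split the integral at $z/2$, but the computations are equivalent.
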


\begin{proof}
We have for all $u\in[0,z]$,
\begin{align*}
 \left(\frac{g(u)}{\phi(u)}\right)^2\phi(u-\mu) &= 
 \left(\frac{g(z)}{\phi(z)}\right)^2 \phi(z-\mu)\exp\left\{ -\int_u^z \left[\log \left\{g^2/\phi^2(\cdot)\phi(\cdot-\mu)\right\}\right]'(v)dv\right\}. \end{align*}
Now, by \eqref{log-lip}, for all $v\in[0,z]$ and $\mu\ge 0$,
$$
\left(2\log g - 2 \log \phi +\log \phi(\cdot-\mu)\right)'(v) \geq -2 \Lambda + 2v -(v-\mu) \geq   v-2 \Lambda.
$$
Therefore, inserting the latter in the above display, we obtain
\begin{align*}
 \left(\frac{g(u)}{\phi(u)}\right)^2\phi(u-\mu) &\leq 
 \left(\frac{g(z)}{\phi(z)}\right)^2 \phi(z-\mu)e^{-(z-2\Lambda)^2/2} e^{(u-2\Lambda)^2/2}. \end{align*}
One concludes because letting $s=z-2\Lambda\geq z/2$ and noting that
 \begin{align*}
 e^{-s^2/2} \int_0^z e^{(u-2\Lambda)^2/2} du&\leq e^{-s^2/2} \int_{-s}^{s} e^{t^2/2} dt =2\int_{0}^s e^{-(s-t)(s+t)/2}dt \\
 & \le 2\int_0^s e^{-(s-t)s/2}\le \int_0^\infty e^{-xs/2}dx = 4/s\leq 8/z.
 \end{align*}
\end{proof}

\subsection{Behaviour of $\tilde{m}$}

The next lemma refines  Lemma~7 in \cite{js04}.

\begin{lemma} \label{lemmtilde}
For $\tilde{m}(w)$ defined by \eqref{mtilde},   we have, for $\zeta=\zeta(w)$ and asymptotically as $w\to 0$,
\begin{align}
\frac{\tilde{m}(w)}{2\ol{G}(\zeta)} \sim 1.
  \label{equivmtildeGbar}
\end{align}
In particular, for $\kappa$ as in  \eqref{tails}, as $w\to 0$, $ \tilde m (w) \asymp \zeta^{\kappa-1}g(\zeta)$ and $\tilde{m}(w)\geqa w^c$ for arbitrary $c\in(0,1)$.
\end{lemma}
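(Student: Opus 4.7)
The plan is to start from the positive representation \eqref{mtildepos}, namely
\[
\tilde m(w) \;=\; \int_{\RR} \frac{w\beta(x)^2}{1+w\beta(x)}\,\phi(x)\,dx,
\]
and to split the integration domain at the pseudo-threshold $\zeta=\zeta(w)$, which by definition satisfies $w\beta(\zeta)=1$. The idea is that, in the ``tail'' region $|x|>\zeta$, the factor $w\beta/(1+w\beta)$ is close to $1$ and the integrand is essentially $\beta(x)\phi(x)=g(x)-\phi(x)$, producing exactly $2\ol G(\zeta)$ at leading order. In the ``central'' region $|x|\le\zeta$, the whole integrand should be of negligible order.

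For the tail, I would use the identity $w\beta^2/(1+w\beta) = \beta - \beta/(1+w\beta)$ to write
\[
\int_{|x|>\zeta}\frac{w\beta^2\phi}{1+w\beta}\,dx
\;=\; 2\bigl(\ol G(\zeta)-\ol\Phi(\zeta)\bigr)\;-\;\int_{|x|>\zeta}\frac{\beta(x)\phi(x)}{1+w\beta(x)}\,dx.
\]
The first piece is $2\ol G(\zeta)(1+o(1))$ because $\ol\Phi(\zeta)/\ol G(\zeta)\to 0$ by \eqref{increasing2}. For the second piece, I would bound $\beta/(1+w\beta)\le 1/w$ pointwise and then use Mills' ratio $\ol\Phi(\zeta)\le \phi(\zeta)/\zeta$ (Lemma \ref{bphi}) together with the defining identity $\phi(\zeta)/g(\zeta)=w/(1+w)$ to obtain a bound of order $g(\zeta)/\zeta$; by \eqref{tails}, $g(\zeta)/\zeta\asymp \ol G(\zeta)/\zeta^{\kappa}$, which is $o(\ol G(\zeta))$ since $\zeta\to\infty$ as $w\to 0$.

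For the central region $|x|\le\zeta$, I would drop the denominator to get $w\beta^2/(1+w\beta)\le w\beta^2$, and split further according to whether $g/\phi<2$ or $g/\phi\ge 2$. On the first set $|\beta|<2$, so $w\int \beta^2\phi\le 4w$, which is $o(\ol G(\zeta))$ because \eqref{tails3} yields $\ol G(\zeta)\geqa e^{-\Lambda\zeta}$ with $\zeta\asymp\sqrt{-2\log w}$ (Lemma \ref{lemzetagen}), so $w/\ol G(\zeta)\to 0$. On the second set, $\beta^2\le (g/\phi)^2$, and I would apply Lemma \ref{equ-beta2maj} with $\mu=0$ and $z=\zeta$ (valid since $\zeta\to\infty$), giving
\[
w\int_0^{\zeta}\!\Bigl(\tfrac{g}{\phi}\Bigr)^{\!2}\!\phi\,dx \;\le\; \tfrac{8w}{\zeta}\,\Bigl(\tfrac{g(\zeta)}{\phi(\zeta)}\Bigr)^{\!2}\!\phi(\zeta)
\;=\;\tfrac{8(1+w)}{\zeta}\,g(\zeta)\;\leqa\;g(\zeta)/\zeta\;=\;o(\ol G(\zeta)),
\]
again by \eqref{tails}. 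Combining the central and tail estimates (and using symmetry, as $\beta$, $\phi$ and $g$ are even) yields \eqref{equivmtildeGbar}.

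Finally, the asymptotic equivalence $\tilde m(w)\asymp \zeta^{\kappa-1}g(\zeta)$ is immediate from \eqref{equivmtildeGbar} and \eqref{tails}. For the lower bound $\tilde m(w)\geqa w^c$ with arbitrary $c\in(0,1)$, I would insert the Laplace-type bound $g(\zeta)\ge g(0)e^{-\Lambda\zeta}$ from \eqref{tailsg} and the equivalent $\zeta\sim\sqrt{-2\log w}$ from Lemma \ref{lemzetagen}, so that $\tilde m(w)\geqa \zeta^{\kappa-1}e^{-\Lambda\zeta}\geqa e^{-\Lambda\sqrt{2|\log w|}}$, a quantity which dominates any power $w^c$ with $c>0$ for $w$ small enough. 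The main technical subtlety I expect is making the ``leading order $2\ol G(\zeta)$'' quantitative: we need $\ol\Phi(\zeta)/\ol G(\zeta)$ and $g(\zeta)/(\zeta\ol G(\zeta))$ to vanish \emph{simultaneously} uniformly in the tail/central split, which the combination of \eqref{tails} and \eqref{increasing2} delivers cleanly.
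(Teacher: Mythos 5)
Your proof is correct and follows essentially the same route as the paper: both start from the positive representation \eqref{mtildepos}, split at $\zeta=\zeta(w)$, identify the leading tail contribution $2\ol G(\zeta)$ (your identity $w\beta^2/(1+w\beta)=\beta-\beta/(1+w\beta)$ is algebraically equivalent to the paper's factorisation $\frac{w\beta}{1+w\beta}(g-\phi)$), reduce the tail error to $\ol\Phi(\zeta)/w\leqa g(\zeta)/\zeta=o(\ol G(\zeta))$, and control the central region via Lemma~\ref{equ-beta2maj}. One small imprecision: on $|x|\le\zeta$ the function $\beta$ can be negative, so the pointwise bound $w\beta^2/(1+w\beta)\le w\beta^2$ is not valid there; you should instead use $1+w\beta(x)\ge 1+\beta(0)>0$, which only costs a fixed constant and leaves the rest of the argument intact.
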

\begin{proof}
Using \eqref{mtildepos}, symmetry of $\be$ and $\be\phi=g-\phi$ on $[\zeta,\infty)$,
\begin{align}\label{equintermmtilde}
\tilde m(w) &= 2 \int_0^\zeta \frac{w\beta(u)^2}{1+w \beta(u)}\phi(u)du - \int_\zeta^\infty \frac{2w \beta(u)}{1+w \beta(u)} \phi(u)du+ \int_\zeta^\infty \frac{2w \beta(u)}{1+w \beta(u)} g(u)du.
\end{align}
For the first term of \eqref{equintermmtilde}, since for $u\in[0,\zeta]$, $1+w \beta(u) \geq 1+\beta(0)$,
\begin{align*}
2 \int_0^\zeta \frac{w\beta(u)^2}{1+w \beta(u)}\phi(u)du &\leq 2w (1+\beta(0))^{-1} \int_0^\zeta \beta(u)^2\phi(u)du\\
&\leq \frac {C}{\zeta} w \beta(\zeta) (g/\phi)(\zeta) = \frac {C g(\zeta)}{\zeta} ,
\end{align*}
for $C=20/(1+\be(0))$, by Lemma~\ref{equ-beta2maj} ($\mu=0$),  where we use that $\beta(\zeta)\leq (g/\phi)(\zeta) \leq (5/4) \beta(\zeta)$ which holds for $\zeta$ large enough, or equivalently for $w\le \omega_1$ with $\omega_1=\omega_1(g)$ a universal constant.
The second term of \eqref{equintermmtilde} is negative whenever $\zeta>\beta^{-1}(0)$ and of smaller order than the third term. For the third term we use that for $u\geq \zeta$, $w\beta(u)\geq 1$ and thus $1\leq 2 w \beta(u)/(1+w \beta(u)) \leq 2$, hence
\begin{align*}
\ol{G}(\zeta)&\leq \int_\zeta^\infty \frac{2w \beta(u)}{1+w \beta(u)} g(u)du\leq 2\ol{G}(\zeta).
\end{align*}
Now,  by assumption   $\ol{G}(\zeta) \asymp g(\zeta)\zeta^{\kappa-1}$, see \eqref{tails}. Hence, when $w$ is small, the dominating term in \eqref{equintermmtilde} is the third one, which gives 
\begin{equation}
\tilde{m}(w) \sim  \int_\zeta^\infty \frac{2 w \beta(u)}{1+w \beta(u)} g(u) du\label{equivmtilde}
\end{equation}
Now, let us prove 
\begin{equation}
  \int_\zeta^\infty \frac{ w \beta(u)}{1+w \beta(u)} g(u) du \sim \ol{G}(\zeta)\label{equivmtildeinter}
\end{equation}
from which \eqref{equivmtildeGbar} follows.
To prove \eqref{equivmtildeinter}, let us write 
\begin{align*}
 \int_\zeta^\infty \frac{w \beta(u)}{1+w \beta(u)} g(u)du &= \ol{G}(\zeta) - \int_\zeta^\infty \frac{ g(u)}{1+w \beta(u)} du.
\end{align*}
Hence,  we obtain
\begin{align*}
\left|\ol{G}(\zeta)- \int_\zeta^\infty \frac{w \beta(u)}{1+w \beta(u)} g(u)du \right| 
&\leq  \int_\zeta^\infty \frac{ g(u)}{1+w \beta(u)} du\\
&\leq  w^{-1} \int_\zeta^\infty \phi(u) du = \frac{\ol{\Phi}(\zeta)}{w},
\end{align*}
because $1+w \beta(u) = 1-w + w g(u)/\phi(u) \geq w g(u)/\phi(u)$.
Now using that $\ol{\Phi}(\zeta)\sim \phi(\zeta)/\zeta \sim w g(\zeta)/\zeta$ and since  $\ol{G}(\zeta) \asymp g(\zeta)\zeta^{\kappa-1}$ (see \eqref{tails}), the difference in the last display is a $o(\ol{G}(\zeta))$ and \eqref{equivmtildeGbar} is proved.
Then, $\tilde m (w) \asymp \zeta^{\kappa-1}g(\zeta)$ follows from \eqref{tails} and this in turn implies
by \eqref{tailsg} and Lemma \ref{lemzetagen}, 
$\tilde m (w)  \geqa e^{-\Lambda\zeta(w)}\geqa w^{c}$ for any $c>0$. 
\end{proof}

\subsection{Upper bound on $m_1$}

The next lemma refines the bounds on $m_1$ of Lemma 9 in \cite{js04}. The refinement is important in that we obtain a precise upper-bound for any $\mu$ larger than a constant. Moreover, the bound is sharp in this regime of $\mu$'s, as we shall see below. 
\begin{lemma} \label{lemun} 
There exist constants $C>0$ and $\omega_0\in(0,1)$ such that for any $w\le \omega_0$, for any $\mu$ such that $\mu\ge \mu_0:=2\La$, with $T_\mu(w)$ as in  \eqref{Tmu},
\begin{align*} 
m_1(\mu,w) & \le
 C \frac{\ol{\Phi}(\zeta-|\mu|)}{w} T_\mu(w).
\end{align*}
 In particular, $m_1(\mu,w) \le C\zeta^2 \ol{\Phi}(\zeta-\mu)/w$ holds for any $\mu\ge \mu_0$ and $w\le \omega_0$. 
For any $w\le \omega_0$, one also has
\begin{align*}  
   m_1(\mu,w) & \le \frac{C}{|\mu|}e^{-\mu^2/2 + |\mu|\zeta}, & \text{for any  } \ \zeta^{-1} \le |\mu| \le \mu_0, \\
   |m_1(\mu,w)| & \le C(1+\zeta\mu^2),& \text{for any  }\  |\mu| \le\zeta^{-1}.
\end{align*}
 \end{lemma}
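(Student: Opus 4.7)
The starting point is to decompose $m_1(\mu,w)=\int \be(t,w)\phi(t-\mu)dt$ by splitting the real line at $\pm \zeta$, motivated by the defining property $\be(\zeta)=1/w$: on $\{|t|>\zeta\}$ one has $w\be(t)\ge 1$ hence $0\le \be(t,w)\le 1/w$, while on $\{|t|\le \zeta\}$ one has $1+w\be(t)\ge 1+w\be(0)$ (positive for $w$ small by Lemma \ref{lembeta}), so $\be(t,w)\le C\,\be(t)_+$. By evenness of $\be$ we may take $\mu\ge 0$, and pair $\phi(t-\mu)$ with $\phi(-t-\mu)$ to restrict integration to $[0,\zeta]$, getting
\[ m_1(\mu,w) \le \frac{2\bfi(\zeta-\mu)}{w} + 2C\int_0^\zeta \!\!\frac{g(t)}{\phi(t)}\,\phi(t-\mu)\,dt = \frac{2\bfi(\zeta-\mu)}{w} + 2C\int_0^\zeta\!\!g(t)\,e^{t\mu-\mu^2/2}dt,\]
where we used $\bfi(\zeta-\mu)\ge \bfi(\zeta+\mu)$ for $\mu\ge0$.

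For the large-signal regime $\mu\ge \mu_0=2\La$, the Lipschitz assumption \eqref{log-lip} yields $g(t)\le g(\zeta)e^{\La(\zeta-t)}$ on $[0,\zeta]$, so
\[ \int_0^\zeta g(t) e^{t\mu-\mu^2/2}dt \le g(\zeta)e^{\La\zeta-\mu^2/2}\int_0^\zeta e^{t(\mu-\La)}dt\le \frac{2\,g(\zeta)\,e^{\zeta\mu-\mu^2/2}}{\mu}, \]
using $\mu-\La\ge \mu/2$. Since by definition of $\zeta$ we have $g(\zeta)\asymp \phi(\zeta)/w$ (for $w$ small), the last display is $\lesssim \phi(\zeta-\mu)/(\mu w)$. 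Combining, $m_1(\mu,w)\lesssim \bfi(\zeta-\mu)/w + \phi(\zeta-\mu)/(\mu w)$. For $\mu<\zeta$, Mills's inequality $\phi(x)\le (x+1/x)\bfi(x)$ applied at $x=\zeta-\mu$ converts the second term into $\lesssim \zeta\bfi(\zeta-\mu)/(\mu w)\asymp \bfi(\zeta-\mu)T_\mu(w)/w$ (treating the case $\zeta-\mu<1$ separately via the trivial bound $\phi/\bfi\le C\zeta$). For $\mu\ge \zeta$, $\bfi(\zeta-\mu)\ge 1/2$ and the global bound $m_1\le 1/w$ from Lemma \ref{lemm12} directly gives the conclusion. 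Finally $T_\mu(w)\le 2+\zeta/\mu_0\le C\zeta\le \zeta^2$ for $\zeta$ large, producing the second statement.

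In the intermediate regime $\zeta^{-1}\le \mu\le \mu_0$, the Lipschitz trick is not useful because $\mu-\La$ can be non-positive; instead one uses that $g$ is bounded (by \eqref{tails2}) to obtain
\[ \int_0^\zeta g(t)e^{t\mu-\mu^2/2}dt \le C\,e^{-\mu^2/2}\cdot\frac{e^{\mu\zeta}-1}{\mu}\le \frac{C}{\mu}e^{-\mu^2/2+\mu\zeta}.\]
The tail contribution $2\bfi(\zeta-\mu)/w$ is dominated by this, since $1/w\asymp g(\zeta)/\phi(\zeta)$ and $\bfi(\zeta-\mu)\asymp \phi(\zeta-\mu)/\zeta$, hence $\bfi(\zeta-\mu)/w\lesssim g(\zeta)e^{\mu\zeta}/\zeta$, which is of smaller order than $\mu^{-1}e^{-\mu^2/2+\mu\zeta}$ because $g(\zeta)=o(1)$ and $\zeta/\mu\to\infty$.

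For the small-signal regime $|\mu|\le \zeta^{-1}$, I would exploit the evenness of $\be$ via the identity $(\phi(t-\mu)+\phi(t+\mu))/2=\phi(t)\cosh(t\mu)e^{-\mu^2/2}$, yielding $m_1(\mu,w)=e^{-\mu^2/2}\int \be(t,w)\phi(t)\cosh(t\mu)dt$. Subtracting the $\cosh\!\equiv\!1$ contribution, which equals $-e^{-\mu^2/2}\tilde m(w)$ and is bounded in absolute value by $\tilde m(1)\lesssim 1$, one gets
\[ |m_1(\mu,w)|\le C + \int |\be(t,w)|\phi(t)|\cosh(t\mu)-1|\,dt.\]
On $|t|\le\zeta$ one has $|t\mu|\le 1$, so $|\cosh(t\mu)-1|\le C(t\mu)^2$, and using $|\be(t,w)|\phi(t)\le C(g(t)+\phi(t))$ together with $\int t^2 g(t)\,dt\lesssim \zeta$ (from \eqref{tails2}) gives a contribution $\lesssim \mu^2(1+\zeta)\lesssim \mu^2\zeta$. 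On $|t|>\zeta$, $|\be(t,w)|\le 1/w$ and $\cosh(t\mu)\le e^{|t\mu|}$ yield a contribution $\lesssim (1/w)\,e^{\mu^2/2}\bfi(\zeta-|\mu|)$, which is $O(g(\zeta)/\zeta)=O(1)$ by the same Mills/normalization computation as above. Combining gives the announced bound.

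\textbf{Main obstacle.} The delicate step is the large-signal case: to convert the natural bound $\phi(\zeta-\mu)/(\mu w)$ arising from the Laplace-type estimate on $\int_0^\zeta g(t)e^{t\mu}\,dt$ into the required $\bfi(\zeta-\mu)T_\mu(w)/w$, one must apply Mills's inequality at $\zeta-\mu$ and track both the regimes $\mu<\zeta$ and $\mu\ge\zeta$, handling the boundary case $\zeta-\mu\in[0,1)$ separately. The definition $T_\mu(w)=1+|\zeta-\mu|/\mu$ is precisely chosen so that these two regimes fit into a single statement.
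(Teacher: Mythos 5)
Your proof is correct, and for two of the three regimes it departs from the paper's approach in genuinely different ways. For the large-signal case $\mu\ge\mu_0$, the paper restricts the core integral to $[-\zeta,\zeta]$, further splits at $|x|=1/\mu$, and then performs an integration by parts on $\int_1^{\mu\zeta} g(t/\mu)e^t\,dt$ to extract the boundary term $g(\zeta)e^{\mu\zeta}/\mu$, separately checking that the inner piece $(a)\lesssim e^{-\mu^2/2}/\mu$ is dominated. You instead bound $g(t)\le g(\zeta)e^{\Lambda(\zeta-t)}$ pointwise from \eqref{log-lip} and integrate $e^{t(\mu-\Lambda)}$ directly, using $\mu-\Lambda\ge\mu/2$; this arrives at the same estimate $g(\zeta)e^{\mu\zeta-\mu^2/2}/\mu$ without the inner split or the integration by parts, and is arguably cleaner. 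The Mills-inequality conversion to $\bfi(\zeta-\mu)T_\mu(w)/w$, separating $\mu<\zeta$ from $\mu\ge\zeta$, is the same in both proofs. For the small-signal case $|\mu|\le\zeta^{-1}$, the paper simply cites Lemma~9, eq.~(89) of \cite{js04}; your $\cosh$ identity $(\phi(t-\mu)+\phi(t+\mu))/2=\phi(t)\cosh(t\mu)e^{-\mu^2/2}$ gives a self-contained derivation, exploiting $|\cosh(t\mu)-1|\lesssim(t\mu)^2$ on $|t|\le\zeta$ together with $\int_{|t|\le\zeta}t^2g(t)\,dt\lesssim\zeta$ from \eqref{tails2}, which is nice to have spelled out. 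The intermediate regime is handled essentially identically. One small remark: your bound $T_\mu(w)\le 2+\zeta/\mu_0\lesssim\zeta$ is in fact sharper than the $C\zeta^2$ used in the paper for the "in particular'' statement, though of course both suffice.
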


Since $T_\mu(w) =1+|\zeta-|\mu||/|\mu|$ can be written  $1 + (\zeta-|\mu|)_+/|\mu| + (|\mu|-\zeta)_+/|\mu| \leq 2 + (\zeta/|\mu|-1)_+ $, we deduce the following corollary.

\begin{corollary} \label{cor:lemun}
There exists $\omega_0 \in(0,1)$ such that 
for any $K>1$, there exist constants $C(K) >0$ such that for any $w\le \omega_0$, for any $\mu$ such that $\mu\ge \zeta/K$, we have 
\begin{align*} 
m_1(\mu,w) & \le
 C(K) \frac{\ol{\Phi}(\zeta-|\mu|)}{w}.
\end{align*}
 \end{corollary}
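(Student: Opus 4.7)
The plan is to observe that Corollary~\ref{cor:lemun} is essentially a direct consequence of the first bound of Lemma~\ref{lemun} combined with the rewriting of $T_\mu(w)$ indicated just before the corollary.

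First, I would start from the identity $T_\mu(w) = 1 + |\zeta - |\mu||/|\mu|$, split according to the sign of $\zeta - |\mu|$ to obtain $T_\mu(w) \le 2 + (\zeta/|\mu| - 1)_+$, and then use the hypothesis $|\mu| \ge \zeta/K$ to conclude $\zeta/|\mu| \le K$ and hence $T_\mu(w) \le 2 + (K-1)_+ = K+1$. This is the main arithmetic input, and it explains why the constant depends only on $K$.

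Next, I would invoke the first conclusion of Lemma~\ref{lemun}, which gives, for $w \le \omega_0$ and $|\mu| \ge \mu_0 = 2\Lambda$,
\[
m_1(\mu,w) \le C\,\frac{\ol{\Phi}(\zeta-|\mu|)}{w}\, T_\mu(w) \le C(K+1)\,\frac{\ol{\Phi}(\zeta-|\mu|)}{w}\,,
\]
so that one can take $C(K) = C(K+1)$. Since $\zeta(w) \to \infty$ as $w \to 0$, by possibly decreasing $\omega_0$ in a way that only depends on $K$, the condition $|\mu| \ge \zeta(w)/K$ forces $|\mu| \ge \mu_0$ automatically, so the hypothesis of the lemma is met.

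The only mild obstacle is the residual regime where $|\mu| \in [\zeta/K, \mu_0)$, which can occur for $w$ not yet small enough to make $\zeta(w)/K \ge \mu_0$. On that (bounded) range of $w$, one has $w$ bounded away from $0$, and Lemma~\ref{lembeta} together with the second/third bounds of Lemma~\ref{lemun} show that $m_1(\mu,w)$ stays bounded by a constant depending on $K$; on the other hand $\ol{\Phi}(\zeta-|\mu|)/w$ is bounded below by a positive constant on that same range, so absorbing everything into $C(K)$ completes the argument without affecting the (universal) choice of $\omega_0$.
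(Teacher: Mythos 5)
Your proof is correct and follows the same route the paper sketches just before the corollary: bound $T_\mu(w)\le 2+(\zeta/|\mu|-1)_+\le K+1$ using $|\mu|\ge\zeta/K$ (and $K>1$), then apply the first estimate of Lemma~\ref{lemun}. Your careful handling of the residual range $|\mu|\in[\zeta/K,\mu_0)$ --- where the hypothesis $|\mu|\ge\mu_0$ of Lemma~\ref{lemun} can fail, but then $\zeta(w)<K\mu_0$ forces $w$ to be bounded away from $0$ by a $K$-dependent constant, so that $m_1(\mu,w)$ is bounded via Lemma~\ref{lembeta} and $\ol\Phi(\zeta-|\mu|)/w$ is bounded below, all absorbable into $C(K)$ --- is a worthwhile piece of extra care: it secures the stated quantifier order ($\omega_0$ universal, then any $K>1$) which the paper's one-line remark leaves implicit.
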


We now prove Lemma~\ref{lemun}.

\begin{proof}
As $\mu\to m_1(\mu,w)$ is even by symmetry of $\be$ and $\phi$, it suffices to consider the case $\mu\ge 0$. 
For $\mu>\zeta-1$, the result directly follows from the global bound $|m_1(\mu,w)|\le Cw^{-1}$, a consequence of Lemma \ref{lembeta}.
By definition 
\begin{align*}
 m_1(\mu,w) & = \int_{-\infty}^{\infty} \frac{\beta(x)}{1+w\be(x)} \phi(x-\mu)dx \\ 
 & = \int_{-\zeta}^{\zeta} \frac{\beta(x)}{1+w\be(x)} \phi(x-\mu)dx\ +\ \int_{|x|>\zeta} \frac{\beta(x)}{1+w\be(x)} \phi(x-\mu)dx\\
 & = \qquad \qquad (I) \qquad \qquad \qquad\ +\ \qquad \qquad \qquad  (II).
 \end{align*}
We first deal with the term (II), for which $\beta(x)\ge \be(\zeta) \ge 0$ (for small enough universal $\omega_0$), so $(II)\ge 0$, and using $1+w\be(x) \ge w\be(x)$ one obtains
 \[ (II) \quad \le \quad \frac1w \int_{|x|>\zeta} \phi(x-\mu)dx
\le \frac{2}{w} \ol{\Phi}(\zeta-\mu). \]
Now one rewrites (I) as
\begin{align*}
 (I) & = \int_{-\zeta}^{\zeta} \beta(x) \phi(x-\mu)dx 
- w\int_{-\zeta}^{\zeta} \frac{\beta(x)^2}{1+w\beta(x)} \phi(x-\mu)dx  \\
 & \le \int_{-\zeta}^{\zeta} \beta(x) \phi(x-\mu)dx.
\end{align*}
Let us split 
\begin{align*}
 \int_{-\zeta}^{\zeta} \beta(x) \phi(x-\mu)dx & = \int_{|x|\le 1/\mu} \beta(x) \phi(x-\mu)dx +
\int_{1/\mu\le |x| \le \zeta}\beta(x) \phi(x-\mu)dx \\
& = \qquad \qquad(a)\qquad\qquad+\qquad\qquad (b).
\end{align*}
First, the integral (a) can be written, by definition of $\beta$,
\begin{align*}
 \int_{|x|\le 1/\mu} \beta(x) \phi(x-\mu)dx & = 
\int_{-1/\mu}^{1/\mu} (g-\phi)(x)e^{\mu x- \frac{\mu^2}{2}} dx
\end{align*}
Using $|g-\phi|\le \|g-\phi\|_{\infty}\le C$, one gets  $(a)\leqa e^{-\mu^2/2}/\mu$.
For the integral $(b)$, with $\beta(x)\le (g/\phi)(x)$ (note that $\be(x)$ is possibly negative here), 
\begin{align*}
 (b) & \leq \int_{-\zeta}^{-1/\mu} g(x) e^{\mu x-\frac{\mu^2}{2}} dx + 
 \int_{1/\mu}^{\zeta} g(x) e^{\mu x-\frac{\mu^2}{2}} dx\\
 & \leq \int_{1/\mu}^{\zeta} g(x) e^{-\mu x-\frac{\mu^2}{2}} dx 
 + \int_{1/\mu}^{\zeta} g(x) e^{\mu x-\frac{\mu^2}{2}} dx \\
 & \leq 2e^{-\frac{\mu^2}{2}}  \int_{1}^{\mu\zeta} g(t/\mu) e^{t} dt/\mu.
 \end{align*}
From this one deduces the global bound, for $\mu>1/\zeta$,
 \begin{align*} 
  m_1(\mu,w) & \le \frac{2}{w}\ol{\Phi}(\zeta-\mu) + \frac{C}{\mu}\|g\|_\infty e^{-\mu^2/2 + \mu\zeta}  \\
  & \leqa \frac{g(\zeta)}{\phi(\zeta)} \phi(\zeta-\mu) + \frac{1}{\mu}e^{-\mu^2/2 + \mu\zeta} \leqa (\|g\|_\infty + \mu^{-1})e^{-\mu^2/2 + \mu\zeta},
 \end{align*} 
 which leads to the second inequality of the lemma. Now turning to the first inequality, an integration by parts  gives, with $0\le -g'/g\le \La$ from  
\eqref{log-lip},
\begin{align*}
  \int_{1}^{\mu\zeta} g(t/\mu) e^{t} dt 
  & = [g(t/\mu) e^{t}]_1^{\mu\zeta} - 
  \int_1^{\mu\zeta} \frac{1}{\mu} g'(t/\mu) e^t dt \\
  &  \le g(\zeta)e^{\mu \zeta} + \frac{\La}{\mu}\int_1^{\mu\zeta} g(t/\mu)e^tdt.
\end{align*}
One obtains 
\[  (b) \leq  2\Big(1-\frac{\La}{\mu}\Big)^{-1}g(\zeta)e^{\mu \zeta}\frac{e^{-\frac{\mu^2}{2}}}{\mu}. \]
Noting that $g(\zeta)e^{\mu \zeta}\ge g(0)e^{(\mu-\La)\zeta}$ using \eqref{log-lip} again, and that this quantity is bounded away from $0$ for $\mu\ge \mu_0=2\Lambda$, one concludes that for such $\mu$'s the upper-bound for $(b)$ dominates the one for $(a)$, so that
\[ (a)+(b) \le C  g(\zeta)\frac{e^{\mu\zeta-\frac{\mu^2}{2}}}{\mu}.\]
Now one can note, using $\mu_0 \le \mu\le \zeta-1$ and $(g/\phi)(\zeta)\asymp w^{-1}$,
\begin{align*}
 g(\zeta)\frac{e^{\mu\zeta-\frac{\mu^2}{2}}}{\mu} & = g(\zeta)\frac{\phi(\zeta-\mu) }{\phi(\zeta)}\frac{1}{\mu} \\
 & \le C\frac{\ol\Phi(\zeta-\mu)}{w}\frac{|\zeta-\mu|}{\mu}.
\end{align*} 
This gives the result in the case $\mu_0\le \mu\le \zeta-1$, which concludes the proof of the first inequality. The last part of the lemma follows by noting that $T_\mu(w)\le C\zeta^2$. 

 For $|\mu|\le 1/\zeta$, we can invoke Lemma 9, eq. (89) from \cite{js04}, that is
 \[ m_1(\mu,w)\le -\tilde{m}(w)+C\zeta\mu^2\]
 which is at most $C+C\zeta\mu^2$.
\end{proof}

\subsection{Upper bound on $m_2$}

\begin{lemma} \label{lemdeux}
There exist constants $C>0$ and $\omega_0\in(0,1)$ such that for any $w\le \omega_0$, 
for any $\mu\in\R$,
\begin{align*} m_2(\mu,w) & \le
 C \frac{\ol{\Phi}(\zeta-|\mu|)}{w^2}.
 \end{align*}
\end{lemma}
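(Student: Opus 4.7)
The plan is to split the integral defining $m_2(\mu,w)=\int\be(x,w)^2\phi(x-\mu)dx$ according to whether $|x|\le\zeta$ or $|x|>\zeta$, and to reduce to the case $\mu\ge 0$ by symmetry of $\be$ and $\phi$. I will also treat the two cases $\mu\le\zeta$ and $\mu>\zeta$ separately.

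The easy case is $\mu>\zeta$: then $\ol\Phi(\zeta-\mu)\ge\ol\Phi(0)=1/2$, and the global bound $|\be(x,w)|\le 1/(w\wedge c_1)$ from Lemma \ref{lembeta}, valid for $w\le \omega_0$ small enough, gives $m_2(\mu,w)\le 1/w^2 \leqa \ol\Phi(\zeta-\mu)/w^2$ directly.

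For $0\le\mu\le\zeta$, the contribution from $\{|x|>\zeta\}$ is handled by the same global bound: $|\be(x,w)|\le 1/w$, so
\[\int_{|x|>\zeta}\be(x,w)^2\phi(x-\mu)dx \le \frac{1}{w^2}\bigl(\ol\Phi(\zeta-\mu)+\ol\Phi(\zeta+\mu)\bigr) \le \frac{2\,\ol\Phi(\zeta-\mu)}{w^2}.\]
For the contribution from $\{|x|\le\zeta\}$, I use that $1+w\be(x)\ge 1+w\be(0)\ge 1-w$ for $w$ small, so $|\be(x,w)|\leqa |\be(x)|\leqa 1+(g/\phi)(x)$, hence $\be(x,w)^2\leqa 1+(g/\phi)^2(x)$. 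By symmetry of $g/\phi$ and since $\phi(x+\mu)\le\phi(x-\mu)$ on $[0,\zeta]$ when $\mu\ge 0$,
\[\int_{-\zeta}^{\zeta}(g/\phi)^2(x)\phi(x-\mu)dx \le 2\int_0^\zeta (g/\phi)^2(x)\phi(x-\mu)dx.\]
Now apply Lemma \ref{equ-beta2maj} with $z=\zeta\ge 4\La$ (valid for $w\le\omega_0$ small enough by Lemma \ref{lemzetagen}) to obtain
\[\int_0^\zeta (g/\phi)^2(x)\phi(x-\mu)dx \le \frac{8}{\zeta}(g/\phi)^2(\zeta)\,\phi(\zeta-\mu) \leqa \frac{\phi(\zeta-\mu)}{\zeta w^2},\]
where I used $(g/\phi)(\zeta)=1+\be(\zeta)=1+1/w\leqa 1/w$.

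The last step is to convert $\phi(\zeta-\mu)$ into $\ol\Phi(\zeta-\mu)$. Using the standard Mills-ratio lower bound (cf. Lemma \ref{bphi}), one has $\phi(y)\leqa (1+y)\ol\Phi(y)$ for all $y\ge 0$; applied to $y=\zeta-\mu\ge 0$ this yields $\phi(\zeta-\mu)\leqa (1+\zeta)\ol\Phi(\zeta-\mu)$, and therefore
\[\frac{\phi(\zeta-\mu)}{\zeta w^2} \leqa \frac{1+\zeta}{\zeta}\cdot\frac{\ol\Phi(\zeta-\mu)}{w^2} \leqa \frac{\ol\Phi(\zeta-\mu)}{w^2}.\]
Combining the two parts of the split and absorbing the constant contribution ($\leqa 1 \leqa \ol\Phi(\zeta-\mu)/w^2$ since $\ol\Phi(\zeta-\mu)\ge \ol\Phi(\zeta)\geqa w$ by Lemma \ref{lemmtilde} and \eqref{tails}) finishes the argument. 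The only genuinely delicate point is the $\phi\to\ol\Phi$ conversion: I need to ensure the extra factor $1+\zeta$ coming from Mills' ratio is absorbed by the $1/\zeta$ factor produced by Lemma \ref{equ-beta2maj}, which works precisely because $\zeta\to\infty$ as $w\to 0$.
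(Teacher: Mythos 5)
Your proof follows essentially the same strategy as the paper's: reduce to $\mu\ge 0$, split $|x|\le\zeta$ from $|x|>\zeta$, use the crude $|\be(x,w)|\le 1/w$ bound outside $[-\zeta,\zeta]$, and apply Lemma~\ref{equ-beta2maj} plus $(g/\phi)(\zeta)=1+1/w$ inside. Your conversion of $\phi(\zeta-\mu)/\zeta$ to $\ol\Phi(\zeta-\mu)$ via the uniform Mills bound $\phi(y)\leqa (1+y)\ol\Phi(y)$, $y\ge 0$, is a tidy alternative to the paper's case distinction $\zeta-\mu\ge 1$ vs.\ $\zeta-\mu<1$; both work. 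One small inefficiency: you bound $\be(x)^2\leqa 1+(g/\phi)^2(x)$, which introduces a spurious constant contribution that you then have to absorb; the paper avoids this by observing $(g/\phi-1)^2\le C(g/\phi)^2$ since $g/\phi$ is bounded away from $0$ (e.g.\ by \eqref{tailsg}). Your absorption step is still fine, but the justification you give is mildly overstated: $\ol\Phi(\zeta)\asymp w\,g(\zeta)/\zeta$, which is $o(w)$, so the claim $\ol\Phi(\zeta)\geqa w$ is false. What is true, and what you actually need, is $\ol\Phi(\zeta)\geqa w^{1+\eps}$ for any $\eps>0$ (hence $\geqa w^2$ for small $w$), since $g(\zeta)\geqa e^{-\La\zeta}$ and $\zeta\asymp\sqrt{\log(1/w)}$ decay slower than any power of $w$. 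With that one exponent corrected, the argument is sound.
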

\begin{proof}
Since $m_2(\mu,w)=E[\beta(Z+\mu,w)^2]=\int_{-\infty}^\infty 
\beta(u,w)^2 \phi(u-\mu) du$ by definition, we first bound
\[ \beta(u,w)^2 = \left(\frac{\beta(u)}{1+w\beta(u)}\right)^2
\le C\beta(u)^2\1_{|u|\le \zeta} + w^{-2} \1_{|u|>\zeta}. \]
Indeed, for $\be(u)\ge 0$ this follows from bounding the denominator from below 
by $1$ or $w\be(u)$ respectively, and for $\be(u)<0$ (in which case $|u|<\zeta$, as soon as $w_0<\be^{-1}(0)$) one uses
the fact that $1+w\be(u)\ge 1+w\beta_{min}\ge c_0>0$. Deduce that
\begin{align*} 
m_2(\mu,w) & \le C\int_{-\zeta}^\zeta \beta(z)^2\phi(z-\mu)dz +
\int_{|z|>\zeta} w^{-2}\phi(z-\mu)dz\\
&\le  \qquad\qquad (A)\qquad\qquad+\qquad\qquad (B).
\end{align*}
By definition of (B),
\[ (B) =w^{-2}(\bfi(\zeta-\mu) + \bfi(\zeta+\mu))
\le 2w^{-2}\bfi(\zeta-|\mu|).\]
To bound (A), we note
\[ (A)=C\left(\int_0^\zeta \be(z)^2\phi(z+\mu)dz +\int_0^\zeta \be(z)^2\phi(z-\mu)dz\right) 
\le 2C\int_0^\zeta \be(z)^2\phi(z-|\mu|)dz.  \]
As the last bound is symmetric in $\mu$, it is enough to obtain the desired bound for $\mu\ge 0$, which we thus assume for the remaining of the proof. 
For large enough C, it holds $(\frac{g}{\phi}-1)^2 \le C(\frac{g}{\phi})^2$ (e.g. expanding the square and using that $g/\phi$ is bounded away from $0$) which with Lemma \ref{equ-beta2maj} leads to
\[  \int_0^\zeta \be(z)^2\phi(z-\mu)dz \le C\int_0^\zeta (g/\phi)(z)^2\phi(z-\mu)dz\le 
C\frac8{\zeta} \big(\frac{g}{\phi}\big)^2(\zeta)\phi(\zeta-\mu). \]
Also, $(g/\phi)(\zeta)=\be(\zeta)+1=w^{-1}+1\le 2w^{-1}$. To conclude one writes
\[ \frac{\phi(\zeta-\mu)}{\zeta} = \frac{\phi(\zeta-\mu)}{\zeta-\mu+\mu}. \]
If $\zeta-\mu\ge1$, one can use Lemma \ref{bphi} to obtain that the previous quantity is less than $2\bfi(\zeta-\mu)$ (bound the denominator from below by $\zeta-\mu$). If $\zeta-\mu\le 1$, there exist $C_1,C_2>0$ with
\[ \sup_{\mu:\, \mu\ge \zeta-1} \frac{\phi(\zeta-\mu)}{\zeta} \le C_1\le C_2\bfi(1)\le C_2\bfi(\zeta-\mu).\]
The lemma follows by combining the previous bounds.
\end{proof}

\subsection{Lower bound on $m_1$}

\begin{lemma} \label{m1binf}
There exist constants $M_0, C_1>0$ and $\omega_0\in(0,1)$ such that for any $w\le \omega_0$, and any $\mu \ge M_0$, with $T_\mu(w)$ defined by \eqref{Tmu},
\begin{align*} 
m_1(\mu,w) & \ge
 C_1 \frac{\ol{\Phi}(\zeta-\mu)}{w}T_\mu(w). 
 \end{align*}
\end{lemma}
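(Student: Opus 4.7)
Since $m_1(\mu,w)$ is even in $\mu$ (as $\ga$, $\phi$ are symmetric), I restrict to $\mu \ge 0$. The plan is to isolate a well-chosen region of integration on which $\beta(x,w)$ is controlled from below, and to show that the negative contribution to $m_1(\mu,w)$ coming from $\{x:\,\beta(x)<0\}$ is uniformly bounded, hence negligible once $w$ is small enough. The region to use depends on the regime of $\mu$ vs.\ $\zeta$.

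First, by symmetry and monotonicity of $\beta$ on $[0,\infty)$, $\beta(x)<0$ iff $|x|<x^*$ for a universal $x^*>0$, and on that set Lemma~\ref{lembeta} gives $|\beta(x,w)|\le C_0$ for some constant $C_0=C_0(g)$. Hence
\[
 m_1(\mu,w) \ge -C_0 + \int_{|x|>x^*}\!\!\beta(x,w)\phi(x-\mu)\,dx,
\]
and on $\{|x|>x^*\}$ the integrand is nonnegative, so I am free to restrict the integration further to a convenient subset.

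For the first regime, $\mu \ge \zeta/2$, note that $T_\mu(w)\le 3$ is bounded. Since $\beta$ is increasing on $[0,\infty)$ and $\beta(\zeta)=1/w$, one has $\beta(x)\ge 1/w$ on $[\zeta,\infty)$, from which $\beta(x,w)=\beta(x)/(1+w\beta(x))\ge 1/(2w)$. Therefore
\[
 m_1(\mu,w) \ge \frac{\bar\Phi(\zeta-\mu)}{2w}-C_0.
\]
On this range, $\bar\Phi(\zeta-\mu)\ge \bar\Phi(\zeta/2)$; a direct computation using $\bar\Phi(\zeta/2)\asymp\phi(\zeta/2)/\zeta$ (Lemma~\ref{bphi}) and the upper bound $1/w\leqa g(\zeta)/\phi(\zeta)$ shows that $\bar\Phi(\zeta/2)/w$ grows like $e^{3\zeta^2/8}/(\zeta g(\zeta)^{-1})\to \infty$ as $w\to 0$. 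Thus for $w$ small enough, $\bar\Phi(\zeta-\mu)/(4w)\ge C_0$ and I obtain $m_1(\mu,w)\ge \bar\Phi(\zeta-\mu)/(4w)\ge (1/12)\bar\Phi(\zeta-\mu)T_\mu(w)/w$.

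For the second, harder regime $M_0\le \mu\le \zeta/2$, $T_\mu(w)\asymp \zeta/\mu$ and the tail $[\zeta,\infty)$ no longer captures the right order. Instead I use the ``shoulder'' $[\zeta-1/\mu,\zeta]$, mimicking the upper bound proof of Lemma~\ref{lemun}. Indeed, for $\zeta$ large enough one has $g/\phi\ge 2$ throughout this interval, so $\beta(x)\ge g(x)/(2\phi(x))$ and $1+w\beta(x)\le 2$; hence $\beta(x,w)\ge g(x)/(4\phi(x))$. Combining with $g(x)\ge g(\zeta)e^{-\La/\mu}\ge g(\zeta)e^{-\La}$ (using \eqref{log-lip}), $e^{\mu x}\ge e^{\mu\zeta-1}$, and the length $1/\mu$ of the interval, gives
\[
 \int_{\zeta-1/\mu}^{\zeta}\beta(x,w)\phi(x-\mu)\,dx \ \ge\ \frac{1}{4}\int_{\zeta-1/\mu}^{\zeta}g(x)e^{\mu x-\mu^2/2}dx\ \ge\ c_1\,\frac{g(\zeta)e^{\mu\zeta-\mu^2/2}}{\mu},
\]
with $c_1=e^{-\La-1}/4$. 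Finally I match this to the target: using $\bar\Phi(\zeta-\mu)\le \phi(\zeta-\mu)/(\zeta-\mu)$, $1/w\le 2g(\zeta)/\phi(\zeta)$ for small $w$, and $T_\mu(w)\le 2(\zeta-\mu)/\mu$ (since $\mu\le \zeta/2$ implies $(\zeta-\mu)/\mu\ge 1$), I get
\[
 \frac{\bar\Phi(\zeta-\mu)T_\mu(w)}{w}\ \le\ 4\,\frac{g(\zeta)e^{\mu\zeta-\mu^2/2}}{\mu},
\]
so that the lower bound above dominates $(c_1/4)\bar\Phi(\zeta-\mu)T_\mu(w)/w$ up to the additive loss $C_0$.

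The only issue remaining, which I expect to be the main technical point, is to absorb the additive $C_0$ uniformly in $\mu$. For this I observe that $h(\mu):=g(\zeta)e^{\mu\zeta-\mu^2/2}/\mu$ satisfies $h'(\mu)/h(\mu)=\zeta-\mu-1/\mu\ge \zeta/4>0$ on $[M_0,\zeta/2]$ once $\zeta\ge 4/M_0$, so it is increasing and minimized at $\mu=M_0$. The lower bound $g(\zeta)\ge g(0)e^{-\La\zeta}$ gives $h(M_0)\ge c\,e^{(M_0-\La)\zeta}/M_0$, which diverges as $\zeta\to\infty$ provided one chooses $M_0>\La$. Taking $\omega_0$ small enough that $\zeta(\omega_0)$ is so large that $h(M_0)\ge 8C_0/c_1$ (and all other ``$\zeta$ large enough'' conditions are satisfied) finishes the proof, with $C_1=\min(1/12,c_1/8)$, say.
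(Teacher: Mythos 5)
Your proof is correct. It shares the paper's overall structure---bound the negative contribution from $\{\beta<0\}$ by a universal constant, then show the positive part dominates once $w$ is small---but obtains the key lower estimate differently. The paper treats $\mu\in[M_0,\zeta-1]$ by combining the tail $\int_{|x|>\zeta}$, which yields $\ol\Phi(\zeta-\mu)/(2w)$, with the interval $[d,\zeta]$, bounded via an integration-by-parts argument on $\int_d^\zeta g(x)e^{\mu x}dx$ that produces the extra $(\zeta-\mu)/\mu$ factor; the two contributions are then added to form $T_\mu(w)$. You split at $\mu=\zeta/2$ instead of $\mu=\zeta-1$, which lets a single piece handle each regime: for $\mu\ge\zeta/2$, $T_\mu(w)$ is bounded and the tail $\int_\zeta^\infty$ alone suffices; for $M_0\le\mu\le\zeta/2$, the shoulder $[\zeta-1/\mu,\zeta]$ alone yields $c_1\, g(\zeta)e^{\mu\zeta-\mu^2/2}/\mu$, which already dominates $\ol\Phi(\zeta-\mu)T_\mu(w)/w$ because $(\zeta-\mu)/\mu\ge 1$ there. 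Your shoulder estimate replaces the paper's integration by parts and is somewhat more elementary (it requires only the Lipschitz bound on $\log g$ over a unit-length interval, not monotonicity of $g$ on $[d,\infty)$), and the split at $\zeta/2$ avoids having to add the two contributions. The absorption of the bounded loss (your $C_0$, the paper's $D_1$) follows the same logic in both proofs: the main term diverges as $\zeta\to\infty$ once $M_0>\Lambda$, so taking $\omega_0$ small enough suffices.
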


\begin{proof}
By definition, using $\zeta=\zeta(w)$ as shorthand,
\begin{align*}
 m_1(\mu,w) 
 & = \int_{-\zeta}^{\zeta} \frac{\beta(x)}{1+w\be(x)} \phi(x-\mu)dx\ +\ \int_{|x|>\zeta} \frac{\beta(x)}{1+w\be(x)} \phi(x-\mu)dx\\
 & = \qquad \qquad (I) \qquad \qquad \qquad\ +\ \qquad \qquad \qquad  (II).
 \end{align*}
To bound (II) from below, one notes that $1+w\be(x)\le 2w\be(x)$ for $|x|\ge \zeta$, so 
\[ (II) \ge \frac1{2w} \int_{|x|>\zeta}  \phi(x-\mu)dx= 
 \frac1{2w}( \ol{\Phi}(\zeta-\mu)+ \ol{\Phi}(\zeta+\mu))
 \ge \frac1{2w} \ol{\Phi}(\zeta-\mu). \]
To bound (I) from below, 
let us introduce $d=\max(d_1,d_2)$, where $d_1$ verifies $\beta(d_1)=1$ and $d_2$ is such that for $x\ge d_2$, the map $x\to g(x)$ is decreasing (such $d_2$ exists by \eqref{assumpgdebase}).
We isolate first the possibly negative part of the integral defining $(I)$ and write
\begin{align*} 
 \int_{|x|\le d} \frac{\beta(x)}{1+w\be(x)} \phi(x-\mu)dx 
& \ge - \int_{|x|\le d} \frac{|\beta(x)|}{1+w\be(0)} \phi(x-\mu)dx \\
& \ge - \int_{|x|\le d} \frac{|\beta(x)|}{1+w\be(0)} \frac{dx}{\sqrt{2\pi}}=:-D_1.
\end{align*}
Let $I_1$ be the part of the integral (I) corresponding to $x$ in $\Gamma:=\{x:\, d\le |x|\le \zeta\}$. If $\zeta>d$,
\begin{align*}
I_1 & \ge \int_\Gamma 
\beta(x) \phi(x-\mu)dx 
- w\int_\Gamma \frac{\beta(x)^2}{1+w\beta(x)} \phi(x-\mu)dx \\
& \ge \frac12 \int_\Gamma \beta(x) \phi(x-\mu)dx\\
& \ge \frac14  \int_\Gamma  g(x) \frac{\phi(x-\mu)}{\phi(x)}dx,
\end{align*}
where we have used that $w\be(\cdot)/(1+w\be(\cdot)) \le 1/2$ on $\Gamma$ 
and that $g/\phi-1\ge g/(2\phi)$ on $\Gamma$ by definition of this set. 
 An integration by parts now shows that
 \begin{align*}
 \int_d^\zeta g(x)e^{\mu x}dx & = \frac1\mu \int_{\mu d}^{\mu \zeta}
 g(t/\mu) e^t dt \\
& = \mu^{-1} [g(t/\mu)e^t]_{\mu d}^{\mu \zeta} -  \mu^{-2}\int_{\mu d}^{\mu \zeta} g'(t/\mu) e^t dt\\
& \ge \mu^{-1} \left[ g(\zeta)e^{\mu \zeta} - g(d)e^{\mu d} \right],
\end{align*} 
as $g'(u)<0$ for $u>d\ge d_2$.  
We now claim that $g(\zeta)e^{\mu \zeta}\ge 2 g(d)e^{\mu d}$ for any $\mu\ge 2\La$ and $\zeta\ge d+\log(2)/\La$. Indeed, for such $\mu, \zeta$, 
\[e^{\mu (\zeta-d) } \ge e^{2\La(\zeta-d)} \ge 2 e^{\La(\zeta-d)},
\]
while, using that $-\La\le (\log g)' <0$ on $(d,\infty)$ by \eqref{log-lip} and the definition of $d$, one obtains
\[ 2\frac{g(d)}{g(\zeta)} = 2 e^{-\{\log g(\zeta) - \log g(d)\}} \le 2e^{\La(\zeta-d)}\le  e^{\mu(\zeta-d)}.\]
Putting the two previous bounds together leads to, for such $\mu, \zeta$,
\[I_1 \ge \frac1{8\mu} g(\zeta) e^{\mu\zeta-\mu^2/2}. \]
Let us now distinguish two cases. 
Suppose first that $M_0\le \mu \le \zeta-1$ for $M_0:=2\Lambda$. The map $\mu\to \mu\zeta-\mu^2/2$ is increasing on this interval, so its minimum is attained for $\mu=M_0$. Combining this with $g(\zeta)\ge Ce^{-\Lambda\zeta}$ and using the rough bound $\mu^{-1}\ge \zeta^{-1}$ leads to, uniformly for $\mu\in[M_0,\zeta-1]$,
\[ I_1\geq \frac{e^{-\Lambda\zeta+M_0\zeta-M_0^2/2}}{8\zeta} \geqa \frac{e^{\Lambda\zeta}}{\zeta}. \] 
Since $e^{\Lambda u}/u\to \infty$ as $u\to\infty$ and $\zeta=\zeta(w)\to \infty$ as $w\to 0$, we have $I_1\ge 2D_1$ for any $ \mu\ge [M_0,\zeta-1]$ and any $w\ge \omega_0$ for $\omega_0$ small enough. One deduces that for such $w$ and $\mu$,
\[ I_1-D_1 \ge \frac{g(\zeta)}{16}\frac{e^{\zeta\mu-\mu^2/2}}{\mu}\geqa \frac{1}{\mu}\frac{\phi(\zeta-\mu)}{\phi(\zeta)}g(\zeta). \]
Noting that $\phi(\zeta)/g(\zeta)\sim w$ and combining with the bound on (II) above, one deduces, for $w\le \omega_0$ and $\mu\in[M_0,\zeta-1]$, 
\begin{align*}        
 m_1(\mu,w) & \ge \frac{\bfi(\zeta-\mu)}{2w} + C\frac{\phi(\zeta-\mu)}{\mu w}.
\end{align*}    
Using that $\mu\le \zeta-1$, one deduces that
\[ \frac{\phi(\zeta-\mu)}{\mu w} \ge \frac{\zeta-\mu}{\mu}\frac{\bfi(\zeta-\mu)}{w}.\]
This gives the desired inequality if $\mu\in[M_0,\zeta-1]$. The second case is now $\mu>\zeta-1$. In this case, we simply use $I_1\ge 0$ to get
\[ m_1(\mu,w)\ge -D_1+(II)\ge -D_1+ \frac1{2w} \ol{\Phi}(\zeta-\mu).\]
As $\ol{\Phi}(\zeta-\mu)/(2w)\ge \ol{\Phi}(1)/(2w)$ for small enough $w$, the last display is bounded from below by $\ol{\Phi}(\zeta-\mu)/(4w)$. Noting that the bound
\[ m_1(\mu,w)  \ge C\frac{\bfi(\zeta-\mu)}{w}\left[1+\frac{|\zeta-\mu|}{\mu}\right]\] holds in the two cases, for $C$ a small enough constant, leads to the result, recalling  the definition of $T_w(\mu)$ in \eqref{Tmu}.
\end{proof}

Combining Lemmas \ref{lemdeux} and \ref{m1binf} (and using $T_\mu(w)\ge 1$) one obtains the following bound.
\begin{corollary} \label{m2m1}
There exist constants $M_0, C_2>0$ and $\omega_0\in(0,1)$ such that for any $w\le \omega_0$, and any $\mu \ge M_0$,  
\begin{align*} 
m_2(\mu,w) & \le C_2 \frac{m_1(\mu,w)}{w}. 
 \end{align*}
\end{corollary}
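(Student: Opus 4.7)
The plan is to directly combine the two immediately preceding lemmas, since the corollary is essentially a bookkeeping consequence of them. First I would take the constants $M_0, C_1, \omega_0^{(1)}$ from Lemma \ref{m1binf} and the constant $C, \omega_0^{(2)}$ from Lemma \ref{lemdeux}, and set $\omega_0 := \min(\omega_0^{(1)}, \omega_0^{(2)})$ so that both lemmas apply simultaneously for $w \le \omega_0$.

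Next, for $w\le \omega_0$ and $\mu\ge M_0$, Lemma \ref{lemdeux} gives $m_2(\mu,w) \le C\,\ol{\Phi}(\zeta-\mu)/w^2$, which I rewrite as
\[
m_2(\mu,w) \;\le\; \frac{C}{w}\cdot \frac{\ol{\Phi}(\zeta-\mu)}{w}.
\]
Using that $T_\mu(w) = 1 + |\zeta(w)-|\mu||/|\mu| \ge 1$, Lemma \ref{m1binf} yields $m_1(\mu,w) \ge C_1\,\ol{\Phi}(\zeta-\mu)/w$, hence
\[
\frac{\ol{\Phi}(\zeta-\mu)}{w} \;\le\; \frac{m_1(\mu,w)}{C_1}.
\]
Substituting into the previous display gives the conclusion with $C_2 := C/C_1$.

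There is no real obstacle: the work is already done in Lemmas \ref{lemdeux} and \ref{m1binf}; the corollary is a one-line observation that the ratio $m_2/m_1$ is controlled by $1/w$ because the common factor $\ol{\Phi}(\zeta-\mu)$ appears with one extra power of $w^{-1}$ on the $m_2$-side than on the $m_1$-side. The only minor care needed is to keep track of the threshold $M_0$ (inherited from Lemma \ref{lemun}/\ref{m1binf}), which is why the statement is restricted to $\mu\ge M_0$, and to notice that discarding the factor $T_\mu(w)$ in the lower bound on $m_1$ is harmless since $T_\mu(w)\ge 1$.
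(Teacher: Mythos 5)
Your proof is correct and matches the paper's own derivation exactly: the paper states that the corollary follows by combining Lemmas \ref{lemdeux} and \ref{m1binf} and using $T_\mu(w)\ge 1$, which is precisely your argument.
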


Here is another lower bound for $m_1$ when the signal is large
\begin{lemma} \label{m1binflargesignal}
For any $\veps\in(0,1)$ and $\rho>0$, there exist $\omega_0=\omega_0(\eps,\rho)\in(0,1)$ such that for any $w\le \omega_0$, and any $\mu \ge (1+\rho)\:\zeta(w)$, 
\begin{align*} 
m_1(\mu,w) & \ge (1-\veps)/w.
 \end{align*}
\end{lemma}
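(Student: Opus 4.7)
My proof would start from the elementary algebraic identity
\begin{equation*}
\beta(x,w) \;=\; \frac{\beta(x)}{1+w\beta(x)} \;=\; \frac{1}{w} \;-\; \frac{1}{w(1+w\beta(x))},
\end{equation*}
which, after integrating against $\phi(x-\mu)$, gives
\begin{equation*}
 m_1(\mu,w) \;=\; \frac{1}{w} \;-\; \frac{1}{w}\int \frac{\phi(x-\mu)}{1+w\beta(x)}\,dx
\;=\; \frac{1}{w} - \frac{1}{w(1-w)}\,\E_\mu[\ell(X;w,g)],
\end{equation*}
where I have used $1+w\beta(x) = ((1-w)\phi(x)+wg(x))/\phi(x)$ and the definition \eqref{lformula} of the $\ell$-value. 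So it suffices to show that $\E_\mu[\ell(X;w,g)]$ can be made arbitrarily small, uniformly in $\mu\ge(1+\rho)\zeta(w)$, once $w$ is small enough.

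I would then split, with $x_1 := (1+\rho/2)\zeta(w)$,
\begin{equation*}
 \E_\mu[\ell(X;w,g)] \;\le\; \ell(x_1;w,g) \;+\; \P_\mu(X\le x_1),
\end{equation*}
using that $\ell(\cdot;w,g)$ is nonincreasing on $[0,\infty)$ and symmetric (Lemma \ref{lem:qlvalue}) so that $\ell(x;w,g)\le \ell(x_1;w,g)$ on $x\ge x_1$, combined with the bound $\ell\le 1$ on the remaining region. Since $\mu - x_1\ge (\rho/2)\zeta(w)\to\infty$ as $w\to 0$, the Gaussian tail gives $\P_\mu(X\le x_1)\le \bfi((\rho/2)\zeta(w))\to 0$. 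For the first term, by definition $\ell(x_1;w,g)\le w^{-1}(1-w)(\phi/g)(x_1)$, and using $(\phi/g)(\zeta(w))=w/(1+w)$ together with the log-Lipschitz property \eqref{log-lip} of $g$,
\begin{equation*}
 \frac{(\phi/g)(x_1)}{(\phi/g)(\zeta(w))} \;\le\; \exp\!\Bigl(-\bigl(\rho+\tfrac{\rho^2}{4}\bigr)\tfrac{\zeta(w)^2}{2} + \Lambda\tfrac{\rho}{2}\zeta(w)\Bigr),
\end{equation*}
which forces $\ell(x_1;w,g)\le \exp(-c\zeta(w)^2)\to 0$ as $w\to 0$ by Lemma \ref{lemzetagen}.

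Combining, there exists $\omega_0=\omega_0(\eps,\rho)$ such that for $w\le\omega_0$ the quantity $\E_\mu[\ell(X;w,g)]/(1-w)$ is bounded by $\eps$ uniformly over $\mu\ge(1+\rho)\zeta(w)$, which yields $m_1(\mu,w)\ge (1-\eps)/w$, as desired. I expect no serious obstacle: the two error terms are uniform in $\mu$ (the $\ell$-value bound is $\mu$-free, and the Gaussian-tail bound only uses $\mu\ge(1+\rho)\zeta(w)$), and the decay of $(\phi/g)(x_1)$ relative to $(\phi/g)(\zeta(w))$ is super-exponential in $\zeta(w)$ because the gain $(\rho+\rho^2/4)\zeta^2/2$ from the Gaussian factor dwarfs the loss $\Lambda(\rho/2)\zeta$ from the at-most-Laplace tail of $g$.
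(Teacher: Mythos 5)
Your proof is correct and is essentially the paper's argument written through the identity $\frac{w\beta(x)}{1+w\beta(x)} = 1 - \frac{\ell(x;w,g)}{1-w}$: both split at $x_1 = a\zeta(w)$ with $a=1+\rho/2$, bound one piece by the Gaussian tail $\bfi((\rho/2)\zeta(w))\to 0$ and the other by showing that the density ratio at $a\zeta$ versus $\zeta$ decays like $\exp\{-(a^2-1)\zeta^2/2 + O(\zeta)\}$ using the log-Lipschitz property \eqref{log-lip}. The only cosmetic difference is that you phrase the decomposition in terms of $\E_\mu[\ell]$ while the paper works directly with $\E_\mu[w\beta(X)/(1+w\beta(X))]$; these are the same quantity, so the estimates coincide.
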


\begin{proof}
Let $a=1+(\rho/2)$ and let us write, for $w$ small enough,
\begin{align*}
 w m_1(\mu,w) 
 & = \int_{-a\zeta}^{a\zeta} \frac{w \beta(x)}{1+w\be(x)} \phi(x-\mu)dx +\ \int_{|x|>a\zeta} \frac{w \beta(x)}{1+w\be(x)} \phi(x-\mu)dx\\
 &\geq \int_{x>a\zeta} \frac{w\beta(x)}{1+w\be(x)} \phi(x-\mu)dx -   \int_{-a\zeta}^{a\zeta} \phi(x-\mu)dx\\
 &\geq  \frac{w\beta(a\zeta)}{1+w\be(a\zeta)} \ol{\Phi}(a\zeta-\mu) - (1-\ol{\Phi}(a\zeta-\mu)).
 \end{align*}
Since for $\mu \ge (1+\rho)\:\zeta$, we have that $\ol{\Phi}(a\zeta-\mu) \geq \ol{\Phi}(-(\rho/2)\zeta)$ tends to $1$ when $w$ tends to zero, we only have to prove that $w\beta(a\zeta)=\beta(a\zeta)/\beta(\zeta)$ tends to infinity. The latter comes from 
$$
\beta(a\zeta)/\beta(\zeta) \gtrsim e^{-a\Lambda \zeta}  \:\frac{\phi(\zeta)}{\phi(a\zeta)} = e^{ (a^2-1) \zeta^2 - a\Lambda \zeta},
$$
by using the definition of $\beta$ and  \eqref{tailsg}.
\end{proof}

\subsection{Results for $m_1$ and $\tilde{m}$ ratio}\label{m1tilderatio}

In the next lemmas, we study the behaviour of the functionals, for given $\theta_0\in\R^n$,
\begin{align}
H_{\te_0}(w) &= \frac{\sum_{i\in S_0} m_1(\te_{0,i},w)}{\tilde m(w)},\:\: w\in(0,1), 
\label{defH} \\
H^\circ_{\te_0}(w, K) &= \frac{\sum_{i\in \cC_0(\theta_0,w,K)} m_1(\te_{0,i},w)}{\tilde m(w)}, \:\: w\in(0,1), 
\:\: K\ge 1,\label{defHbis} 
\end{align}
 where we denoted  $S_0=\{1\leq i\leq n\::\: \theta_{0,i}\neq 0\}$ and 
 \[  \cC_0(\theta_0,w,K)=\{1\leq i\leq n\::\: |\theta_{0,i}|\geq \zeta(w)/K\}  \subset S_0. \]
The set $\cC_0(\theta_0,w,K)$ is sometimes denoted by $\cC_0(w,K)$ or $\cC_0$ for short. 
 
\begin{lemma} \label{lemsmallsignal} 
Consider a sparsity $s_n\leq n^{\upsilon}$ for $\upsilon\in(0,1)$. 
Consider $H_{\te_0}$ and $H^\circ_{\te_0}$ as in \eqref{defH} and \eqref{defHbis}, respectively.  There exist constants $C=C(\upsilon,g)>0$ and $D=D(\upsilon,g)\in (0,1)$ such that
\begin{align}\label{removesmallsignal}
\sup_{\te_{0}\in \ell_0[s_n]}\ 
\sup_{  w\in \left[\frac1n,\frac1{\log{n}}\right],\ K\in\left[\frac2{1-\upsilon}, \frac4{1-\upsilon}\right] }\ \left|H_{\te_0}(w)- H^\circ_{\te_0}(w,K) \right| \le Cn^{1-D},
\end{align}
for any $n$ larger than an integer $N=N(\upsilon,g)$.
\end{lemma}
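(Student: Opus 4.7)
The difference under consideration rewrites
\[
H_{\te_0}(w)-H^\circ_{\te_0}(w,K) \;=\; \frac{1}{\tilde m(w)}\sum_{i\in S_0\setminus \cC_0(\te_0,w,K)} m_1(\te_{0,i},w),
\]
where each surviving index satisfies $0<|\te_{0,i}|<\zeta(w)/K$. My plan is to bound each term individually via Lemma~\ref{lemun}, sum over at most $s_n\le n^\upsilon$ indices, then divide by $\tilde m(w)$, which is bounded below by an arbitrary negative power of $n$ by Lemma~\ref{lemmtilde}. The uniformity in $w$ comes from the fact that the relevant estimates are monotone or asymptotic in $w\in[1/n,1/\log n]$, and the uniformity in $K$ from the worst-case choice $K=2/(1-\upsilon)$ (larger $K$ only enlarges $\cC_0$, so smaller $S_0\setminus\cC_0$).

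I would split the sum according to the three regimes of Lemma~\ref{lemun}. For $|\te_{0,i}|\le 1/\zeta(w)$, $|m_1(\te_{0,i},w)|\le C(1+\zeta\te_{0,i}^2)\le C'$. For $1/\zeta(w)\le|\te_{0,i}|\le \mu_0:=2\La$, $m_1(\te_{0,i},w)\le C\mu^{-1}e^{-\mu^2/2+\mu\zeta}$; since the exponent is increasing in $\mu$ on $[0,\zeta-1]$, this is at most $C\zeta e^{\mu_0\zeta}\le e^{c_1\sqrt{\log n}}$, which is subpolynomial in $n$. Both contributions add up to at most $s_n\cdot n^{o(1)}=n^{\upsilon+o(1)}$, which will be negligible compared to the third regime.

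The binding case is $\mu_0\le|\te_{0,i}|<\zeta(w)/K$. Lemma~\ref{lemun} then gives $m_1(\te_{0,i},w)\le C\zeta^2\bfi(\zeta-|\te_{0,i}|)/w$. Since $\zeta-|\te_{0,i}|>\zeta(1-1/K)$ and using $\bfi(x)\le \phi(x)/x$ (Lemma~\ref{bphi}), I obtain
\[
m_1(\te_{0,i},w)\le \frac{C\zeta}{w}\,e^{-\zeta(w)^2(1-1/K)^2/2}.
\]
Using $\zeta(w)\sim\sqrt{2\log(1/w)}$ from Lemma~\ref{lemzetagen}, for any $\veps>0$ and $n$ large one gets $e^{-\zeta^2(1-1/K)^2/2}\le w^{(1-\veps)(1-1/K)^2}$. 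With $K\ge 2/(1-\upsilon)$, $(1-1/K)^2\ge ((1+\upsilon)/2)^2$, so each term is $\le C\zeta\, w^{(1-\veps)((1+\upsilon)/2)^2-1}\le C\sqrt{\log n}\,n^{1-(1-\veps)((1+\upsilon)/2)^2}$ using $w\ge 1/n$. Summing over $|S_0\setminus\cC_0|\le s_n\le n^\upsilon$ indices gives
\[
\sum_{i\in S_0\setminus \cC_0}m_1(\te_{0,i},w)\le C\sqrt{\log n}\;n^{\upsilon+1-(1-\veps)((1+\upsilon)/2)^2}.
\]

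Finally, Lemma~\ref{lemmtilde} yields $\tilde m(w)\gtrsim w^c$ for arbitrary $c>0$, so $\tilde m(w)\ge C(c)n^{-c}$ for $w\ge 1/n$. Combined with the key identity $\upsilon+1-((1+\upsilon)/2)^2=1-((1-\upsilon)/2)^2$, the whole quantity is bounded by $C\sqrt{\log n}\,n^{1-((1-\upsilon)/2)^2+O(\veps+c)}\le Cn^{1-D}$ for any $D<((1-\upsilon)/2)^2$ by choosing $\veps,c$ small, which establishes the lemma. The main subtlety is checking that the exponent $\upsilon-((1+\upsilon)/2)^2$ is strictly negative for $\upsilon\in(0,1)$, i.e.\ that the constraint $K\ge 2/(1-\upsilon)$ compensates exactly for the sparsity exponent $\upsilon$; any $K$ below this threshold would fail to yield a gain.
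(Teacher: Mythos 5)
Your proof is correct and follows essentially the same approach as the paper's: the identical three-regime split of $S_0\setminus\cC_0$ (according to whether $|\te_{0,i}|$ is at most $1/\zeta(w)$, between $1/\zeta(w)$ and $\mu_0$, or between $\mu_0$ and $\zeta(w)/K$), the same bounds from Lemma~\ref{lemun}, the same use of $\zeta(w)^2\gtrsim \log(1/w)$, $w\ge1/n$, $s_n\le n^\upsilon$, and $\tilde m(w)\gtrsim w^c$ from Lemma~\ref{lemmtilde}, with your bookkeeping slightly tighter (keeping the exponent $\upsilon+1-(1-1/K)^2$ intact gives $D$ close to $((1-\upsilon)/2)^2$, whereas the paper drops the negative factor $\upsilon-1+2/K$ and records only $D$ near $(1-\upsilon)^2/16$). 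One small inaccuracy: your closing claim that any $K$ below $2/(1-\upsilon)$ "would fail to yield a gain" is not what your own estimate shows --- the exponent $\upsilon+1-(1-1/K)^2$ drops below $1$ already when $K>1/(1-\sqrt\upsilon)$, which is strictly smaller than $2/(1-\upsilon)$; the latter is the threshold used in the paper's particular algebraic rearrangement, not an intrinsic barrier.
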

\begin{proof}
For $\te_{0}\in \ell_0[s_n]$ and $w\in [n^{-1},1/\log n]$, denote
\begin{align*}
 \cC_1 &= S_0\setminus\cC_0= \{1\leq i\leq n\::\: 0<|\te_{0,i}| < \zeta(w)/K \}.
    \end{align*} 
By using the upper bounds on $m_1$ obtained in Lemma~\ref{lemun} (and $\mu_0$ defined therein), with $\zeta=\zeta(w)$, and for now taking $K \ge 2$  arbitrary,
\begin{align*}
 \sum_{i\in \cC_1} m_1(\te_{0,i},w) 
&= \Big\{ \sum_{0<|\te_{0,i}|\le \zeta^{-1}} + \sum_{\zeta^{-1}< |\te_{0,i}|\le \mu_0} + \sum_{\mu_0 < |\te_{0,i}|< \zeta/K}\Big\}\, m_1(\te_{0,i},w)  \\
 & \leqa s_n\left\{(1+\zeta^{-1})+ \zeta e^{\mu_0\zeta} +
\zeta w^{-1}\ol\Phi\left(\zeta-\zeta/K\right) \right\},
\end{align*}
where to bound the third sum we use $\ol\Phi\left(\zeta-|\theta_{0,i}|\right)\le \ol\Phi\left(\zeta-\zeta/K\right)$ and   $T_{\mu}(w)\leqa \zeta(w)$. 
Now, by Lemma~\ref{bphi}, 
\begin{align*}
 \bfi\left(\zeta-\frac{\zeta}{K}\right) &\le \frac{K}{K-1}\zeta^{-1} \exp\left(-\frac{\zeta^2}{2} \frac{(K-1)^2}{K^2}\right)\leqa \frac{1}{\zeta} w^{(1-1/K)^2}
 \end{align*}
for $n$ large enough, where we used $\zeta(w)^2\geq -2\log w$ via \eqref{zetamin2} in the last step. Now using that for $w\ge n^{-1}$, we have $\zeta\le 2\sqrt{\log{n}}$ for large $n$ by Lemma \ref{lemzetagen}, so that $e^{\mu_0 \zeta}$ is negligible compared to any positive power of $n$. One deduces that, for $n$ large enough, using $w\ge n^{-1}$ and $s_n\leqa n^{\upsilon}$ by assumption, and any $K\ge 2$,
\begin{align*}
 \sum_{i\in \cC_1} m_1(\te_{0,i},w) 
 &\le C s_n\left\{ 1 + e^{C\zeta} +  w^{-2/K+1/K^2} \right\} \\
 &\le Cn^{\upsilon}e^{C\zeta} + C n   \: n^{\upsilon-1+2/K-1/K^2}.
 \end{align*}
Now if $\upsilon-1+2/K\le 0$, which holds for $K$ as in the statement, one gets
\begin{align*}
\sup_{\te_{0}\in \ell_0[s_n]}\sup_{  w\in [n^{-1},1/\log{n}]}  \frac{ \sum_{i\in \cC_1} m_1(\te_{0,i},w) }{\tilde{m}(w)}
 &\leq  \frac{C}{\tilde{m}(n^{-1})}  \{ n^{\upsilon}e^{2C\sqrt{\log{n}}} + n^{1-1/K^2}\}.
 \end{align*}
 For $K$ as in the statement, we further have $1-K^{-2}\le 1-(1-\upsilon)^2/16$. 
Since $\tilde{m}(n^{-1})$ decreases to $0$ slower than any power of $n$ (see Lemma~\ref{lemmtilde}, combined with \eqref{tailsg} and the bound \eqref{zetamaj2} on $\zeta$), the last display can be bounded by $Cn^{1-D}$,
 for $D$ small enough, which shows \eqref{removesmallsignal}.
\end{proof}

\begin{lemma} \label{lemhcirc}
Consider $H^\circ_{\te_0}$ as in \eqref{defHbis} for some choice of $K>1$.
 Then there exists a constant $C=C(K,g)>0$ such that, for all $z\geq 1$, there exists $\omega_0=\omega_0(z,K,g)\in (0,1)$ such that for all $w\in (0,\omega_0)$ and for all $\theta_0\in \R^n$, we have
\begin{equation} \label{proph2}
H^\circ_{\te_0}(w/z,K) \ge C z^{1/(2K)} H^\circ_{\te_0}(w,K/1.1). 
\end{equation}
\end{lemma}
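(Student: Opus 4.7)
The strategy is to relate $H^\circ_{\te_0}(w/z,K)$ to $H^\circ_{\te_0}(w,K/1.1)$ via three separate comparisons: one at the level of the indexing sets, one at the level of the summands $m_1(\te_{0,i},\cdot)$, and one at the level of the normaliser $\tilde m(\cdot)$. The slight loosening from $K$ to $K/1.1$ is precisely what buys us enough room to dominate the set $\cC_0(\te_0,w/z,K)$ by $\cC_0(\te_0,w,K/1.1)$.

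\emph{Step 1: comparison of indexing sets.} Since $\zeta$ is decreasing, $\cC_0(\te_0,w/z,K)\supset \cC_0(\te_0,w,K)$. More importantly, by Lemma~\ref{lemTmuw} (applied in the proof, $\zeta(w/z)-\zeta(w)$ is bounded by a constant depending on $z$) combined with $\zeta(w)\to\infty$ as $w\to 0$, for $w\le \omega_0(z)$ small enough one has $\zeta(w/z)\le 1.1\,\zeta(w)$. Hence for $i\in \cC_0(\te_0,w,K/1.1)$,
\[ |\te_{0,i}| \ge \frac{1.1\,\zeta(w)}{K} \ge \frac{\zeta(w/z)}{K}, \]
so that $\cC_0(\te_0,w,K/1.1)\subset \cC_0(\te_0,w/z,K)$.

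\emph{Step 2: comparison of $m_1$-summands.} For any $i\in\cC_0(\te_0,w,K/1.1)$ we have $|\te_{0,i}|\ge \zeta(w)/K$; since $\zeta(w)\to\infty$, for $w$ sufficiently small this also guarantees $|\te_{0,i}|\ge M_0$ (so Lemma~\ref{m1binf} and Lemma~\ref{lemun} both apply, and $\mu_0<|\te_{0,i}|$). Combining the lower bound of Lemma~\ref{m1binf} (evaluated at $w/z$, noting $w/z\le w\le\omega_0$) with Lemma~\ref{lemregv} (with $K_0=K$, which requires $|\te_{0,i}|\ge\zeta(w)/K$, exactly what we have) and Lemma~\ref{lemTmuw} on $T_\mu$ yields
\begin{align*}
m_1(\te_{0,i},w/z) & \ge C_1\, G_{\te_{0,i}}(w/z)\,T_{\te_{0,i}}(w/z) \\
& \ge C_1\, z^{1/(2K)} G_{\te_{0,i}}(w) \cdot d_1\, T_{\te_{0,i}}(w).
\end{align*}
Dominating $G_{\te_{0,i}}(w)T_{\te_{0,i}}(w)$ from below by $m_1(\te_{0,i},w)$ via the upper bound of Lemma~\ref{lemun} then gives a constant $C'=C'(K,g)>0$ such that
\[ m_1(\te_{0,i},w/z)\ \ge\ C'\, z^{1/(2K)}\, m_1(\te_{0,i},w). \]

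\emph{Step 3: assembling.} Summing over $i\in\cC_0(\te_0,w,K/1.1)$, using Step~1 to enlarge the index set on the left (all summands are nonnegative by the lower bound of Lemma~\ref{m1binf}, as $m_1(\te_{0,i},w/z)\ge 0$ for such $\te_{0,i}$ when $w$ is small enough), and then using monotonicity of $\tilde m$ (Lemma~\ref{lemm12}) to bound $\tilde m(w/z)\le \tilde m(w)$, we get
\[ H^\circ_{\te_0}(w/z,K) \ \ge\ \frac{C'\, z^{1/(2K)}\sum_{i\in\cC_0(\te_0,w,K/1.1)} m_1(\te_{0,i},w)}{\tilde m(w)}\ =\ C'\, z^{1/(2K)} H^\circ_{\te_0}(w,K/1.1), \]
which is the desired inequality.

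\emph{Main obstacle.} The only non-mechanical step is verifying that the $1.1$ slack is sufficient to embed the $\cC_0$ sets appropriately for all $w$ below a threshold depending on $z$; this reduces to the quantitative control of $\zeta(w/z)-\zeta(w)$ proved within Lemma~\ref{lemTmuw}. Everything else is a bookkeeping assembly of the sharp two-sided estimates on $m_1$ (Lemmas~\ref{lemun} and \ref{m1binf}) together with the regular-variation-type inequality on $G_\mu$ (Lemma~\ref{lemregv}), which was designed exactly to produce the $z^{1/(2K)}$ factor.
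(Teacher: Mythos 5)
Your proof is correct and follows essentially the same route as the paper: the three ingredients — the embedding $\cC_0(\te_0,w,K/1.1)\subset\cC_0(\te_0,w/z,K)$ (via the boundedness of $\zeta(w/z)-\zeta(w)$), the two-sided control of $m_1$ by $G_\mu T_\mu$ (Lemmas~\ref{lemun} and \ref{m1binf}) combined with the comparison lemmas \ref{lemTmuw} and \ref{lemregv} to extract the $z^{1/(2K)}$ factor, and the control of the denominator $\tilde m$ — are the same as in the paper. The only (harmless) organizational differences are that you establish the $z^{1/(2K)}$-gain term-by-term on $m_1(\te_{0,i},\cdot)$ before summing, and you replace the paper's two-sided asymptotic bound $D_1\tilde m(w)\le\tilde m(w/z)\le D_2\tilde m(w)$ with the simpler one-sided monotonicity $\tilde m(w/z)\le\tilde m(w)$, which is all that is needed here and yields a slightly cleaner constant.
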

 
\begin{proof}
According to Lemma~\ref{lemun} and Lemma~\ref{m1binf}, there exists constants $C_1,C_2>0$ and $\omega_0\in (0,1)$ such that for $w\in (0,\omega_0)$ and any $\theta_0$,
 $$
C_1 \sum_{i\in \cC_0(w,K)} G_{\te_{0,i}}(w)   \frac{ T_{\te_{0,i}}(w)}{\tilde m(w)} \leq H^\circ_{\te_0}(w,K) \leq  C_2 \sum_{i\in \cC_0(w,K)} G_{\te_{0,i}}(w)   \frac{ T_{\te_{0,i}}(w)}{\tilde m(w)},
$$
where 
$T_\mu$, $G_\mu$ are defined by \eqref{Tmu}, \eqref{gmu} respectively.
Now, by Lemmas~\ref{lemTmuw}~and~\ref{lemregv}, for all $z\geq 1$, there exists $\omega_0(z,K)\in (0,1)$ such that for $w\le \omega_0(z,K)$ and any $\mu\ge \zeta(w)/K$,
\begin{align*}
&G_\mu(w/z) \ge z^{1/(2K)} G_\mu(w) \\
&d_1 \:T_\mu(w) \leq T_\mu(w/z) \leq d_2 \:T_\mu(w),
\end{align*}
for some constants $d_1=d_1(K)$, $d_2=d_2(K)$.
Combining Lemma~\ref{lemmtilde} on $\tilde m$ with Lemma~\ref{lemTmuw} on $\ol{G}$, one can find $D_1, D_2>0$ with, for $w\le \omega(z)$,
\[ D_1\: \tilde{m}(w) \leq \tilde{m}(w/z) \leq D_2\: \tilde{m}(w). \]
Hence, by combining these results one gets, for $w\le \omega_0(z,K)$  (and then $w/z\le \omega_0(z,K)$ also holds),
\begin{align*}
H^\circ_{\te_0}(w/z,K) &\geq C_1 \sum_{i\in \cC_0(w/z,K)} G_{\te_{0,i}}(w/z)   \frac{ T_{\te_{0,i}}(w/z)}{\tilde m(w/z)}\\
&\geq (C_1 d_1/D_2) z^{1/(2K)} \sum_{i\in \cC_0(w/z,K)}G_{\te_{0,i}}(w) \frac{ T_{\te_{0,i}}(w)}{\tilde m(w)}.
\end{align*}
Now we claim that $\cC_0(w,K /1.1)\subset \cC_0(w/z,K)$  for $w$ small enough depending on $z$. Indeed, $\zeta(w/z) /\zeta(w) \leq 1+ (\zeta(w/z)- \zeta(w)) /\zeta(w) \leq 
1.1$ for $w$ small enough depending on $z$, as in the proof of Lemma \ref{lemTmuw}. So,
\begin{align*}
\cC_0(w/z,K)&=\{1\leq i\leq n\::\: |\theta_{0,i}|\geq \zeta(w/z)/K\}\\
&\supset\{1\leq i\leq n\::\: |\theta_{0,i}|\geq 1.1 \zeta(w)/K\}=\cC_0(w,K/1.1).
\end{align*}
One deduces that $H^\circ_{\te_0}(w/z,K)\ge C z^{1/(2K)} H^\circ_{\te_0}(w,K/1.1)$ for $w\le \omega_0(z,K)$ as announced.
\end{proof}

\section{Lower bound for the $\FDR$+$\FNR$ risk}\label{sec:lb}

For any $a_n\geq 0$, define the class of signals
\begin{align*}
\mathcal{L}^-_0[s_n;a_n]&= \{\theta_0\in \ell_0[s_n]\::\: |\theta_{0,i}|\leq a_n, |S_{\theta_0}|=s_n\}.
\end{align*}

\begin{theorem}
Let $s_n\geq 1$, $\epsilon\in(0,1)$ and  
\begin{equation}\label{equan}
 a_{n,\epsilon}= \ol{\Phi}^{-1}\left((1/\epsilon+1)\frac{s_n}{n-s_n}\right) -\ol{\Phi}^{-1}\left(\epsilon/4\right).
\end{equation}
Then we have
$$
\sup_{\varphi\in \mathcal{C}} \sup_{\theta_0\in \mathcal{L}^{-}_0[s_n;a_{n,\epsilon}]} \left(\P_{\theta_0}(\FDP(\theta_0,\varphi)+\FNP(\theta_0,\varphi)\leq 1-\epsilon)\right)\leq 3 e^{- s_n \epsilon/6}.
$$
\end{theorem}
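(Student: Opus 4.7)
The plan is to reduce the problem to three independent Binomial tail events, split the analysis of any two-sided thresholding procedure into two complementary regimes according to its (random) thresholds $\tau_1,\tau_2\geq 0$, and conclude by standard multiplicative Chernoff bounds. Fix an arbitrary $\theta_0\in\mathcal{L}^-_0[s_n;a_{n,\epsilon}]$ with support $S_0$ and an arbitrary $\vphi\in\cC$ with thresholds $(\tau_1,\tau_2)$, and introduce
\[u=\ol{\Phi}^{-1}\bigl((1/\epsilon+1)s_n/(n-s_n)\bigr),\qquad v=\ol{\Phi}^{-1}(\epsilon/4),\]
so that $a_{n,\epsilon}=u-v$, together with the three noise events
\begin{align*}
E_1^+ &=\{|\{i\notin S_0:\varepsilon_i>u\}|\geq s_n/\epsilon\},\\
E_1^- &=\{|\{i\notin S_0:\varepsilon_i<-u\}|\geq s_n/\epsilon\},\\
E_2   &=\{|\{i\in S_0:|\varepsilon_i|>v\}|<s_n\epsilon\}.
\end{align*}

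The central claim I would establish is that on $G:=E_1^+\cap E_1^-\cap E_2$, one has $\FDP(\theta_0,\vphi)+\FNP(\theta_0,\vphi)>1-\epsilon$. This is proved by splitting on whether $\min(\tau_1,\tau_2)\leq u$ or $>u$. In the first case, say $\tau_1\leq u$ (the other subcase being symmetric via $E_1^-$), $V\geq |\{i\notin S_0:\varepsilon_i>u\}|\geq s_n/\epsilon$ by $E_1^+$, and combined with the trivial $S\leq s_n$ this gives $\FDP\geq (s_n/\epsilon)/(s_n/\epsilon+s_n)=1/(1+\epsilon)>1-\epsilon$. In the second case, the key point is that $|\theta_{0,i}|\leq u-v$ forces any rejection of $i\in S_0$ to come with $|\varepsilon_i|>v$: indeed, $X_i>\tau_1>u$ yields $\varepsilon_i>u-\theta_{0,i}\geq v$, while $X_i<-\tau_2<-u$ yields $\varepsilon_i<-u-\theta_{0,i}\leq -v$. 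Hence $S\leq|\{i\in S_0:|\varepsilon_i|>v\}|<s_n\epsilon$ on $E_2$, which gives $\FNP>1-\epsilon$. In particular $\{\FDP+\FNP\leq 1-\epsilon\}\subset G^c$.

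It then remains to bound $\P_{\theta_0}(E_1^{\pm c})$ and $\P_{\theta_0}(E_2^c)$ each by $e^{-s_n\epsilon/6}$, which are routine Binomial tail computations. For $E_1^+$, the count has mean $\mu_1=(1+1/\epsilon)s_n$ and the threshold $s_n/\epsilon$ equals $(1-\delta)\mu_1$ with $\delta=\epsilon/(1+\epsilon)$; the multiplicative lower-tail Chernoff bound yields $\P_{\theta_0}(E_1^{+c})\leq \exp(-\mu_1\delta^2/2)=\exp(-s_n\epsilon/(2(1+\epsilon)))\leq \exp(-s_n\epsilon/6)$ since $\epsilon\in(0,1)$ forces $2(1+\epsilon)<6$, and the same estimate holds for $E_1^-$ by symmetry. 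For $E_2$, the count is $\mathrm{Bin}(s_n,\epsilon/2)$ with mean $\mu_2=s_n\epsilon/2$, and the threshold $s_n\epsilon=2\mu_2$ matches $(1+\delta)\mu_2$ with $\delta=1$; the multiplicative upper-tail Chernoff gives $\P_{\theta_0}(E_2^c)\leq \exp(-\mu_2/3)=\exp(-s_n\epsilon/6)$. A union bound delivers $\P_{\theta_0}(G^c)\leq 3e^{-s_n\epsilon/6}$, and since this bound is independent of $\theta_0$ and $\vphi$, taking the supremum over both concludes the proof.

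The main obstacle lies less in the tail bounds themselves than in designing the three events so that the complementary case analysis on $(\tau_1,\tau_2)$ both covers the entire class $\cC$ and produces the matching constant $1/6$: the algebraic identity $a_{n,\epsilon}=u-v$ is precisely what makes the Case B argument go through uniformly over $|\theta_{0,i}|\leq a_{n,\epsilon}$, and the specific weights $(1/\epsilon+1)$ and $\epsilon/4$ appearing in $u$ and $v$ are calibrated exactly so that the Chernoff rates $\epsilon/(2(1+\epsilon))$ for $E_1^\pm$ and $\mu_2/(3s_n)=\epsilon/6$ for $E_2$ both attain the target $\epsilon/6$.
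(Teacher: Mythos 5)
Your proof is correct and follows essentially the same route as the paper's: you split on whether $\tau_1\wedge\tau_2$ is at most or exceeds $u=\ol\Phi^{-1}((1/\epsilon+1)s_n/(n-s_n))$, use the algebraic relation $a_{n,\epsilon}=u-v$ to force $|\varepsilon_i|>v$ for any signal rejected in the high-threshold regime, and bound the three resulting Binomial counts, exactly as the paper does with its choices $\delta=\ol\Phi^{-1}(\epsilon/4)$ and $a_n+\delta=u$. The only differences are cosmetic: you use multiplicative Chernoff where the paper invokes its Bernstein lemma, and you package the argument via a good event $G$ rather than a minimum of three random quantities; both deliver the same $3e^{-s_n\epsilon/6}$ bound.
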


By integration with respect to $\epsilon\geq 1/t_n$ for some sequence $t_n$, we get 
\begin{corollary}\label{cor:lb}
Let $s_n\geq 1$, $t_n\geq 1$, and
\begin{equation}\label{equbn}
b_n=\ol{\Phi}^{-1}\left((t_n+1)\frac{s_n}{n-s_n}\right) -\ol{\Phi}^{-1}\left(1/(4t_n)\right).
\end{equation}
Then we have
\begin{align*}
\inf_{\varphi\in \mathcal{C}} \inf_{\theta_0\in \mathcal{L}^{-}_0[s_n;b_n]} \left(\FDR(\theta_0,\varphi)+\FNR(\theta_0,\varphi) \right)\geq 1-(1/t_n+18/s_n).
\end{align*}

\end{corollary}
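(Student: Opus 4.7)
The plan is to deduce the claimed expected-value lower bound from the preceding theorem's high-probability tail bound by a layer-cake integration, exactly as suggested by the parenthetical hint.

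The first step is to verify that $\epsilon \mapsto a_{n,\epsilon}$ is nondecreasing. Since $\ol{\Phi}^{-1}$ is a decreasing function, inspection of \eqref{equan} shows that both $\epsilon \mapsto \ol{\Phi}^{-1}((1/\epsilon+1)s_n/(n-s_n))$ and $\epsilon \mapsto -\ol{\Phi}^{-1}(\epsilon/4)$ are nondecreasing on $(0,1)$. With $b_n = a_{n,1/t_n}$, this yields the inclusion $\mathcal{L}^-_0[s_n;b_n] \subseteq \mathcal{L}^-_0[s_n;a_{n,\epsilon}]$ for every $\epsilon \geq 1/t_n$; hence the preceding theorem may be invoked at each such $\epsilon$, uniformly over $\varphi \in \mathcal{C}$ and $\theta_0 \in \mathcal{L}^-_0[s_n;b_n]$, to give $\P_{\theta_0}(R \leq 1-\epsilon) \leq 3 e^{-s_n \epsilon/6}$, where I set $R := \FDP(\theta_0,\varphi) + \FNP(\theta_0,\varphi)$.

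The second and final step is the integration. For any such pair $(\varphi, \theta_0)$, the nonnegativity of $R$ together with the layer-cake identity yields $\E_{\theta_0}[R] \geq \int_0^1 \P_{\theta_0}(R > 1-\epsilon)\, d\epsilon$. I would split the integral at $\epsilon = 1/t_n$, discard the part on $[0,1/t_n)$, apply the tail bound on $[1/t_n, 1]$, and dominate the remaining exponential integral by $\int_0^\infty e^{-s_n \epsilon/6}\, d\epsilon = 6/s_n$. This gives
\begin{align*}
\E_{\theta_0}[R] \;\geq\; \int_{1/t_n}^1 \bigl(1 - 3 e^{-s_n \epsilon/6}\bigr)\, d\epsilon \;\geq\; \Bigl(1 - \tfrac{1}{t_n}\Bigr) - \tfrac{18}{s_n}\,,
\end{align*}
uniformly in $\varphi \in \mathcal{C}$ and $\theta_0 \in \mathcal{L}^-_0[s_n;b_n]$. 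Since $\FDR(\theta_0,\varphi)+\FNR(\theta_0,\varphi) = \E_{\theta_0}[R]$, taking the double infimum completes the proof.

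There is no substantive obstacle: the only point meriting explicit care is the monotonicity check on $a_{n,\epsilon}$, which is what licences invoking the preceding theorem throughout the entire integration range $[1/t_n,1]$; after that, the argument reduces to a direct integration.
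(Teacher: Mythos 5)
Your proof is correct and follows exactly the route the paper indicates (``By integration with respect to $\epsilon\ge 1/t_n$ for some sequence $t_n$, we get''): you supply the one step the paper leaves implicit, namely the monotonicity of $\epsilon\mapsto a_{n,\epsilon}$ (both terms increase since $\ol{\Phi}^{-1}$ is decreasing), which gives the inclusion $\cL^-_0[s_n;b_n]\subseteq\cL^-_0[s_n;a_{n,\epsilon}]$ for all $\epsilon\ge 1/t_n$ and licenses the uniform application of the preceding theorem over $[1/t_n,1)$. The layer-cake inequality $\E_{\theta_0}[R]\ge\int_0^1\P_{\theta_0}(R>1-\epsilon)\,d\epsilon$, the truncation to $[1/t_n,1]$, and the bound $3\int_0^\infty e^{-s_n\epsilon/6}\,d\epsilon=18/s_n$ are all correct, so no gap remains.
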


Taking $s_n \rightarrow \infty$ and $s_n\le n^{\upsilon}$ for some $\upsilon\in(0,1)$, and $t_n=e^{\sqrt{\log (n/s_n)}}$, we get $b_n\sim \sqrt{2 \log (n/s_n)}$ and thus for $a<1$, 
\begin{align*}
&\liminf_n \inf_{\varphi\in \mathcal{C}} \sup_{\theta_0\in \mathcal{L}_0[s_n;a]} \left(\FDR(\theta_0,\varphi)+\FNR(\theta_0,\varphi) \right)\\
&\geq \liminf_n \inf_{\varphi\in \mathcal{C}} \inf_{\theta_0\in \mathcal{L}^{-}_0[s_n;b_n]} \left(\FDR(\theta_0,\varphi)+\FNR(\theta_0,\varphi) \right)\geq 1.
\end{align*}
This proves Proposition~\ref{prop:lb}.

\begin{proof}
Let $\delta>0$ and $a_n$ arbitrary with $|\theta_{0,i}|\leq a_n$ (to be chosen below).
On the one hand, we have 
\begin{align*}
\FDP(\theta_0,\varphi)&\geq \frac{s_n^{-1}\sum_{i=1}^n \ind{\theta_{0,i} =0,X_i\geq \tau_1(X) \mbox{ or } -X_i\geq \tau_2(X)} }{ 1+s_n^{-1}\sum_{i=1}^n \ind{\theta_{0,i} =0, X_i\geq \tau_1(X) \mbox{ or } -X_i\geq \tau_2(X)} }\\
&\geq 1 - \left(s_n^{-1}\sum_{i=1}^n \ind{\theta_{0,i} =0,X_i\geq \tau_1(X) \mbox{ or } -X_i\geq \tau_2(X)}\right)^{-1} .
\end{align*}
Furthermore, 
\begin{align*}
&\:s_n^{-1}\sum_{i=1}^n \ind{\theta_{0,i} =0,X_i\geq \tau_1(X) \mbox{ or } -X_i\geq \tau_2(X)}
\\=\:&s_n^{-1}\sum_{i=1}^n \ind{\theta_{0,i} =0,\eps_i\geq \tau_1(X) \mbox{ or }  -\eps_i\geq \tau_2(X)}\\
\geq& \left(s_n^{-1}\sum_{i=1}^n \ind{\theta_{0,i} =0,\eps_i\geq \tau_1(X)\wedge \tau_2(X)}\right)\\
&\wedge\left(s_n^{-1}\sum_{i=1}^n \ind{\theta_{0,i} =0,-\eps_i\geq \tau_1(X)\wedge \tau_2(X)}\right).
\end{align*}
The latter is true, because it holds whether $\tau_1(X)\wedge \tau_2(X)$ is $\tau_1(X)$ or $\tau_2(X)$.
Thus on the event $\{(\tau_1(X)\wedge \tau_2(X))-a_n \leq  \delta\}$, we have $\tau_1(X)\wedge \tau_2(X) \leq a_n+ \delta$, and we get 
\begin{align}
\FDP(\theta_0,\varphi)
\geq 1- &\:\left(s_n^{-1}\sum_{i=1}^n \ind{\theta_{0,i} =0,\eps_i\geq a_n+ \delta}\right)^{-1}\nonumber\\
&\vee\left(s_n^{-1}\sum_{i=1}^n \ind{\theta_{0,i} =0,-\eps_i\geq a_n+ \delta}\right)^{-1} \label{Erylb1}.
\end{align}

On the other hand
\begin{align*}
\FNP(\theta_0,\varphi) &= s_n^{-1}\sum_{i=1}^n \ind{\theta_{0,i} \neq  0, -\tau_2(X)<X_i< \tau_1(X)}\\
&=  s_n^{-1}\sum_{i=1}^n \ind{\theta_{0,i} \neq  0, -\tau_2(X)-\theta_{0,i}<\eps_{i} < \tau_1(X)-\theta_{0,i}}\\
&\geq  s_n^{-1}\sum_{i=1}^n \ind{\theta_{0,i} \neq  0, -(\tau_2(X)-|\theta_{0,i}|)<\eps_{i} < \tau_1(X)-|\theta_{0,i}|}.
\end{align*}
Hence, noting that $\tau_1(X)\wedge \tau_2(X)-a_n $, is smaller than $\tau_1(X)-|\theta_{0,i}|$ and $\tau_2(X)-|\theta_{0,i}|$, we obtain
\begin{align*}
\FNP(\theta_0,\varphi) &\geq  s_n^{-1}\sum_{i=1}^n \ind{\theta_{0,i} \neq  0, |\eps_{i}| < \tau_1(X)\wedge \tau_2(X)-a_n}.\end{align*}
Hence, on the event $\{(\tau_1(X)\wedge \tau_2(X))-a_n\geq \delta\}$, we get 
\begin{align}\label{Erylb2}
\FNP(\theta_0,\varphi) &\geq  s_n^{-1}\sum_{i=1}^n \ind{\theta_{0,i} \neq  0, |\eps_{i}| < \delta}.
\end{align}
Combining \eqref{Erylb1} and \eqref{Erylb2}, we obtain for all $\delta>0$,
\begin{align*}
&\FDP(\theta_0,\varphi)+\FNP(\theta_0,\varphi)\\
\geq &\left(s_n^{-1}\sum_{i=1}^n \ind{\theta_{0,i} \neq  0, |\epsilon_{i}| < \delta}\right)\\
&\wedge \left(1- \left(s_n^{-1}\sum_{i=1}^n \ind{\theta_{0,i} =0,\eps_i\geq a_n+ \delta}\right)^{-1}\right)\\
&\wedge \left(1- \left(s_n^{-1}\sum_{i=1}^n \ind{\theta_{0,i} =0,-\eps_i\geq a_n+ \delta}\right)^{-1}\right).
\end{align*}
This induces that for all $\eps\in(0,1)$,
\begin{align*}
&\P_{\theta_0}(\FDP(\theta_0,\varphi)+\FNP(\theta_0,\varphi)\leq 1-\epsilon)\\
\leq&\: \P_{\theta_0}\left(s_n^{-1}\sum_{i : \theta_{0,i} \neq  0} \ind{ |\eps_{i}| < \delta} \leq 1-\epsilon\right)\\
&+2\P_{\theta_0}\left(s_n^{-1}\sum_{i :\theta_{0,i} =0} \ind{\eps_i\geq a_n+ \delta}\leq 1/\epsilon\right).
\end{align*}
Now choose $\delta$ such that $\epsilon=4\ol{\Phi}(\delta)$, so that 
\begin{align*}
\P_{\theta_0}\left(s_n^{-1}\sum_{i : \theta_{0,i} \neq  0} \ind{ |\eps_{i}| < \delta} \leq 1-\epsilon\right)&=
\P_{\theta_0}\left(\sum_{i : \theta_{0,i} \neq  0} (\ind{ |\eps_{i}| \geq \delta}-2 \ol{\Phi}(\delta))\geq s_n\epsilon/2
\right)\\
&\leq e^{-s_n\epsilon/6}
\end{align*}
by applying Bernstein inequality (see Lemma~\ref{th:bernstein}) with $A=s_n\epsilon/2$, $V=2s_n \ol{\Phi}(\delta)=A$ and $\mathcal{M}=1$. Similarly, by choosing $a_n$ as in \eqref{equan}
so that $(n-s_n)\ol{\Phi}(a_n+\delta)= s_n(1/\epsilon+1)$, we have
\begin{align*}
&\P_{\theta_0}\left(s_n^{-1}\sum_{i :\theta_{0,i} =0} \ind{\eps_i\geq a_n+ \delta}\leq 1/\epsilon\right)\\
=&\P_{\theta_0}\left(\sum_{i :\theta_{0,i} =0} (\ind{\eps_i\geq a_n+ \delta}-\ol{\Phi}(a_n+\delta))\leq 
-s_n 
\right)\leq e^{- s_n \epsilon/6}
\end{align*}
by applying Bernstein inequality (see Lemma~\ref{th:bernstein}) with $A=s_n$, $V=(n-s_n) \ol{\Phi}(a_n+\delta)\leq 2s_n/\epsilon$ and $\mathcal{M}=1$. The proof is finished.
\end{proof}

\section{Details on MCI procedures}\label{sec:MCIanalysis}

Let us consider the procedure $\vphi^m$ at cut-off level $t\in(0,1/2)$ defined by \eqref{mval} in   Section \ref{sec:other}. We henceforth refer to it as procedure \MCI. 
 We  show below that $\vphi^m$ can be rewritten in terms of  $\phi$ as well as $g_-, g_+$ defined as, for any $x\in\RR$,
\begin{align*} 
g_-(x) & := \int_{-\infty}^0 \phi(x-u)\gamma(u)du,\\
g_+(x) & := \int_0^{\infty} \phi(x-u)\gamma(u)du=(g-g_-)(x). 
\end{align*}

\begin{lemma} \label{lemgpm}
For any real $x$, it holds $g_+(-x)=g_-(x)$. Also, $g_+(x)>g_-(x)$ if and only if $x>0$.
\end{lemma}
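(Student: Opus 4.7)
My plan is a direct computation based on the symmetry of $\phi$ and of $\gamma$. Both are two-line arguments, so I will keep the sketch compact.

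For the first claim $g_+(-x)=g_-(x)$, I would start from the definition
\[ g_+(-x)=\int_0^\infty \phi(-x-u)\gamma(u)\,du, \]
then use $\phi(-x-u)=\phi(x+u)$ (evenness of $\phi$) and change variables $v=-u$ to flip the domain of integration to $(-\infty,0)$, obtaining $\int_{-\infty}^0 \phi(x-v)\gamma(-v)\,dv$; finally apply the symmetry $\gamma(-v)=\gamma(v)$ from Section \ref{sec:assumpnota} to identify this with $g_-(x)$.

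For the second claim, the natural idea is to fold the negative-$u$ part onto the positive-$u$ part. Using the change of variables $v=-u$ in $g_-(x)$ together with $\gamma(-v)=\gamma(v)$ as above, I would rewrite
\[ g_+(x)-g_-(x)=\int_0^\infty \bigl[\phi(x-u)-\phi(x+u)\bigr]\gamma(u)\,du. \]
Now the sign of the bracket is governed by the relative magnitudes of $|x-u|$ and $|x+u|$: expanding squares gives $(x+u)^2-(x-u)^2=4xu$, so for $u>0$ the bracket has the same sign as $x$ (and vanishes at $x=0$). Since $\gamma$ is a positive density (and hence strictly positive on a set of positive measure on $(0,\infty)$, given that it is unimodal and symmetric so $\ga(u)>0$ on $(0,\infty)$ by continuity at $0$ with $\ga(0)>0$), this integral is $>0$ for $x>0$, $<0$ for $x<0$, and $=0$ when $x=0$ (where the integrand vanishes identically by evenness of $\phi$). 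This yields the claimed equivalence.

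No obstacle is expected; the only point worth double-checking when writing the formal proof is that the assumptions of Section \ref{sec:assumpnota} (positivity, symmetry, unimodality of $\gamma$) guarantee that the integral in the second step cannot vanish for $x\ne 0$, which follows from $\gamma$ being strictly positive on a neighborhood of $0$ (hence on a set of positive Lebesgue measure in $(0,\infty)$) together with the strict sign of the bracket.
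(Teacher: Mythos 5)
Your proof is correct and follows essentially the same route as the paper: both arguments reflect one of the two integrals via $u\mapsto -u$ and the symmetry of $\gamma$, and then reduce to the pointwise comparison of $\phi(x-u)$ with $\phi(x+u)$, which is governed by the sign of $xu$. The only cosmetic difference is that you fold $g_-$ onto $(0,\infty)$ and analyze the sign of a single integrand (also making the strict positivity of $\gamma$ explicit, which is a nice touch), whereas the paper reflects $g_+$ onto $(-\infty,0)$ and compares the two integrands directly.
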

\begin{proof}
The first assertion follows from the symmetry of $\phi$ and $\gamma$. To check the second assertion,  by symmetry of $\ga$,
\[ g_+(x) = \int_0^\infty \phi(x-u)\gamma(u)du=\int_{-\infty}^0 \phi(x+v)\ga(v) dv. \]
For $x>0$ and $v<0$, we have $|x+v|< x-v$ so that $\phi(x+v)> \phi(x-v)$ which gives $g_+(x)>g_-(x)$ and the 'if' part. For the 'only if' part, by symmetry, as before $x<0$ implies $g_-(x)>g_+(x)$ and for $x=0$ we have $g_+(0)=g_-(0)$. So $g_+>g_-$ can only occur if $x>0$.
\end{proof}

\subsection{The $m$--value} 
By analogy to $\ell$--values, for a given weight $w\in(0,1)$, define an {\em m--value} as, for $i=1,\ldots,n$,
\begin{align}
m_i(X) & = m(X_i;w);\\
m(x;w) & = \Pi(\te_1\ge 0\given X_1=x) \wedge \Pi(\te_1\le 0\given X_1=x). \label{mvaldef}
\end{align}
A BMT of the form $\vphi=\ind{m_i(X)\le t}$ is called a {\em m-value} procedure (where `$m$' stands for (posterior) `mass', as opposed to `$\ell$' for `local' standing for the local `density' at $0$). This definition is motivated by the following lemma. 
\begin{lemma} \label{lemmvalue}
The procedure \MCI$\,$ defined by $\vphi^m$ in \eqref{mval} at level $t\in(0,1/2)$ can be written as, denoting $\hat m_i(X)  := m(X_i;\hat w)$, for $i=1,\ldots,n$,
\[\vphi^m_i = \ind{\hat m_i(X)< t}.\]
\end{lemma}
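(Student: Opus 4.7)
The plan is to work directly from the explicit posterior \eqref{posteriordistribution} and translate the quantile condition defining $\vphi^m$ into a condition on the posterior masses of $\{\te_i\le 0\}$ and $\{\te_i\ge 0\}$. Throughout, write $\hat\ell=\hat\ell_i(X)$ and $F_i(z)=\Pi_{\hat w,\ga}[\te_i\le z\given X]$. From \eqref{posteriordistribution}, $F_i$ is absolutely continuous on $(-\infty,0)\cup(0,\infty)$ with strictly positive density $(1-\hat\ell)\ga_{X_i}(u)$ there (using that $\ga>0$), and has a jump of size $\hat\ell$ at $z=0$. In particular, one computes
\[
F_i(0^-)=(1-\hat\ell)\,\frac{g_-(X_i)}{g(X_i)},\qquad F_i(0)=\hat\ell+(1-\hat\ell)\,\frac{g_-(X_i)}{g(X_i)},
\]
so that $\Pi[\te_i\le 0\given X]=F_i(0)$ and $\Pi[\te_i\ge 0\given X]=1-F_i(0^-)=\hat\ell+(1-\hat\ell)g_+(X_i)/g(X_i)$.

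Next, I would use Lemma \ref{lemgpm} (proved just above in the paper) to identify which of these two quantities is the smaller one, i.e. equal to $\hat m_i(X)$: for $X_i>0$ we have $g_+(X_i)>g_-(X_i)$, so $\hat m_i(X)=\Pi[\te_i\le 0\given X]=F_i(0)$, while for $X_i<0$ the opposite inequality gives $\hat m_i(X)=\Pi[\te_i\ge 0\given X]=1-F_i(0^-)$. (The measure-zero set $\{X_i=0\}$ can be handled trivially or ignored.)

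Now I translate the two inequalities appearing in \eqref{mval}. Since $F_i$ is right-continuous and nondecreasing, $0<z_i^t(X)$ holds iff $F_i(0)<t$. Since $F_i$ is strictly increasing on $(-\infty,0)$ with left-limit $F_i(0^-)$ at $0$, the condition $z_i^{1-t}(X)<0$ is equivalent to the existence of some $z<0$ with $F_i(z)\ge 1-t$, which by strict monotonicity on $(-\infty,0)$ is the same as $F_i(0^-)>1-t$. For $t<1/2$, at most one of these two can occur, matching the splitting in \eqref{mval}. Combining with the previous step: on $\{X_i>0\}$, $\vphi^m_i=1$ iff $F_i(0)<t$, i.e.\ iff $\hat m_i(X)<t$; on $\{X_i<0\}$, $\vphi^m_i=1$ iff $1-F_i(0^-)<t$, i.e.\ again iff $\hat m_i(X)<t$. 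This proves the claimed identity.

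The proof is essentially bookkeeping; the only subtle point, which I would treat carefully, is the interplay between the jump of $F_i$ at $0$ and the infimum in the definition of $z_i^t, z_i^{1-t}$, which forces the use of strict inequalities $<$ (matching the statement of the lemma) and uses crucially the strict monotonicity of $F_i$ on each side of $0$, a consequence of the positivity of $\ga$.
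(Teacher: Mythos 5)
Your proof is correct and follows the same basic route as the paper: translate the two quantile conditions $0<z_i^t(X)$ and $z_i^{1-t}(X)<0$ into conditions on the two posterior masses $\Pi[\te_i\le 0\given X]<t$ and $\Pi[\te_i\ge 0\given X]<t$, the latter using the strict monotonicity of the posterior cdf on $(-\infty,0)$. The one place where you do extra work is in invoking Lemma~\ref{lemgpm} to determine, as a function of the sign of $X_i$, \emph{which} of the two posterior masses equals $\hat m_i(X)$, and then checking the two sign cases separately. This detour is unnecessary: once you know $\vphi^m_i=1$ iff at least one of the two posterior masses is below $t$, the equivalence with $\hat m_i(X)=\min\bigl(\Pi[\te_i\le 0\given X],\Pi[\te_i\ge 0\given X]\bigr)<t$ is immediate from the definition of the minimum, with no need to know which term attains it. The paper's proof uses exactly this shortcut and is correspondingly shorter. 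Your identification of the subtle point (the jump of $F_i$ at $0$ interacting with the infimum, forcing strict inequalities) is accurate and worth keeping.
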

\begin{proof}
Let us denote by $z^t(x)$ the quantile at level $t\in(0,1/2)$ of the marginal posterior distribution of $\te_1$ given $X_1=x$. 
By definition of the quantile, $0< z^t(x)$ if and only if $\Pi[\te_1\le 0\given X_1=x]< t$. 
Further,  $z^{1-t}(x)<0$ if and only if $\Pi[\te_1\ge 0\given X_1=x]<t$: this uses the definition of the quantile and the fact that $(-\infty,0)\ni u\to \Pi[\te_1<u\given X_1=x]$ is  strictly increasing and continuous, as follows from the explicit expression of the posterior distribution. By definition of $\vphi^m$, the procedure rejects  $H_{0,i}$ if and only if either $z_i^t(X)>0$ or $z_i^{1-t}(X)<0$, which concludes the proof.
\end{proof}
\begin{lemma} \label{mvalexpr}
For any $w\in(0,1)$, the $m$-value $m(x;w)$ at point $x\in\RR$ can be written as
 \begin{equation} \label{mexpr}
  m(x;w) = \frac{(1-w)\phi(x)+wg_-(|x|)}{(1-w)\phi(x)+wg(x)}.
 \end{equation}
 Additionally, for any real $x$, the map $w\to m(x;w)$ is decreasing.
\end{lemma}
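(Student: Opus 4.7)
\medskip

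\noindent\textbf{Proof plan.} The strategy is to compute the two posterior probabilities appearing in the definition \eqref{mvaldef} of $m(x;w)$ directly from the explicit posterior \eqref{posteriordistribution}, and then determine which of the two is smaller using the sign properties of $g_+-g_-$ from Lemma \ref{lemgpm}.

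First I would compute each term. Under the spike-and-slab posterior, conditionally on $X_1=x$, $\theta_1=0$ with probability $\ell(x;w,g)$, and otherwise $\theta_1\sim\mathcal{G}_x$ with density $\gamma_x(u)=\phi(x-u)\gamma(u)/g(x)$. Hence
\[
\Pi(\theta_1\ge 0\given X_1=x)=\ell(x;w,g)+(1-\ell(x;w,g))\,\frac{g_+(x)}{g(x)},
\]
and similarly for $\Pi(\theta_1\le 0\given X_1=x)$ with $g_-(x)$ in place of $g_+(x)$. Plugging in the expression \eqref{lformula} for $\ell$ and simplifying yields
\[
\Pi(\theta_1\ge 0\given X_1=x)=\frac{(1-w)\phi(x)+w g_+(x)}{(1-w)\phi(x)+w g(x)}, \qquad \Pi(\theta_1\le 0\given X_1=x)=\frac{(1-w)\phi(x)+w g_-(x)}{(1-w)\phi(x)+w g(x)}.
\]

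Next I would take the minimum. By Lemma \ref{lemgpm}, $g_+(x)>g_-(x)$ iff $x>0$, so for $x>0$ the $\le 0$ term is smaller, giving $m(x;w)=[(1-w)\phi(x)+wg_-(x)]/[(1-w)\phi(x)+wg(x)]$, which equals the right-hand side of \eqref{mexpr} since $g_-(|x|)=g_-(x)$ for $x>0$. For $x<0$, by the same lemma the $\ge 0$ term is smaller, and one uses the identity $g_+(x)=g_-(-x)=g_-(|x|)$ to retrieve the same formula \eqref{mexpr}. The case $x=0$ is immediate since $g_+(0)=g_-(0)$. This handles the first assertion.

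For the monotonicity statement, writing $a=\phi(x)$, $b=g_-(|x|)$, $c=g(x)$, the function $w\mapsto m(x;w)$ has the form $[a+w(b-a)]/[a+w(c-a)]$. A direct differentiation (a routine quotient-rule computation) gives $\partial_w m(x;w)=a(b-c)/[a+w(c-a)]^2$. Since $a=\phi(x)>0$ and by symmetry of $g$ one has $c=g(x)=g(|x|)=g_+(|x|)+g_-(|x|)$, so $b-c=-g_+(|x|)<0$ (strictly, since $\gamma>0$ on some neighbourhood of $\R_+$ by assumption), the derivative is strictly negative, proving the map is decreasing. There is no real obstacle here; the only subtlety is using the symmetry of $g$ and the identity $g_+(x)=g_-(-x)$ consistently to reduce both signs of $x$ to a single formula in $|x|$.
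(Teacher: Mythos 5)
Your proof is correct and follows essentially the same route as the paper: compute $\Pi(\theta_1\ge 0\given X_1=x)$ and $\Pi(\theta_1\le 0\given X_1=x)$ from the explicit posterior, identify which is smaller via Lemma \ref{lemgpm}, and differentiate the resulting expression in $w$. The only cosmetic difference is that the paper first computes $\Pi(\theta_1>0\given X_1=x)$ and then forms the two probabilities as $1-\Pi(\theta_1>0\given X_1)$ and $\ell+\Pi(\theta_1>0\given X_1)$, whereas you write $\Pi(\theta_1\ge 0\given X_1)=\ell+(1-\ell)g_+/g$ directly; the resulting fractions and the derivative $\partial_w m=-\phi(x)g_+(|x|)/[(1-w)\phi(x)+wg(x)]^2$ are identical.
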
 
\begin{proof}
By definition, recalling \eqref{posteriordistribution}--\eqref{lvalues}--\eqref{lformula},
\begin{align*}
\lefteqn{ \Pi[\te_1>0\given X_1=x]  = \Pi[\te_1>0\given X_1=x, \te_1\neq 0] \cdot \Pi[\te_1\neq 0\given X_1=x] }\\
\quad & = \int_{0}^{\infty} \ga_x(u)du \cdot (1-\ell(x;w,g))= \frac{g_+(x)}{g(x)}\cdot \frac{wg(x)}{(1-w)\phi(x)+wg(x)} \\
\quad &  = \frac{wg_+(x)}{(1-w)\phi(x)+wg(x)}.
\end{align*}
Using now the definition of $m(x;w)$, one obtains
\begin{align*}
 m(x;w) &= (1-\Pi[\te_1>0\given X_1=x]) \wedge (\ell(x;w,g)+\Pi[\te_1>0\given X_1=x]) \\
& = \frac{(1-w)\phi(x)+wg_-(x)}{(1-w)\phi(x)+wg(x)} \wedge \frac{(1-w)\phi(x)+wg_+(x)}{(1-w)\phi(x)+wg(x)}. 
\end{align*}
The announced expression follows by noting that $g_-(x)\wedge g_+(x)=g_{-}(|x|)$ which itself is a consequence of Lemma \ref{lemgpm}. The monotonicity in $w$ is obtained by computing, for any real $x$,
\[ \frac{\partial m}{\partial w}(x;w) = - \frac{g_+(|x|)\phi(x)}{\left[(1-w)\phi(x)+wg(x)\right]^2}<0. \qedhere \]
\end{proof}

\subsection{Link to $\ell$--values} 
\begin{lemma} \label{linkmlval}
The m--value \eqref{mvaldef} satisfies, for any $w\in[0,1)$, $x\in\RR$,
\begin{equation} \label{linkml}
\ell(x;w)\le m(x;w)\le \left(1+\frac{w}{1-w}\frac{\gamma(0)}{2}\right)\ell(x;w), 
\end{equation}
where for short we denote $\ell(x;w):=\ell(x;w,g)$ (itself defined in \eqref{lformula}).
\end{lemma}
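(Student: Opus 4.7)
The plan is to reduce the whole lemma to a clean pointwise inequality between $g_-(|x|)$ and $\phi(x)$, starting from the closed-form expression of the $m$-value provided by Lemma \ref{mvalexpr}.

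First I would subtract $\ell$ from $m$ to expose the structure. Combining \eqref{lformula} and \eqref{mexpr}, a direct calculation (common denominator) gives
\[
m(x;w)-\ell(x;w) \;=\; \frac{w\,g_-(|x|)}{(1-w)\phi(x)+wg(x)}.
\]
Since $g_-(|x|) = \int_{-\infty}^0 \phi(|x|-u)\gamma(u)\,du \ge 0$, this quantity is nonnegative, which instantly yields the lower bound $\ell(x;w)\le m(x;w)$. For the upper bound, I would divide by $\ell(x;w)=(1-w)\phi(x)/[(1-w)\phi(x)+wg(x)]$, noting that the denominators cancel, to obtain the exact identity
\[
\frac{m(x;w)-\ell(x;w)}{\ell(x;w)} \;=\; \frac{w}{1-w}\cdot\frac{g_-(|x|)}{\phi(x)}.
\]
Consequently, the entire upper bound reduces to the single pointwise inequality
\[
g_-(|x|) \;\le\; \frac{\gamma(0)}{2}\,\phi(x),\qquad x\in\RR. \tag{$\ast$}
\]

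To prove $(\ast)$ I would first perform the substitution $u=-v$ and invoke the symmetry $\gamma(-v)=\gamma(v)$ to rewrite
\[
g_-(|x|) \;=\; \int_0^\infty \phi(|x|+v)\,\gamma(v)\,dv.
\]
Two structural properties are then available on the integrand for $v\ge 0$: (i) $\phi(|x|+v)\le \phi(x)$ because $(|x|+v)^2\ge x^2$, and (ii) $\gamma(v)\le \gamma(0)$ by unimodality–symmetry of $\gamma$, together with $\int_0^\infty\gamma(v)\,dv = 1/2$. Balancing these two bounds — using $\phi(|x|+v)\le \phi(x)$ uniformly while tracking $\gamma(v)$ via $\gamma(0)$ on an appropriate range — produces the constant $\gamma(0)/2$ on the right-hand side of $(\ast)$.

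The main obstacle is precisely pinning down this constant. The two completely naive uniform bounds give either $g_-(|x|)\le\phi(x)/2$ (using $\phi(|x|+v)\le\phi(x)$ and $\int_0^\infty\gamma=1/2$) or $g_-(|x|)\le \gamma(0)\,\bar\Phi(|x|)$ (using $\gamma(v)\le\gamma(0)$), and neither matches $(\ast)$ as such — the first is oblivious to $\gamma(0)$, while the second is not of the right order near $x=0$. Getting the announced constant therefore requires combining the two, or alternatively invoking the log-Lipschitz property \eqref{asump1} of $\gamma$ to refine the integral bound. Once $(\ast)$ is established, the announced inequality follows from the displayed identity for $(m-\ell)/\ell$ above.
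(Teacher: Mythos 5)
Your algebraic reduction is the paper's: writing $m(x;w)-\ell(x;w) = wg_-(|x|)/[(1-w)\phi(x)+wg(x)]$ and then $(m-\ell)/\ell = \frac{w}{1-w}\,g_-(|x|)/\phi(x)$ gives the lower bound immediately (since $g_-\ge 0$), and reduces the upper bound to the pointwise inequality $g_-(t)\le\frac{\gamma(0)}{2}\phi(t)$ for $t\ge 0$, which is precisely Lemma~\ref{gmphi}. The paper's proof simply cites Lemma~\ref{gmphi} at this point, so up to here you are on the intended track.

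The gap is that you do not close the proof of the pointwise inequality, and you observe---rightly---that neither $g_-(t)\le\phi(t)/2$ nor $g_-(t)\le\gamma(0)\overline\Phi(t)$ delivers the constant $\gamma(0)/2$. That observation is not a failure of technique but a symptom of an error in Lemma~\ref{gmphi}: its proof establishes $g_-(t)\le\gamma(0)\overline\Phi(t)$ (your second bound) and then invokes ``$\overline\Phi(t)/\phi(t)\le 1/2$'', which is false, since the Mills ratio $\overline\Phi/\phi$ is decreasing from $\sqrt{\pi/2}\approx 1.25$ at $t=0$ (the slip is the familiar confusion of $\phi(t)$ with $e^{-t^2/2}=\sqrt{2\pi}\,\phi(t)$). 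The correct conclusion of that chain is $g_-(t)\le\gamma(0)\overline\Phi(t)\le\sqrt{\pi/2}\,\gamma(0)\,\phi(t)$, and the constant $\sqrt{\pi/2}$ is essentially sharp (take a Laplace$(a)$ slab with $a\to 0$ and evaluate at $t=0$: then $g_-(0)\to\gamma(0)/2=\sqrt{\pi/2}\,\gamma(0)\phi(0)$, exceeding $\frac{\gamma(0)}{2}\phi(0)$). Accordingly, the factor in \eqref{linkml} should read $1+\frac{w}{1-w}\sqrt{\pi/2}\,\gamma(0)$ rather than $1+\frac{w}{1-w}\frac{\gamma(0)}{2}$. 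This is harmless for the rest of the paper, since the proof of Theorem~\ref{thm-mci} only uses that this factor is $1+O(w)$, but you should not expect any refinement to reach $\gamma(0)/2$: that constant is unattainable.
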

\begin{proof}
The first inequality immediately follows from the expression of $m(x;w)$ in \eqref{mexpr} and the $\ell$-value expression \eqref{lformula}. The second inequality follows using Lemma \ref{gmphi}. 
\end{proof}
\begin{lemma} \label{gmphi}
For any $t\ge 0$, we have 
\[ g_-(t) \le \frac12 \ga(0)\phi(t). \]
\end{lemma}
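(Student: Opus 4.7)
The plan hinges on the symmetry of both $\phi$ and $\gamma$ around $0$ together with the monotonicity of $\phi$ on $[0,\infty)$. As a first step, by the change of variable $u\mapsto -v$ and $\gamma(-v)=\gamma(v)$, one rewrites
\[ g_-(t)=\int_{-\infty}^0\phi(t-u)\gamma(u)\,du=\int_0^\infty \phi(t+v)\gamma(v)\,dv, \]
a representation valid for every real $t$ which places the integration on the half-line $[0,\infty)$, where $\phi$ is decreasing and where unimodality of $\gamma$ is easiest to use.

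The core step is then the pointwise bound $\phi(t+v)\le \phi(t)$, valid for any $t\ge 0$ and $v\ge 0$ because $t+v\ge t\ge 0$ and $\phi$ is symmetric and strictly decreasing on $[0,\infty)$. Pulling this constant factor outside the integral and using $\int_0^\infty\gamma(v)\,dv=1/2$ (symmetry of the probability density $\gamma$) gives directly
\[ g_-(t)\le \phi(t)\int_0^\infty\gamma(v)\,dv=\frac{\phi(t)}{2}. \]

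The announced inequality $g_-(t)\le \tfrac12\gamma(0)\phi(t)$ is then read off by combining this with unimodality at $0$: when $\gamma(0)\ge 1$ the conclusion is immediate from the display above, while in general one may use the dual bound $\gamma(v)\le \gamma(0)$ first to obtain $g_-(t)\le \gamma(0)\,\overline\Phi(t)$, and close via the Mill's-ratio comparison $\overline\Phi(t)\le (\sqrt{2\pi}/2)\,\phi(t)$ valid on $[0,\infty)$ (obtained by differentiating $\overline\Phi/\phi$ and noting its value $\sqrt{2\pi}/2$ at $t=0$ and that it is decreasing, since $(\overline\Phi/\phi)'=-1+t\overline\Phi/\phi<0$ by the standard Mill's upper bound). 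I do not foresee any serious obstacle here: the only nonroutine moment is the choice between the two natural estimates on the integrand ($\phi(t+v)\le \phi(t)$ vs.\ $\gamma(v)\le \gamma(0)$), and both lead to the desired conclusion once one performs the symmetric substitution that turns the one-sided integral over $(-\infty,0]$ into one over $[0,\infty)$.
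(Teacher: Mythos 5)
Your argument is essentially the paper's in the case that matters, $\gamma(0)<1$: both bound $\gamma(v)\le\gamma(0)$ to get $g_-(t)\le\gamma(0)\ol{\Phi}(t)$, then bound $\ol{\Phi}(t)$ by a multiple of $\phi(t)$. Your supplementary elementary bound $\phi(t+v)\le\phi(t)$, giving $g_-(t)\le\phi(t)/2$ directly, is a nice alternative, but it only subsumes the claim when $\gamma(0)\ge1$, which need not hold: for the Laplace slab with scaling $a=1/2$ used in the paper's own simulations, $\gamma(0)=a/2=1/4$.

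There is a genuine gap in the closing step, and it is one shared with the paper itself. You correctly establish $\ol{\Phi}(t)\le(\sqrt{2\pi}/2)\,\phi(t)$ for $t\ge0$ (tight at $t=0$), but this yields $g_-(t)\le\frac{\sqrt{2\pi}}{2}\gamma(0)\phi(t)$, not $\frac12\gamma(0)\phi(t)$; the constants differ by a factor $\sqrt{2\pi}\approx2.5$, so the displayed Mill's comparison does not ``close'' the proof of the stated inequality. In fact the inequality as printed fails at $t=0$ for the Laplace slab with $a=1/2$: a direct computation gives $g_-(0)=\frac{a}{2}e^{a^2/2}\ol{\Phi}(a)\approx0.087$, while $\frac12\gamma(0)\phi(0)\approx0.050$. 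The paper's proof contains the same slip, invoking a ``standard bound $\ol{\Phi}(t)/\phi(t)\le1/2$'' that is false for small $t$ (at $t=0$ the ratio equals $\sqrt{\pi/2}\approx1.25$). Your constant $\sqrt{2\pi}/2$ is in fact the correct one, and the lemma's $1/2$ should be read as $\sqrt{2\pi}/2$. This has no downstream consequence: Lemma \ref{linkmlval} and Theorem \ref{thm-mci} only require a bound of the form $g_-(t)\le C\gamma(0)\phi(t)$ for some universal constant $C$.
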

\noindent {\em Remark.} The following more precise bounds also hold, for any $t\ge 0$,
\[ \gamma(-1)\left(\overline\Phi(t)-\overline\Phi(t+1)\right)\le g_-(t) \le \ga(0) \overline\Phi(t). \]
showing that $g_-(t)\asymp \phi(t)/t$ for large $t$.
\begin{proof}
As $\ga$ is unimodal, continuous and symmetric, its maximum is attained at $0$, so $\|\ga\|_\infty=\ga(0)$, and
\begin{align*}
(g_-/\phi)(t) & = \int_{-\infty}^0 e^{u t-u^2/2}\ga(u)du \\
& \le \ga(0) e^{t^2/2} \int_{-\infty}^0 e^{(t-u)^2/2} du\le \ga(0)\phi(t)^{-1}\overline\Phi(t).
\end{align*}
The lemma follows using the standard bound $\overline\Phi(t)/\phi(t)\le 1/2$, as well as the upper bound in the remark above. The lower bound in the remark is obtained by restricting the integral defining $g_-$ to $[-1,0]$.
\end{proof}

\subsection{Proof of Theorem \ref{thm-mci}}
The idea of the proof for the procedure \MCI$\,$  is as follows. To control the FDR for $m$--values, one combines the inequalities \eqref{linkml} with the bounds for $\ell$--values already derived in the proof of Theorem \ref{th1}. Using these inequalities will only  modify by a constant multiplicative factor (close to $1$, e.g. $1+\epsilon$, $\epsilon>0$) the level `$t$' of the original argument for $\ell$--values. This only modifies the constants $N_0$ and $C$ in the statement of Theorem \ref{th1}, leaving everything else unchanged and leading to the result. We now give the detailed argument for the procedure \MCI$\,$ for completeness.

Proceeding as in the proof of Theorem \ref{th1}, one distinguishes two cases depending on whether \eqref{equw1} has a solution or not.
 If \eqref{equw1} has no solution, then one bounds the FDR of the $m$-values procedure at level $t$ as follows, using the first inequality in \eqref{linkml},
 \begin{align*}
 \FDR( \theta_0,\vphi^{\mbox{\tiny $m$}}(t;\hat{w}))&
 \leq \P_{\theta_0}(\exists i \::\: \theta_{0,i}=0 ,\: \vphi_i^{\mbox{\tiny $m$}}(t;\hat w) =1) \\
&\leq \P_{\theta_0}(\exists i\::\: \theta_{0,i}=0 ,\:\vphi_i^{\mbox{\tiny $\ell$-val}}(t;w_0) =1) + \P_{\theta_0}(\hat w>w_0)
 \end{align*} 
and this quantity is that of the $\ell$--value case, which is thus bounded as in the proof of Theorem \ref{th1}.

If  \eqref{equw1} has a solution, similar to the $\ell$--value case, let us denote by $V_m^{[t]}(w)$ the number of false discoveries of the $m$--values procedure  $\vphi^m(t;w)$ at level $t$ and $S_m^{[t]}(w)$ the number of its true discoveries. Here we denote $V_\ell^{[t]}(w)$ and $ S_m^{[t]}(w)$ the corresponding quantities for $\ell$--values (as in the proof of Theorem \ref{th1}, except here we also keep the level $t$ explicit in the notation, which is important below). We start by writing the FDR as
\begin{align*}
\lefteqn{\FDR(\theta_0,\vphi^{\mbox{\tiny $m$}}(t;\hat{w})) 
  = \E_{\te_0}\left[ \frac{V_m^{[t]}(\hat w)}{(V_m^{[t]}(\hat w)+S_m^{[t]}(\hat w))\vee 1}\right]}\\
& \le  \E_{\te_0}\left[ \frac{V_m^{[t]}(\hat w)}{(V_m^{[t]}(\hat w)
+S_m^{[t]}(\hat w))\vee 1}
\ind{w_2\le \hat w\le w_1}\right] + \P_{\te_0}\left[\hat w\notin [w_2,w_1]\right].
\end{align*}
Thanks to the first inequality in \eqref{linkml}, 
\[ \ind{\hat m_i(X)<t} \le  \ind{\ell(\hat w,X_i)\le  t}, \]
which yields $V_m^{[t]}(\hat w)\le V_\ell^{[t]}(\hat w)$. Now  using the second inequality in \eqref{linkml}, and working on the event that $w_2\le \hat w\le w_1$,
 \begin{align*}
 \ind{\hat m_i(X)<t} &\ge \ind{\left(1+\frac{\hat w}{1-\hat w}\frac{\ga(0)}{2}\right)\ell(\hat w,X_i)< t} \\
&\ge  \ind{\left(1+\frac{\hat w}{1-\hat w}\ga(0)\right)\ell(\hat w,X_i)\le t} \\
&\ge \ind{\left(1+\frac{w_1}{1-w_1}\ga(0)\right)\ell(\hat w,X_i)\le t}\ge \ind{\frac{5}{4}\ell(\hat w,X_i)\le t},
 \end{align*}
where for the second  inequality we have used that $\ell$--values are strictly positive almost surely, and for the fourth inequality that $w_1$ goes to $0$ with $n$, using Lemma \ref{lemw1}. This leads to, on the event that $w_2\le \hat w\le w_1$,
\[ \ind{\ell(\hat w,X_i)\le  t'} \le \ind{\hat m_i(X)<t},\quad t':=\frac{4}{5}t, \]
which implies $S_m^{[t]}(\hat w)\ge S_\ell^{[t']}(\hat w)$. So, denoting $\cD=\{w_2\le \hat w\le w_1\}$ for short,
\[  \E_{\te_0}\left[ \frac{V_m^{[t]}(\hat w)}{(V_m^{[t]}(\hat w)
+S_m^{[t]}(\hat w))\vee 1}
\ind{\cD}\right]
\le \E_{\te_0}\left[ \frac{V_\ell^{[t]}(\hat w)}{(V_\ell^{[t]}(\hat w)
+S_\ell^{[t']}(\hat w))\vee 1}
\ind{\cD}\right].
\]
From this point on, one can use the bounds derived for $\ell$--values, replacing $t$ by $t'$ in the bound for $S_\ell^{[t']}$. This only induces changes in the constants appearing in the bound \eqref{lvalcase2} for $\ell$--values, everything else being unchanged. 

By combining the bounds in both cases, bounds which coincide with the $\ell$--values bounds up to the choice of the constants, this concludes the proof of Theorem \ref{thm-mci}. 
 
\section{Details on $\SC$ procedure}\label{sec:SCanalysis}

We explore here in more details the behavior of the Sun and Cai procedure $\SC$, as defined in Section~\ref{rem:SC}, with a heuristic, a lemma 
and numerical support. To fix the idea, we focus on the quasi-Cauchy prior (similar results could be obtained with Laplace prior).\\

\subsection{Numerical study}\label{tab:final}

Let us first consider the same simulation setting as in Section~\ref{sec:simu} for Figure~\ref{fig1}. The FDR of $\SC$ is computed on  Figure~\ref{fig:SC} for different values of thresholds $t\in\{0.05,0.1,0.2\}$. Clearly, compared to {\tt EBayesq}, we observe a more severe FDR inflation, especially when $s_n/n$ is not small and when the signal is large. This suggests the following question:

\begin{figure}[h!]
\begin{tabular}{ccc}
\vspace{-0.5cm}
&quasi-Cauchy & Laplace\\
\vspace{-1cm}
\rotatebox{+90}{\hspace{3cm}$s_n/n=0.1$} &\includegraphics[scale=0.35]{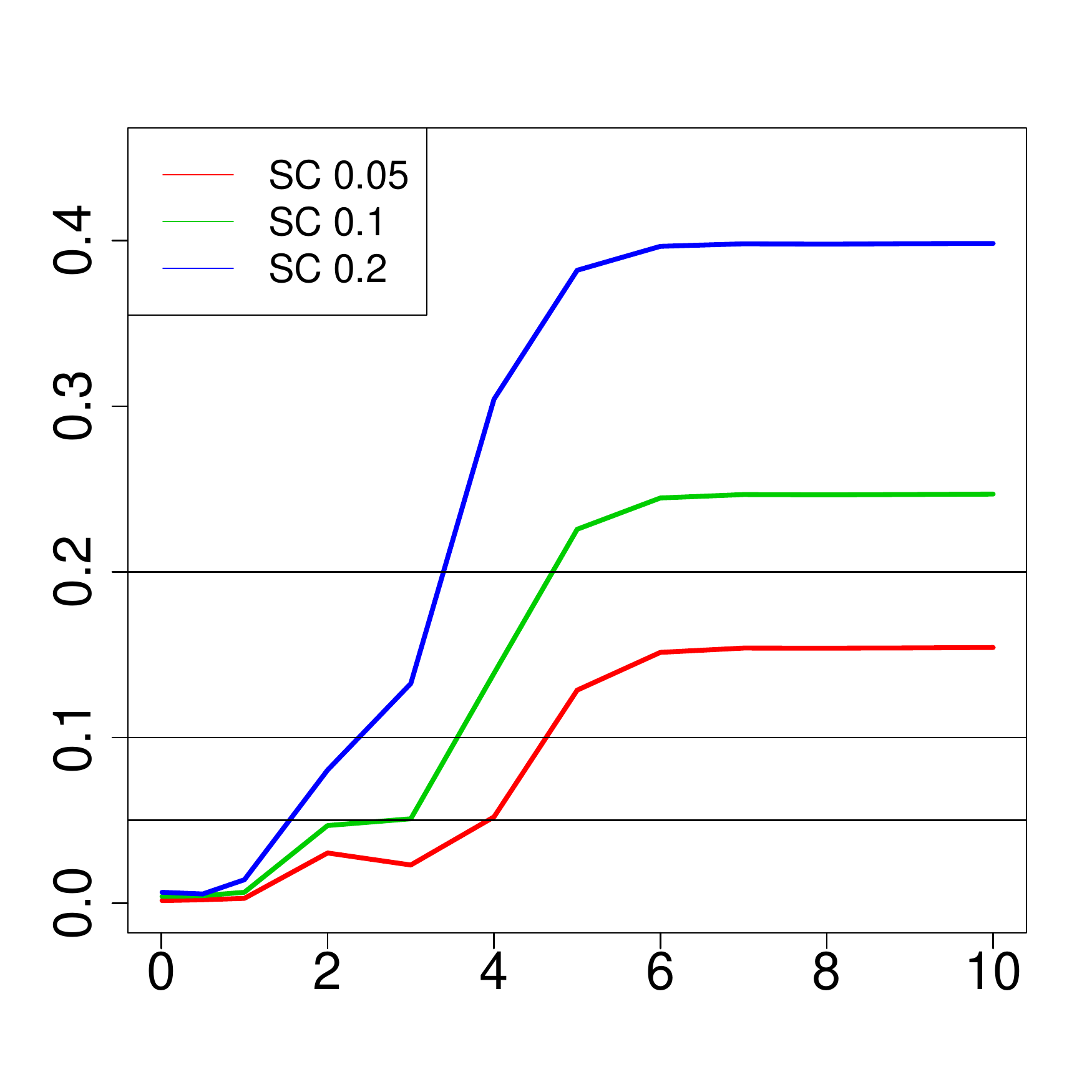}
&\includegraphics[scale=0.35]{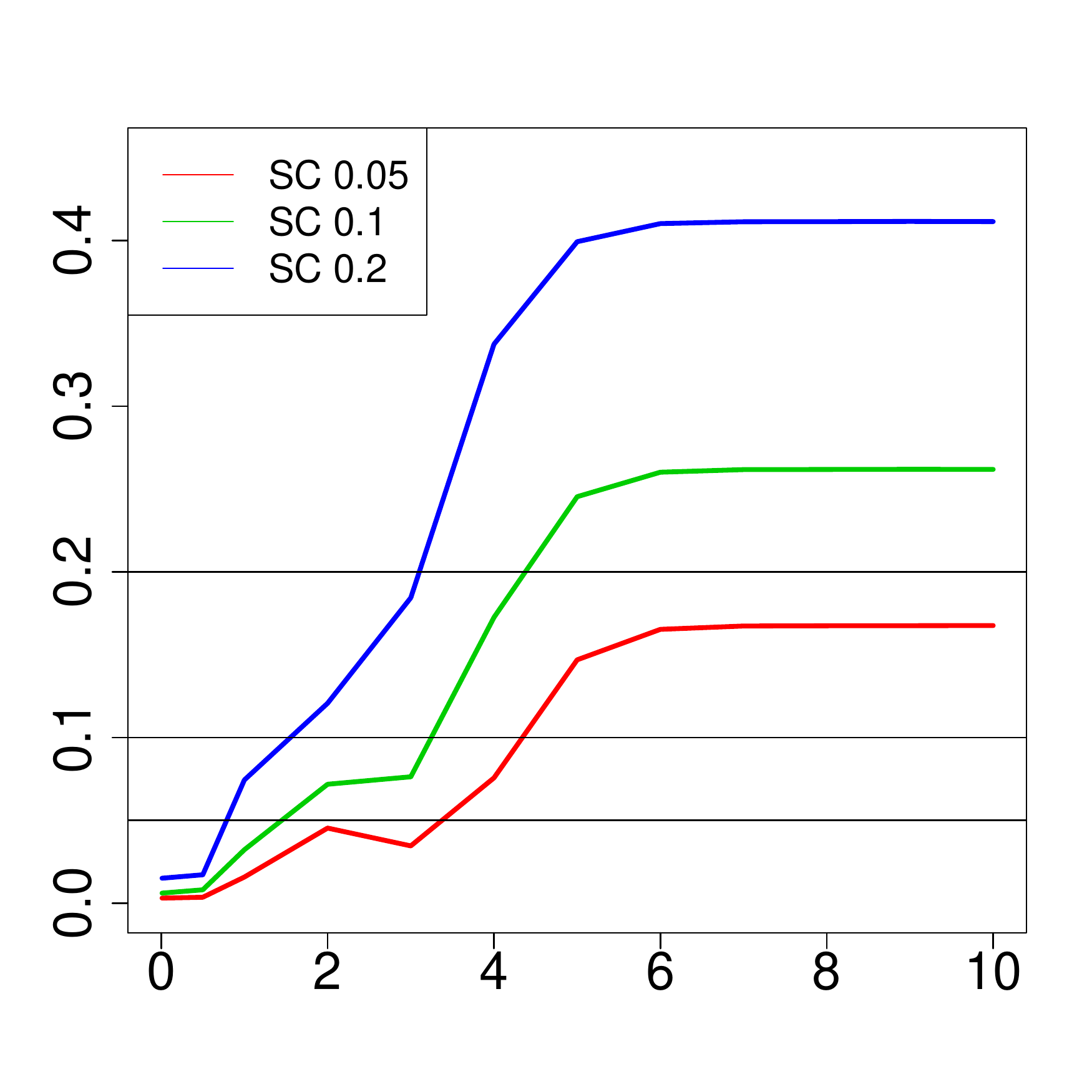}
\\
\vspace{-1cm}
\rotatebox{+90}{\hspace{3cm}$s_n/n=0.01$} &\includegraphics[scale=0.35]{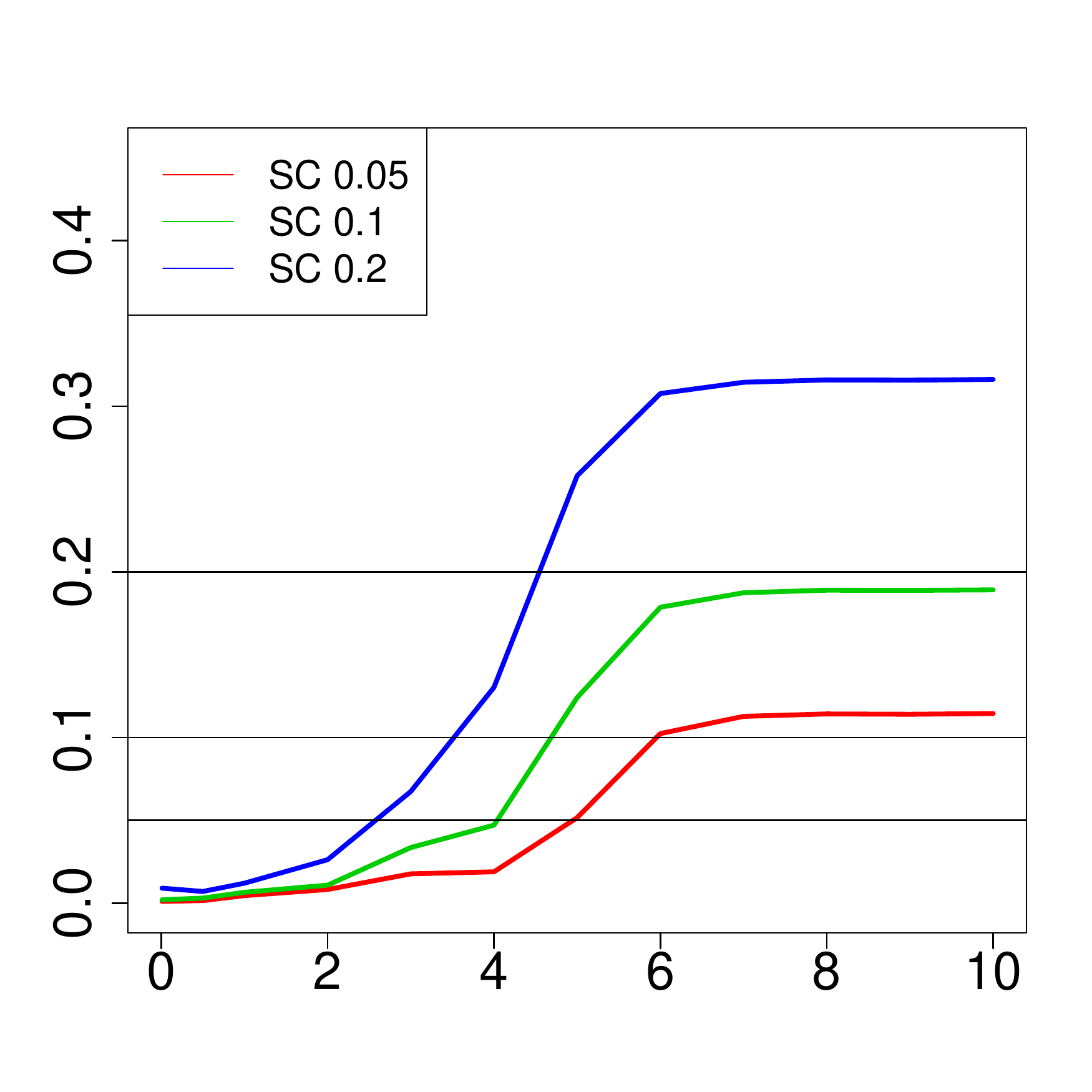}
&\includegraphics[scale=0.35]{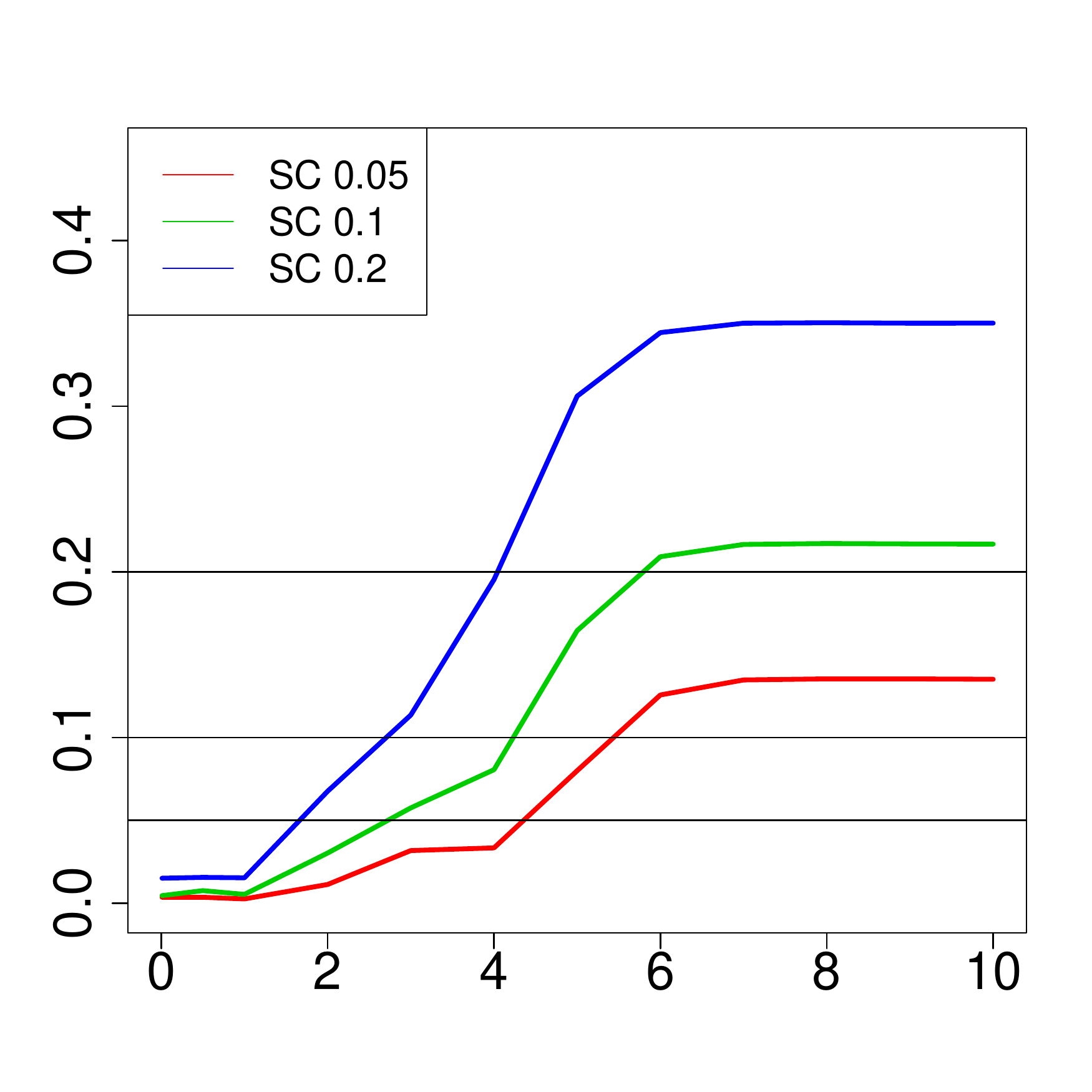}
\\
\vspace{-0.5cm}
\rotatebox{+90}{\hspace{3cm}$s_n/n=0.001$} &\includegraphics[scale=0.35]{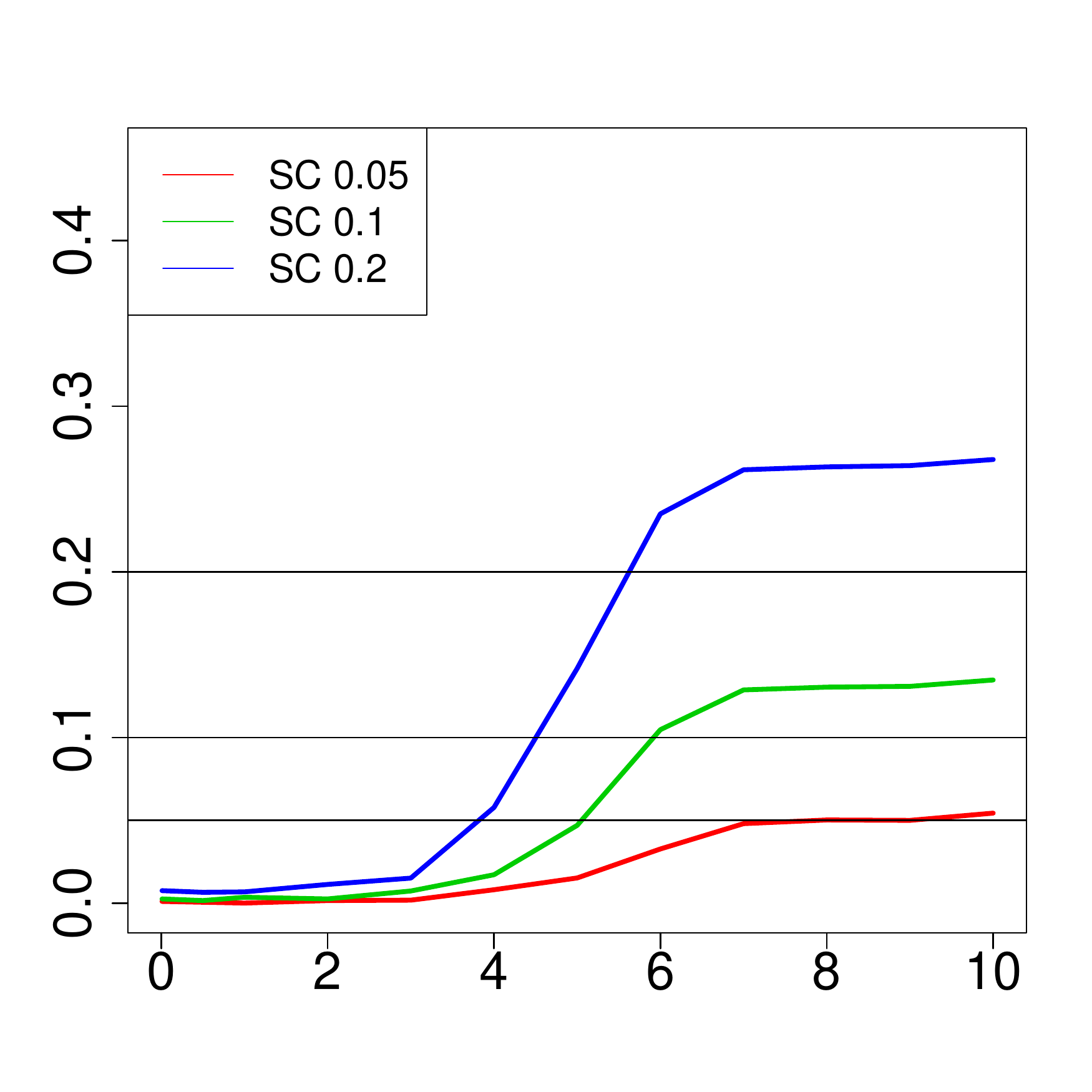}
&\includegraphics[scale=0.35]{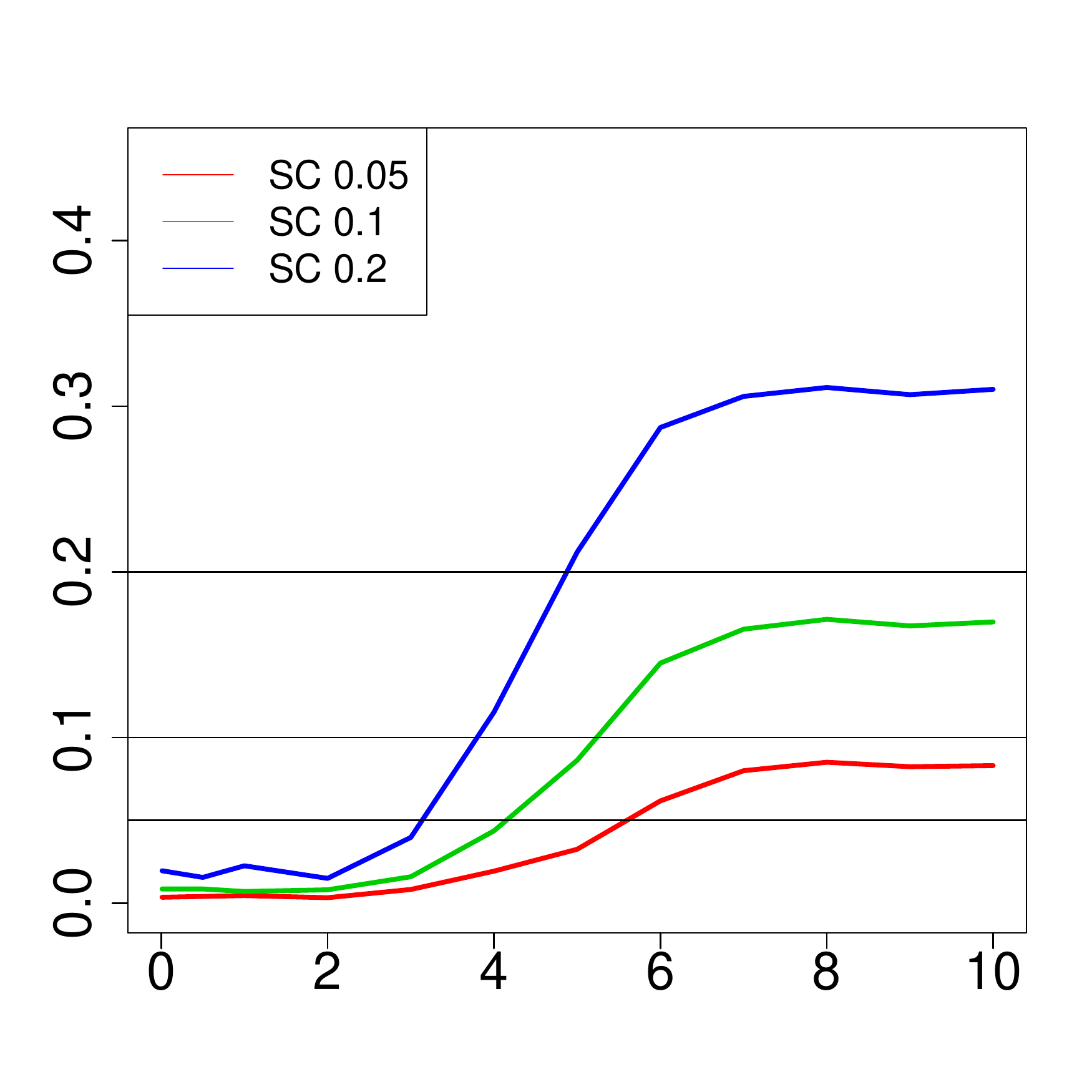}
\end{tabular}
\caption{\label{fig:SC}
FDR  for $\SC$ procedure with threshold $t\in\{0.05,0.1,0.2\}$.  $n=10,\,000$; $2000$ replications; alternative all equal to $\mu$ (on the $X$-axis).}
\end{figure}

\begin{center}
For a very large signal, does the FDR of $\SC$ procedure converges to $t$ when $n$ tends to infinity (and $s_n/n$ tends to $0$)?
\end{center}

To elucidate this question, we first perform a numerical experiment in the special case $t=0.2$, $\mu=15$ and $n=10^7$ for different values of $s_n$. Due to the large amplitude of $n$, it is too computationally demanding to use the empirical Bayes $\hat{w}$ into the $\ell$-values expression. Rather, we can safely replace it by $w=w^\star$ solving $s_n = (n-s_n) w \tilde{m}(w)$ because the signal is very strong. Also, it is enough to make $10$ replications to approximate the FDR, because the concentration of the FDP to the FDR is very fast for $n=10^7$. 
The result is given in the following table.
$$
\begin{array}{|c|c|c|c|c|c|}\hline
s_n & 10^4 & 10^3 & 10^2 & 10 & 5\\\hline
\FDR & 0.30 & 0.29 & 0.28& 0.24& 0.21\\\hline
\end{array}
$$
This experiment suggests that the FDR of $\SC$ does converge to the targeted level $t=0.2$, but very slowly with respect to $s_n/n$. \\

In the next sections, we provide an analysis to support the fact that the FDR of $\SC$ converges to $t$ at a logarithmic rate in $s_n/n$ when the signal is very large. This will thus corroborate the above numerical findings.

\subsection{Heuristic}

Let  $w^\star$ solving $s_n = (n-s_n) w^\star \tilde{m}(w^\star)$. We write $w$ for $w^\star$ for short. Let us introduce the quantity, for $u\in(0,1)$,
\begin{align*}
f_n(u)&=\frac{\E_{\theta_0=0}(\l(X_i;w)\ind{q(X_i;w)\leq u}) }{\P_{\theta_0=0}(q(X_i;w)\leq u)}\\
&= \frac{ \int_{\chi(r(w,u))}^{+\infty}  \frac{(1-w)\phi(x)}{(1-w)\phi(x) + w g(x)}  \phi(x) dx}{\overline{\Phi}(\chi(r(w,u)))}\leq 1.
\end{align*}
It is not difficult to check that $f_n$ is continuous increasing from $(0,1)$ to $(0,f_n(1))$, with $f_n(1)\to 1$ when $n\to \infty$ and $s_n/n\to 0$. 
We propose the following heuristic.

\begin{heu}\label{heuristic}
For $\theta_0\in\ell_0[s_n]$ with ``strong signal", the following holds: 
$$
\FDR(\theta_0,\SC)/t-1 \asymp  1-f_n(u^\star) \mbox{ as $n\to \infty$ and $s_n/n\to 0$,}
$$
where $u=u^\star\in [t,1)$ is the solution of  $f_n(u) u = t$.
\end{heu}

%\or
We will also assume in the sequel that for $n$ large, the solution  $u^\star$ above is below some universal constant $v_0\in(t,1)$.

\paragraph{Justifying Heuristic~\ref{heuristic}}
For short, we write $\l_i(X)$ (resp. ${q}_i(X)$) for $\l(X_i;w)$ (resp. $q(X_i;w)$).
First, let us observe that the $\SC$ procedure can be expressed as a thresholding rule rejecting the null hypotheses corresponding to $|X_i|$ larger than some threshold (contrary to {\tt EBayesL} and {\tt EBayesq}, this threshold in general depends on $X$).  Hence, even if the $\SC$ procedure is {\it a priori} not related to {\tt EBayesq} procedure, we can express this procedure as rejecting the null hypotheses corresponding to ${q}_i(X)\leq u(X,t)$ for some function $u(\cdot,\cdot)$. Clearly, the procedure ${\tt EBayesq}$ at threshold $u(X,t)$ is the $\SC$ procedure at threshold $t$.
Now assume that $u(X,t)$ is well concentrated around a value $u^\star$ (away from $0$ and $1$), so that $\SC(t)\approx {\tt EBayesq}(u^\star)$.
 The proof of Theorem~\ref{theorem:psharpFDRcontrol} hence suggests that, when the signal is large, 
$$
 \FDR(\theta_0, \SC(t)) \approx \FDR(\theta_0, {\tt EBayesq}(u^\star)) \approx  \frac{(n-s_n)\P_{\theta_0=0}(q_i(X)\leq u^\star) }{s_n+(n-s_n)\P_{\theta_0=0}(q_i(X)\leq u^\star)}  \approx u^\star.
$$
Next, the definition of $\SC(t)$ implies that 
$$
\frac{\sum_{i=1}^n \l_i(X)\ind{q_i(X)\leq u^\star}}{\sum_{i=1}^n \ind{q_i(X)\leq u^\star}}\approx \frac{\sum_{i=1}^n \l_i(X)\ind{q_i(X)\leq u(X,t)}}{\sum_{i=1}^n \ind{q_i(X)\leq u(X,t)}} \approx t .
$$
In addition, by standard concentration arguments and since the signal is strong, we also have
\begin{align*}
\frac{\sum_{i=1}^n \l_i(X)\ind{q_i(X)\leq u^\star}}{\sum_{i=1}^n \ind{q_i(X)\leq u^\star}} &\approx \frac{\sum_{i\in\cH_0} \l_i(X)\ind{q_i(X)\leq u^\star}}{s_n+\sum_{i\in\cH_0} \ind{q_i(X)\leq u^\star}}\\
&\approx \frac{(n-s_n)\E_{\theta_0=0}(\l_i(X)\ind{q_i(X)\leq u^\star}) }{s_n+(n-s_n)\P_{\theta_0=0}(q_i(X)\leq u^\star)}\\
&= f_n(u^\star) \times \frac{(n-s_n)\P_{\theta_0=0}(q_i(X)\leq u^\star) }{s_n+(n-s_n)\P_{\theta_0=0}(q_i(X)\leq u^\star)}\\
&\approx f_n(u^\star) \times  u^\star.
\end{align*}
Combining the above fact leads to $ \FDR(\theta_0, \SC(t)) \approx u^\star$ and $f_n(u^\star) u^\star\approx t$, which justifies, provided the remainder terms in the previous approximations are of smaller order, that   
$\FDR(\theta_0,\SC)/t-1 \asymp u^\star/t-1 \asymp 1/f_n(u^\star)-1$ and leads to Heuristic~\ref{heuristic}.

\subsection{Convergence of $f_n$}

Heuristic~\ref{heuristic} suggests that the inflation of the FDR of $\SC$ is determined by how much $f_n(u)$ is below $1$ for some fixed $u\in(0,1)$. The following result provides the order of $1-f_n(u)$ when $n$ is large. 

\begin{lemma}\label{lem:final}
Consider the quasi-Cauchy case.
There exist universal constants $c>0$, $C>0$ such that the following holds. Let $u_0,v_0\in(0,1)$, with $u_0<v_0$. For all $u\in (u_0,v_0)$, for all $w\in(0,1)$ smaller than some $\omega(u_0,v_0)>0$, we have
 \begin{align*}
c\frac{\log(\log (1/w))  }{ \zeta(w)^2} \leq 1-\frac{ \int_{\chi(r(w,u))}^{\infty}  \frac{(1-w)\phi(x)}{(1-w)\phi(x) + w g(x)}  \phi(x) dx}{\overline{\Phi}(\chi(r(w,u)))}\leq C \frac{\log(\log (1/w))}{ \zeta(w)^2}.
\end{align*}
In particular, for $w=w^\star$ solving $s_n = (n-s_n) w^\star \tilde{m}(w^\star)$ with $s_n \le n^\upsilon$ for some $\upsilon\in(0,1)$, and $n$ any integer larger than some $ N(\upsilon,u_0,v_0)>0$,
  \begin{align*}
c\frac{  \log(\log (n/s_n))}{\log (n/s_n)} \leq 1-f_n(u) \leq C\frac{ \log(\log (n/s_n))}{\log (n/s_n)} .
\end{align*}
\end{lemma}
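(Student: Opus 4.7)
The plan is to first rewrite $1-f_n(u)$ as a clean ratio of integrals against $g$, and then identify the main contribution via a precise relation between $T:=\chi(r(w,u))$ and $\zeta:=\zeta(w)$. The algebraic identity $wg(x)\phi(x)/[(1-w)\phi(x)+wg(x)]=(w/(1-w))\,g(x)\,\ell(x;w,g)$ combined with the defining relation $\overline\Phi(T)=r(w,u)\overline G(T)$ of $T$ yields
\[
1-f_n(u)=\frac{1-u}{u}\cdot\frac{\int_T^\infty g(x)\,\ell(x;w,g)\,dx}{\overline G(T)},
\]
so, since $u\in(u_0,v_0)$ is bounded away from $0$ and $1$, it suffices to show that $R(w):=\int_T^\infty g\ell\,dx/\overline G(T)\asymp \log\log(1/w)/\zeta(w)^2$.

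Next, I split the numerator at $\zeta$. For the outer part, the bound $\ell(x;w,g)\le (1-w)\phi(x)/(wg(x))$ and Lemma~\ref{bphi} give $\int_\zeta^\infty g\ell\,dx\le (1-w)\overline\Phi(\zeta)/w$, which by the quasi-Cauchy asymptotics $\overline\Phi(x)\sim \phi(x)/x$ and $\overline G(x)\sim (2\pi)^{-1/2}/x$ is of order $\overline G(\zeta)/\zeta^2$, contributing only $O(1/\zeta^2)=O(1/\log(1/w))$ to $R(w)$ and therefore negligible. For the inner part, the key observation is that $\ell(\,\cdot\,;w,g)$ is decreasing in $|x|$ on $\RR^+$ and $\ell(\zeta;w,g)=(1-w)/2\ge 1/3$ for $w$ small, so $\ell(x;w,g)\in[1/3,1]$ uniformly on $[T,\zeta]$ (using $T<\zeta$ from Lemma~\ref{propchizeta}). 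Hence
\[
\tfrac13(\overline G(T)-\overline G(\zeta))\ \le\ \int_T^\zeta g(x)\,\ell(x;w,g)\,dx\ \le\ \overline G(T)-\overline G(\zeta),
\]
and the analysis of $R(w)$ reduces to estimating $\overline G(T)-\overline G(\zeta)$ relative to $\overline G(T)$.

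The crucial quantitative step is a two-term expansion of $\zeta$ and $T$ specific to quasi-Cauchy ($\kappa=2$). The equation $\phi(\zeta)/g(\zeta)=w/(1+w)$ combined with $g(x)\sim(2\pi)^{-1/2}/x^2$ gives $\zeta(w)^2=2\log(1/w)+2\log\log(1/w)+O(1)$; and $\overline\Phi(T)=r(w,u)\overline G(T)$ combined with $\overline\Phi(x)\sim\phi(x)/x$ and $\overline G(x)\sim(2\pi)^{-1/2}/x$ gives $T^2=2\log(1/w)+O(1)$. Therefore
\[
\zeta(w)^2-T^2=2\log\log(1/w)\,(1+o(1)),\qquad \zeta-T\sim \log\log(1/w)/\zeta.
\]
Using $\overline G(x)\sim(2\pi)^{-1/2}/x$ once more yields $\overline G(T)-\overline G(\zeta)\sim (2\pi)^{-1/2}(\zeta-T)/(T\zeta)\asymp \log\log(1/w)/\zeta^3$ and $\overline G(T)\asymp 1/\zeta$, giving $R(w)\asymp \log\log(1/w)/\zeta(w)^2$, which is the first double inequality. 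For the second statement I plug in $w=w^\star$: Lemma~\ref{lemmtilde} with $\kappa=2$ gives $\tilde m(w)\asymp 1/\zeta(w)$, so the equation defining $w^\star$ yields $w^\star\asymp s_n\zeta(w^\star)/(n-s_n)$; together with $s_n\le n^{\upsilon}$ and Lemma~\ref{lemzetagen} this gives $\log(1/w^\star)\sim \log(n/s_n)$, hence $\zeta(w^\star)^2\sim 2\log(n/s_n)$ and $\log\log(1/w^\star)\sim \log\log(n/s_n)$, and the conclusion follows.

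The main technical obstacle is the precise matching of the subleading constants in these two expansions, since the desired $\asymp$ requires the gap $\zeta^2-T^2$ to be pinned down up to $o(\log\log(1/w))$, while Lemma~\ref{lemxizeta} only provides a one-sided bound. This sharp matching is made possible by the polynomial tail $\overline G(x)\sim (2\pi)^{-1/2}/x$ specific to quasi-Cauchy, which closes both expansions up to an $O(1)$ term; replacing this by a much thinner tail such as Laplace, corresponding to $\kappa=1$, would modify the size of the gap and hence the final rate.
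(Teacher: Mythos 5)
Your proof is correct and follows essentially the same strategy as the paper: split the integral at $\zeta(w)$, use the quasi-Cauchy asymptotics $g(x)\sim(2\pi)^{-1/2}x^{-2}$, $\overline G(x)\sim(2\pi)^{-1/2}x^{-1}$, and exploit the gap $\zeta(w)-\chi(r(w,u))\asymp\log\log(1/w)/\zeta(w)$. The main tactical differences are cosmetic but worth noting: the paper keeps the quantity as $\overline\Phi(\chi_w)-\int h$ with $h=w\beta\phi/(1+w\beta)$ and bounds the inner piece by $wg(\cdot)(\zeta-\chi_w)$ using monotonicity of $g$, whereas you reformulate $1-f_n(u)$ into the cleaner ratio $\frac{1-u}{u}\int_T^\infty g\ell / \overline G(T)$ and use the neat observation $\ell(\zeta;w,g)=(1-w)/2$ to pin $\ell\in[1/3,1]$ on $[T,\zeta]$, reducing the inner part to $\overline G(T)-\overline G(\zeta)$ directly. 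You also rederive the gap estimate from scratch via the two-term expansions $\zeta^2=2\log(1/w)+2\log\log(1/w)+O(1)$ and $T^2=2\log(1/w)+O(1)$, rather than appealing (as the paper does) to the proofs of Lemmas~\ref{propchizeta} and \ref{lemxizeta}; this makes the argument more self-contained for the quasi-Cauchy case, at the cost of not generalizing beyond it, which you correctly point out.
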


\begin{proof}
Denoting $h(x)=w\be(x)\phi(x)/(1+w\be(x))$ and $\chi_w=\chi(r(w,u))$, 
\begin{align*}
\lefteqn{\int_{\chi_w}^{\infty}  \frac{\phi(x)}{(1-w)\phi(x) + w g(x)}  \phi(x) dx
 = \int_{\chi_w}^{\infty} \frac{1}{1+w\be(x)}  \phi(x) dx}&& \\
& = \int_{\chi_w}^{\infty} \phi(x) dx 
- \int_{\chi_w}^{\infty} h(x) dx\\
& = \overline{\Phi}(\chi_w) - \int_{\chi_w}^{\zeta(w)} h(x) dx
- \int_{\zeta(w)}^\infty h(x) dx.
\end{align*}
The following bounds on $h(x)$ follow from the definition of $\be=g/\phi-1$ and the fact that $x$ is large enough (as $w$ is small),
 \begin{align*}
  \phi(x)/2 & \le h(x)  \le \phi(x),\qquad & x\in[\zeta(w),\infty);\\
  wg(x)/4 & \le h(x)  \le wg(x),\qquad & x\in[\chi_w,\zeta(w)].
 \end{align*}
As $g$ is decreasing for $x$ large, the last line also implies 
$wg(\zeta(w))/4 \le h(x) \le wg(\chi_w)$ for  $x\in[\chi_w,\zeta(w)]$. 
Putting this together with the previous identity leads to (remember also that $ \chi(r(w,u))\leq \zeta(w)$ from Lemma~\ref{propchizeta})
\begin{align*}
\lefteqn{\overline{\Phi}(\chi_w) - \overline{\Phi}(\zeta(w))
-w g(\chi_w)[\zeta(w)-\chi_w]
}&& \\
& \le
\int_{\chi_w}^{\infty}  \frac{\phi(x)}{(1-w)\phi(x) + w g(x)}  \phi(x) dx\\
& \le  
\overline{\Phi}(\chi_w) - \overline{\Phi}(\zeta(w))/2
-w g(\zeta(w))[\zeta(w)-\chi_w]/4.
\end{align*} 
Further note that in the quasi-Cauchy case,
\begin{align*}
\overline{\Phi}(\chi_w) & \asymp \frac{w u}{(1-u)}\overline{G}(\chi_w)
\asymp w\frac{u}{1-u}\chi_w^{-1}\\
\overline{\Phi}(\zeta(w)) & \asymp \frac{\phi(\zeta(w))}{\zeta(w)}
\asymp w\frac{g(\zeta(w))}{\zeta(w)}\asymp w\zeta(w)^{-3}.
\end{align*}
As $u$ is bounded away from $0$ and $1$, we have
$\chi_w\sim \zeta(w)\sim (2\log(1/w))^{1/2}$. 
Also, it follows from the proofs of Lemmas \ref{propchizeta} and \ref{lemxizeta} respectively, using again that $u$ is bounded away from $0$ and $1$, that, for universal constants $c,C>0$,
$$
 c \frac{\log\log(1/w)}{ \zeta(w)}\leq  \zeta(w)-\chi_w\le C \frac{\log\log(1/w)}{ \zeta(w)}.
$$
Combining the previous estimates leads to the desired bound.
\end{proof}

Combining Lemma~\ref{lem:final}, the fact that $u=u^\star$ is the solution of  $f_n(u) u = t$, and 
that $u^\star\in [t,v_0]$ for $n$ large, we obtain 
$$
1-f_n(u^\star)\asymp (\log (n/s_n))^{-1} \log(\log(n/s_n)).
$$
Finally, the latter combined with Heuristic~\ref{heuristic} suggests that the FDR of $\SC(t)$ procedure is of order $t$ plus a positive term decreasing slowly with 
 $n/s_n$. This supports the fact that the FDR of $\SC(t)$ seems larger than $t$ on Figure~\ref{fig:SC}, but still converging to the targeted level $t$ for $n/s_n$ very large, as in the table of Section~\ref{tab:final}. Making the Heuristic precise is a very interesting direction for future work.

\section{Auxiliary lemmas} 

\begin{lemma} \label{bphi} 
For any $x>0$,
\[ \frac{x^2}{1+x^2}\frac{\phi(x)}{x}\le \bfi(x)\le \frac{\phi(x)}{x}.\]
In particular, for any $x\ge 1$,
$ \bfi(x) \ge \frac12\frac{\phi(x)}{x}$ and $\bfi(x)\sim \frac{\phi(x)}{x}$ when $x\to \infty$.
Furthermore, for any $y\in(0,1/2)$,
$$
\left\{ \left(2\log (1/y) - \log \log (1/y)-\log(16\pi)\right)_+\right\}^{1/2}\leq \ol{\Phi}^{-1}(y) \leq \left\{ 2\log (1/y)\right\}^{1/2}.
$$
and also for $y$ small enough,
$$
\ol{\Phi}^{-1}(y) \leq \left\{ 2\log (1/y) - \log \log (1/y)\right\}^{1/2} .
$$
In particular, $\ol{\Phi}^{-1}(y) \sim \left\{ 2\log (1/y)\right\}^{1/2}$ when $y\to 0$. 
\end{lemma}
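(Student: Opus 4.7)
The plan is to prove the successive claims of the lemma, each reducing to classical Mill's-ratio manipulations for the Gaussian tail. First I would establish $\frac{x^2}{1+x^2}\frac{\phi(x)}{x}\le \bfi(x)\le \frac{\phi(x)}{x}$ for $x>0$. The upper inequality follows from one integration by parts: $\bfi(x)=\int_x^\infty(-\phi'(u)/u)\,du=\phi(x)/x-\int_x^\infty \phi(u)/u^2\,du$, by dropping the nonnegative correction. For the lower inequality I would consider $f(x)=(1+x^2)\bfi(x)-x\phi(x)$: a direct computation gives $f'(x)=2x\bfi(x)-2\phi(x)\le 0$ using the upper bound just proved, and since $f(+\infty)=0$ we conclude $f\ge 0$. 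The consequence $\bfi(x)\ge \phi(x)/(2x)$ for $x\ge 1$ is immediate since $x^2/(1+x^2)\ge 1/2$ there, and the asymptotic $\bfi(x)\sim \phi(x)/x$ as $x\to\infty$ follows by sandwiching.

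For the coarse upper bound $\bfi^{-1}(y)\le\sqrt{2\log(1/y)}$ on $y\in(0,1/2)$, I would prove $\bfi(z)\le e^{-z^2/2}$ for every $z\ge 0$ by studying $h(z)=e^{-z^2/2}-\bfi(z)$: one has $h(0)=1/2>0$, $h(+\infty)=0$, and $h'(z)=e^{-z^2/2}(1/\sqrt{2\pi}-z)$ changes sign only at $z=1/\sqrt{2\pi}$, so $h$ increases then decreases to zero while staying nonnegative. Taking logs in $y\le e^{-z^2/2}$ with $z=\bfi^{-1}(y)$ gives the bound. For the refined upper bound (for $y$ small), a bootstrap is needed: from $\bfi(z)\le \phi(z)/z$ I obtain $z^2\le 2\log(1/y)-2\log z-\log(2\pi)$, which requires a preliminary lower bound on $\log z$. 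I would establish $z\ge \sqrt{\log(1/y)}$ for $y$ small, which reduces via Mill's ratio to checking $8\pi y\log(1/y)\le 1$, trivially true since $y\log(1/y)\to 0$. Then $\log z\ge \frac12\log\log(1/y)$ yields $z^2\le 2\log(1/y)-\log\log(1/y)-\log(2\pi)$, bounded by the claimed quantity since $\log(2\pi)>0$.

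For the refined lower bound I would start from $\bfi(z)\ge \phi(z)/(2z)$, valid for $z\ge 1$, take logs to obtain $z^2/2\ge \log(1/y)-\log z-\log(2\sqrt{2\pi})$, and plug in the coarse upper bound $\log z\le\frac12\log(2\log(1/y))$ to get $z^2\ge 2\log(1/y)-\log\log(1/y)-\log(16\pi)$. The main obstacle is that this argument requires $z\ge 1$, whereas the claim is stated uniformly on $(0,1/2)$. To bridge the gap I would show that whenever the argument inside $(\cdot)_+$ is strictly positive one already has $z>1$; equivalently, for $y\ge \bfi(1)$ one must have $16\pi y^2\log(1/y)\ge 1$. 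On $[\bfi(1),1/2]$ the map $y\mapsto y^2\log(1/y)$ is increasing since its critical point $y=e^{-1/2}$ lies outside this interval, so the minimum is attained at the left endpoint, where a direct numerical check gives $16\pi\bfi(1)^2\log(1/\bfi(1))>1$. When the inner quantity is nonpositive, the claim reduces to $z\ge 0$, which is trivial. The final asymptotic $\bfi^{-1}(y)\sim \sqrt{2\log(1/y)}$ as $y\to 0$ follows at once by sandwiching the matching refined bounds.
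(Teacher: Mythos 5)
Your proof is correct and follows essentially the same route as the paper: classical Mill's-ratio bounds for $\ol{\Phi}$, followed by logarithmic manipulations to invert them, with a numerical check on $[\ol{\Phi}(1),1/2]$ to dispose of the $(\cdot)_+$ edge case. The only minor structural difference is in the refined upper bound, where you bootstrap from the elementary estimate $z \ge \sqrt{\log(1/y)}$, while the paper substitutes the refined lower bound it has just established into $y\,\ol{\Phi}^{-1}(y)\le\phi(\ol{\Phi}^{-1}(y))$ — both yield the stated inequality.
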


\begin{proof}
The first display of the lemma are classical bounds on $\overline\Phi$. The second display follows using the first one and similar inequalities as those used to derive bounds on $\xi, \zeta, \chi$.
Let us prove the last relation: for all $y\in (0,1/2)$,  
$$y\left\{ \left(2\log (1/y) - \log \log (1/y)-\log(16\pi)\right)_+\right\}^{1/2}\leq y \ol{\Phi}^{-1}(y) \le \phi(\ol{\Phi}^{-1}(y)) $$
Hence, 
\begin{align*}
\ol{\Phi}^{-1}(y) &\leq \left\{ -2\log \left(y\left\{ \left(2\log (1/y) - \log \log (1/y)-\log(16\pi)\right)_+\right\}^{1/2}\right)\right\}^{1/2}\\
&\leq \left\{ -2\log y - \log \left(\left(2\log (1/y) - \log \log (1/y)-\log(16\pi)\right)_+\right)\right\}^{1/2}
\end{align*}
which provides the result. 
\end{proof}

\begin{lemma} \label{lemphixphiy} 
For any $x,y\in \R$, with $|x-y|\leq 1/4$, we have
\begin{equation}
\ol{\Phi}(x) \geq \ol{\Phi}(y)\: \frac{1}{4} e^{-(x^2-y^2)_+/2}.
\end{equation}
\end{lemma}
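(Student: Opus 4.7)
The plan is to split on the sign of $x^2 - y^2$, reducing everything to a single ``main'' case in which the Mills-ratio bounds from Lemma \ref{bphi} do the work. When $(x^2-y^2)_+ = 0$, the target reduces to $\bar\Phi(x) \ge \bar\Phi(y)/4$. If $x \le y$ this is immediate from monotonicity of $\bar\Phi$; in the remaining sub-case $x > y$ with $x+y \le 0$, the constraint $|x-y|\le 1/4$ forces $x,y \in [-1/8, 1/8]$, so both $\bar\Phi(x)$ and $\bar\Phi(y)$ sit in a compact range bounded away from $0$ and $1$, and a direct numerical check (using $\bar\Phi(-1/4) = \Phi(1/4)$ and $\bar\Phi(1/8)$) gives the factor $1/4$.

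The main case is therefore $x^2 > y^2$, i.e.\ $(x^2-y^2)_+ = x^2 - y^2 > 0$. Using the symmetry $\bar\Phi(u) = \Phi(-u)$ and the fact that $|x-y|$ is preserved under $(x,y) \mapsto (-x,-y)$, it suffices to treat $x > 0$ with $x > |y|$. The workhorse is Lemma \ref{bphi}: for $x>0$ and $y>0$,
\[
\bar\Phi(x) \ge \frac{x}{1+x^2}\phi(x), \qquad \bar\Phi(y) \le \frac{\phi(y)}{y},
\]
so that
\[
\frac{\bar\Phi(x)}{\bar\Phi(y)} \;\ge\; \frac{xy}{1+x^2}\,e^{-(x^2-y^2)/2}.
\]
When $y \ge 1$ and $x = y + h$ with $h \in (0, 1/4]$, one checks that $\frac{xy}{1+x^2} \ge \frac{y}{x+1/x} \ge \frac{y-1/4+\cdots}{} \ge 1/4$ (the minimum over $y \ge 1, h\le 1/4$ sits near $0.488$), which is the desired bound.

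The remaining configurations — $y \in [0,1)$ (forcing $x \le 5/4$), or $y < 0$ with $x > -y$ (forcing $|x|,|y| \le 1/4$) — are all compact: $\bar\Phi$ is continuous and bounded below by a positive constant, and $(x^2-y^2)/2 \le h(y + h/2) \le (1/4)(1+1/8) = 9/32$, so $e^{-(x^2-y^2)/2} \ge e^{-9/32}$. A direct evaluation shows $\bar\Phi(x)/\bar\Phi(y) \ge e^{-(x^2-y^2)/2}/4$ in each of these bounded sub-cases. The only mildly delicate step is the very last numerical verification on $y \in [0,1)$, where the slack is tight (the needed lower bound on the ratio is roughly $0.19$ while one has about $0.21$); this confirms that the factor $1/4$ in the statement cannot be substantially improved by this argument. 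Apart from the bookkeeping across the sign cases, there is no real obstacle: the Mills-ratio estimates of Lemma \ref{bphi} already encode the correct $e^{-(x^2-y^2)/2}$ scaling, and the smallness of $|x-y|$ prevents the prefactor $\frac{xy}{1+x^2}$ from degrading below $1/4$.
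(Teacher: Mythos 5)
Your overall strategy mirrors the paper's (case split on the signs and sizes of $x,y$, Mills-ratio bounds from Lemma~\ref{bphi} in the ``large $y$'' regime, direct numerical bounds in the compact regimes), but one of your reductions to a compact set is false. In the sub-case $(x^2-y^2)_+=0$ with $x>y$ (equivalently $y<x\le -y$, forcing $y<0$ and $x+y\le 0$), you assert that $|x-y|\le 1/4$ ``forces $x,y\in[-1/8,1/8]$''. It does not: the two constraints $x+y\le 0$ and $x-y\le 1/4$ give only $x\le 1/8$ (and hence $y<1/8$), but impose no lower bound whatsoever on $y$. For instance $y=-10$, $x=-9.9$ satisfies every hypothesis of this sub-case and lies nowhere near $[-1/8,1/8]$. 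The inequality does still hold there (both $\ol\Phi(x)$ and $\ol\Phi(y)$ are close to $1$), but your justification via a numerical check on a bounded interval simply does not apply, so the step would not survive scrutiny. The paper sidesteps this entirely: whenever $y\le 0$ one only needs $x\le y+1/4\le 1/4$, so $\ol\Phi(x)\ge\ol\Phi(1/4)\ge 1/4\ge \tfrac14\ol\Phi(y)$ (using $\ol\Phi(y)\le 1$), with no compactness argument at all, and this works uniformly for all $y\le 0$.

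A second, smaller issue: your ``symmetry'' reduction to $x>0$ is not a symmetry of the claimed inequality. The map $(x,y)\mapsto(-x,-y)$ preserves $|x-y|$ and $(x^2-y^2)_+$, but sends $\ol\Phi(x)$ to $\Phi(x)=1-\ol\Phi(x)$, turning the target into a genuinely different inequality. The reduction to $x>0$ is nonetheless valid, but for a different reason: if $x<0$ and $|x|>|y|$ then $x<-|y|\le y$, so $\ol\Phi(x)\ge\ol\Phi(y)$ and the inequality is trivial. If you state the reduction, justify it that way. Finally, your sub-case $y\ge 1$ contains a garbled chain of inequalities (``$\frac{xy}{1+x^2}\ge\frac{y}{x+1/x}\ge\frac{y-1/4+\cdots}{}\ge 1/4$''); the clean route is the one the paper uses — $y\ge x/2$ when $y\ge 1$ and $x\le y+1/4$, so $\frac{xy}{1+x^2}\ge\frac{x^2}{2(1+x^2)}\ge\frac14$ — and the paper's monotonicity argument for $\ol\Phi(y+1/4)/\ol\Phi(y)$ on $y\in(0,1]$ is more transparent than the numerical sweep you gesture at for $y\in[0,1)$.
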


\begin{proof}
Let us assume $x>y$ (otherwise the result is trivial).
If $y\leq 0$, we have $\ol{\Phi}(x) \geq \ol{\Phi}(1/4) \geq 1/4 \geq 1/4 \ol{\Phi}(y)$ so the inequality is true.
Assume now $y>0$.
By Lemma~\ref{bphi}, 
\begin{align*}
\frac{\ol{\Phi}(x)}{\ol{\Phi}(y)} &\geq \frac{\ol{\Phi}(y+1/4)}{\ol{\Phi}(y)} \ind{y\leq 1} + \frac{xy}{1+x^2} e^{-(x^2-y^2)/2} \ind{y\geq 1}\\
&\geq  \frac{\ol{\Phi}(5/4)}{\ol{\Phi}(1)} \ind{y\leq 1} + \frac{x^2}{2(1+x^2)} e^{-(x^2-y^2)/2} \ind{y\geq 1}
\end{align*}
because $y\in(0,\infty)\to \frac{\ol{\Phi}(y+1/4)}{\ol{\Phi}(y)}$ is decreasing and $y\geq x/2$ when $y\geq 1$. This concludes the proof.
\end{proof}

\begin{lemma}\label{th:bernstein}[Bernstein's inequality]
Let $W_i$, $1\leq i \leq n$ centered independent variables with $|W_i|\le \mtc{M}$ and $\sum_{i=1}^n\var(W_i)\le V$, then for any $A>0$,
\[ P\left[ \sum_{i=1}^n W_i >A \right] \le \exp\left\{-\frac12 A^2/(V+\mtc{M}A/3) \right\}.\]
\end{lemma}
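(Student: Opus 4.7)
My plan is to apply the classical Chernoff--Cram\'er exponential method. For any $\lambda>0$, Markov's inequality applied to $e^{\lambda\sum_i W_i}$ together with independence of the $W_i$'s yields $\mbp[\sum_i W_i>A]\le e^{-\lambda A}\prod_{i=1}^n\mbe[e^{\lambda W_i}]$. The core step is to bound the individual log moment generating functions. Expanding $e^{\lambda W_i}$ as a power series, using $\mbe W_i=0$, and bounding $\mbe[W_i^k]\le \var(W_i)\mtc{M}^{k-2}$ for $k\ge 2$ (which uses $|W_i|\le \mtc{M}$), I would obtain $\mbe[e^{\lambda W_i}]\le 1+\var(W_i)(e^{\lambda\mtc{M}}-1-\lambda\mtc{M})/\mtc{M}^2$, and then applying $1+x\le e^x$ and summing over $i$ via $\sum_i \var(W_i)\le V$ gives $\prod_{i=1}^n \mbe[e^{\lambda W_i}] \le \exp\{V(e^{\lambda\mtc{M}}-1-\lambda\mtc{M})/\mtc{M}^2\}$.

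Next, I would invoke the sharp analytic inequality $(e^x-1-x)/x^2\le 1/\{2(1-x/3)\}$, valid for $0\le x<3$, which (for $\lambda\mtc{M}<3$) transforms the previous bound into $\exp\{V\lambda^2/[2(1-\lambda\mtc{M}/3)]\}$. Combining the two parts,
\[ \mbp\Big[\sum_{i=1}^n W_i>A\Big]\le \exp\Big\{-\lambda A+\frac{V\lambda^2}{2(1-\lambda\mtc{M}/3)}\Big\}\qquad\text{for all }\lambda\in[0,3/\mtc{M}). \]

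Finally, I optimize by choosing $\lambda=A/(V+\mtc{M}A/3)$, which automatically satisfies $\lambda\mtc{M}/3<1$ since $V\ge 0$. A short computation shows $1-\lambda\mtc{M}/3=V/(V+\mtc{M}A/3)$, so $V\lambda^2/\{2(1-\lambda\mtc{M}/3)\}=\lambda A/2$ and the exponent collapses to $-\lambda A/2=-A^2/\{2(V+\mtc{M}A/3)\}$, matching the claimed bound. There is no genuine obstacle: the only subtle point worth flagging is that the precise denominator $V+\mtc{M}A/3$ in the statement relies on the sharper inequality $(e^x-1-x)/x^2\le 1/\{2(1-x/3)\}$ rather than a naive Taylor estimate; once this is in place, the remaining optimization is a one-line calculation.
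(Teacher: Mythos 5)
The paper states this as a classical result (Bernstein's inequality) without giving a proof, so there is no in-text argument to compare against. Your derivation is the standard Chernoff--Cram\'er proof: bounding $\mbe[e^{\lambda W_i}]$ via the moment inequality $\mbe[W_i^k]\le \var(W_i)\mtc{M}^{k-2}$ for $k\ge 2$, applying the elementary inequality $(e^x-1-x)/x^2\le 1/\{2(1-x/3)\}$ on $[0,3)$ (which holds term-by-term since $(j+2)!\ge 2\cdot 3^j$), and optimizing at $\lambda=A/(V+\mtc{M}A/3)$. Each step is correct and the optimization indeed collapses the exponent to $-A^2/\{2(V+\mtc{M}A/3)\}$, so the proof is valid and matches the canonical argument one would cite for this lemma.
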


\begin{lemma} \label{integ}
There exists a constant $C>1$ such that, for any $M\ge 1$,
\[ \frac{e^M}{M^2}-1\le \int_1^M \frac{e^v}{v^2}dv \le C\frac{e^M}{M^2}. \]
\end{lemma}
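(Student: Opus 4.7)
The plan is to establish both bounds by a single integration by parts (for the lower bound) and a split-integral monotonicity argument (for the upper bound); neither direction requires any substantial new idea.

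For the lower bound, I would integrate by parts with $u = 1/v^2$ and $dv' = e^v\,dv$ to obtain the identity
\[ \int_1^M \frac{e^v}{v^2}\,dv \;=\; \frac{e^M}{M^2} - e + 2\int_1^M \frac{e^v}{v^3}\,dv. \]
Since the remainder integral is nonnegative, this already gives $\int_1^M e^v/v^2\,dv \ge e^M/M^2 - e$. To sharpen the additive constant from $e$ down to $1$, I would retain a small piece of the remainder: the function $v \mapsto e^v/v^3$ has derivative $(v-3)e^v/v^4$ and is therefore decreasing on $[1,3]$, so for $M\ge 2$ one has $2\int_1^2 e^v/v^3\,dv \ge 2 \cdot e^2/8 = e^2/4$, and since $-e + e^2/4 > -1$ the stated lower bound follows on the range $M \ge 2$. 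The regime $M\in[1,2)$ is either degenerate (the stated inequality becomes tight or slightly fails as $M\to 1^+$, since the left side vanishes while the right is positive) and must be handled by restricting $M$ above a small universal threshold.

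For the upper bound, I would split the integral at $M/2$:
\[ \int_1^M \frac{e^v}{v^2}\,dv \;=\; \int_1^{M/2} \frac{e^v}{v^2}\,dv \;+\; \int_{M/2}^M \frac{e^v}{v^2}\,dv. \]
On $[M/2, M]$ the bound $v^2 \ge M^2/4$ gives the second piece at most $(4/M^2)\int_{M/2}^M e^v\,dv \le 4 e^M/M^2$. On $[1, M/2]$ one simply uses $v^2 \ge 1$ and $\int_1^{M/2} e^v\,dv \le e^{M/2}$; since $M^2 e^{-M/2} \to 0$, the term $e^{M/2}$ is negligible compared to $e^M/M^2$, so it is absorbed into $C e^M/M^2$ for $M$ above a universal threshold. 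The bounded regime $M \in [1, M_0]$ contributes only universal constants and is handled by enlarging $C$, yielding the desired bound uniformly in $M \ge 1$.

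The only mild technical subtlety is tracking the additive constant in the lower bound for $M$ near $1$, where the stated inequality is tightest and in fact borderline; the main structural content, namely that the primitive inherits the leading behavior $e^M/M^2$ from the integrand up to a polynomial correction, is captured immediately by the integration-by-parts identity above, and the dyadic split controls the size of the correction for the upper bound.
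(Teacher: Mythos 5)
Your integration-by-parts identity for the lower bound is exactly the paper's starting point, but you are right to be suspicious: the identity directly yields only $\int_1^M e^v/v^2\,dv \ge e^M/M^2 - e$, and since $e>1$ this does \emph{not} by itself give the stated bound $e^M/M^2 - 1$. Your sharpening via $2\int_1^2 e^v/v^3\,dv\ge e^2/4$ is correct and closes the gap for $M\ge 2$, and your observation that the bound genuinely fails for $M$ close to $1$ is correct as stated: at $M=1$ the integral vanishes while $e^M/M^2-1=e-1>0$, and a quick numerical check (e.g.\ $M=1.2$) confirms the inequality is violated on a neighbourhood of $1$. The paper's own proof of the lower bound (``integrate by parts and note the second term is nonnegative'') only yields the weaker $-e$ constant and does not address this regime, so you have identified an imprecision in the paper rather than a flaw in your own argument. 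The lemma as written should either have $-e$ in place of $-1$, or restrict to $M\ge 2$, or else absorb the additive constant differently.

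For the upper bound your approach genuinely differs from the paper's. You split at $M/2$ and use the crude pointwise bounds $v^2\ge M^2/4$ on $[M/2,M]$ and $v^2\ge 1$ on $[1,M/2]$, absorbing the tail $e^{M/2}$ into $Ce^M/M^2$ for $M$ large and enlarging $C$ to cover bounded $M$. The paper instead integrates by parts starting from $3$ and uses the bootstrap inequality $2\int_3^M e^v/v^3\,dv \le \tfrac23\int_3^M e^v/v^2\,dv$ to conclude $\int_3^M e^v/v^2\,dv\le 3e^M/M^2$ directly, then covers $M\le 3$ by choosing $C$ large. Both are correct; your split is more elementary and requires no self-referential inequality, while the paper's bootstrap gives a cleaner explicit constant ($3$ rather than roughly $5$) in one line. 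Neither difference is substantive.

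Net: your proposal is correct where the lemma is true (i.e.\ for $M\ge 2$), is more careful than the paper's proof on the lower bound, and takes a legitimate alternative route on the upper bound.
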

\begin{proof}
For $M\le 3$ the result is immediate for $C$ chosen large enough beforehand. For $M>3$, one writes
\[  \int_3^M \frac{e^v}{v^2}dv = \left[ \frac{e^v}{v^2}\right]_3^M + 2\int_3^M  \frac{e^v}{v^3}dv
\le \frac{e^M}{M^2} + \frac{2}{3} \int_3^M \frac{e^v}{v^2}dv, \]
so that $\int_3^M \frac{e^v}{v^2}dv\le 3e^M/M^2$, from which the upper bound follows. The lower bound follows from integrating by parts between $1$ and $M$ and noting that the second term is nonnegative.
\end{proof}

\begin{lemma}\label{lembinom}
For $m \geq 1$, $p_1,\dots,p_m \in(0,1)$, consider $U=\sum_{i=1}^m B_i$, 
where $B_i\sim \mathcal{B}(p_i)$,  $1\leq i \leq m$, are independent.
For any nonnegative variable $T$ independent of $U$, we have 
\begin{align}
 \E \left(\frac{T }{T + U}\ind{T>0}\right) &\leq e^{-\E U}+ \frac{12\: \E T}{\E U} .  \label{equ-binom3}
\end{align}
\end{lemma}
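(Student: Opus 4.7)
Let $m = \E U = \sum_{i=1}^m p_i$. The plan is to split the expectation according to whether $U=0$ or $U\ge 1$, and then handle the two cases separately using independence of $T$ and $U$.

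First, on the event $\{U=0\}$, the ratio $T/(T+U)\ind{T>0}$ equals $\ind{T>0}\le 1$, so using independence of $T$ and $U$ one bounds
\[ \E\bigl[\tfrac{T}{T+U}\ind{T>0,\,U=0}\bigr]\le \P(U=0)=\prod_{i=1}^m(1-p_i)\le e^{-\sum p_i}=e^{-\E U}.\]
This already produces the first term of the bound.

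For the second piece $\E[T/(T+U)\ind{U\ge 1}]$, I would use $T/(T+U)\le T/U$ on $\{U\ge 1\}$, and again the independence of $T$ and $U$, to reduce to bounding $\E[\ind{U\ge 1}/U]$ by something of order $1/m$. I would split this quantity according to whether $U\ge m/2$ or $1\le U<m/2$: on the first event $1/U\le 2/m$, contributing at most $2/m$; on the second event $1/U\le 1$, contributing at most $\P(U<m/2)$. Chebyshev's inequality combined with the crude bound $\mathrm{Var}(U)=\sum p_i(1-p_i)\le m$ gives
\[ \P(U<m/2)\le \P(|U-m|\ge m/2)\le \frac{4\,\mathrm{Var}(U)}{m^2}\le \frac{4}{m}. \]
Putting the pieces together yields $\E[\ind{U\ge 1}/U]\le 6/m$ (and the looser constant $12$ in the statement leaves room for absorbing whatever minor slack occurs, e.g.\ the case $m$ small).

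Assembling the two contributions gives the claimed inequality. There is no real obstacle: the only choice is the threshold (here $m/2$) used to split $U$, and the Chebyshev step — which works precisely because $\mathrm{Var}(U)\le \E U$ for sums of independent Bernoullis — is what ensures the second term scales like $\E T/\E U$ rather than something worse. Care must only be taken in the case $m\le 1$, where $e^{-\E U}$ is already of constant order and the inequality holds trivially after adjusting constants.
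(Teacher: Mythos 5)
Your proof is correct, and it takes a genuinely different (and slightly more elementary) route than the paper's. The paper first shows $\P(U=0)\le e^{-\E U}$ exactly as you do, but for the remainder it uses concavity of $x\mapsto x/(x+u)$ together with Jensen (in the $T$-variable, enabled by independence) to replace $T$ by $\E T$ inside the ratio, then bounds $\E\bigl[\E T/(\E T + U\vee 1)\bigr]$ by splitting at $U\le\E U/2$ and invoking Bernstein's inequality plus the trick $xe^{-x}\le 1$. You bypass Jensen entirely via the cruder but adequate bound $T/(T+U)\le T/U$ on $\{U\ge1\}$, which after independence reduces everything to $\E[\ind{U\ge1}/U]$, and then you use Chebyshev instead of Bernstein — feasible precisely because $\var(U)\le\E U$. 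Both roads lead to the same estimate; yours needs fewer tools, while the paper's Jensen step would be indispensable if one wanted $T$ inside a ratio that is not monotone in $T$.

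One small point to tighten: you hedge that the case of small $\E U$ "holds trivially after adjusting constants," but the statement fixes the constant $12$, so no adjustment is available. In fact none is needed: when $\E U\le 12$ the crude bound $\E[\ind{U\ge1}/U]\le 1\le 12/\E U$ already suffices, while for $\E U>4$ your Chebyshev bound gives $\le 6/\E U\le 12/\E U$; these two ranges overlap and cover all $\E U>0$. Also note you reused the letter $m$ both for the number of trials and for $\E U$; this should be fixed for clarity, but it does not affect the argument.
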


\begin{proof} 
Let us prove the two following inequalities:  for all $u>0$,
\begin{align*}
\P(U=0) &\leq   e^{-\sum_{i=1}^m p_i}.\\
 \E \left(\frac{u \:  \sum_{i=1}^m p_i}{u \: \sum_{i=1}^m p_i + U \vee 1}\right) &\leq 12u. 
\end{align*} 
For the first inequality, using $\log (1-x) \leq -x$ for all $x\in(0,1)$, 
$$
\P(U=0) = \prod_{i=1}^m (1-p_i) = e^{ \sum_{i=1}^m \log (1-p_i)} \leq   e^{- \sum_{i=1}^m p_i}=e^{-\E U}.
$$
For the second assertion, we have 
\begin{align*}
& \E \left(\frac{u\:  \sum_{i=1}^m p_i}{u \: \sum_{i=1}^m p_i + U \vee 1}\right) \leq  \E  \left(\frac{\sum_{i=1}^m p_i}{U \vee 1}\right) u.
\end{align*}
Now applying Bernstein's inequality, we have 
\begin{align*}
\P  \left( U \leq \sum_{i=1}^m p_i/2\right) &= \P \left( U - \sum_{i=1}^m p_i \leq -\sum_{i=1}^m p_i/2\right) \\
&\leq \exp\left\{-\frac12 \sum_{i=1}^m p_i ( 1/2)^2/(1+1/6) \right\} \leq  e^{- 0.1 \sum_{i=1}^m p_i}. 
\end{align*}
As a result, one obtains, using $x e^{-x}\le 1$ for $x\ge 0$,
\begin{align*}
&\E \left(\frac{\sum_{i=1}^m p_i}{U \vee 1}\right)\\
&\leq \E \left(\frac{\sum_{i=1}^m p_i}{U \vee 1}\ind{U> \sum_{i=1}^m p_i/2}\right)+\E \left(\frac{\sum_{i=1}^m p_i}{U \vee 1}\ind{U\leq \sum_{i=1}^m p_i/2}\right)\\
&\leq 2+ 10 \left(0.1 \:\sum_{i=1}^m p_i\right) \: e^{- 0.1 \sum_{i=1}^m p_i} \leq 12,
\end{align*}
as announced. To show \eqref{equ-binom3}, we now use the independence assumption and the concavity of $x\to \frac{x}{x+u}$ (for $u>0$), to obtain
\begin{align*}
 \E \left[  \frac{T}{T+U} \mathds{1}{\{T>0\}} \right] 
 &= \P(U=0,T>0) +
  \E \left[  \frac{T}{T+U} \mathds{1}{\{U>0\}} \right]\\
&\leq \P(U=0) + \E \left[  \frac{\E T}{\E T+U} \mathds{1}{\{U>0\}} \right]\\
&\leq \P(U=0) + \E \left[  \frac{\E T}{\E T+U\vee 1} \right].
\end{align*}
The two previous inequalities for $u=\E T/\E U$ thus give the result.
\end{proof}

\section{$\ \,$Additional numerical experiments}\label{sec:addnum}

Figures~\ref{fig2},~\ref{fig3} and~\ref{fig4}  present further numerical experiments along the lines of the comments of Section~\ref{sec:simu}.

\begin{figure}[h!]
\begin{tabular}{ccc}
\vspace{-0.5cm}
&quasi-Cauchy & Laplace\\
\vspace{-1cm}
\rotatebox{+90}{\hspace{3cm}$s_n/n=0.1$} &\includegraphics[scale=0.35]{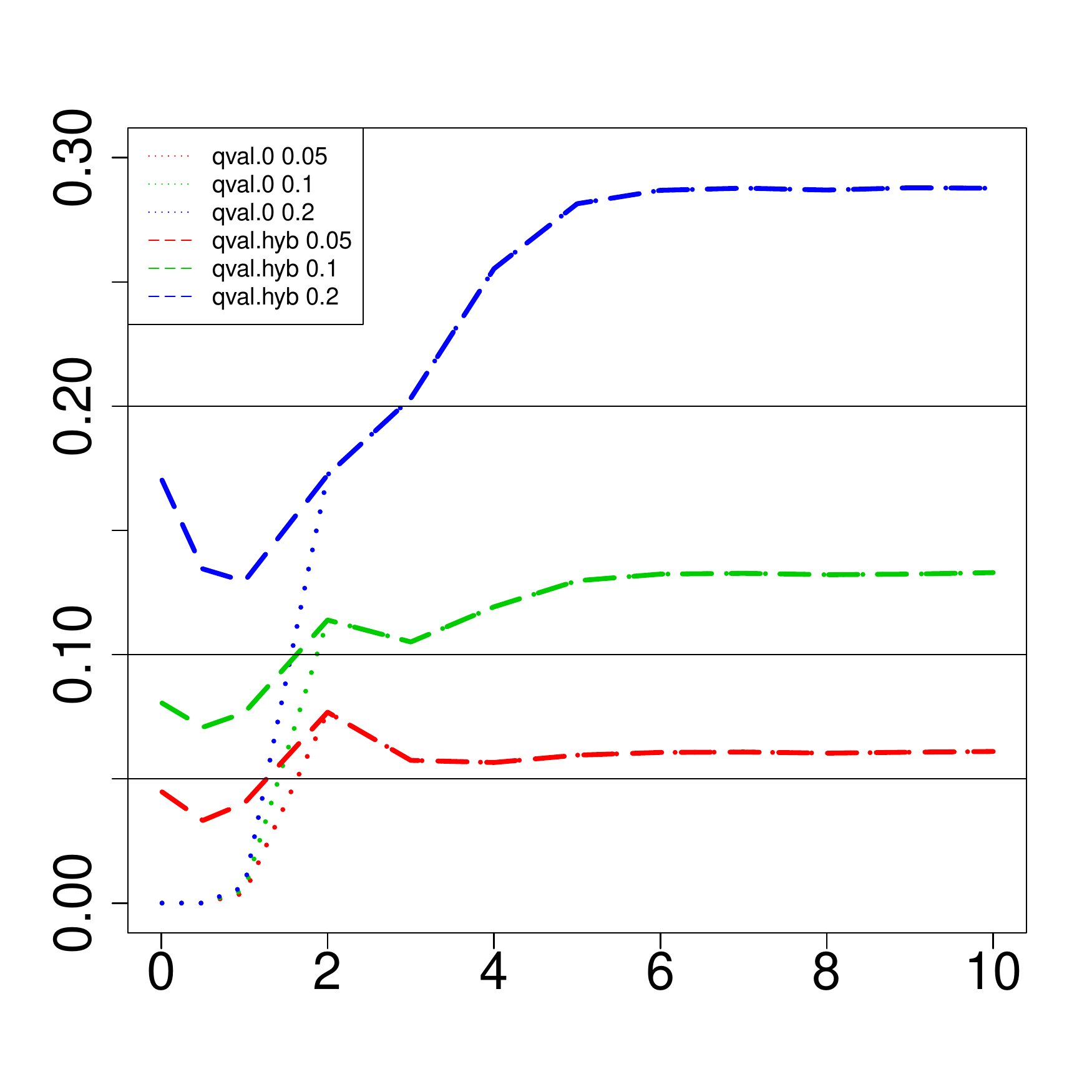}
&\includegraphics[scale=0.35]{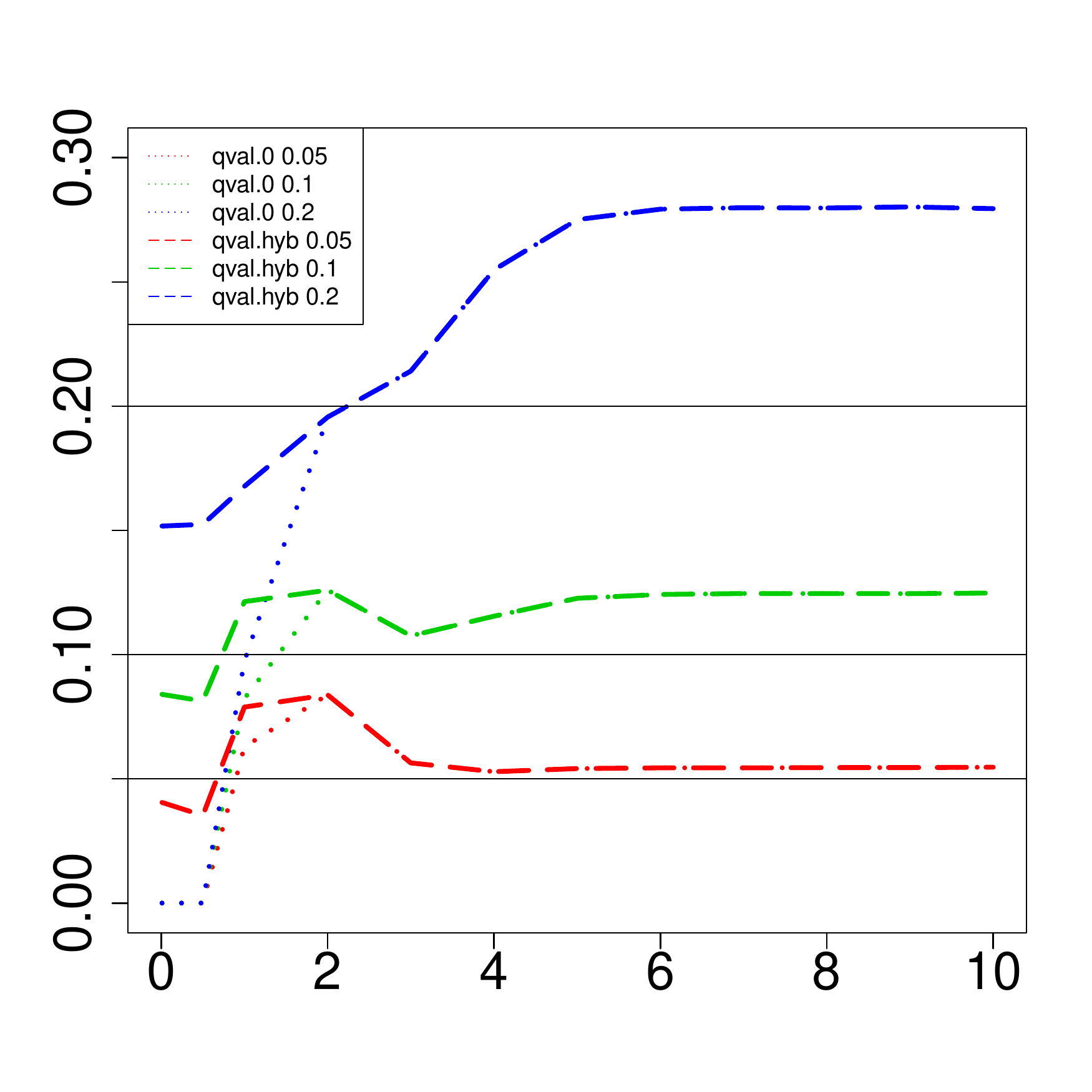}
\\
\vspace{-1cm}
\rotatebox{+90}{\hspace{3cm}$s_n/n=0.01$} &\includegraphics[scale=0.35]{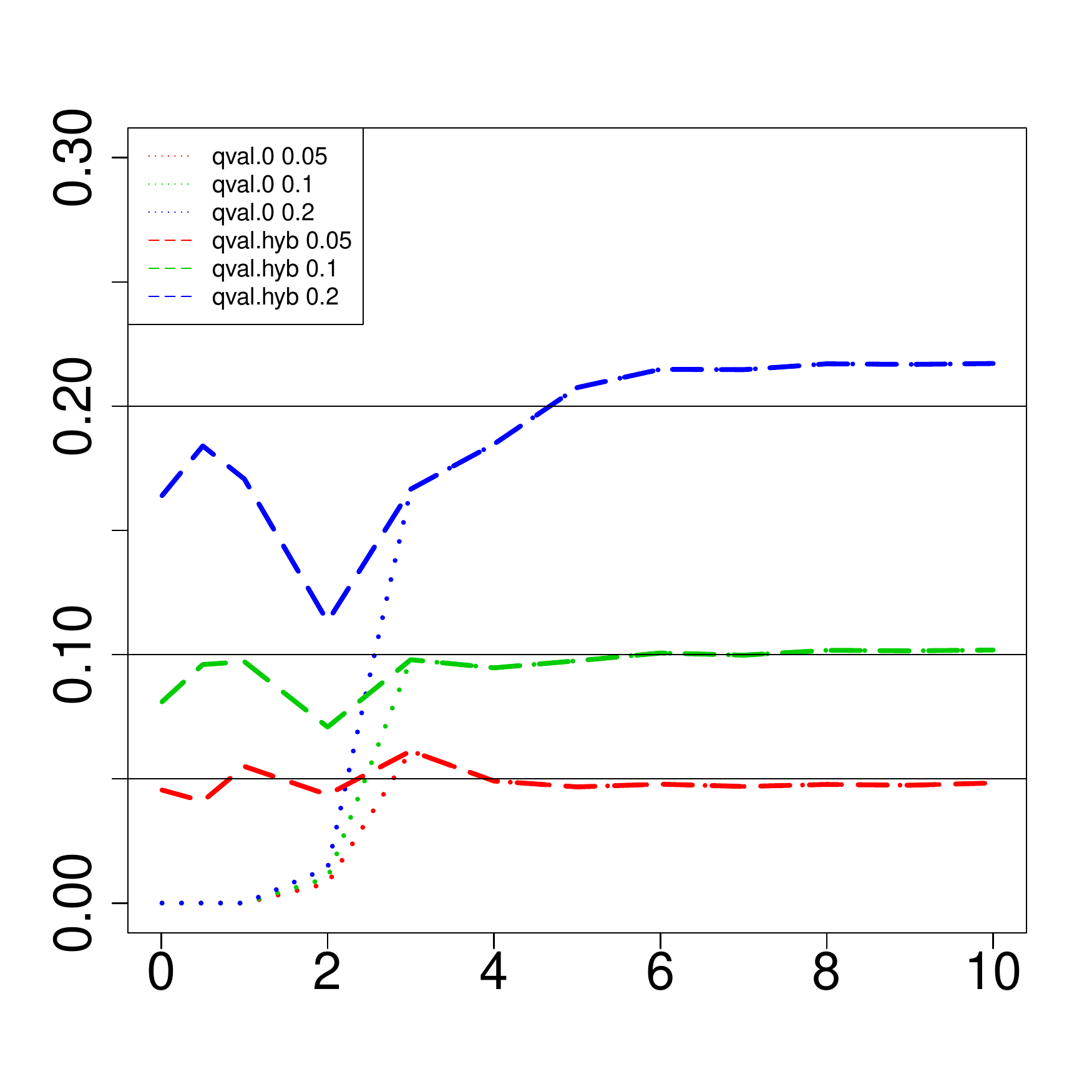}
&\includegraphics[scale=0.35]{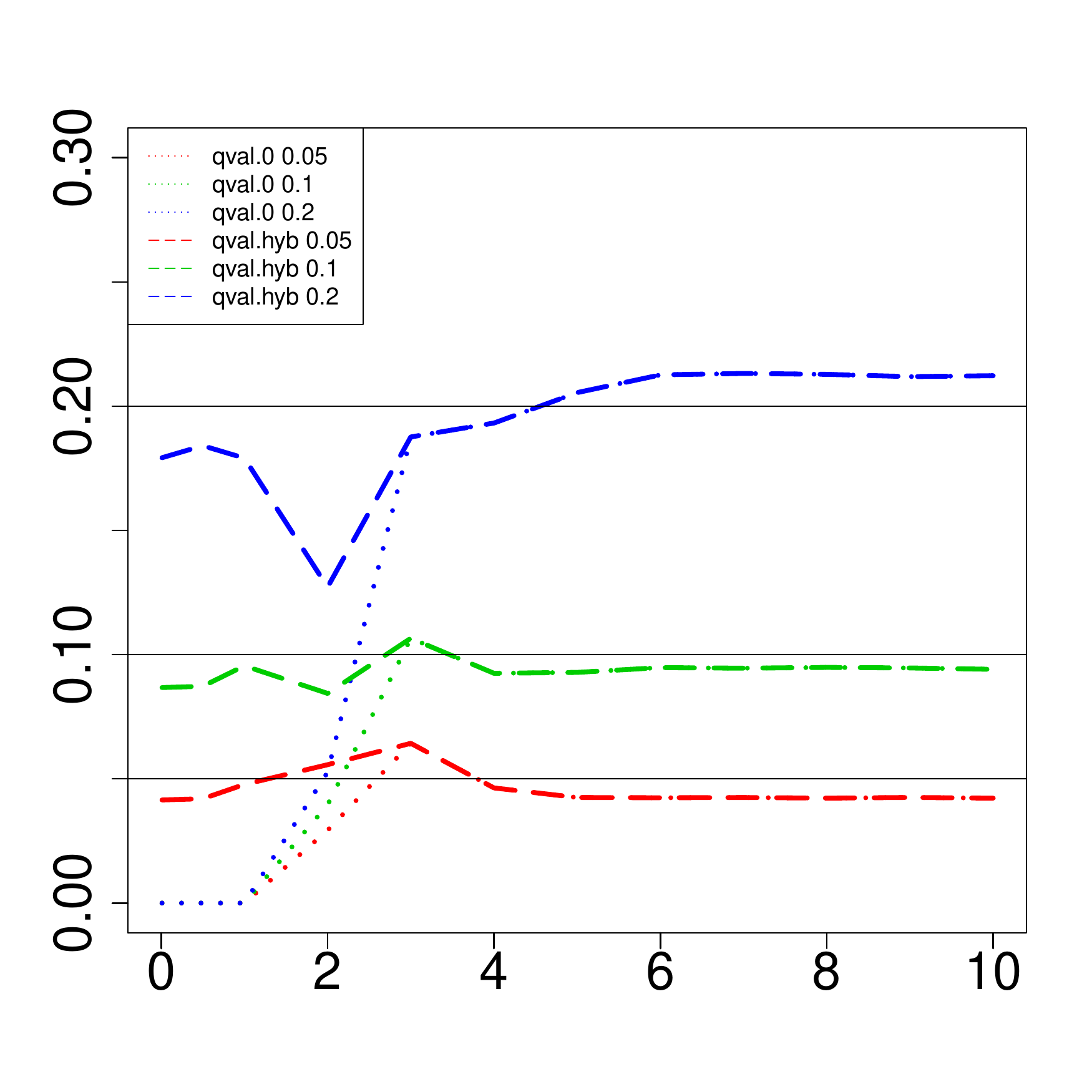}
\\
\vspace{-0.5cm}
\rotatebox{+90}{\hspace{3cm}$s_n/n=0.001$} &\includegraphics[scale=0.35]{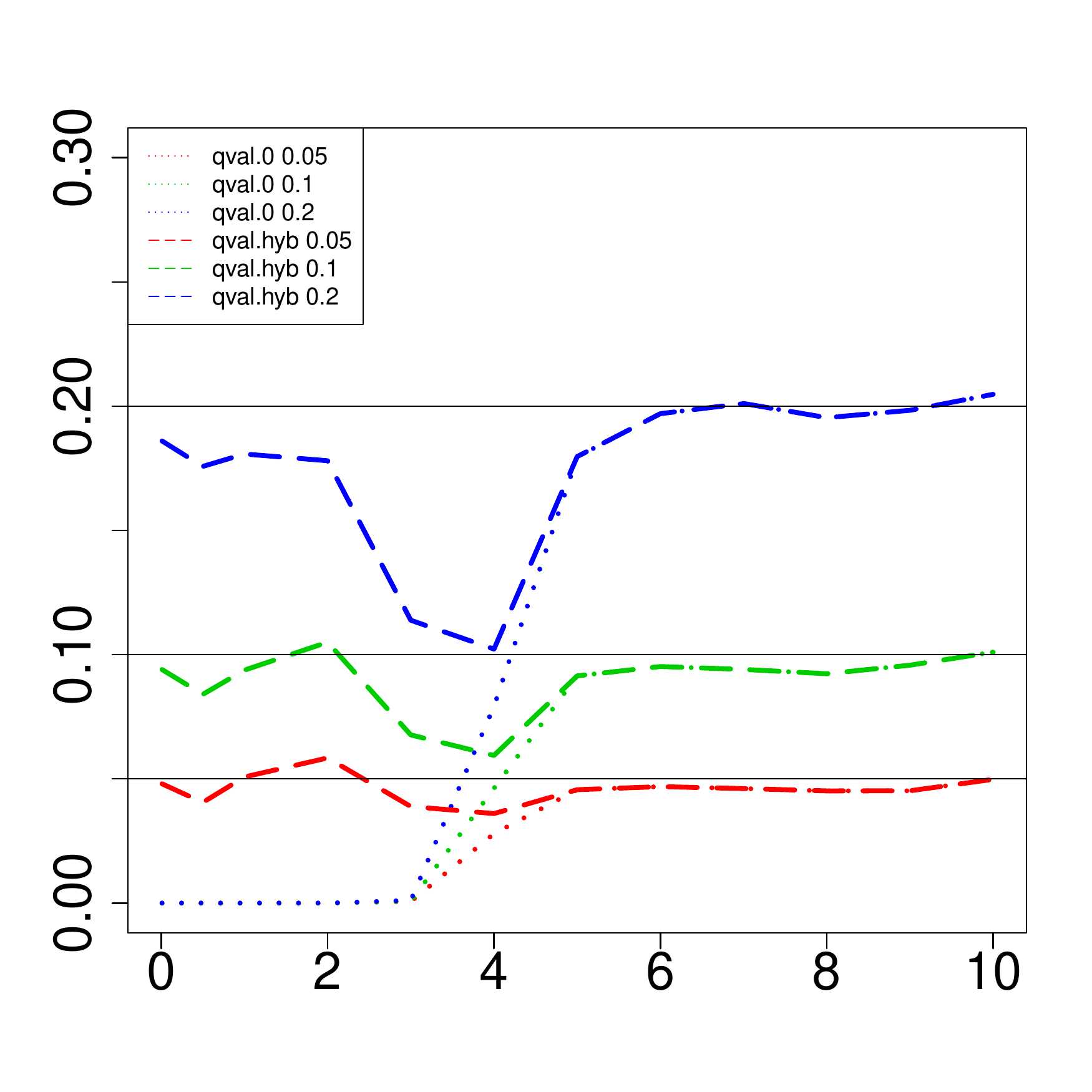}
&\includegraphics[scale=0.35]{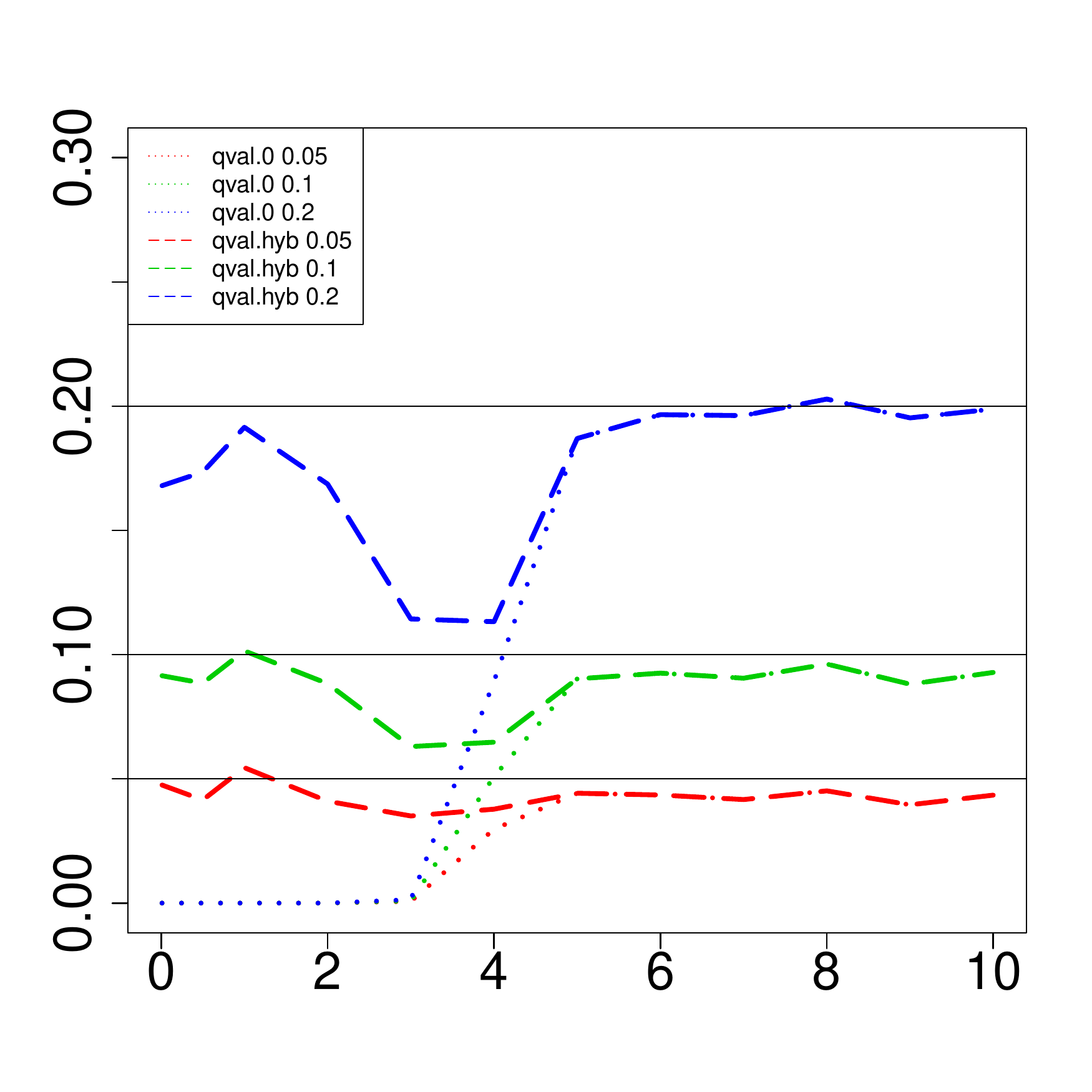}
\end{tabular}
\caption{\label{fig2}
FDR of \EBayesqseuillage and \EBayesqplus procedures with threshold $t\in\{0.05,0.1,0.2\}$.  $n=10,\,000$; $2000$ replications; alternative all equal to $\mu$ (on the $X$-axis).}
\end{figure}

\begin{figure}[h!]
\begin{tabular}{ccc}
\vspace{-0.5cm}
&quasi-Cauchy & Laplace\\
\vspace{-1cm}
\rotatebox{+90}{\hspace{3cm}$s_n/n=0.1$} &\includegraphics[scale=0.35]{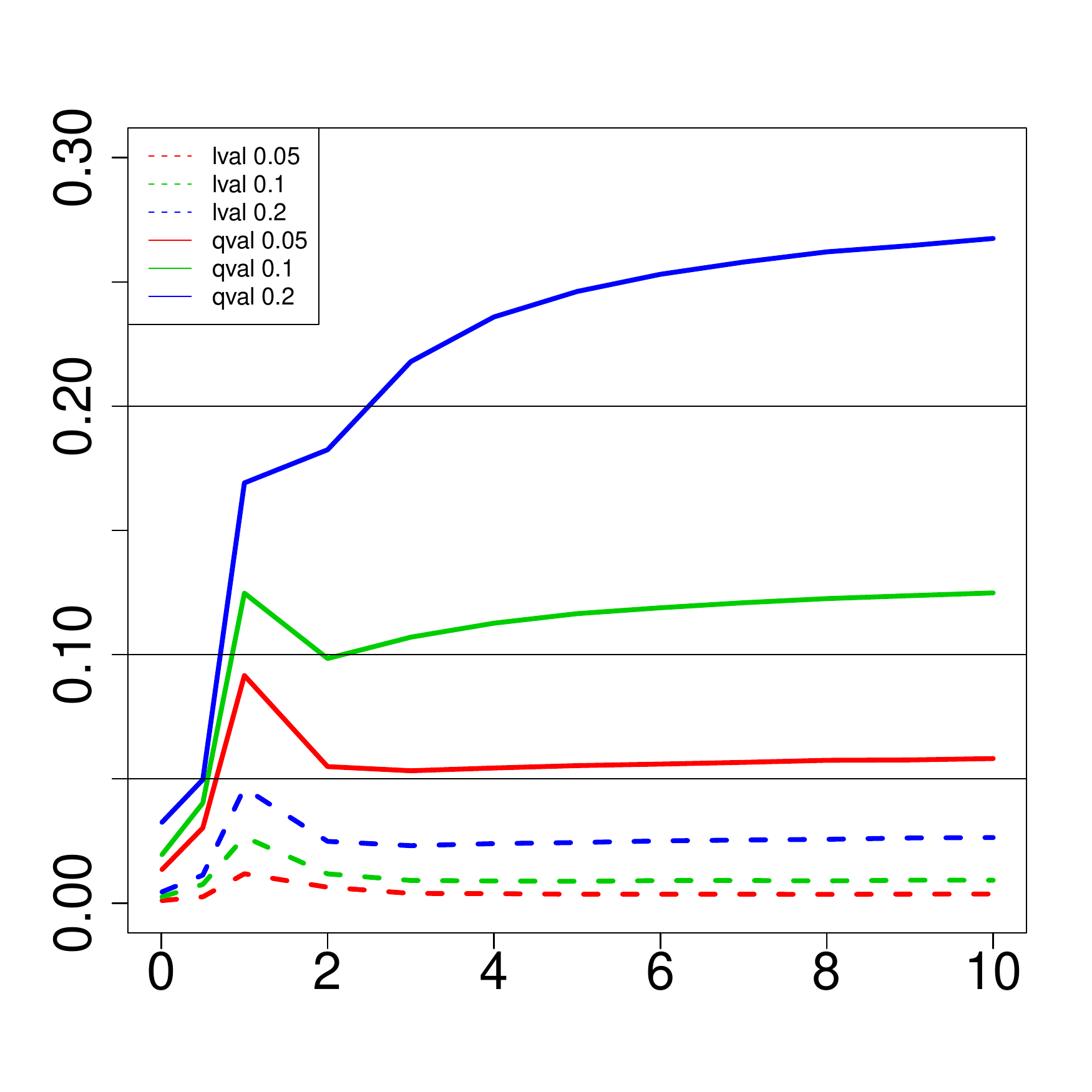}
&\includegraphics[scale=0.35]{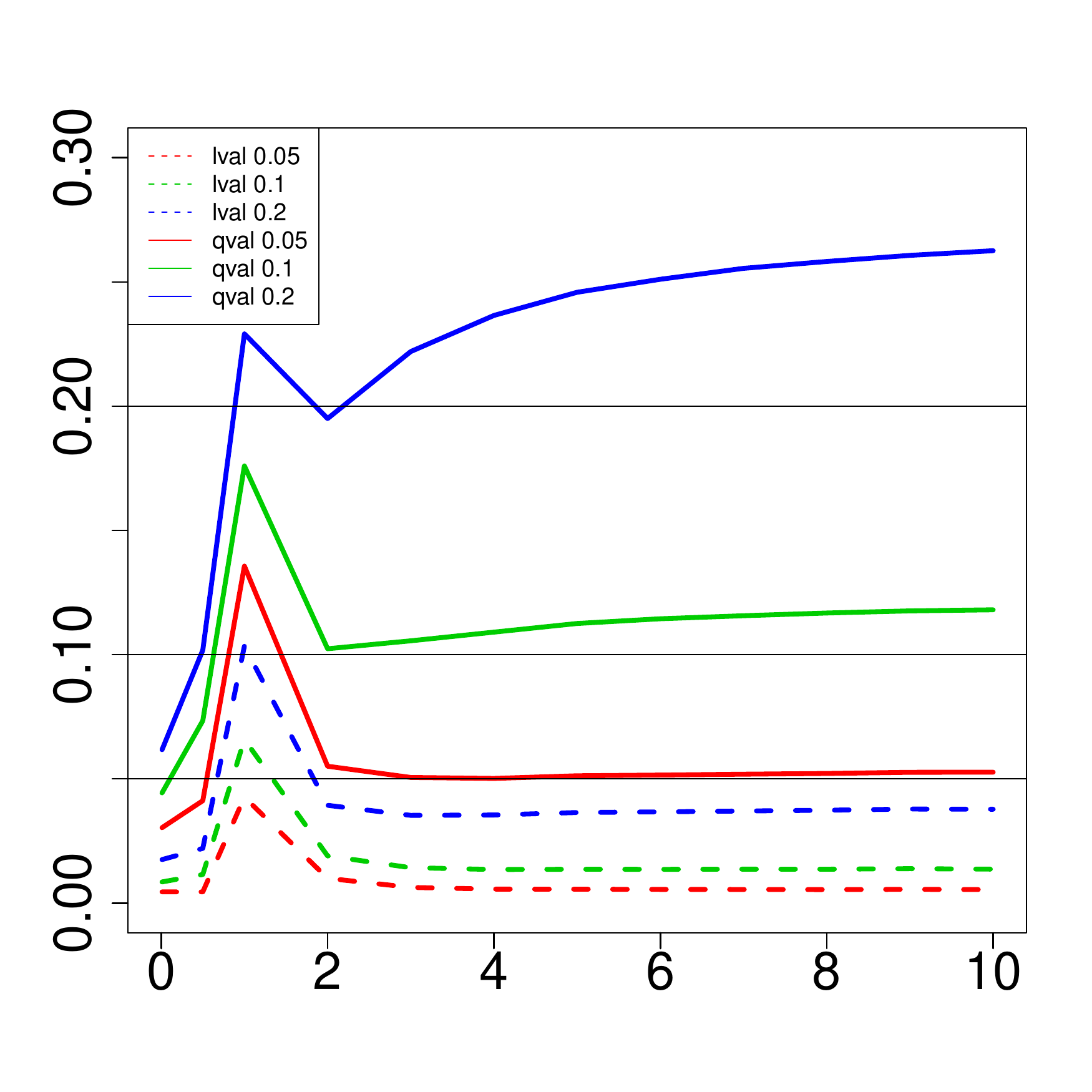}
\\
\vspace{-1cm}
\rotatebox{+90}{\hspace{3cm}$s_n/n=0.01$} &\includegraphics[scale=0.35]{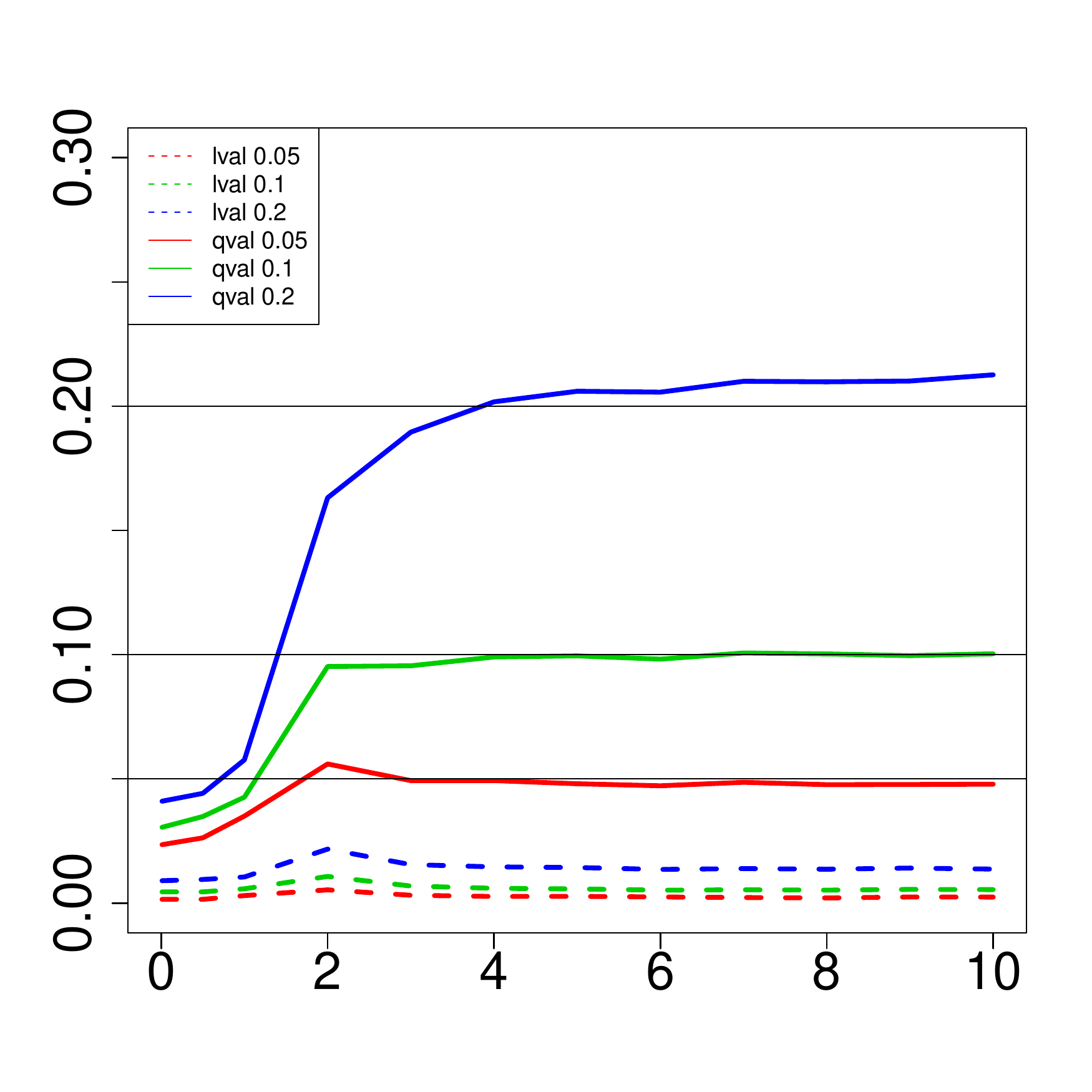}
&\includegraphics[scale=0.35]{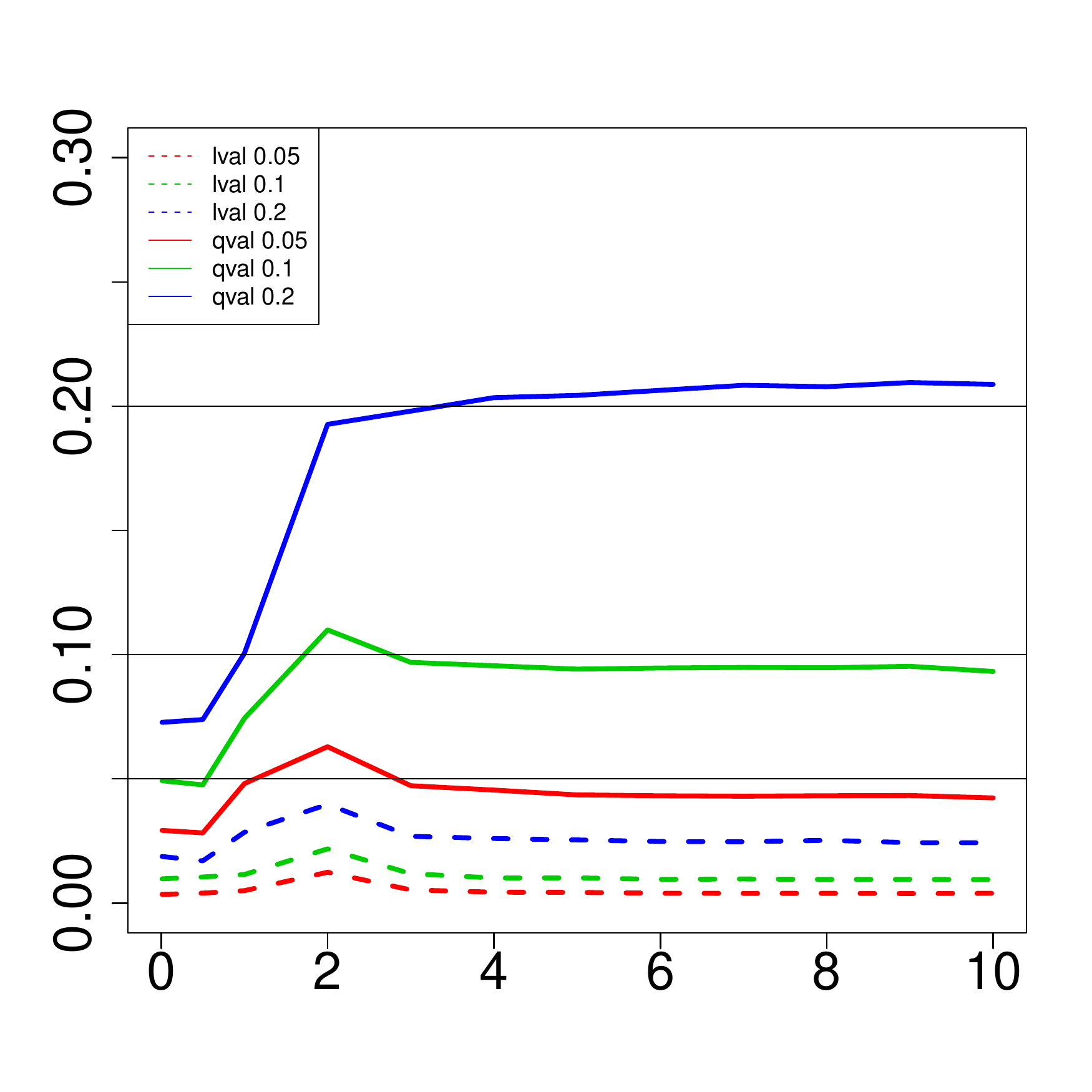}
\\
\vspace{-0.5cm}
\rotatebox{+90}{\hspace{3cm}$s_n/n=0.001$} &\includegraphics[scale=0.35]{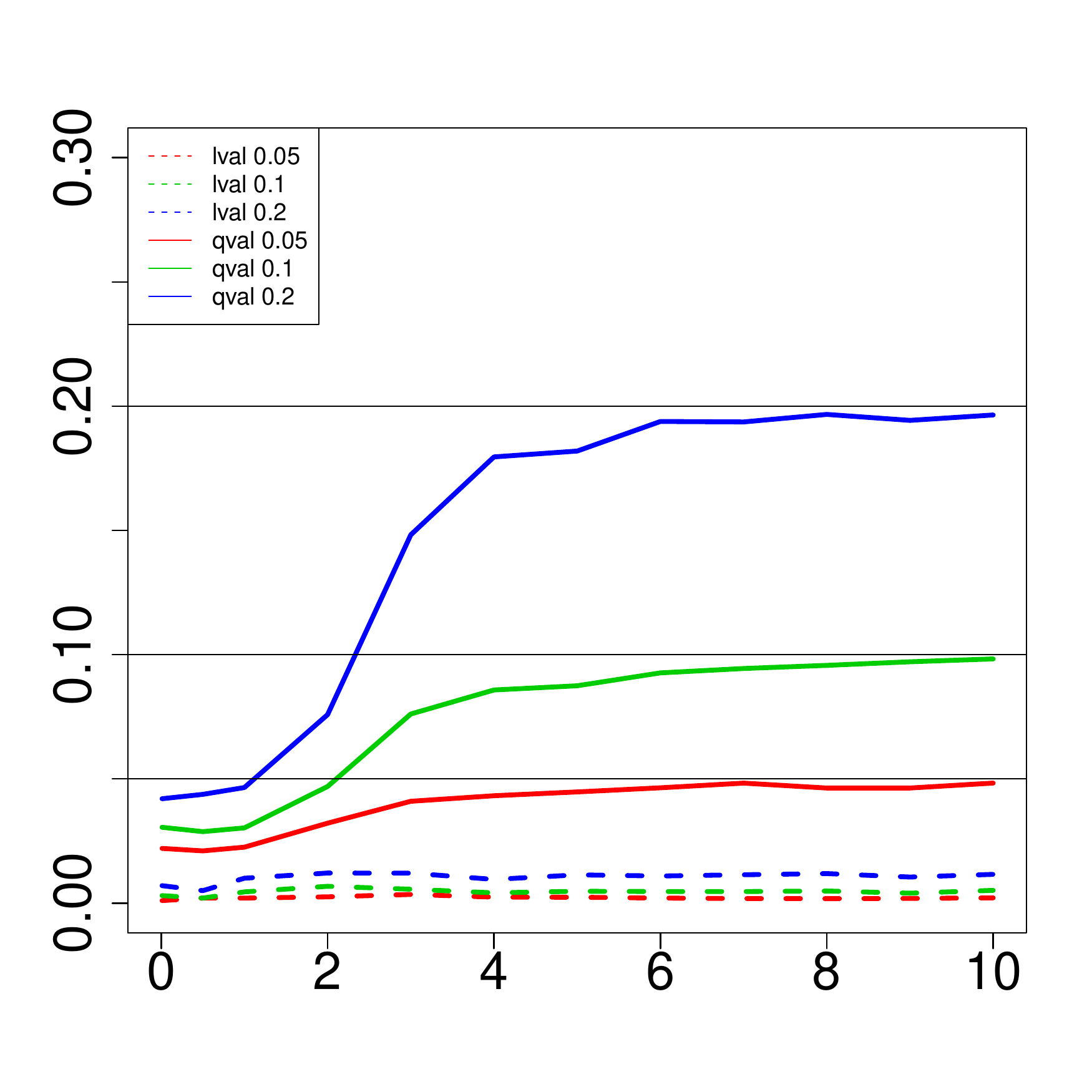}
&\includegraphics[scale=0.35]{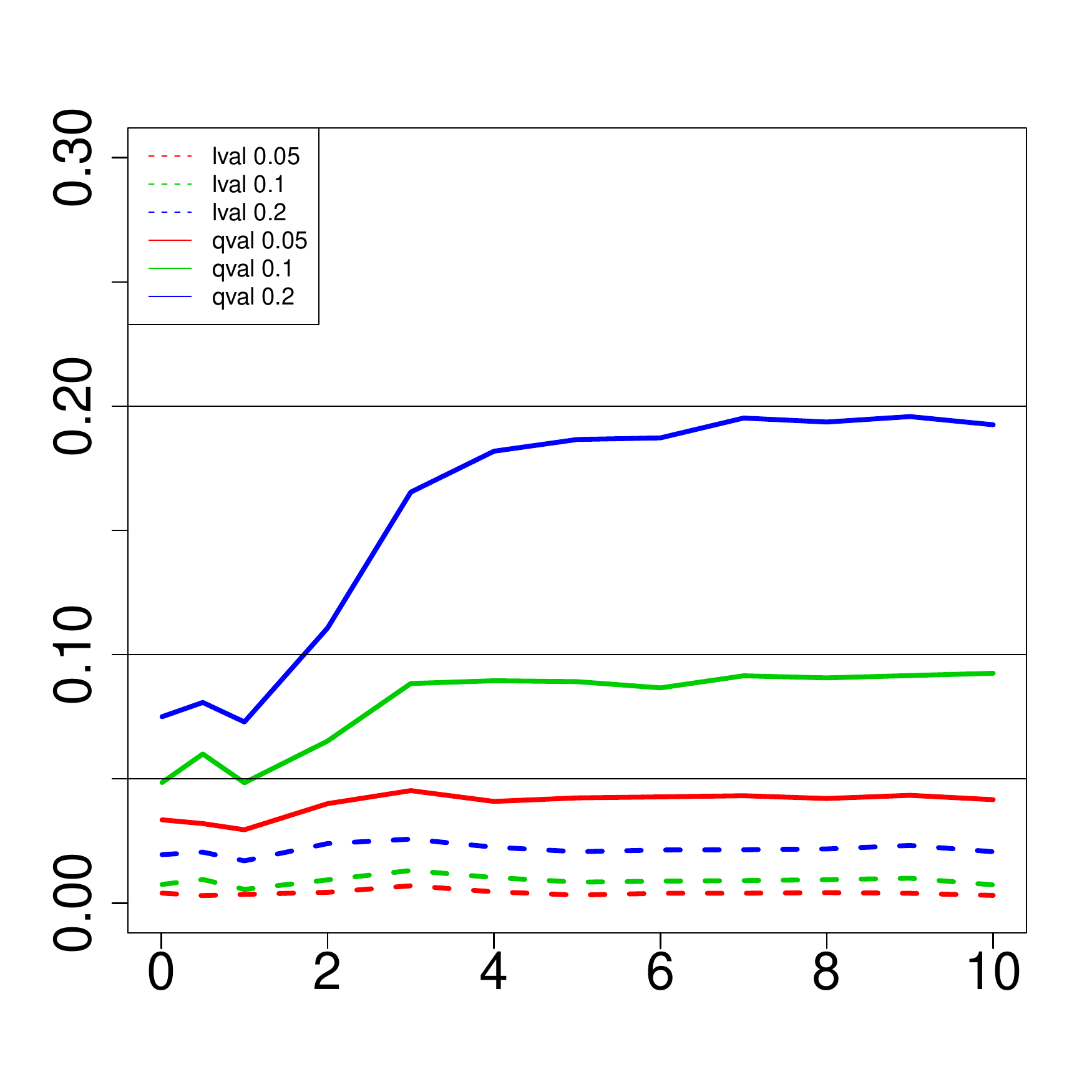}
\end{tabular}
\caption{\label{fig3}
FDR of {\tt EBayesL} and {\tt EBayesq} procedures with threshold $t\in\{0.05,0.1,0.2\}$. $n=10,\,000$; $2000$ replications; alternative values i.i.d. uniformly drawn into $[0,2\mu]$ ($\mu$ on the $X$-axis).}
\end{figure}

\begin{figure}[h!]
\begin{tabular}{ccc}
\vspace{-0.5cm}
&quasi-Cauchy & Laplace\\
\vspace{-1cm}
\rotatebox{+90}{\hspace{3cm}$s_n/n=0.1$} &\includegraphics[scale=0.35]{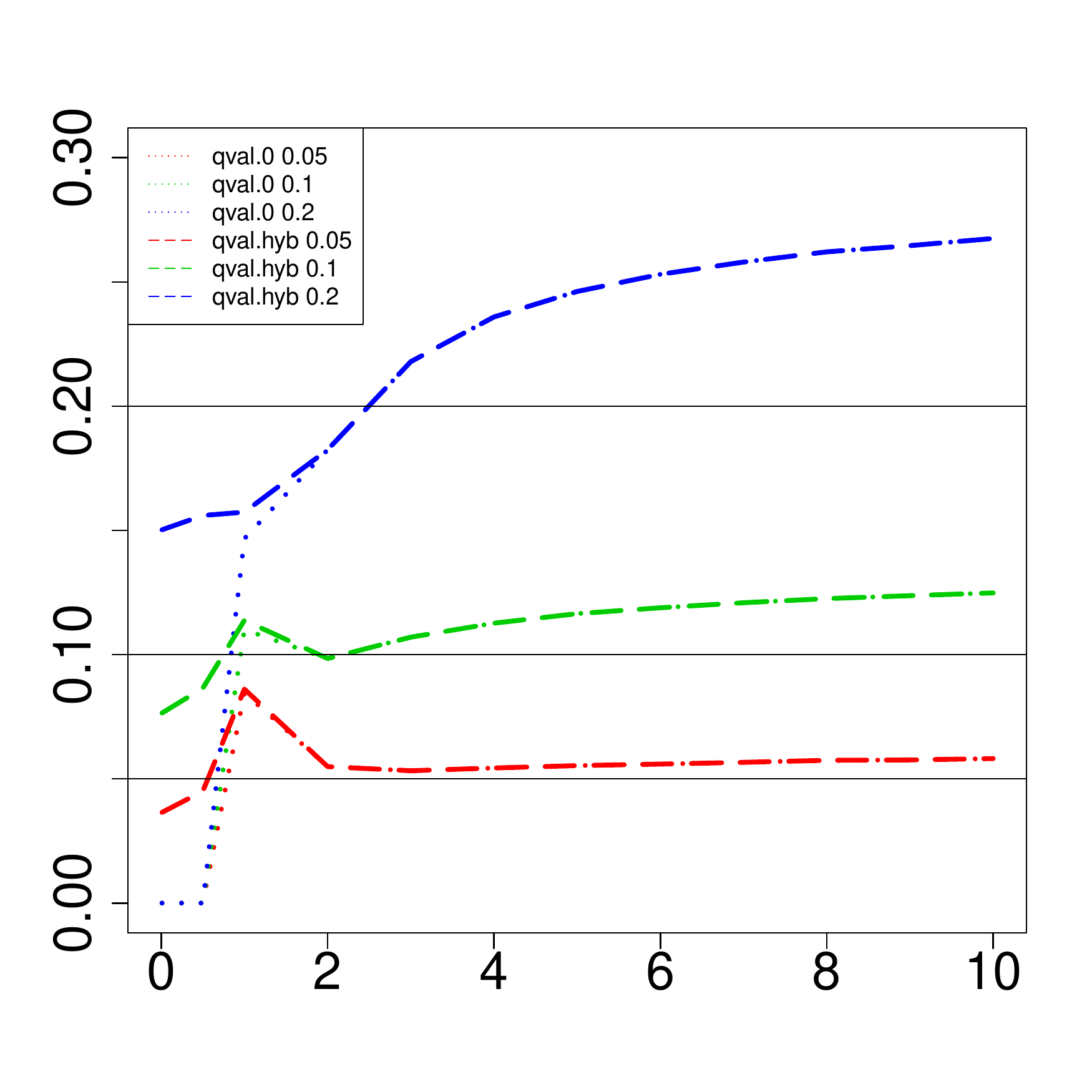}
&\includegraphics[scale=0.35]{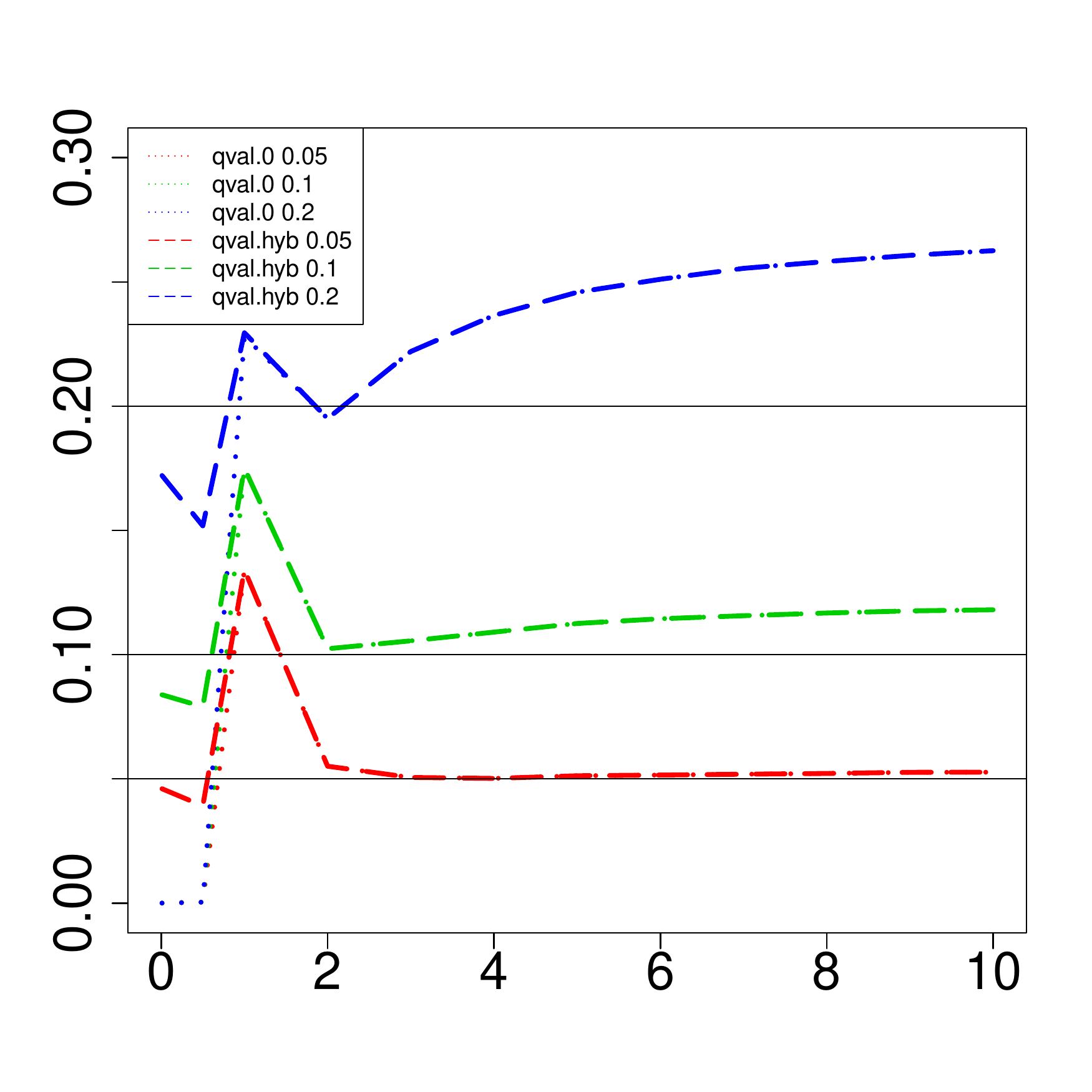}
\\
\vspace{-1cm}
\rotatebox{+90}{\hspace{3cm}$s_n/n=0.01$} &\includegraphics[scale=0.35]{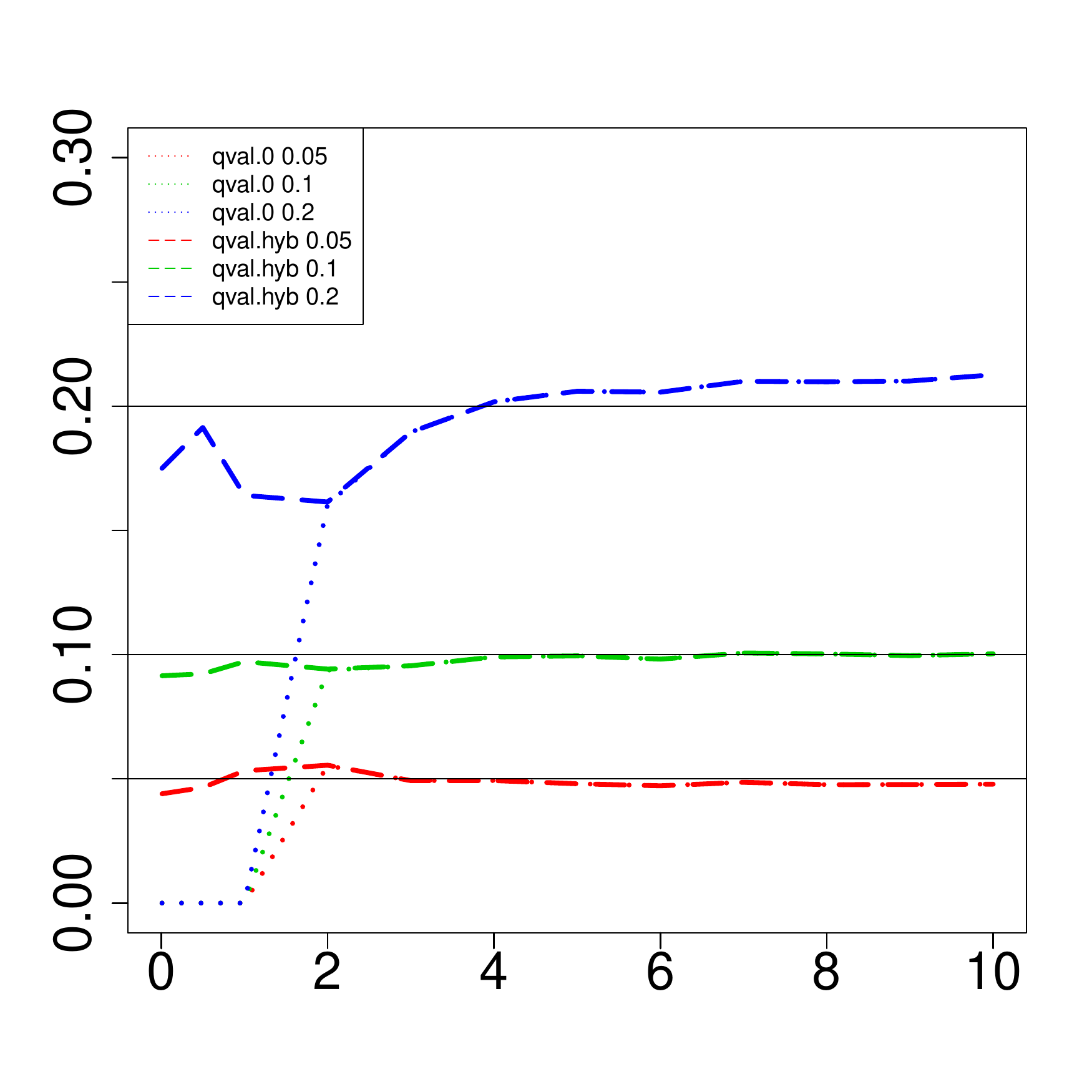}
&\includegraphics[scale=0.35]{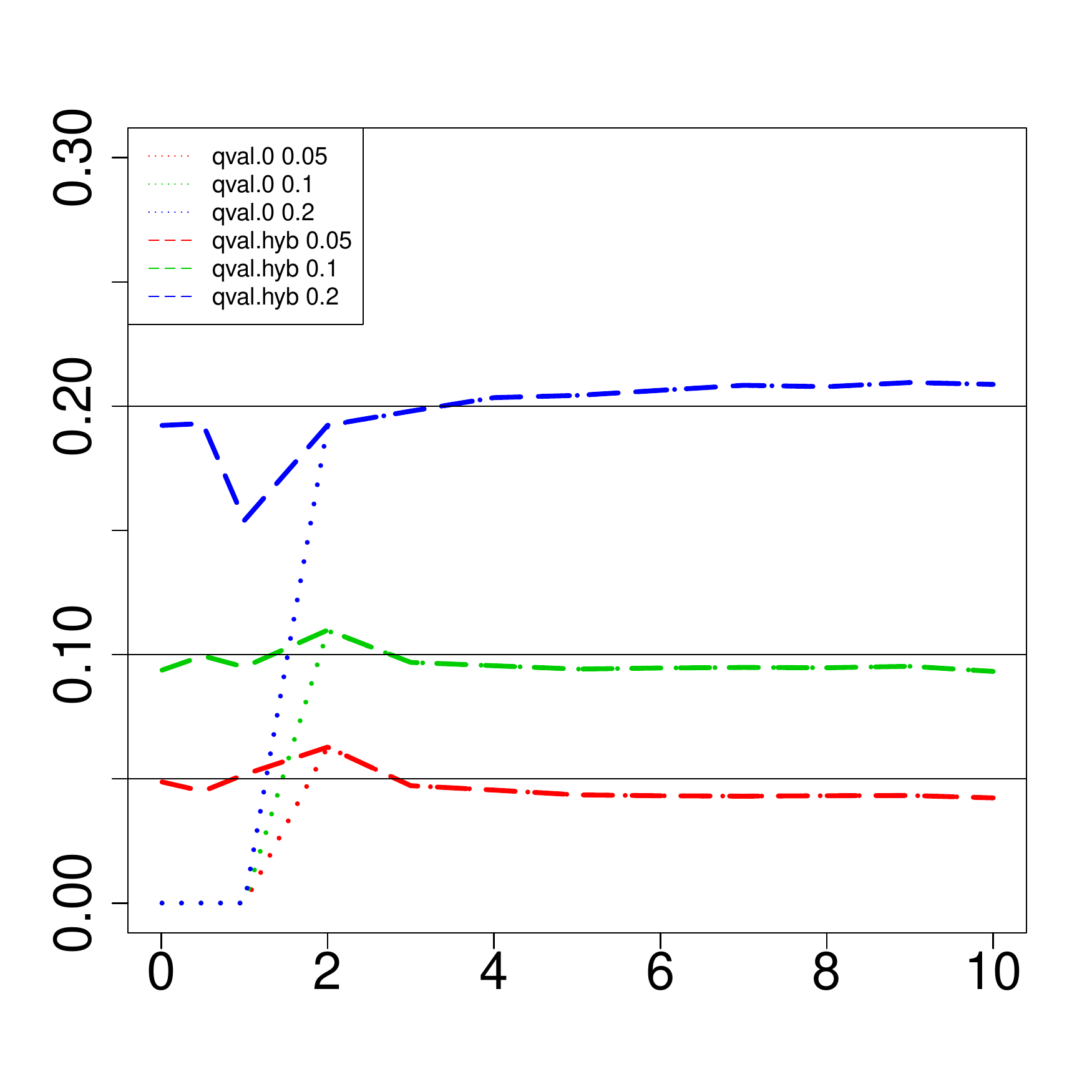}
\\
\vspace{-0.5cm}
\rotatebox{+90}{\hspace{3cm}$s_n/n=0.001$} &\includegraphics[scale=0.35]{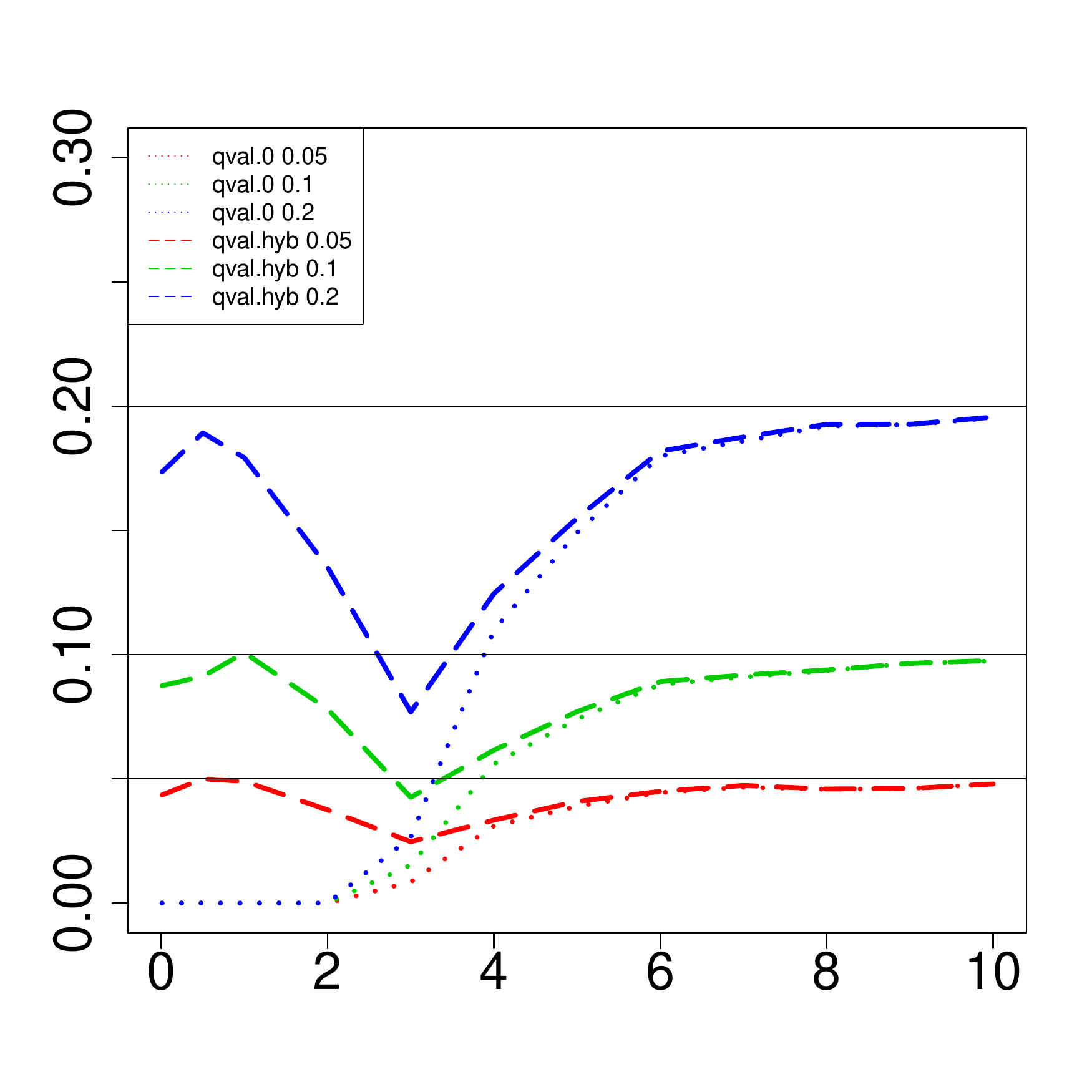}
&\includegraphics[scale=0.35]{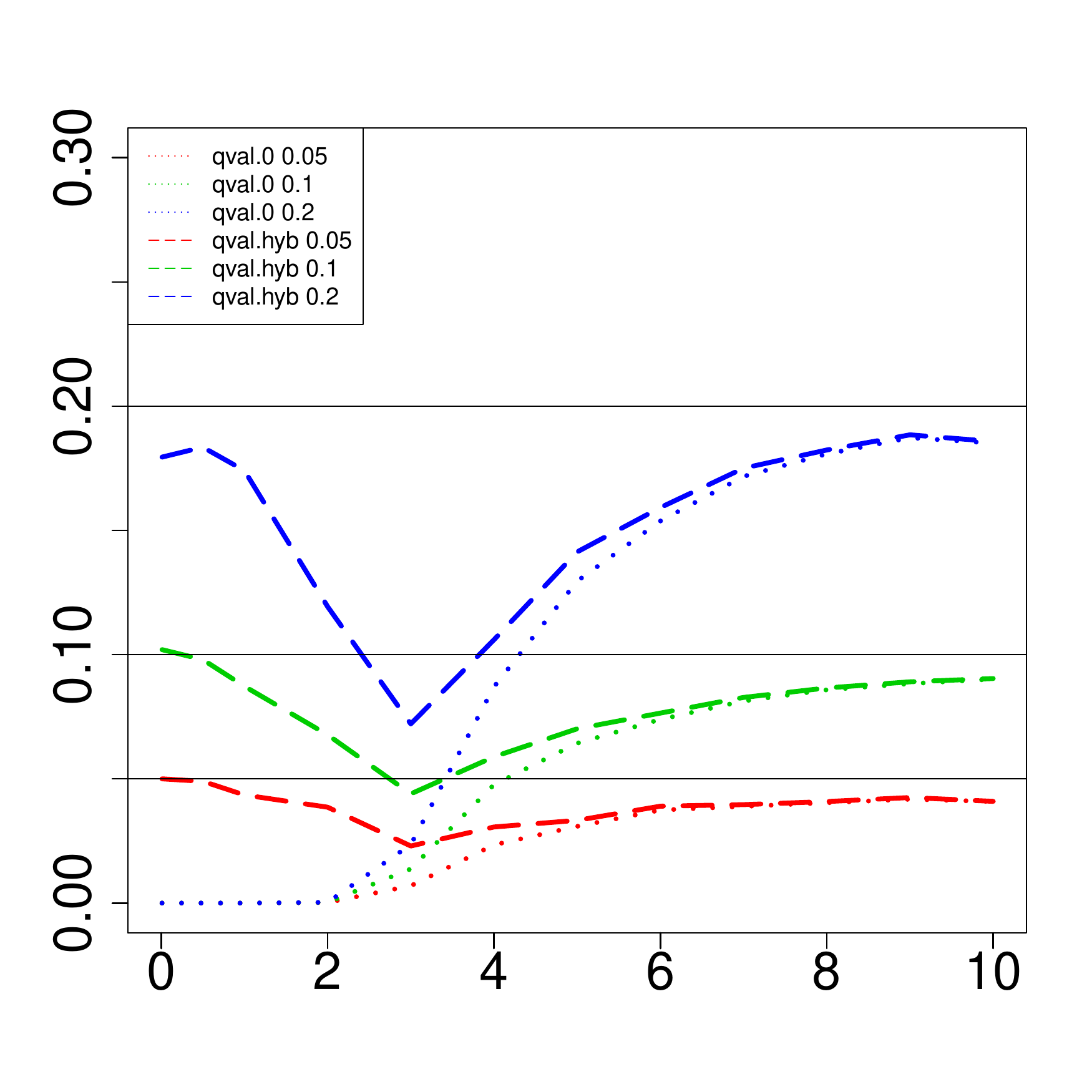}
\end{tabular}
\caption{\label{fig4}
FDR of \EBayesqseuillage and \EBayesqplus procedures with threshold $t\in\{0.05,0.1,0.2\}$. $n=10,\,000$; $2000$ replications; alternative values i.i.d. uniformly drawn into $[0,2\mu]$ ($\mu$ on the $X$-axis).}
\end{figure}

\end{document}